\newcommand{\grg}{\mathsf{Geo}}
\newcommand{\rgg}{\mathsf{Geo}}
\newcommand{\ER}{\mathsf{G}}
\newcommand{\gnp}{{\mathsf{G}(n,p)}}
\newcommand{\dtv}[2]{\mathrm{d}_{\mathrm{TV}}\left(#1,#2\right)}
\newcommand{\GG}{\mathsf{gg}}
\newcommand{\Unif}{\rho}
\newcommand{\dkl}{\mathrm{D}}
\newcommand{\KL}[1]{\dkl(#1\|\Unif)}
\newcommand{\randnbr}{N_{\bG}}
\newcommand{\Leaves}{N_R}
\newcommand{\Balls}{K}
\newcommand{\BallsInt}{K}
\newcommand{\Rest}{R}
\newcommand{\Inst}{\calI}
\newcommand{\Dev}{\mathrm{Dev}}
\newcommand{\Spr}{\mathrm{Spr}}
\newcommand{\Res}{\mathrm{Anch}}
\newcommand{\Tier}{\mathrm{Tier}}
\newcommand{\Par}{\mathrm{Par}}
\newcommand{\Ch}{\mathrm{Ch}}
\newcommand{\Un}{\mathrm{Un}}
\newcommand{\Ber}{\mathsf{Ber}}
\newcommand{\Binom}{\mathsf{Binom}}
\newcommand{\iprod}[1]{\left\langle #1 \right\rangle}
\newcommand{\Iprod}[1]{\langle #1 \rangle}
\newcommand{\VecDist}[1]{\Unif^{#1}}
\newcommand{\subgnorm}{\psi_2}
\newcommand{\scap}{\mathrm{cap}}
\newcommand{\anticap}{\ol{\mathrm{cap}}}
\newcommand{\CPsiBound}{C_{\scaleto{\ref{lem:beta-pdf}}{5pt} }}
\newcommand{\Cbetaac}{C_{\scaleto{\ref{lem:beta-concentration}}{5pt} }}
\newcommand{\CMartingale}{C_{\scaleto{\ref{lem:martingale-azuma}}{5pt} }}
\newcommand{\CSubgauss}{C_{\scaleto{\ref{lem:subgaussian-equiv}}{5pt} }}
\newcommand{\Cconcalt}{C_{\scaleto{\ref{cor:two-sets-subexp}}{5pt} }}
\newcommand{\Ccaa}{C_{\scaleto{\ref{cor:caps-and-anti}}{5pt} }}
\renewcommand{\ln}{\log}
\begin{document}

\title{Testing thresholds for high-dimensional sparse random geometric graphs}
\author{Siqi Liu\thanks{UC Berkeley. \texttt{sliu18@berkeley.edu}. Supported in part by the Berkeley Haas Blockchain Initiative and a donation from the Ethereum Foundation.} \and Sidhanth Mohanty\thanks{UC Berkeley. \texttt{sidhanthm@cs.berkeley.edu}. Supported by a Google PhD Fellowship.} \and Tselil Schramm\thanks{Stanford University.  \texttt{tselil@stanford.edu}.} \and Elizabeth Yang\thanks{UC Berkeley. \texttt{elizabeth\_yang@berkeley.edu}.  Supported by the NSF GRFP under Grant No. DGE 1752814.}}
\date{\today}
\maketitle

\begin{abstract}
The random geometric graph model $\grg_d(n,p)$ is a distribution over graphs in which the edges capture a latent geometry.
To sample $\bG \sim \grg_d(n,p)$, we identify each of our $n$ vertices with an independently and uniformly sampled vector from the $d$-dimensional unit sphere $\bbS^{d-1}$, and we connect pairs of vertices whose vectors are ``sufficiently close,'' such that the marginal probability of an edge is $p$.
Because of the underlying geometry, this model is natural for applications in data science and beyond.

We investigate the problem of testing for this latent geometry, or in other words, distinguishing an \erdos-\renyi~graph $\ER(n, p)$ from a random geometric graph $\grg_d(n, p)$.
It is not too difficult to show that if $d\to \infty$ while $n$ is held fixed, the two distributions become indistinguishable; we wish to understand how fast $d$ must grow as a function of $n$ for indistinguishability to occur.

When $p = \frac{\alpha}{n}$ for constant $\alpha$, we prove that if $d \ge \polylog(n)$, the total variation distance between the two distributions is close to $0$; this improves upon the best previous bound of Brennan, Bresler, and Nagaraj (2020), which required $d \gg n^{3/2}$, and further our result is nearly tight, resolving a conjecture of Bubeck, Ding, Eldan, \& R\'{a}cz (2016) up to logarithmic factors.
    We also obtain improved upper bounds on the statistical indistinguishability thresholds in $d$ for the full range of $p$ satisfying $\frac{1}{n} \leq p \leq \frac{1}{2}$, improving upon the previous bounds by polynomial factors.

Our analysis uses the Belief Propagation algorithm to characterize the distributions of (subsets of) the random vectors {\em conditioned on producing a particular graph}. 
In this sense, our analysis is connected to the ``cavity method'' from statistical physics.
To analyze this process, we rely on novel sharp estimates for the area of the intersection of a random sphere cap with an arbitrary subset of $\bbS^{d-1}$, which we prove using optimal transport maps and entropy-transport inequalities on the unit sphere.
We believe these techniques may be of independent interest.
\end{abstract}

\setcounter{tocdepth}{2}
\setcounter{page}{-10}
\thispagestyle{empty}
\newpage
\tableofcontents
\thispagestyle{empty}
\thispagestyle{empty}
\newpage
\setcounter{page}{1}

\section{Introduction}

The study of random graphs has been incredibly influential, not only in modeling applications, but also in the development of algorithms and in the study of mathematics.
For example, consider the simple \erdos-\renyi random graph model $\ER(n,p)$, in which $n$ nodes are each connected independently with probability $p$.
This model was introduced by \erdos and \renyi in 1959 \cite{ER59}, and since then its study has blossomed.
To list only a few of the many fruits of this line of research:
$\ER(n,p)$ graphs have been used to demonstrate the existence of many combinatorial objects via the probabilistic method (e.g. \cite{alon-spencer});
they have been used in algorithm design both as a benchmark and as a starting point for analysis in worst-case or semirandom settings (e.g. \cite{feige-kilian,BCCFV10});
they have been useful for proving conditional and unconditional lower bounds in complexity theory (e.g. \cite{jerrum92,feige02,rossman08,grigoriev01,schoenebeck08});
and finally $\ER(n,p)$ graphs are a common model in statistical physics \cite{nishimori81,panchenko13} and network science \cite{HLBL83,CK16}.
Though deep questions remain, decades of intensive study of the $\ER(n,p)$ distribution have been rewarded with a rich understanding and many unforeseen insights.

Our primary object of study is the {\em random geometric graph} model on the sphere, $\grg_d(n,p)$. 
In this model, a graph is sampled by choosing $n$ vectors $\bv_1,\ldots,\bv_n$ uniformly at random from the $d$-dimensional sphere $\bbS^{d-1}$, identifying $\bv_i$ with vertex $i$, then connecting each pair $i,j$ whose vectors $\bv_i,\bv_j$ have inner product exceeds a threshold $\tau(p)$, chosen so that $\Pr[\iprod{\bv_i,\bv_j} > \tau(p)] = p$.
A compelling aspect of this model is that the graph structure is derived from a geometric representation of the vertices;
this makes it suitable for modeling applications in, say, data science, where we think of network nodes as representable by a feature vector in some $d$-dimensional space, with neighboring nodes sharing similar features.

Though random geometric graphs on the sphere (and also on other domains, such as $[0,1]^d$) have been studied extensively (see the monograph \cite{penrose03}), the focus has been on the low-dimensional setting, where we think of $d$ is fixed as $n \to \infty$.
However, the high-dimensional setting, in which $d \to \infty$ as a function of $n$, is still poorly understood.
In low dimensions, one can understand $\grg_d(n,p)$ as a fine random discretization of $\bbS^{d-1}$, but we no longer expect $\grg_d(n,p)$ to behave in this way in the high-dimensional setting. 

This opens an intriguing possibility: what are the properties of $\grg_d(n,p)$ in the high-dimensional setting?
And, what are the undiscovered applications of these random objects in algorithm design and in mathematics?
One reason to study this question is that the high-dimensional setting is a more faithful model for graphs arising from modern high-dimensional data (e.g. high-dimensional feature vectors).
Thus, high-dimensional geometric random graphs could constitute a more useful benchmark for algorithmic methods, or a more useful starting point for designing algorithms for the semirandom setting.
Another reason to study this distribution is that it may yield graphs with properties that we have not yet realized are possible; this has historical precedent, for example, \erdos-\renyi graphs witnessing the existence of certain Ramsey graphs \cite{alon-spencer}.
\medskip

Devroye, Gy\"{o}rgy, Lugosi, and Udina \cite{DGLU11} made the first exploration of $\rgg_d(n,p)$ in the high-dimensional regime.
They study the classic property of the chromatic number in random geometric graphs, and also raise the following fundamental question:
 for which $d= d(n,p)$ is it possible to test for the presence of an underlying geometry?
When can one distinguish $\rgg_d(n,p)$ from $\ER(n,p)$?

For intuition, one can see that if $d \to \infty$ for $n$ fixed, the $\grg_d(n,p)$ distribution approaches $\ER(n,p)$: the vectors $\bv_1,\ldots,\bv_n$ are effectively mutually orthogonal, $\tau(p)$ is very small, and conditioning on the presence of the edge $(i,j)$ (or equivalently on $\Iprod{\bv_i,\bv_j} > \tau(p)$) has little impact on the probability that the edges $(i,k)$ and/or $(j,k)$ are present. 
How fast must $d$ grow as a function of $n$ to realize this limiting behavior?
In \cite{DGLU11} the authors observe that if $d \gg \exp(n^2)$, a central limit theorem implies that $\rgg_d(n,p)$ and $\ER(n,p)$ are indistinguishable; they also note that this bound is likely far from tight.

Determining the asymptotic threshold in $d$ when geometric graphs become indistinguishable from \erdos-\renyi graphs is a most basic question that we must address if we wish to make a serious study of high-dimensional random geometric graphs.
Bubeck, Ding, Eldan, and R\'{a}cz \cite{BDER16} were the first to tackle this question, showing that in dense graphs (when $p = \Theta(1)$), the threshold occurs at $d \asymp n^3$; when $d \gg n^3$ the total variation distance goes to $0$, and when $d \ll n^3$ the ``signed triangle count'' provides a good test statistic.
But in the arguably more interesting sparse case $p = \Theta \left(\frac{1}{n} \right)$, \cite{BDER16} are only able to establish that signed triangle counts distinguish when $d = O(\log^3 n)$.
They conjecture that when $d = \Omega(\log^3 n)$, the distributions are indistinguishable.

The current best bound, due to Brennan, Bresler, and Nagaraj \cite{BBN20}, asserts that in the regime $p = \Theta \left(\frac{1}{n}\right)$, $\dtv{\grg_d(n,p)}{\ER(n,p)} \to 0$ so long as $d \gg n^{3/2}$.
In essence, their bound relies on the fact that independent random vectors $\bv_i,\bv_j \sim \bbS^{d-1}$ have $|\Iprod{\bv_i,\bv_j}| = \wt{O}(\frac{1}{\sqrt{d}})$ with high probability; when $d \gg n^{3/2}$, these inner products are small enough relative to $n$ that $\bv_i$ has negligible projection (of order $\approx \sqrt{\sum_{j \neq i} \Iprod{\bv_i,\bv_j}^2} = O(\sqrt{n/d})$) into $\mathrm{span}\{\bv_j\}_{j \neq i}$, which is enough to guarantee approximate independence of edges.
This argument is carried out with technical sophistication in \cite{BBN20}, but clearly, this technique cannot be extended to $d < n$, much less to $d = \polylog n$, as the vectors in $\{\bv_j\}_{j \neq i}$ then span all of $\R^d$.

In this work, we come close to closing this gap in the sparse regime, confirming the conjecture of \cite{BDER16} up to polylogarithmic factors: we show that if $p = \Theta(\frac{1}{n})$, the total variation distance goes to zero when $d \gg \log^{36} n$.
We also give a separate improved bound on the total variation distance for the entire parameter regime $p \ll 1$, showing that the total variation distance goes to $0$ when $d \gg n^3 p^2 $, improving upon previous bounds by polynomial factors. 
Our proof relies on a number of novel technical contributions.
Crucially, we must understand, for ``typical'' $\bG \sim \grg_d(n,p)$, the distribution of (subsets of the) vectors $\bv_1,\ldots,\bv_n$ sampled from $\bbS^{d-1}$ {\em conditioned on giving a vector embedding for $\bG$}. 
In order to understand this complicated conditional distribution, we use the Belief Propagation (BP) algorithm, where our variables are the vertices $[n]$ and their ``labels'' are vectors in $\bbS^{d-1}$. 
To analyze BP, we rely on a (to our knowledge) novel concentration inequality, which we prove using optimal transport, for the area of the intersection of a random spherical cap with any subset $L \subseteq \bbS^{d-1}$.
We also demonstrate a coupling of $\bG\sim\grg_d(n,p)$ and $\bG_+ \sim \ER(n,p + o(p))$ which produces $\bG \subseteq \bG_+$ (meaning $\bG$ is a subgraph of $\bG_+$) with high probability.
We feel that these techniques may find other applications, and be of independent interest.

\subsection{Our results}

Our main result is an indistinguishability result for sparse Random Geometric Graphs and \erdos-\renyi graphs when the dimension $d$ exceeds $\poly\log n$.

\begin{theorem} \torestate{\label{thm:sparse}
For any fixed constant $\alpha \ge 1$, if $d = \Omega(\log^{36} n)$, then 
\[
\lim_{n \to \infty}\dtv{\grg_d\left(n,\tfrac{\alpha}{n}\right)}{\ER\left(n,\tfrac{\alpha}{n}\right)} = 0.
\]}
\end{theorem}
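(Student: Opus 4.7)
The plan is to bound the TV distance via a chain-rule decomposition through KL divergence and then to control each conditional term using Belief Propagation together with the sphere-cap concentration inequality. I would fix a sequential revelation order---say, revealing the neighborhoods of vertices $1, 2, \ldots, n$ one at a time---and let $F_t$ denote the partial graph after step $t$. By Pinsker's inequality it suffices to control
\[
\dkl\bigl(\grg_d(n,p) \klbar \ER(n,p)\bigr) \;=\; \sum_{t} \mathbb{E}_{F_{t-1}}\!\left[\dkl\bigl(\grg_d(F_t \mid F_{t-1}) \klbar \ER(F_t \mid F_{t-1})\bigr)\right].
\]
Under $\ER$, each conditional increment is a vector of independent $\Ber(p)$ edges; under $\grg_d$, it is governed by the posterior distribution of the latent vectors given $F_{t-1}$. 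The total KL budget we can afford is $o(1)$, so each conditional edge probability under $\grg_d$ must equal $p$ up to a strong enough multiplicative correction to keep the squared-deviation sum small.

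In the regime $p = \alpha/n$, an $\ER(n,p)$ graph is with high probability locally tree-like: the neighborhood of each vertex up to depth $O(\log n)$ is a tree of size $\polylog(n)$. Invoking the coupling $\bG \subseteq \bG_+$ with $\bG_+ \sim \ER(n, p + o(p))$ transfers this structural property from $\ER$ to $\grg_d$, letting me condition on a tree-like $F_{t-1}$. On a tree, the posterior on the latent vectors factorizes by Belief Propagation: for each directed edge $i \to j$ there is a message $\mu_{i \to j}$ on $\bbS^{d-1}$ satisfying, up to normalization,
\[
\mu_{i \to j}(u) \;\propto\; \prod_{k \in \nbr(i)\setminus\{j\}} \int_{\bbS^{d-1}} \mathbf{1}\bigl[\Iprod{u, w} > \tau(p)\bigr]\, d\mu_{k \to i}(w),
\]
with uniform messages at the leaves. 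The conditional marginal edge probability under $\grg_d$ between two freshly exposed vertices then becomes a double integral of $\mathbf{1}[\Iprod{u,v}>\tau]$ against incoming messages, and the goal is to show that this integral equals $p$ plus a negligible correction.

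The main obstacle is inductively bounding the closeness of BP messages to the uniform distribution $\Unif$ on $\bbS^{d-1}$ as one moves up the tree. This is where the novel sphere-cap concentration inequality is essential: for any well-behaved measure $\mu$ on $\bbS^{d-1}$, the map $u \mapsto \mu(\ccap(u, \tau(p)))$ is concentrated around $p$ with fluctuations exponentially small in $d$. Combined with an entropy-type potential such as $\dkl(\mu_{i \to j} \klbar \Unif)$, this lets me show that each BP step amplifies the deviation from uniform by at most a polylogarithmic factor. Iterating over the $O(\log n)$ layers of the local tree yields a total deviation of at most $\polylog(n)/\mathrm{poly}(d)$; taking $d \gg \log^{36} n$ gives enough slack both to overwhelm the polylogarithmic amplification at each layer and to aggregate to $o(1)$ total KL across all vertices, which via Pinsker yields the claimed TV bound. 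The hardest technical step is controlling the accumulated deviation through many levels of BP recursion; the sphere-cap concentration, applied at each layer to the cumulative measure, is exactly what prevents catastrophic blowup.
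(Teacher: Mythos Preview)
Your architecture matches the paper: Pinsker plus vertex-by-vertex KL tensorization, the coupling with $\ER(n,p+o(p))$ to import tree structure, BP on the local tree, and sphere-cap concentration to control the messages. But your description of the BP dynamics is backwards in a way that matters. You say each step ``amplifies the deviation from uniform by at most a polylogarithmic factor'' and that this yields a final deviation of $\polylog(n)/\poly(d)$ after $O(\log n)$ layers; if each step genuinely amplified by a polylog factor, after depth $\ell \sim \log n/\log\log n$ you would get $(\polylog n)^{\ell}$, which is hopeless. The actual mechanism is a \emph{contraction}: convolving a message with a random $p$-cap multiplies its distance from uniform by roughly $\sqrt{(\log^{O(1)} n)/d} \ll 1$ once $d \gg \polylog n$. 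The anchored boundary vertices carry the worst deviation, and this decays geometrically as messages propagate toward the roots in $S$, giving a bound of order $(\polylog(n)/\sqrt{d})^{\ell}$. It is this geometric decay, not mere bounded amplification, that makes $d \gg \polylog n$ sufficient and is the heart of the argument.

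Two further ingredients you are missing. First, the potential is not $\dkl(\mu_{i\to j}\klbar\Unif)$ but a finer ``spread profile'' (essentially the tail of $|\mu(z)-1|$ under $z\sim\Unif$); messages can have $\|\cdot\|_\infty$ as large as $1/p$, and KL alone does not behave well under the pointwise product of up to $\log^2 n$ incoming messages at each tree vertex, whereas the spread profile does. Second, there is a crucial reduction from $\Pr[N(n)=S]$ to $\Pr[N(n)\supseteq S]$: the intersection of $n-|S|$ random anti-caps concentrates far more tightly than caps (each anti-cap contributes fluctuation $\sim p\eps$ rather than $\eps$), so one first peels off the $(1-p)^{n-1-|S|}$ factor almost for free and is left analyzing only the $|S|\le \log^2 n$ caps. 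This is what makes the BP over the depth-$\ell$ balls $B(S,\ell)$ the right object and keeps the tree small enough for the contraction argument to run.
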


Our result settles the conjecture of \cite{BDER16} up to logarithmic factors, an exponential improvement over the previous bound of \cite{BBN20}, which required $d \gg n^{3/2}$.
We remark that we have not made an effort to optimize the logarithmic factors; it is possible that our current proofs in combination with chaining-style arguments will yield $\log^3 n$, matching their conjecture.
We also obtain an improved result for general $p = \Omega \left(\frac{1}{n} \right)$:

\begin{theorem} \torestate{\label{thm:all-p}
For any fixed constant $\alpha > 0$, if $\frac{\alpha}{n} < p < \frac{1}{2}$ and $d = \wt{\Omega}(p^2n^3)$,
\[
\lim_{n \to \infty}\dtv{\grg_d\left(n,p\right)}{\ER\left(n,p\right)} = 0.
\]
}
\end{theorem}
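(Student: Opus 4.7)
The plan is a two-step reduction. First, by the data-processing inequality, bound $\dtv{\grg_d(n,p)}{\ER(n,p)}$ by the total variation distance between the inner-product matrix and an i.i.d.\ matrix with matching marginals. Writing $M_{ij} = \Iprod{\bv_i, \bv_j}$ and $T_{ij}(x) = \mathbf{1}[x > \tau(p)]$, we have $\bG = T(M)$; and if $\tilde M$ is an i.i.d.\ matrix with the same marginal law as $M_{ij}$, then $T(\tilde M) \sim \ER(n,p)$, so that
\[ \dtv{\grg_d(n,p)}{\ER(n,p)} \le \dtv{T(M)}{T(\tilde M)}. \]

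Second, bound the right-hand side by a Fourier / signed-subgraph expansion of the $\chi^2$-divergence. Writing $f_e = (\mathbf{1}[\Iprod{\bv_i,\bv_j} > \tau(p)] - p)/\sqrt{p(1-p)}$ for each edge $e$,
\[ \chi^2\bigl(T(M)\,\|\,T(\tilde M)\bigr) \;=\; \sum_{\emptyset \neq H \subseteq \binom{[n]}{2}} \Bigl(\mathbb{E}\bigl[\textstyle\prod_{e \in H} f_e\bigr]\Bigr)^2. \]
The key computation is the signed-triangle correlation $\mathbb{E}[f_{12} f_{23} f_{13}]$. Although $\Iprod{\bv_i, \bv_j}$ are pairwise uncorrelated, they carry a non-vanishing three-point correlation $\mathbb{E}[\Iprod{\bv_1,\bv_2}\Iprod{\bv_2,\bv_3}\Iprod{\bv_1,\bv_3}] = 1/d^2$, which drives the per-triangle contribution. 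An Edgeworth / Hermite-polynomial expansion at the tail $\tau(p)$ should show that the triangle contribution is the leading term in the expansion, of order $\wt O(p^{\beta} n^3 / d)$ for some $\beta \in \{2,3\}$ picking up the tail mass at $\tau(p)$. Converting via $\dtv \le \tfrac12 \sqrt{\chi^2}$ and verifying that higher-order subgraph contributions ($4$-cycles, $K_4$'s) are subdominant yields $\dtv = o(1)$ in the claimed regime $d = \wt\Omega(p^2 n^3)$.

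The main obstacle is the sharp estimation of the multivariate spherical tail probability $\Pr[\Iprod{\bv_1,\bv_2}>\tau(p),\,\Iprod{\bv_2,\bv_3}>\tau(p),\,\Iprod{\bv_1,\bv_3}>\tau(p)]$ with a clean $p$-dependence in the error. A naive Slepian- or Wishart-to-GOE-type comparison gives only $\chi^2 = \wt O(n^3/d^2)$, matching the BDER threshold $d \gg n^3$ but missing the $p^2$ gain for sparse $p$. A cleaner alternative is to apply the subgraph coupling $\bG \subseteq \bG_+ \sim \ER(n, p + o(p))$ announced in the introduction to reduce the two-sided TV comparison to a monotone one-sided problem, where the spherical cap intersection estimates developed in the paper for Theorem~\ref{thm:sparse} should apply directly; combining this with a dual lower coupling $\bG_- \subseteq \bG$ and the standard TV-stability of $\ER(n,p)$ under small density perturbations would then deliver the $d \gg p^2 n^3$ threshold.
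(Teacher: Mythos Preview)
Your proposal sketches two routes, and both have genuine gaps that prevent them from reaching $d = \wt\Omega(p^2 n^3)$.

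\textbf{The $\chi^2$ / signed-subgraph expansion.} The Fourier identity is correct, but the entire difficulty lies in the step you pass over: controlling the sum over all subgraphs $H$ with minimum degree at least $2$ (those with a leaf vanish). You assert that triangles dominate and that $4$-cycles, $K_4$'s, etc.\ are subdominant, but give no mechanism; the signed moments $\E[\prod_{e\in H} f_e]$ do not factor over components, and for large $H$ they encode precisely the multi-point spherical tail probabilities you flag as the ``main obstacle.'' You also concede that the only concrete computation you can point to (Wishart--GOE comparison) recovers only $d \gg n^3$. If the triangle term really dominated and everything else were negligible, the resulting threshold would in fact be $d \gg (nH(p))^3$, which is \emph{stronger} than \pref{thm:all-p}; so this route, even if completable, is a different and harder project, not a proof of the stated theorem.

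\textbf{The sandwich coupling.} The inference ``$\bG_- \subseteq \bG \subseteq \bG_+$ with high probability and $\dtv{\ER(n,(1-\eps)p)}{\ER(n,(1+\eps)p)} = o(1)$, hence $\dtv{\bG}{\ER(n,p)} = o(1)$'' is invalid. The sandwich constrains only monotone events, while TV is a supremum over all events. Under the coupling of \pref{prop:dom}, $\bG_- = \bG_+$ requires every threshold $\btheta$ to miss $((1-\eps)p,(1+\eps)p)$, which has probability $(1-2\eps p)^{\binom{n}{2}}$; with $\eps \asymp \sqrt{np/d}$ and $d \asymp p^2 n^3$ one gets $\eps p n^2 \asymp \sqrt{p n^2} \to \infty$, so $\bG_-\neq\bG_+$ almost surely and the sandwich leaves the conditional law of $\bG$ given $(\bG_-,\bG_+)$ carrying exactly the geometric information you are trying to erase. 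The TV-optimal coupling between $\ER(n,(1-\eps)p)$ and $\ER(n,(1+\eps)p)$ is a different, non-monotone object, and there is no way to thread $\bG$ through it.

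\textbf{What the paper does.} The paper uses neither a global $\chi^2$ nor the coupling for \pref{thm:all-p}. It tensorizes the \emph{relative entropy} over the neighborhood of the last vertex (\pref{claim:KL-seq}, \pref{claim:n-nbrs}), reducing to a truncated single-neighborhood $\chi^2$ bound (\pref{lem:chi-square}): one must show $\Pr[N_{\bG}(n)=S \mid \bG_{n-1}]$ is within a $(1\pm o(n^{-1/2}))$ factor of $p^{|S|}(1-p)^{n-1-|S|}$ for typical $\bG_{n-1}$ and $S$. That conditional probability is the expected measure of an intersection of $|S|$ random caps and $n{-}1{-}|S|$ random anti-caps, controlled by a martingale argument (\pref{lem:martingale-intersect}, \pref{cor:caps-and-anti}) whose increments are governed by the optimal-transport cap-concentration of \pref{sec:transport-result}. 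The $p^2$ improvement over $d\gg n^3$ arises because intersecting with a random \emph{anti}-cap perturbs the normalized measure by $1 \pm O(p\eps)$ rather than $1\pm O(\eps)$, so the $\sim n$ anti-caps contribute fluctuation $\wt O(\sqrt{np^2/d})$ instead of $\wt O(\sqrt{n/d})$. The sandwich of \pref{prop:dom} appears only in the proof of \pref{thm:sparse}, and there only to transfer structural facts (tree neighborhoods, ball sizes) from $\ER$ to $\grg$, never to bound TV directly.
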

This improves  by polynomial factors (in $p$ and $n$) on the previous bound of \cite{BBN20}, which required $d \gg \min\{pn^3 \log \frac{1}{p}, p^2 n^{7/2} \polylog n\}$ and $d \gg n \poly\log n$.
However, this result is not tight (at least for small $p$) since in particular it does not recover \pref{thm:sparse}.
Given that we have come close to establishing the conjecture of \cite{BDER16} in the sparse case, it is tempting to interpolate between the upper and lower bounds of \cite{BDER16} in the $p = \Theta(1)$ regime and their conjecture for the $p = \Theta(\frac{1}{n})$ regime and speculate that for all $p \le \frac{1}{2}$, the testing threshold occurs at $d \asymp (n H(p))^3 = O(n^3 p^3 \ln^3 \frac{1}{p})$, for $H(p)$ the binary entropy function. 
In \pref{app:signed-triangles}, we show that the ``signed triangle count'' test statistic analyzed by \cite{BDER16} in the $p = \Theta(1)$ regime can in fact distinguish whenever $d \ll (n H(p))^3$ for all $p = \Omega(\frac{1}{n})$, establishing that the testing threshold occurs at some $d = \Omega((nH(p)^3)$.
If $d \asymp (nH(p))^3 $ is indeed the threshold, our \pref{thm:all-p} is tight up to a factor of $\wt{O}(p)$.

\paragraph{Relaxed stochastic dominance by \erdos-\renyi graphs.} En route to proving \pref{thm:sparse} and \pref{thm:all-p}, we establish a result which may be of independent interest.
We show that whenever $d = \wt\Omega(p^2n^2)$, a random geometric graph $\bG \sim \grg_d(n,p)$ can be coupled with an \erdos-\renyi graph $\bG_+ \sim \ER(n,p + o(p))$ (and with $\bG_- \sim \ER(n,p-o(p))$) in such a way that the \erdos-\renyi~graph ``stochastically dominates''\footnote{Strictly speaking we only have stochastic dominance conditioned on the success of the coupling; that is why we say that it is ``relaxed'' stochastic dominance.} the geometric graph with high probability over the coupling, in the sense that every edge in $\bG$ is also contained in $\bG_+$ (respectively, every edge in $\bG_-$ is also contained in $\bG$).
\begin{proposition}
\torestate{\label{prop:dom}
For any constant $\alpha > 0$ there exist constants $C_1,C_2 > 0$ such that if $\frac{\alpha}{n} \le p \le \frac{1}{2}$ and $d \ge C_1 \cdot ( n^2p^2 + \ln^4 n)\ln^4 n$, for any  $\eps \ge C_2 \sqrt{\frac{1}{d}(np + \ln n)\ln^4 n}$, one can simultaneously sample $\bG_- \sim \ER(n,(1-\eps)p)$, $\bG\sim \grg_d(n,p)$, and $\bG_+\sim \ER(n,(1+\eps)p)$ in a correlated manner so that with probability at least $1-n^{-\Omega(\ln n)}$, $\bG_- \subseteq \bG \subseteq \bG_+$.}
\end{proposition}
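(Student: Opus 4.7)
The plan is to construct the coupling vertex-by-vertex: I will reveal $\bv_1, \ldots, \bv_n$ sequentially and, at each step $j$, couple the new edges of $\bG$ incident to vertex $j$ with fresh i.i.d.\ Bernoulli variables via a TV-optimal coupling. The central ingredient will be the sharp concentration inequality for areas of intersections of spherical caps promised in the abstract: with high probability over $\bv_1, \ldots, \bv_{j-1}$, the area of $\bigcap_{i \in S} C_i$ lies close to $p^{|S|}$ for every relevant $S \subseteq [j-1]$, where $C_i := \{v \in \bbS^{d-1} : \Iprod{\bv_i, v} > \tau(p)\}$ is the spherical cap of area $p$ around $\bv_i$.

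At step $j$, set $Z_i := \mathbf{1}[\bv_j \in C_i]$ so that $(i, j) \in \bG \iff Z_i = 1$, and let $\mu_j$ denote the conditional law of $(Z_i)_{i < j}$ given $\bv_1, \ldots, \bv_{j-1}$. Define a good event $\calE$ requiring that, for every $j$ and every relevant $S \subseteq [j-1]$, the area $\mathrm{Area}(\bigcap_{i \in S} C_i)$ is within $O(\eps \cdot p^{|S|})$ of $p^{|S|}$; the cap-intersection concentration inequality will then give $\Pr[\calE^c] \le n^{-\Omega(\log n)}$ under the hypothesized lower bound on $d$. On $\calE$, inclusion--exclusion yields
\[
\mu_j(y) = \sum_{T \subseteq \{i : y_i = 0\}} (-1)^{|T|} \mathrm{Area}\Bigl(\bigcap_{i \in \{i : y_i = 1\} \cup T} C_i\Bigr),
\]
which is close to $p^{|y|_1}(1-p)^{|y|_0}$, so $\mu_j$ is close to $\Ber(p)^{\otimes (j-1)}$ in total variation.

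Next, construct a joint coupling of $(\bv_j, (U^{(j)}_i)_{i < j})$ so that $\bv_j$ is marginally uniform on $\bbS^{d-1}$, the $U^{(j)}_i$'s are marginally i.i.d.\ $\mathrm{Uniform}[0,1]$, and with probability at least $1 - \dtv{\mu_j}{\Ber(p)^{\otimes (j-1)}}$ the indicators $(\mathbf{1}[\bv_j \in C_i])_{i<j}$ agree with $(\mathbf{1}[U^{(j)}_i < p])_{i<j}$---this is obtained in two steps: first couple $(Z_i)$ with $(\mathbf{1}[U^{(j)}_i < p])$ via the TV-optimal coupling, then complete $\bv_j$ and $U^{(j)}_i$ from conditional uniforms compatible with the indicators. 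Declare $(i, j) \in \bG_\pm \iff U^{(j)}_i < (1 \pm \eps) p$. Since the sandwich $\mathbf{1}[U < (1-\eps)p] \le \mathbf{1}[U < p] \le \mathbf{1}[U < (1+\eps)p]$ holds deterministically, the containment $\bG_- \subseteq \bG \subseteq \bG_+$ holds on the step-$j$ edges on coupling success.

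Assembling over all $j$, the $U^{(j)}_i$'s are mutually independent across all pairs $(i, j)$, so the coupled $\bG_\pm$ are exactly $\ER(n, (1 \pm \eps)p)$. Summing $\Pr[\calE^c]$ with $\sum_{j=2}^n \dtv{\mu_j}{\Ber(p)^{\otimes (j-1)}}$ yields total failure probability at most $n^{-\Omega(\log n)}$. The main obstacle will be bounding $\dtv{\mu_j}{\Ber(p)^{\otimes(j-1)}}$ on $\calE$ sharply enough: a direct union bound over $y \in \{0,1\}^{j-1}$ using $L^\infty$ control on the deviations $\delta_S := \mathrm{Area}(\bigcap_{i \in S} C_i) - p^{|S|}$ suffers an exponential blow-up of order $(2(1+p))^{j-1}$ from inclusion--exclusion and only suffices for $p = O(1/n)$; for larger $p$ one must exploit the \emph{multiplicative} estimate $|\delta_S|/p^{|S|} = O(\eps)$ that the paper's sharp cap-intersection bounds provide, and work with $\chi^2$ or KL divergence (combined with Pinsker) rather than bounding TV directly.
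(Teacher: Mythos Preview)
Your coupling is too strong, and this makes the approach fail for the full parameter range of the proposition. On the event that your TV-optimal coupling succeeds at every step $j$, you obtain $(\mathbf{1}[\bv_j\in C_i])_{i<j} = (\mathbf{1}[U^{(j)}_i<p])_{i<j}$ for \emph{all} $j$, which means $\bG$ coincides edge-for-edge with the $\ER(n,p)$ graph $\{(i,j):U^{(j)}_i<p\}$. Thus if your failure probability were $n^{-\Omega(\log n)}$, you would have proved $\dtv{\grg_d(n,p)}{\ER(n,p)}\le n^{-\Omega(\log n)}$. But this is false in much of the regime the proposition covers: for $p$ bounded away from $0$ the signed-triangle statistic distinguishes the two models whenever $d\ll n^3$, whereas the proposition only assumes $d\gtrsim n^2p^2\ln^4 n$. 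So the quantity $\sum_j \dtv{\mu_j}{\Ber(p)^{\otimes(j-1)}}$ simply is not $n^{-\Omega(\log n)}$ there, and no sharper $\chi^2$/KL argument can rescue this---the obstruction is information-theoretic, not a slack bound. (Your approach may be salvageable for $p=O(1/n)$, where the hypothesis on $d$ already exceeds the distinguishability threshold, but not in general.)

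The paper's proof avoids this by coupling one \emph{edge} at a time rather than one vertex's entire neighborhood. For each pair $(i,k)$ with $k<i$ it draws a single $\btheta\sim\mathrm{Unif}[0,1]$ and uses it for all three graphs: $(i,k)\in\bG_-$ iff $\btheta<(1-\eps)p$, $(i,k)\in\bG_+$ iff $\btheta<(1+\eps)p$, and $(i,k)\in\bG$ iff $\btheta$ is below the \emph{conditional} probability $\rho(\bA_{i,k-1}\cap \bS_k)/\rho(\bA_{i,k-1})$, where $\bA_{i,k-1}$ is the intersection of caps and anticaps of $\bv_1,\dots,\bv_{k-1}$ dictated by the earlier edge decisions for vertex $i$. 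This sequentially-refined region is what makes $\bv_i$ marginally uniform on $\bbS^{d-1}$ at the end. The sandwich then holds exactly when each of these one-dimensional conditional probabilities lies in $[(1-\eps)p,(1+\eps)p]$, a far weaker requirement than your joint-TV closeness; it is precisely what the single-cap-versus-arbitrary-set concentration inequality delivers once one has a lower bound on $\rho(\bA_{i,k-1})$.
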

\noindent The notation $\bG \subseteq \bG_+$ means that $\bG$ is a subgraph of $\bG_+$.
We find it intriguing that this coupling succeeds whenever $d = \wt{\Omega}(p^2n^2)$, which is well below the speculative ``interpolated'' threshold $d = \wt{\Omega}(p^3 n^3)$ for large $p$.

\subsection{Prior and related work}

\paragraph{Prior Works.}
Random geometric graphs in fixed dimension are a well-studied model, with connections to Poisson processes and continuum percolation. 
The tools used to study fixed-dimensional random graphs are of a very different flavor from ours; for example, in low dimensions it is often useful to compare $\grg_d(n,p)$ to a random process on an appropriate low-dimensional infinite lattice which discretizes the space.
We refer the reader to the survey \cite{walters11} and the monograph \cite{penrose03}.

The study of random geometric graphs in high dimension was initiated by \cite{DGLU11}.
The main result of \cite{DGLU11} is a bound on the clique number of $\grg_d(n,p)$; for example in the dense case $p = \Theta(1)$, they show that the clique numbers of $\grg_d(n,p)$ and $\ER(n,p)$ become indistinguishable when $d = \Omega(\log^3 n)$.
The authors of \cite{DGLU11} also note that if $d \to \infty$ fast enough as a function of $n$ (they require $d \gg \exp(n^2)$), $\grg_d(n,p)$ and $\ER(n,p)$ are indistinguishable.
This naturally raises the question: for which $d$ is it possible to test for the underlying geometry?

The authors of \cite{BDER16} are the first to directly study the testing phase transition in $d$.
They show that at any density, if $d \gg n^3$, then the total variation distance between $\rgg_d(n,p)$ and $\ER(n,p)$ goes to zero with $n$, and conversely, if $d \ll n^3$, then in the dense regime $p = \Theta(1)$, the signed triangle count statistic furnishes a hypothesis test between the distributions.
However in the sparse regime $p = \Theta\left(\frac{1}{n}\right)$, their results are not as conclusive; they are able to show that triangle counts furnish a test if and only if $d \ll \log^3 n$, and they conjecture that if $d \gg \log^3 n$ the total variation distance goes to zero.
Briefly, their bound on the TV distance is via a reduction to the indistinguishability of Wishart matrices and matrices from the Gaussian Orthogonal Ensemble (GOE); 
the idea is that one can obtain a sample from $\ER(n,p)$ by thresholding the off-diagonal entries of a GOE matrix $\bB$ with independent entries sampled from $\cN(0,\frac{1}{d})$ at threshold $\approx \tau(p)$, and similarly obtain a sample from $\grg_d(n,p)$ by thresholding the off-diagonal entries of a Wishart matrix $\bA\bA^\top$ at $\approx \tau(p)$, where $\bA \in \R^{n \times d}$ with rows sampled independently from $\cN(0,\frac{1}{d}\Id_d)$.
Hence, if $\bB$ and $\bA\bA^\top$ are indistinguishable from their off-diagonal entries, one can conclude that $\ER(n,p)$ and $\rgg_d(n,p)$ are indistinguishable as well.
It makes sense, then, that the result may not be tight in the sparse regime, as thresholding at a higher value $\tau(p)$ (corresponding to a sparser graph) intuitively reveals less information about the original GOE or Wishart matrix.
The proof in \cite{BDER16} directly compares the Wishart and GOE densities to obtain the TV bound.
Independently, \cite{JL15} obtain the same bounds on the TV distance of Wishart and GOE matrices.

Following the work of \cite{BDER16}, \cite{BBN20} study the question of detecting underlying geometry in greater generality. 
The authors show that for any $\frac{\log n}{n^2}<p \le \frac{1}{2}$, if $d \gg \min\left\{pn^3\log\frac{1}{p},p^2 n^{7/2} \poly\log n\right\}$ and $d \gg n\log^4 n$, then $\dtv{\rgg_d(n,p)}{\ER(n,p)} \to_n 0$.
In particular, they match the bound appearing in \cite{BDER16} for dense graphs, and in sparse graphs with $p = \Theta\left(\frac{1}{n}\right)$ they improve the bound to $d \gg n^{3/2} \poly\log n$.
At a high level, their proof applies information-theoretic inequalities to reduce the question to bounding the Chi-square divergence of the marginal of a single edge in $\rgg_d(n,p)$ vs. $\ER(n,p)$; they then bound this Chi-squared divergence by showing that if one conditions $\bv_1,\ldots,\bv_n \sim (\bbS^{d-1})^{\otimes n}$ on the event that they produce a ``typical'' random geometric graph $G$ excluding edge $(n-1,n)$, then the vectors $\bv_n,\bv_{n-1}$ remain sufficiently independent that $\Pr[\Iprod{\bv_{n-1},\bv_n} > \tau(p)]\approx p$. 
They achieve this by showing that $\bv_{n-1}$ and $\bv_n$ each have a small projection onto $\mathrm{span}\{\bv_1,\ldots,\bv_{n-2}\}$, even after conditioning on their adjacency into $[n-2]$.
Their argument is essentially {\em oblivious to the particular choice of $G$}, and merely uses properties that hold with high probability over independently sampled vectors in $\bbS^{d-1}$.
This argument relies on $\bv_1,\ldots,\bv_{n-2}$ not spanning the entirety of $\R^d$, and the authors of \cite{BBN20} explicitly state that improving their bound to any $d < n$ requires a different approach.

\paragraph{Techniques.}
We describe our techniques in detail in \pref{sec:tech-overview}, but here we discuss some connections in the literature.
To improve upon the bounds of \cite{BBN20}, we draw upon tools from several areas.
At the heart of the proofs of both \pref{thm:sparse} and \pref{thm:all-p} are new sharp concentration of measure results for intersections of random spherical caps with arbitrary subsets of $\bbS^{d-1}$; to prove these bounds, we make use of optimal transport inequalities in the Wasserstein metric (see e.g. \cite{Vil08}). 
To our knowledge this is the first application of optimal transport in this context, and may be of independent interest. 
These tail bounds (in combination with Pinsker's inequality and some simple arguments) are enough to yield \pref{thm:all-p}, proving that the total variation distance goes to zero if $d \gg n^3 p^2$.

Then, to break the $d \asymp n$ barrier for sparse graphs and prove \pref{thm:sparse}, we must answer the following question: if $\bv_1,\ldots,\bv_n$ are sampled uniformly conditioned on producing a vector embedding of a fixed graph $G$, how strong are the correlations in the marginal distributions on $\{\bv_i \}_{i \in S}$ for small subsets $S \subset [n]$?
This is similar to establishing decay of correlations, as in the analysis of Gibbs sampling (c.f. \cite{DSW04,weitz06}).
To achieve this, we carry out a rigorous analysis in the style of the ``{\em cavity method}'' from statistical physics (c.f. \cite{MP03}): we compute the marginal over the depth-$\frac{\log n}{\log\log n}$-neighborhood of vertices in $S$ via the Belief Propagation (BP) algorithm, under arbitrary boundary conditions on the rest of the graph.
The cavity method has been previously used to compute solution geometry phase transitions for a number of prominent discrete spin systems, such as coloring, Ising and Potts models, and more (c.f. \cite{DMS14,CKPZ18,panchenko09,talagrand03}); 
cavity-style arguments have also been employed in a similar way to establish decay of correlation properties in the analysis of Glauber dynamics, as in \cite{GKS15}.
In our instantiation of Belief Propagation, the variables are the vertices in the graph, and their ``labels'' or ``assignments'' are vectors in $\bbS^{d-1}$, the constraints are that vectors corresponding to edges have inner product at least $\tau$, and hence the ``messages'' passed from variable to variable are convolutions of marginal distributions with spherical caps; the concentration of measure for intersections of sets with random spherical caps is again useful in the analysis of this BP.

\paragraph{Applications of high-dimensional geometric graphs in Theoretical CS.}
A line of work including \cite{FS02,FLS04} has utilized a distribution similar to $\grg_d(n,p)$, to obtain integrality gaps for semidefinite relaxations of max-cut or graph coloring.
In their setting, a graph $\bG$ is sampled by first placing $n$ vertices in a ``regular'' configuration on a $d$-dimensional sphere, after which $n' = \exp(\Theta(d)))$ vertices are randomly subsampled independently with some probability $q$,\footnote{For small enough $q$ these are equivalent to $n'$ independent samples from the uniform measure over $\bbS^{d-1}$.} and the sample is a graph induced on these $n'$ vertices, in which vertices are connected if their inner product lies in some range of distances $(\tau_1,\tau_2)$ (as opposed to the $\grg_d(n,p)$ case in which the connectivity criteria is inner product at least $\tau(p)$). 
The resulting $\bG$ comes with a natural embedding into $\bbS^{d-1}$, which is utilized in constructing the semidefinite programming certificate.
Similar constructions are also used in \cite{KTW14} to give the optimal approximation ratio of constraint satisfaction problems assuming the unique games conjecture.
Though these graphs are not sampled from $\grg_d(n,p)$, they are sampled from a distribution over a high-dimensional sphere which is qualitatively similar. 
We feel that this points to the promise of high-dimensional random geometric graphs for applications in theoretical computer science.

\paragraph{Phase transitions between Wishart and GOE matrices.}
The works mentioned above are those most closely related to our results.
We mention as well additional works concerning the phase transition between Wishart and GOE matrices: the work of \cite{RR19} studies the phase transition between Wishart and GOE at a higher resolution in the dense regime, deriving the expression for the total variation distance as a function of $c = d/n^3$ when $\lim_{n \to \infty} d/n$ is finite. 
The result \cite{BG18} generalizes the $d \gg n^3$ bound to Wishart matrices in vectors drawn from log-concave measures, and in \cite{EM20} the authors study the phase transition for Wishart matrices in vectors drawn from non-isotropic Gaussian measures, drawing conclusions for hypothesis testing dense geometric graphs derived from these ensembles as well. 
Other similar questions that have been studied are hypothesis testing in {\em noisy} Wishart matrices \cite{LR21}, in which the entries are with some probability independently resampled from the Gaussian distribution,  and {\em masked} hypothesis testing between Wishart and GOE matrices, in which only a subset of the matrix's entries are revealed \cite{BBH21}. 
In both of these cases, it has been demonstrated the presence of noise or masking can shift the threshold in interesting ways.

\subsection*{Organization of the paper}
In \pref{sec:tech-overview}, we explain our techniques and the proofs of \pref{thm:sparse} and \pref{thm:all-p} at a high level. 
\pref{sec:prelims} contains preliminaries and definitions. 
In \pref{sec:transport-result} we use optimal transport to derive tail bounds for the measure of the intersection of a random spherical cap with an arbitrary subset (or distribution) on $\bbS^{d-1}$, and in \pref{sec:martingale} we utilize these bounds to get tight concentration for a sequence of random caps and anti-caps.
In \pref{sec:stoch-dom} we prove our coupling \pref{prop:dom}.
Then, in \pref{sec:bp}, we use the Belief Propagation algorithm to analyze the marginal distributions on vectors conditioned on producing a specific graph $G$; we then put these ingredients together in \pref{sec:tv-bound} to prove \pref{thm:all-p} and \pref{thm:sparse}.
Finally, in \pref{app:signed-triangles} we show that the signed triangle count hypothesis test of \cite{BDER16} can be extended to work so long as $d \ll (np\ln\frac{1}{p})^3$ for the full range of $p$, and in \pref{app:prelim} we give some deferred proofs from \pref{sec:prelims}.

\section{Technical overview}\label{sec:tech-overview}

\subsection{Relative entropy tensorization} 
Our goal is to determine $d$ at which the total variation distance $\dtv{\ER(n, p)}{\grg_d(n, p)}$ goes to $0$ as $n \to \infty$. 
Like the authors of \cite{BBN20}, we relate the TV distance between these two distributions to their relative entropy (\pref{def:kl}) $\dkl(\grg_d(n, p) \| \ER(n, p))$ via Pinsker's inequality (\pref{thm:pinsker}), and then apply the tensorization of the relative entropy (\pref{claim:KL-seq}). 
Roughly, the tensorization says that given a decomposition of $\ER(n, p)$ as a product distribution, we can reduce the problem of bounding $\dkl(\grg_d(n, p) \| \ER(n, p))$ to bounding the relative entropy over (potentially simpler) distributions with smaller support.

$\ER(n, p)$ is conveniently a product distribution over edges.
However, unlike $\cite{BBN20}$, we do not use this straightforward decomposition of $\ER(n, p)$ by edge. 
Instead, let $\mu_t$ be the distribution of vertex $t$'s edges to $[t - 1]$.
Similarly, let $\nu_t$ be the marginal distribution of vertex $t$'s edges to $[t - 1]$ over the graph being sampled from $\grg_d(n, p)$. 
Our bound via tensorization now becomes
\[
 \dkl(\grg_d(n, p) \| \ER(n, p)) = \sum_{t = 1}^n \E_{\bG_{t-1} \sim \grg(t - 1, p)}\left[\dkl\left(\nu_t(\cdot \mid \bG_{t-1}) \, \| \, \mu_t \right) \right] \leq n \cdot \E_{\bG_{n-1} \sim \grg(n - 1, p)}\left[\dkl\left(\nu_n(\cdot \mid \bG_{n-1}) \, \| \, \mu_n \right) \right]
\]
where the final inequality follows after applying the chain rule for relative entropy (\pref{claim:n-nbrs}).

\paragraph{The coupling view.} 
The tensorization inequality reduces bounding the TV distance to comparing the probability distribution of the neighborhood of the ``final'' vertex in $\ER(n, p)$ and $\grg(n, p, d)$.
Specifically, we study $\E_{\bG_{n-1} \sim \grg(n - 1, p)}\left[\dkl\left(\nu_n(\cdot \mid \bG_{n-1}) \, \| \, \mu_n \right) \right]$ by considering the following scenario: we already have a graph $\bG_{n-1}$ sampled on $n - 1$ vertices, and we want to incorporate vertex $n$ into our graph. 
By the definition of the Erd\"os-R\'enyi distribution, $\mu_n$ will sample the neighbor set $S \subseteq [n-1]$ with probability $p^{|S|}(1-p)^{n-1-|S|}$.
For a random geometric graph, we can sample a vector $\bv_n \sim \Unif$, and take its dot products to vectors $\bv_1, \ldots, \bv_{n - 1}$ sampled uniformly from $\bbS^{d-1}$ conditioned on producing $\bG_{n-1}$, to determine the neighbors of $n$ in $\bG$ (which we denote by $N_{\bG}(n)$).
Our goal now is to compare $\Pr_{\bG \sim \ER(n, p)}[N_{\bG}(n) = S]$ and $\Pr_{\bG \sim \grg_d(n, p)}[N_{\bG}(n) = S]$ for $S \subseteq [n - 1]$. 

\subsection{A geometric interpretation of neighborhood probability}

For $\bG \sim \grg_d(n, p)$, if vertex $i$ is associated to a (random) vector $\bv_i$, and $(i, j)$ is an edge, we consequently know that $\langle \bv_i, \bv_j \rangle \geq \tau$. 
On the sphere $\bbS^{d - 1}$, the locus of points where $\bv_j$ can be, conditioned on $(i,j)$ being an edge, is a sphere cap centered at $\bv_i$ with a $p$ fraction of the sphere's surface area, which we denote by $\scap(\bv_i)$. 
Similarly, if we know that $i$ and $j$ are \emph{not} adjacent, the locus of points where $\bv_j$ can fall is the complement of a sphere cap, which we call an ``anti-cap,'' with measure $1 - p$. 

Equipped with this geometric picture, we can view the probability that vertex $n$'s neighborhood is exactly equal to $S \subseteq [n - 1]$ as the measure $\Unif(\bL_S)$, where $\bL_S \subseteq \bbS^{d - 1}$ is a random set defined as 
$$
\bL_S := \left(\bigcap_{i \in S} \scap(\bv_i) \right) \cap \left( \bigcap_{i \notin S} \overline{\scap(\bv_i)} \right)
$$
To show that the distance between $\grg_d(n,p)$ and $\ER(n,p)$ is small, we must show that $\rho(\bL_S)$ concentrates around $p^{\abs{S}} (1 - p)^{n - 1 - \abs{S}}$, which is the probability that $n$'s neighborhood is equal to $S$ under the \erdos--\renyi model. 

\paragraph{Optimal transport.}  The backbone of our result is a (to our knowledge) novel application of optimal transport.
In \pref{sec:transport-result}, we prove for a generic distribution $\calL$ supported on $\bbS^{d - 1}$, and $\bz$ sampled uniformly at random over $\bbS^{d - 1}$, that 
$$\Pr_{\bx \sim \calL}[\langle \bx, \bz \rangle \geq \tau] \in (1 \pm \varepsilon) \cdot p \text{ for } \varepsilon \leq \tilde{O} \left(\sqrt{\frac{\ln \norm{\calL}_\infty}{d}} \right)$$ 
with high probability over $\bz$. 
In other words, how tightly the random variable $X_{\calL}(\bz) = \Pr_{\bx \sim \calL}[\langle \bx, \bz \rangle \geq \tau]$ concentrates is directly related to the maximum value of its relative density, $\norm{\calL}_\infty$. 

To give some intuition for this result, first consider the case when $\calL = \Unif$, the uniform distribution over $\bbS^{d - 1}$. 
Then, the variable $X_\rho(\bz) = \Pr_{x \sim \rho}[\iprod{\bx,\bz} \ge \tau] = p$ deterministically.
Now, when $\calL \neq \Unif$, we can work with a transport map $\calD$ between $\Unif$ and $\calL$, and we can couple $\by \sim \calL$ and $\bx \sim \Unif$ according to $\calD$, so that 
\[
X_{\calL}(z) = \Pr_{\by \sim \calL}[\iprod{\by,z} \ge \tau] = \Pr_{\substack{(\bx,\by) \sim_{\calD} (\Unif,\calL)\\ \be = \by - \bx}}[\iprod{\bx,z} \ge \tau - \iprod{\be,z}].
\]
The smaller $\norm{\calL}_\infty$ is, the smaller the average of the transport distance $\|\be\| = \|\bx - \by\|$; further when $\bz \sim \rho$ the quantity $\iprod{\be,\bz}$ concentrates tightly around $\frac{1}{\sqrt{d}}\|\be\|$.
In this way, we translate the concentration of transport distance into tail bounds on $|X_\rho(\bz) - X_{\calL}(\bz)|$. 

To analyze $\Unif(\bL)$ for $\bL$ the intersection of caps and anti-caps defined above, we will apply the above in sequence inside a martingale concentration argument, building up $\bL$ one cap at a time (\pref{lem:martingale-intersect}, \pref{cor:caps-and-anti}).
Using this approach, our transport result alone is enough to conclude \pref{thm:all-p}. 
(The proof is assembled in \pref{sec:gen-p}.) 

\paragraph{The need to resample vectors.} 
In the general $p$ setting, we can think of our analysis of $\nu_n(\cdot  | \bG_{n - 1})$ as considering a fixed vector embedding $\bv_1,\ldots,\bv_{n-1}$ of $\bG_{n - 1}$, and then analyzing the probability that $n$ connects to some $S \subseteq [n - 1]$. 
When $p = \frac{\alpha}{n}$, this does not yield tight results; moreover, one can show that this is not due to loose tail bounds on $\rho(\bL)$, as our concentration results have matching anti-concentration results. 

Hence, in order to prove \pref{thm:sparse}, we must additionally consider the concentration of $\rho(\bL_S)$ {\em on average over vector embeddings of $\bG_{n-1}$} as well. 
We will first sample $\bG_{n-1}$, and then for each set $S$, we bound the deviation in the random variable $\Unif(\bL_S) = \Unif(\bigcap_{i \in S} \scap(\bu_i) \cap \bigcap_{j \not \in S} \anticap(\bu_j))$ conditioned on $\bu_1,\ldots,\bu_{n-1}$ producing $\bG_{n-1}$.
To do this, we will use a ``cavity-method'' style argument: we will view all vectors at distance $> \ell = \frac{\log n}{\log\log n}$ from $S$ as fixed and arbitrary, and then exactly compute the marginal distributions over $\bu_i$ for $i$ at distance $\le \ell$ from $S$, conditional on forming $\bG_{n - 1}$.

\subsection{Neighborhood containment as a constraint satisfaction problem.}

We first reduce the need for high-probability estimates for $\Pr_{\grg, \bG_{k - 1}}[N(k) = S]$ to obtaining estimates for $\Pr_{\grg, \bG_{k - 1}}[N(k) \supseteq S]$ instead.
This simplification is possible because the measure of anti-cap intersections concentrates dramatically better than the measure of cap intersections.
With this step, we eliminate the need to study anti-correlations between $\bv_i, \bv_j$ that do not have an edge between them.

Given $S$ and $\bG_{n -1}$ (and its corresponding vectors), we fix all vectors except those corresponding to the depth-$\frac{\log n}{\log\log n}$ neighborhood of $S$ in $\bG_{n - 1}$, which is with high probability a union of trees. 
To formally analyze the distribution of the unfixed vectors upon resampling them, we set up a $2$-CSP (constraint satisfaction problem) instance over a continuous alphabet that encodes the edges of $\bG_{n - 1}$ within the trees around $S$: each node has a vector-valued variable in $\bbS^{d-1}$, and the constraints are that nodes joined by an edge must have vectors with inner product at least $\tau$.

\paragraph{Belief propagation.} Since our $2$-CSPs are over trees, the {\em belief propagation (BP) algorithm} exactly computes the marginal distribution of each variable vector (see \pref{sec:prelim-bp} for the definition of BP).
Using our results on the concentration of $\Pr_{\bx \sim \calL}[\langle \bx, \bz \rangle \geq \tau]$ over $\bz$ uniform on $\bbS^{d - 1}$, we can quantify the TV distance between the marginal distributions of our resampled vectors and the uniform distribution over $\bbS^{d - 1}$.
At a high level, the farther some $\bv_i$ is from a fixed vector in our 2-CSP, the closer its distribution is to uniform. 
The key insight is that the message from $i$ to its neighbor $j$ in our belief propagation algorithm correspond to a convolution of the marginal distribution of $\bv_i$ with a spherical cap.
We can then use our concentration of measure for spherical caps from \pref{sec:transport-result} to show that convolutions of spherical caps mix to uniform rapidly, causing the correlations between far away vertices to decay.
This can be seen as a form of the ``decay of correlations'' phenomenon.
This analysis gives us the finer-grained control over $\Pr[N_{\bG}(n) = S]$ needed to conclude \pref{thm:sparse}.\footnote{Unfortunately, this analysis only works for $p = \frac{\alpha}{n}$; otherwise the neighborhoods around vertices in $S$ are only trees at a depth which is too shallow for the correlations to decay sufficiently, so the resampled vectors' distributions are not close enough to uniform.}

\subsection{Further connections between \erdos-\renyi and geometric random graphs.}

In the course of proving \pref{thm:sparse}, we prove \pref{prop:dom}, which demonstrates a coupling of $\bG_- \sim \ER(n, p- o(p))$, $\bG \sim \grg_d(n, p)$, and $\bG_+ \sim \ER(n, p + o(p))$ that satisfies $\bG_- \subseteq \bG \subseteq \bG_+$ with high probability. 
We use this coupling in the proof of \pref{thm:sparse} to leverage known structural results on \erdos--\renyi graphs to reason about the structure of random geometric graphs. 
For instance, to upper bound the probability that the depth-$\ell$ neighborhood of some $i \in [n]$ forms a tree under $\grg_d(n, p)$, we rely on known bounds on the probability of this event under $\ER(n, p(1 \pm o(1)))$. 
We remark that this coupling may be of independent interest.

\section{Preliminaries}\label{sec:prelims}

\paragraph{Basic notation.}  Throughout this paper, we use {\bf boldface} for random variables.  
We use $\log x$ to denote the natural base logarithm, and for $x \in [0,1]$, $H(x) = x \ln \frac{1}{x} + (1-x) \ln \frac{1}{1-x}$ denotes the binary entropy function (with the understanding that $H(0) = H(1) = 0$).
We use standard big-$O$ notation, and we use $\wt{O}$ and $\wt{\Omega}$ to hide $\polylog n$ factors. 
The notation $f(x) \gg g(x)$ to denote that $\lim_{x \to \infty}\frac{g(x)}{f(x)} = 0$; the argument $x$ will be clear from context.

\par We use $\Unif$ to denote the uniform distribution on $\bbS^{d-1}$.
Given a function $f:\bbS^{d-1}\to\R$, we define its $p$-th norm for $p < \infty$ as:
\[
    \norm*{f}_p \coloneqq \left(\E_{\bz\sim\Unif} |f(\bz)|^p\right)^{1/p}
\]
and for $p = \infty$ as:
\[
    \norm*{f}_{\infty} \coloneqq \sup_z |f(z)|.
\]
Given a distribution $\nu$ on $\bbS^{d-1}$, we overload notation and use $\norm{\nu}_{p}$ to denote $\norm*{\frac{d\nu}{d\Unif}}_{p}$.
We will also frequently use the symbol $\nu$ itself to denote its relative density to $\rho$.
For a set $A$, we overload notation and use $A$ to denote the uniform distribution on $A$ when it is clear from context.
We'll use $\bX \mid \calE$ to denote the random variable $\bX$ sampled from the conditional distribution of $\bX$ conditioned on the event $\calE$.

Given a graph $G$, and a subset of its vertices $S$, we use $G[S]$ to denote the induced subgraph of $G$ on $S$.  
We use $B_{G}(v,\ell)$ to denote the ball of radius-$\ell$ around a vertex $v$ in graph $G$, and similarly use $S_G(v,\ell)$ to denote the corresponding sphere of radius-$\ell$.
We use $N_G(i)$ to denote the neighbors of vertex $i$ in graph $G$. 

Given a collection of vectors $V = (v_1,\dots,v_n)\in\parens*{\bbS^{d-1}}^n$ the associated \emph{geometric graph} denoted $\GG(V,p) = ([n],E)$ is given by choosing the edge set as all $\{i,j\}$ where $\langle v_i, v_j\rangle \ge \tau(p)$.  In particular, when $\bV\sim\Unif^{\otimes n}$, $\GG(\bV,p)$ is distributed as $\grg_d(n,p)$.
For an $n$-vertex graph $G$, we use $\rho^G$ to denote the conditional distribution $\rho^G = \rho^{\otimes n} \mid \GG(V,p) = G$.

\subsection{Divergences between probability distributions}
We use the relative entropy to compare probability distributions.
\begin{definition}[Relative Entropy] \label{def:kl}
    Given two probability distributions $\mu$ and $\nu$ over $\Omega$ where $\nu$ is absolutely continuous with respect to $\mu$, the \emph{relative entropy} between $\mu$ to $\nu$ is:
    \[
        \dkl(\nu \| \mu) \coloneqq \int_{\Omega} \ln\left(\frac{d\nu}{d\mu}(x)\right) d\nu(x).
    \]
    If $\nu$ is not absolutely continuous with respect to $\mu$, then we define the relative entropy to be $\infty$.
\end{definition}
\noindent A simple but useful observation is the following.
\begin{observation} \label{obs:bounded-dists}
    If for all $x\in\Omega$, $\frac{d\nu}{d\mu}(x)\le C$: 
        (1) $\dkl(\nu \| \mu)\le \ln C$, and (2) for any event $\calE$: $\nu(\calE) \le C\cdot\mu(\calE)$.
\end{observation}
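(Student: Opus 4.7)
The plan is straightforward: both claims follow directly from the definitions by using the hypothesis $\frac{d\nu}{d\mu}(x) \le C$ pointwise to substitute $\ln C$ (respectively $C$) inside an integral, and then using that $\nu$ (respectively $\mu$) is a probability measure.

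For part (1), I would start from the definition $\dkl(\nu \| \mu) = \int_\Omega \ln\bigl(\frac{d\nu}{d\mu}(x)\bigr) \, d\nu(x)$, apply monotonicity of $\ln$ together with the hypothesis to upper bound the integrand pointwise by $\ln C$, and then conclude with $\int_\Omega d\nu = 1$. The only mild subtlety is that $\ln(d\nu/d\mu)$ can be negative on sets where $d\nu/d\mu < 1$; this is harmless for the upper bound but worth noting that pointwise monotonicity of $\ln$ is all that is needed.

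For part (2), I would rewrite $\nu(\calE) = \int_{\calE} \frac{d\nu}{d\mu}(x) \, d\mu(x)$ using the Radon--Nikodym derivative (valid because $\nu \ll \mu$, as is implicit in the hypothesis that $d\nu/d\mu$ exists and is bounded by $C$), substitute the pointwise bound $\frac{d\nu}{d\mu} \le C$, and pull the constant outside the integral to obtain $\nu(\calE) \le C \cdot \mu(\calE)$.

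There is no real obstacle here: the statement is essentially a one-line consequence of the definition of relative entropy and of the Radon--Nikodym derivative. The only care required is in verifying absolute continuity is built into the assumption (so the integrals are well-defined) and in checking the direction of the inequality under $\ln$, neither of which is substantive.
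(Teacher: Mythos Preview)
Your proposal is correct and is exactly the standard verification. The paper does not actually supply a proof of this observation; it states it as immediate, and your argument is precisely the one-line justification the authors have in mind.
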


We will use Pinsker's inequality to bound the total variation distance between two probability distributions in terms of the relative entropy.  
\begin{theorem}[Pinsker's inequality] \label{thm:pinsker}
    For distributions $\mu,\nu$ over the same domain,
        $\dtv{\nu}{\mu}^2 \le \frac{1}{2}\dkl(\nu \| \mu)$.
\end{theorem}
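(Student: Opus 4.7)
The plan is to reduce the statement to the two-point (Bernoulli) case and then verify the inequality there by a direct calculus argument. First, let $A \subseteq \Omega$ be an event attaining the supremum in the definition of total variation, so that $\dtv{\nu}{\mu} = |\nu(A) - \mu(A)|$. Consider the indicator $\mathbf{1}_A$, which pushes forward $\nu$ and $\mu$ to Bernoulli distributions $\mathsf{Ber}(\nu(A))$ and $\mathsf{Ber}(\mu(A))$ respectively. Since relative entropy is non-increasing under deterministic (indeed, any Markov) maps — the data-processing inequality, which follows from the joint convexity of $(\nu,\mu) \mapsto \dkl(\nu\|\mu)$ — we have
\[
\dkl(\nu \| \mu) \;\ge\; \dkl\bigl(\mathsf{Ber}(\nu(A)) \,\|\, \mathsf{Ber}(\mu(A))\bigr).
\]
Thus it suffices to prove the Bernoulli case: for all $p,q \in [0,1]$,
\[
2(p-q)^2 \;\le\; p\ln\tfrac{p}{q} + (1-p)\ln\tfrac{1-p}{1-q}.
\]

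To establish this, fix $q$ and define $f(p) := p\ln\tfrac{p}{q} + (1-p)\ln\tfrac{1-p}{1-q} - 2(p-q)^2$ on $(0,1)$. A direct computation gives $f(q) = 0$ and $f'(q) = 0$, and the second derivative is
\[
f''(p) \;=\; \frac{1}{p(1-p)} - 4 \;\ge\; 0,
\]
where the last inequality uses the elementary bound $p(1-p) \le \tfrac{1}{4}$ on $[0,1]$. Since $f$ is convex with a critical point at $q$, we conclude $f(p) \ge 0$ for all $p \in [0,1]$ (handling $p \in \{0,1\}$ or $q \in \{0,1\}$ by continuity, with $\dkl = \infty$ being trivial). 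Combining this with the data-processing step yields
\[
2\,\dtv{\nu}{\mu}^2 \;=\; 2(\nu(A)-\mu(A))^2 \;\le\; \dkl\bigl(\mathsf{Ber}(\nu(A))\|\mathsf{Ber}(\mu(A))\bigr) \;\le\; \dkl(\nu\|\mu),
\]
which is the desired inequality.

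The only mildly delicate step is the Bernoulli computation; everything else is an immediate appeal to the definition of total variation and to data processing. I would present the Bernoulli inequality as the ``core'' of the proof and remark that data processing reduces the general case to it in one line. No novel ingredients are needed, and the bound is tight up to the constant $\tfrac{1}{2}$ (achieved in the limit $p,q \to \tfrac{1}{2}$), which is a useful sanity check.
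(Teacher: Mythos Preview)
Your proof is correct and is one of the standard arguments for Pinsker's inequality. The paper does not actually prove this statement; it simply cites an external reference (Massart, \emph{Concentration Inequalities and Model Selection}, Theorem~2.16) immediately after the theorem. So there is nothing to compare against in the paper itself, and your self-contained argument via data processing plus the Bernoulli calculus lemma is a perfectly good substitute for the citation.
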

\noindent See e.g. \cite[Theorem 2.16]{Mas07} for a proof.

\subsection{Sphere caps and dot products of unit vectors}

Here, we introduce some useful bounds on the measure of sphere caps, and a concentration bound on the dot products of two unit vectors. 
Proofs for some of these lemmas are provided in the appendix, along with some additional facts about the geometry of the unit sphere that we use to prove them.

\begin{definition}[{$p$-cap}]
	For a vector $v\in\bbS^{d-1}$, its \emph{$p$-cap} is $\scap_p(v) \coloneqq \{x\in\bbS^{d-1}:\langle v, x\rangle \ge \tau(p)\}$.  Similarly, we define its \emph{$p$-anticap} as $\anticap_p(v) \coloneqq \{x\in\bbS^{d-1}:\langle v, x\rangle < \tau(p)\}$.  We drop the $p$ in the subscript when its value is clear from context.
\end{definition}

\noindent Recall L\'evy's theorem for concentration of measure on the unit sphere \cite[Theorem 14.1.1]{Mat13}. We use it to upper bound the measure of a sphere cap with threshold $\tau$.  
\begin{lemma}   \label{lem:conc-measure}
	Let $y\in\bbS^{d-1}$ be any vector.  Then: $\Pr_{\bw\sim\bbS^{d-1}}[|\langle \bw, y\rangle| \ge \tau] \le 4\exp(-\tau^2d/2)$.
\end{lemma}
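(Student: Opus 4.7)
} The plan is to derive the bound as an immediate corollary of Lévy's isoperimetric concentration-of-measure inequality on the sphere (the theorem cited from \cite{Mat13}), applied to the linear functional $f(w) = \langle w, y\rangle$. First I would verify the two hypotheses needed. The function $f : \bbS^{d-1} \to \R$ is $1$-Lipschitz with respect to the geodesic (equivalently, the Euclidean) metric on the sphere, since by Cauchy--Schwarz $|f(w) - f(w')| = |\langle w - w', y\rangle| \le \|w - w'\|$. Moreover, by the symmetry $w \mapsto -w$ of the uniform measure $\Unif$ on $\bbS^{d-1}$ and the antisymmetry $f(-w) = -f(w)$, the pushforward of $\Unif$ under $f$ is a symmetric distribution about the origin, and hence the median of $f$ under $\Unif$ equals $0$.

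Next I would invoke Lévy's inequality in the form stated as \cite[Theorem 14.1.1]{Mat13}, which bounds the one-sided deviation of a $1$-Lipschitz function on $\bbS^{d-1}$ from its median: for each $\tau > 0$,
\[
\Pr_{\bw \sim \Unif}\!\left[f(\bw) - \mathrm{med}(f) \ge \tau\right] \le 2 \exp\!\left(-\tau^2 d / 2\right),
\]
and symmetrically for the event $\{f(\bw) - \mathrm{med}(f) \le -\tau\}$. Substituting $\mathrm{med}(f) = 0$ and combining the two one-sided estimates by a union bound yields
\[
\Pr_{\bw \sim \Unif}\!\left[|\langle \bw, y\rangle| \ge \tau\right] \le 4 \exp\!\left(-\tau^2 d / 2\right),
\]
which is exactly the claimed inequality.

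The only subtlety is matching the constants to those of the cited form of Lévy's theorem; different references state the inequality with somewhat different leading constants (with variants using $c\tau^2 d$ for $c \in \{1/2, 1, 2\}$, and sometimes using mean instead of median), so the ``main obstacle,'' such as it is, is just bookkeeping the constants. The form used in \cite[Theorem 14.1.1]{Mat13} gives $2\exp(-\tau^2 d/2)$ per one-sided tail, so the union bound recovers the factor of $4$ quoted in the lemma statement. No further computation is needed, and the proof should be only a few lines.
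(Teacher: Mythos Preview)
Your proposal is correct and matches the paper's approach: the paper does not give a proof at all, simply presenting the lemma as a direct restatement of L\'evy's concentration inequality \cite[Theorem 14.1.1]{Mat13}, and your derivation via the $1$-Lipschitz linear functional with median zero is exactly the standard way to read off this bound from that theorem.
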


\noindent We now present a convenient upper bound on the dot product threshold $\tau(p)$ of a $p$-cap. 
Its proof is provided in \pref{app:cap-proofs}.

\begin{lemma} \torestate{\label{lem:upper-bound-tau}
	For any $p \leq \frac{1}{2}$, we have $\tau(p) \leq \sqrt{\frac{2 \log(1 / p)}{d}}$.}
\end{lemma}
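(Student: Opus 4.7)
The plan is to reduce the inequality to a one-sided tail bound for $\langle \bw, y \rangle$ where $\bw \sim \rho$ is uniform on $\bbS^{d-1}$ and $y \in \bbS^{d-1}$ is arbitrary. By the definition of $\tau(p)$, we have $\Pr_{\bw \sim \rho}[\langle \bw, y \rangle \geq \tau(p)] = p$, and this probability is monotonically decreasing in the threshold. So it suffices to show
\[
\Pr_{\bw \sim \rho}\left[\langle \bw, y \rangle \geq \sqrt{\tfrac{2\log(1/p)}{d}}\right] \leq p,
\]
as this would force $\tau(p) \leq \sqrt{2\log(1/p)/d}$. (The case $p = 1/2$ is handled trivially since $\tau(1/2) = 0$.)

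The main technical step is to establish a sufficiently sharp one-sided subgaussian tail bound on $\langle \bw, y\rangle$. The relevant starting point is the explicit density of $\langle \bw, y\rangle$, which is proportional to $(1 - t^2)^{(d-3)/2}$ on $[-1, 1]$. From here, I would either integrate directly after using $(1-t^2)^{(d-3)/2} \leq e^{-(d-3)t^2/2}$ and a Gaussian tail estimate, or use the classical Wyner-type cap bound
\[
\Pr_{\bw \sim \rho}[\langle \bw, y \rangle \geq \tau] \leq \tfrac{1}{2}(1-\tau^2)^{(d-1)/2},
\]
which comes from dominating a cap by a ball projected onto $\bbS^{d-1}$. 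Applying $1 - \tau^2 \le e^{-\tau^2}$ then yields a bound of the form $\tfrac{1}{2}e^{-(d-1)\tau^2/2}$, and substituting $\tau = \sqrt{2\log(1/p)/d}$ gives a value comparable to $p$.

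The main obstacle is getting the constants inside the square root exactly right. Note that the Lévy concentration bound stated as \pref{lem:conc-measure} only yields a one-sided tail of $2\exp(-\tau^2 d/2)$, which when inverted gives $\tau \leq \sqrt{2\log(2/p)/d}$ — off by a $\log 2$ inside the logarithm. So \pref{lem:conc-measure} is \emph{not} quite sharp enough, and the proof must use the tighter cap estimate (e.g.\ via the $(1-\tau^2)^{(d-1)/2}/2$ bound or a direct density computation) to eliminate the constant in front of the exponential. Once that tail bound is in hand, the substitution and inversion step is routine algebra.
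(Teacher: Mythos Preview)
Your proposal is correct and follows essentially the same route as the paper: by monotonicity of the cap measure in the threshold, it suffices to show that the cap at $\tau' = \sqrt{2\log(1/p)/d}$ has measure at most $p$, and for this one needs a cap estimate sharper than \pref{lem:conc-measure}. The only difference is which sharp estimate is invoked --- the paper quotes the bound $p \le \frac{1}{2\tau\sqrt{d}}(1-\tau^2)^{(d-1)/2}$ of \cite{BGKKLS01} (valid for $\tau \ge \sqrt{2/d}$, with the case $\tau(p) < \sqrt{2/d}$ handled trivially), whose extra factor $\frac{1}{\tau'\sqrt{d}} = \frac{1}{\sqrt{2\log(1/p)}}$ gives a bit more slack than your $\tfrac{1}{2}(1-\tau^2)^{(d-1)/2}$; with your weaker bound one simply adds the observation that whenever $\tau' \le 1$ one has $p^{-1/d} \le e^{1/2} < 2$, so $\tfrac{1}{2}\,p^{(d-1)/d} \le p$, while $\tau' > 1$ makes the conclusion trivial.
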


\noindent The next lemma helps us understand the deviations in cap volume $p$ when we make small adjustments to its dot product threshold $\tau(p)$.
Its proof is also in \pref{app:cap-proofs}.

\begin{lemma} \torestate{\label{lem:beta-concentration}
	Fix $x \in \bbS^{d - 1}$. Let $p \coloneqq \Pr_{z \sim \rho}[\langle z, x \rangle \geq \tau]$. For any $\varepsilon \geq 0$, there is a universal constant $\Cbetaac$ such that:
	\begin{align*}
		\Pr_{z \sim \rho}[\tau - \varepsilon \leq \langle z, x \rangle \leq \tau + \varepsilon] &\leq p \cdot \left(\Cbetaac \varepsilon \exp(2d \tau \varepsilon) \sqrt{d \log(1 / p)} \right) 
	\end{align*} }
\end{lemma}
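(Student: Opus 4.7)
The plan is to work directly with the one-dimensional density of $\langle z, x \rangle$ when $z \sim \Unif$: by a standard computation, this density is $f(t) = c_d (1-t^2)^{(d-3)/2}$ on $[-1,1]$ for an explicit normalizing constant $c_d$. The claim then reduces to comparing the band integral $\int_{\tau - \varepsilon}^{\tau + \varepsilon} f(t)\,dt$ against the tail integral $p = \int_\tau^1 f(t)\,dt$. There are three quantitative ingredients to assemble.

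First I would upper bound the band integral trivially: since $f$ is nonnegative and monotonically decreasing on $[0,1]$, the numerator is at most $2\varepsilon \cdot f(\tau - \varepsilon)$. Next I would control the ratio
\[
\frac{f(\tau - \varepsilon)}{f(\tau)} = \left(\frac{1 - (\tau - \varepsilon)^2}{1 - \tau^2}\right)^{(d-3)/2} = \left(1 + \frac{2\tau\varepsilon - \varepsilon^2}{1 - \tau^2}\right)^{(d-3)/2} \leq \exp\!\left(\frac{(d-3)\tau\varepsilon}{1 - \tau^2}\right),
\]
which is at most $\exp(2 d \tau \varepsilon)$ whenever $1 - \tau^2$ is bounded away from $0$; this condition is automatic in the relevant regime, since by \pref{lem:upper-bound-tau}, $\tau^2 \leq 2\log(1/p)/d$, and if $\tau^2$ is not bounded away from $1$ then $p$ is already exponentially small in $d$ and the statement is easy to verify separately.

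The key step is a Mills-ratio-type lower bound $f(\tau) \lesssim p \cdot d\tau$. I would prove this by choosing $\delta \coloneqq c/(d\tau)$ for a small absolute constant $c$ and writing
\[
p \ge \int_{\tau}^{\tau + \delta} f(t)\,dt \ge \delta \cdot f(\tau + \delta).
\]
The same ratio computation as above yields $f(\tau + \delta)/f(\tau) \ge \exp(-O(d\tau\delta)) = \Omega(1)$, and therefore $f(\tau) \lesssim p \cdot d\tau$. (One should check $\tau + \delta \le 1$, which again is automatic outside the pathological regime where $\tau$ is very close to $1$.)

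Combining the three ingredients and then invoking \pref{lem:upper-bound-tau} once more to replace $d\tau$ by $\sqrt{2 d \log(1/p)}$, I would conclude
\[
\Pr_{z \sim \rho}[\tau - \varepsilon \le \langle z, x \rangle \le \tau + \varepsilon] \le 2\varepsilon \cdot f(\tau - \varepsilon) \le 2\varepsilon \exp(2 d \tau \varepsilon) \cdot f(\tau) \lesssim p \cdot \varepsilon \exp(2 d \tau \varepsilon) \sqrt{d \log(1/p)},
\]
which is exactly the stated bound with some universal constant $\Cbetaac$. The main technical obstacle is keeping track of constants in the regime where $\tau$ is not tiny (where one must verify $1 - \tau^2$ stays bounded away from $0$ and $\tau + \delta \le 1$); I would handle the complementary regime $p \le e^{-\Omega(d)}$ by a direct argument using $f(\tau) \le c_d$ and the crude bound $p \ge f(1^-) \cdot (\text{small})$, which suffices since the claimed bound is easy when the exponential factor already dominates.
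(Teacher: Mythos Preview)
Your approach is essentially the same as the paper's: the paper bounds the band integral by $2\varepsilon \cdot \psi_d(\tau)\exp(2d\tau\varepsilon)$ using the density-ratio inequality (your second ingredient), then invokes the Mills-ratio bound $\psi_d(\tau) \le C p \cdot \max\{\sqrt d,\, d\tau\}$ (your third ingredient), both of which it imports as black boxes from \cite[Lemma~5.1]{BBN20} rather than re-deriving.

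One small gap to flag: your Mills-ratio argument with $\delta = c/(d\tau)$ breaks down not when $\tau$ is close to $1$ but when $\tau$ is close to $0$ (say $\tau \lesssim 1/\sqrt d$), since then $\delta$ is large and the bound $f(\tau) \lesssim p\cdot d\tau$ is simply false---at $\tau = 0$ one has $f(0)\asymp \sqrt d$ while $p\cdot d\tau = 0$. This is why the paper's cited bound carries the $\max\{\sqrt d,\, d\tau\}$; in that regime $p$ is bounded below by a constant and $f(\tau)\le f(0)\asymp \sqrt d \lesssim p\sqrt{d\log(1/p)}$ directly, so the fix is easy, but your parenthetical about the ``pathological regime'' points at the wrong end of the interval.
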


\subsection{Subgaussian random variables}

We will need concentration inequalities for martingales that arise as a sum of subgaussian random variables.
\begin{definition}[Subgaussian norm]
    The \emph{subgaussian norm} of a real-valued random variable $\bX$ is defined as:
    \[
        \|\bX\|_{\subgnorm} \coloneqq \inf \left\{K>0:\E\exp(\bX^2/K^2)\le 2\right\}.
    \]
    We say $\bX$ is a \emph{subgaussian random variable} if $\|\bX\|_{\subgnorm}<\infty$.
\end{definition}

We will need a version of Azuma's inequality for martingales with centered subgaussian increments.
A proof may be found in \cite{RVH}, Lemma 3.7.
\begin{lemma}[Subgaussian martingale concentration inequality]\torestate{ \label{lem:martingale-azuma}
There exists a constant $\CMartingale > 0$ such that if $\bX_0,\bX_1,\dots,\bX_m$ is a martingale sequence with respect to a filtration $(\calV_t)_{t\in[m]}$ and $K_i\coloneqq \sup_{\calV_{i-1}}\left\|\bX_i-\bX_{i-1}|\calV_{i-1}\right\|_{\subgnorm} < \infty$ for all $i\in[m]$, then:
    \[
        \Pr\left[|\bX_m-\bX_0| \ge t\right] \le 2\exp\left(-\CMartingale\frac{t^2}{\sum_{i=1}^m K_i^2}\right).
    \]}
\end{lemma}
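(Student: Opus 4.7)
The plan is to follow the standard Chernoff/MGF route for martingales, adapted to the subgaussian setting. Let $\bD_i \coloneqq \bX_i - \bX_{i-1}$ denote the martingale increments; by the martingale property $\E[\bD_i \mid \calV_{i-1}] = 0$, and by hypothesis $\|\bD_i \mid \calV_{i-1}\|_{\subgnorm} \le K_i$ almost surely. The first step is to invoke the standard equivalence of subgaussian characterizations (which is presumably what \pref{lem:subgaussian-equiv} provides) to convert the $\subgnorm$ bound into an MGF bound: for some universal constant $c > 0$,
\[
\E\!\left[\exp(\lambda \bD_i) \mid \calV_{i-1}\right] \;\le\; \exp(c \lambda^2 K_i^2) \quad \text{almost surely, for all } \lambda \in \R.
\]
The centering is essential here, since in general the $\subgnorm$-to-MGF bound requires mean zero; I would handle this either by the standard symmetrization/centering argument for subgaussian variables or by quoting the conditional version of the equivalence directly.

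Next, I would iterate the tower property. Writing $\bX_m - \bX_0 = \sum_{i=1}^m \bD_i$ and conditioning on $\calV_{m-1}$ last,
\[
\E\!\left[\exp(\lambda(\bX_m-\bX_0))\right] = \E\!\left[\exp\!\left(\lambda \sum_{i < m}\bD_i\right) \cdot \E[\exp(\lambda \bD_m)\mid\calV_{m-1}]\right] \le \exp(c\lambda^2 K_m^2)\,\E\!\left[\exp\!\left(\lambda \sum_{i<m}\bD_i\right)\right].
\]
Inducting downward yields $\E[\exp(\lambda(\bX_m-\bX_0))] \le \exp\!\bigl(c\lambda^2 \sum_{i=1}^m K_i^2\bigr)$.

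Finally, a Markov/Chernoff step gives $\Pr[\bX_m-\bX_0 \ge t] \le \exp(-\lambda t + c\lambda^2 \sum_i K_i^2)$, and optimizing $\lambda = t/(2c\sum_i K_i^2)$ produces the one-sided tail $\exp(-t^2/(4c\sum_i K_i^2))$. Applying the same argument to the martingale $(-\bX_i)_i$, whose increments have the same conditional $\subgnorm$ bounds, yields the lower tail, and a union bound gives the claimed two-sided inequality with $\CMartingale = 1/(4c)$.

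The main obstacle is the conditional MGF bound: one must be careful that a bound on $\|\bD_i\mid\calV_{i-1}\|_{\subgnorm}$ translates, \emph{uniformly in the conditioning}, to the conditional MGF bound used in the telescoping step. This is where the equivalence of subgaussian characterizations (and the fact that $\E[\bD_i\mid\calV_{i-1}]=0$) is doing real work; once that is in hand, the remainder is a routine Chernoff calculation.
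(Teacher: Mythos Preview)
Your proposal is correct and follows the standard Chernoff/MGF route for subgaussian martingale increments. The paper itself does not prove this lemma at all: it simply cites \cite{RVH}, Lemma 3.7, as a reference. Your argument is essentially the textbook proof one would find there, so there is nothing to compare.
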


Also, we will make use of the following statement to bound the subgaussian norm of a random variable in terms of its tail probabilities.

\begin{lemma}[\cite{Ver18}, Proposition 2.5.2]\label{lem:subgaussian-equiv}
There exists a constant $\CSubgauss > 0$ such that if $\bX$ is a random variable satisfying $\Pr[|\bX| > t] \le 2\exp\left(-\frac{t^2}{K^2}\right)$ for all $t \ge 0$, then $\|\bX\|_{\subgnorm} \le \CSubgauss\cdot K$.
\end{lemma}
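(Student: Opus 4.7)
The plan is to verify the tail-to-moment-generating-function direction of the standard sub-Gaussian equivalence, choosing the constant explicitly. The goal is to exhibit some $c > 0$ so that with $\CSubgauss = c$ we get $\E \exp(\bX^2 / (cK)^2) \le 2$, which by definition of the $\subgnorm$-norm forces $\|\bX\|_{\subgnorm} \le cK$.

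The main tool is the layer-cake / tail integration identity: for any nonnegative random variable $\bY$,
\[
    \E[\bY] \;=\; \int_0^\infty \Pr[\bY > s] \, ds.
\]
First I would apply this with $\bY = \exp(\bX^2 / (cK)^2)$. Since $\bY \ge 1$ almost surely, $\Pr[\bY > s] = 1$ for $s \in [0,1)$. For $s \ge 1$, solving $\exp(\bX^2/(cK)^2) > s$ gives $|\bX| > cK\sqrt{\ln s}$, and the hypothesized tail bound yields
\[
    \Pr[\bY > s] \;=\; \Pr\!\left[|\bX| > cK\sqrt{\ln s}\right] \;\le\; 2\exp\!\left(-\tfrac{c^2 K^2 \ln s}{K^2}\right) \;=\; 2 s^{-c^2}.
\]

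Next I would combine these two estimates:
\[
    \E\exp\!\left(\tfrac{\bX^2}{(cK)^2}\right) \;\le\; 1 + \int_1^\infty 2 s^{-c^2} \, ds \;=\; 1 + \frac{2}{c^2 - 1},
\]
valid whenever $c^2 > 1$. Requiring this to be at most $2$ gives $c^2 \ge 3$; taking $c = \sqrt{3}$ (so $\CSubgauss = \sqrt{3}$) suffices. By the infimum definition of $\|\cdot\|_{\subgnorm}$ we conclude $\|\bX\|_{\subgnorm} \le \sqrt{3}\, K$, as desired.

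There is essentially no obstacle here — the argument is a one-line integration once the right substitution is made. The only thing to be slightly careful about is handling the $s < 1$ regime of the tail integral (where the trivial bound $\Pr[\bY > s] \le 1$ contributes the leading $1$) and ensuring the constant $c$ is taken large enough that the resulting integral converges and sums to at most $2$. Since the lemma is stated without any attempt to optimize the constant, the value $\CSubgauss = \sqrt{3}$ is perfectly acceptable, matching the standard proof in \cite{Ver18}.
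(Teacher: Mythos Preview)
Your proof is correct and is precisely the standard argument from \cite{Ver18}. The paper itself does not give a proof of this lemma; it is stated in the preliminaries with only the citation to Vershynin's Proposition~2.5.2, so there is no ``paper's own proof'' to compare against beyond the reference you have already reproduced.
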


\subsection{Random graphs}

\par In this section, we include some facts about \erdos--\renyi random graphs.  First, we state a high-probability upper bound of $O\parens*{\frac{\log n}{\log\log n}}$ on the maximum degree in both random graph models when $p = \frac{\alpha}{n}$ for constant $\alpha$. This allows us to ignore graphs where the max degree is too high when we upper bound $\dtv {\ER(n, p)}{\grg_d(n, p)}$.  Since the degree of each vertex is distributed like $\Binom(n,p)$, by applying the standard tail bound for a Binomial random variable and taking a union bound over all vertices we get the following.
\begin{lemma} \label{lem:degree-bound}
    Let $\Delta(G)$ denote the maximum degree of a graph $G$. If $p = \frac{\alpha}{n}$, then for both $\bG \sim \ER(n, p)$ and $\bG \sim \grg_d(n, p)$ and for all $d$:
    $$
        \Pr[\Delta(\bG) \geq k] \leq n\cdot\left(\frac{k}{e\alpha}\right)^{-k}.
    $$
\end{lemma}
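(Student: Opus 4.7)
The plan is to reduce the statement to a standard binomial tail bound combined with a union bound over the $n$ vertices, exactly as suggested by the hint. The only nontrivial ingredient is verifying that the marginal degree distribution of a fixed vertex is the same in both models, namely $\Binom(n-1,p)$.

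First I would check this distributional claim. For $\ER(n,p)$ it is immediate from the definition, since the $n-1$ edges incident to any fixed vertex $i$ are independent $\Ber(p)$'s. For $\grg_d(n,p)$ I would condition on $\bv_i$: since the remaining vectors $\{\bv_j\}_{j \ne i}$ are i.i.d.\ uniform on $\bbS^{d-1}$ and independent of $\bv_i$, the edge indicators $\1[\langle \bv_i,\bv_j\rangle \ge \tau(p)]$ for $j \ne i$ are mutually independent even after conditioning on $\bv_i$. By the rotational invariance of $\Unif$ and the definition of $\tau(p)$, each indicator is $\Ber(p)$. Hence, conditionally and therefore marginally, $\deg_{\bG}(i) \sim \Binom(n-1,p)$ in both models.

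Next I would apply the standard binomial tail estimate. Using $\Binom(n-1,p) \preceq \Binom(n,p)$ stochastically together with the crude bound
\[
\Pr[\Binom(n,p) \ge k] \;\le\; \binom{n}{k} p^k \;\le\; \frac{(np)^k}{k!} \;\le\; \left(\frac{enp}{k}\right)^k \;=\; \left(\frac{e\alpha}{k}\right)^k,
\]
where the last equality uses $np = \alpha$ and I have invoked $k! \ge (k/e)^k$. A union bound over the $n$ vertices then yields
\[
\Pr[\Delta(\bG) \ge k] \;\le\; n \cdot \left(\frac{e\alpha}{k}\right)^k \;=\; n\cdot\left(\frac{k}{e\alpha}\right)^{-k},
\]
which is the desired inequality.

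There is essentially no obstacle in this proof; the argument is entirely routine. The only step that deserves a sentence of justification is the binomial marginal in the geometric model, which relies on the independence of $\{\bv_j\}_{j \ne i}$ after conditioning on $\bv_i$ and on the fact that $\tau(p)$ is by definition the threshold making each inner product exceed it with probability exactly $p$.
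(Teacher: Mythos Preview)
Your proposal is correct and matches the paper's own argument essentially verbatim: the paper merely remarks that each vertex's degree is distributed as $\Binom(n-1,p)$ in both models, applies the standard binomial tail bound, and union bounds over the $n$ vertices. Your write-up in fact supplies more detail than the paper does, particularly the justification via conditioning on $\bv_i$ for the geometric model.
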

More generally, we know the following bound on the number of vertices in an \erdos--\renyi graph at distance $\leq \ell$ from any given vertex, as well as a high probability statement about their structure.
\begin{lemma}[{\cite[Lemma 29]{BLM15}}] \label{lem:ball-bound}
    Let $\bG\sim\ER(n,p)$ for $p=\frac{\alpha}{n}$ for constant $\alpha$.  
    Define $\bh_t(v)$ as the number of vertices with distance exactly $t$ from $v$ in $\bG$.  Then for any vertex $v$, there are constants $c, C$ such that:
    \[
        \Pr\bracks*{\exists t \ge 0: \bh_t(v) > s\alpha^{t}} \le C\exp(-cs).
    \]
\end{lemma}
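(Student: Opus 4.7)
The plan is to reduce the problem to a standard large-deviation estimate for a supercritical branching process via stochastic domination. Explore $\bG$ by breadth-first search (BFS) started at $v$, revealing edges between the current frontier and $[n]\setminus\{\text{already-revealed vertices}\}$ one level at a time. At level $t$, each vertex on the frontier has a number of children that is distributed as $\Binom(n-m_t,p)$ for some $m_t\ge 0$ depending on how many vertices have already been revealed. This is stochastically dominated by $\Binom(n,p)$, so we can couple $(\bh_t)_{t\ge 0}$ with a Galton--Watson branching process $(\bZ_t)_{t\ge 0}$ whose offspring distribution is $\Binom(n,p)$ and such that $\bh_t(v)\le \bZ_t$ deterministically for every $t$. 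It then suffices to prove $\Pr[\exists t:\bZ_t> s\alpha^t]\le C e^{-c s}$.

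Next I would control $\bZ_t$ through its exponential moment. Writing $g(\lambda)=\E[e^{\lambda X}]$ for $X\sim \Binom(n,\alpha/n)$, we have the recursion
\[
\E[\exp(\lambda \bZ_t)] \;=\; \E\!\left[g(\lambda)^{\bZ_{t-1}}\right] \;=\; \E[\exp(\psi(\lambda)\bZ_{t-1})],
\]
where $\psi(\lambda)=\log g(\lambda)=n\log(1+p(e^\lambda-1))\le \alpha(e^\lambda-1)$. A Taylor expansion shows $\psi(\lambda)\le \alpha\lambda(1+C'\lambda)$ for all $\lambda\in[0,1]$, and iterating this inequality yields, for $\lambda=c/\alpha^t$ with $c$ a sufficiently small absolute constant,
\[
\psi^{(t)}(c/\alpha^t) \;\le\; c\cdot\prod_{i=0}^{t-1}\bigl(1+C'c/\alpha^i\bigr) \;\le\; c\cdot \exp\bigl(C'c\sum_{i\ge 0}\alpha^{-i}\bigr) \;\le\; 2c,
\]
uniformly in $t$. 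Since $\bZ_0=1$, we conclude $\E[\exp((c/\alpha^t)\bZ_t)]\le e^{2c}$ for every $t$. A Chernoff bound then gives
\[
\Pr[\bZ_t>s\alpha^t] \;\le\; e^{-cs}\cdot e^{2c} \;=\; C_0 e^{-cs}.
\]

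To handle the ``$\exists t$'' quantifier, I would observe that the statement is trivial unless $s$ is reasonably large (for $s\le 1$ one may inflate $C$ to make the bound vacuous), and that the branching process eventually either dies out or grows as $\Theta(\alpha^t)$ times an a.s. finite limit $\bW=\lim_t \bZ_t/\alpha^t$. Concretely, apply Doob's maximal inequality to the nonnegative martingale $\bW_t:=\bZ_t/\alpha^t$ together with the uniform-in-$t$ exponential-moment bound above: the random variable $\sup_t \bW_t$ then has the same exponential tail as a single $\bW_t$, up to constants. This gives the desired estimate $\Pr[\sup_t\bW_t>s]\le Ce^{-cs}$, which transfers back to $\bh_t(v)$ via the stochastic domination.

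The main technical obstacle is the uniform-in-$t$ control of $\psi^{(t)}(c/\alpha^t)$: one must choose $c$ small enough that the multiplicative corrections $(1+C'c/\alpha^i)$ accumulate to only a constant, and verify that the Taylor remainder bound $\psi(\lambda)\le \alpha\lambda(1+C'\lambda)$ used in each step is valid throughout the iteration (i.e., that the running argument $\psi^{(i)}(c/\alpha^t)$ stays bounded, which can be established by induction from the outermost level inward). Extending the single-$t$ Chernoff bound to an $\exists t$ statement via the exponential martingale/Doob argument is the second delicate step, but once the uniform moment bound is in hand, the rest is routine.
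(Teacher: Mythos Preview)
The paper does not prove this lemma; it is quoted verbatim from \cite[Lemma~29]{BLM15} and used as a black box. So there is no ``paper's proof'' to compare against, and your sketch should be judged on its own merits.

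Your approach is the standard one and is essentially correct. The BFS exploration of $\ER(n,\alpha/n)$ is indeed stochastically dominated by a Galton--Watson process with $\Binom(n,\alpha/n)$ offspring, the exponential-moment recursion $\E[e^{\lambda \bZ_t}]=\exp(\psi^{(t)}(\lambda))$ is the right tool, and the Doob submartingale argument you outline for the $\exists t$ quantifier works: once you have $\E[e^{c\bW_T}]\le e^{2c}$ uniformly in $T$, the process $e^{c\bW_t}$ is a submartingale (convex function of a martingale with finite expectation), so Doob's maximal inequality gives $\Pr[\sup_{t\le T}\bW_t\ge s]\le e^{-cs}\E[e^{c\bW_T}]\le e^{2c-cs}$, uniformly in $T$.

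One point worth flagging: the step $\sum_{i\ge 0}\alpha^{-i}<\infty$ in your iteration bound requires $\alpha>1$. For $\alpha\le 1$ the product $\prod_i(1+C'c/\alpha^i)$ is not uniformly bounded in $t$, and in fact the lemma as stated with an exponential tail is \emph{false} for the critical case $\alpha=1$ (the width $\sup_t\bZ_t$ of a critical branching process has only a $1/s$ tail, by the sharpness of Doob's $L^1$ bound). This is not a problem for the paper's purposes: every invocation of \pref{lem:ball-bound} in the paper goes through the coupling of \pref{prop:dom} and is applied to $\bG_+\sim\ER(n,(1+\eps)\alpha/n)$ with $\alpha\ge 1$, so the effective mean degree $(1+\eps)\alpha$ is strictly greater than $1$. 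But you should state the hypothesis $\alpha>1$ explicitly rather than leaving it implicit in the convergence of the geometric series.
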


\begin{lemma}[{\cite[Lemma 30]{BLM15}}] \label{lem:neighborhood-tree}
	Let $\bG\sim\ER(n,p)$ for $p=\frac{\alpha}{n}$ for constant $\alpha$, and let $B_{\bG}(v, t)$ be the set of all vertices with distance $\leq t$ from vertex $v$ in $\bG$.
	Then, for any vertex $v$, there is a constant $c'$ such that:
	$$
	\Pr[B_{\bG}(v, t) \text{ is not a tree}] \leq \frac{c' \alpha^t}{n}
	$$
\end{lemma}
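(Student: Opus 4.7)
The plan is to apply the first-moment method over a canonical family of ``lollipop'' subgraphs rooted at $v$. Observe that the induced subgraph on $B_{\bG}(v,t)$ fails to be a tree precisely when it contains a cycle. Given any such cycle $C$, I would pick a vertex $u \in C$ minimizing $j := \dist_{\bG}(v,u) \le t$ and a shortest $v$-to-$u$ path $P$. Minimality of $j$ ensures that $P$ is internally disjoint from $C \setminus \{u\}$ (otherwise an earlier intermediate vertex of $P$ would be a cycle vertex at distance $< j$ from $v$), so $P \cup C$ is a lollipop in $\bG$ using $j+k$ distinct vertices and $j+k$ edges, with handle length $j$ and cycle length $k := |C|$. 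Conversely, any lollipop with $j + \lfloor k/2 \rfloor \le t$ places its cycle inside $B_{\bG}(v,t)$.

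I would then apply a union bound. For each admissible pair $(j,k)$, the number of labeled lollipop embeddings rooted at $v$ is at most $n^{j+k-1}$, and each specific embedding appears in $\bG$ with probability $p^{j+k} = (\alpha/n)^{j+k}$. Summing,
\[
\Pr[B_{\bG}(v,t) \text{ is not a tree}] \;\le\; \sum_{(j,k)} n^{j+k-1}\,p^{j+k} \;=\; \frac{1}{n} \sum_{(j,k)} \alpha^{j+k},
\]
where the sum is restricted to feasible parameter pairs. Collapsing the geometric series in $(j,k)$ and absorbing constants into $c'$ would yield a bound of the claimed form.

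The main obstacle is making the summation sharp enough, since the feasibility constraint on $(j,k)$ is subtle: a long cycle can still fit inside $B_{\bG}(v,t)$ via ``shortcut'' edges not belonging to the canonical lollipop, so one cannot naively restrict to $j + \lfloor k/2 \rfloor \le t$. The cleanest resolution, which is the route taken in \cite{BLM15}, is to recast the union bound as an incremental analysis of BFS exploration from $v$: condition on the first $t$ levels of the exploration, and at each newly revealed vertex bound the probability of creating a back- or cross-edge (i.e.\ attaching to an already-discovered vertex other than its BFS parent) by (current frontier size)~$\cdot\, p$. Combined with the high-probability control $|B_{\bG}(v,t)| = O(\alpha^t)$ from \pref{lem:ball-bound}, this uniform handling of shortcut-augmented lollipops is what delivers the advertised $O(\alpha^t/n)$ scaling without the case split between ``short'' and ``long'' cycles.
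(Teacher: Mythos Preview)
The paper does not prove this lemma; it is quoted from \cite{BLM15} without proof, so there is no in-paper argument to compare against.

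Your lollipop union bound, as written, does not close: with your witness (an arbitrary cycle $C\subseteq B_{\bG}(v,t)$ together with a shortest path to its nearest vertex $u$) the only constraint you can certify is $j=\dist(v,u)\le t$, while $k=|C|$ is unbounded, so $\frac{1}{n}\sum_{j\le t,\,k\ge 3}\alpha^{j+k}$ diverges whenever $\alpha\ge 1$. You diagnose this correctly, but the remedy is simpler than a full BFS-exploration argument. Rather than an arbitrary cycle, take as witness a single \emph{non-tree edge} $\{a,b\}$ with respect to the BFS tree from $v$, together with the two BFS tree paths $v\to a$ and $v\to b$. Since $a,b\in B_{\bG}(v,t)$, both paths have length at most $t$, so the resulting unicyclic subgraph rooted at $v$ has at most $2t+1$ edges on at most $2t+1$ vertices, all parameters now bounded. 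The first-moment sum over such witnesses is finite and gives $O(\alpha^{2t}/n)$.

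One further point: both this corrected first-moment count and your BFS-exploration sketch (once made precise) yield $O(\alpha^{2t}/n)$, not the $O(\alpha^t/n)$ printed in the lemma. The exponent $t$ cannot be right in general: already at $t=1$, the probability that two neighbours of $v$ are themselves adjacent is $\Theta(\alpha^3/n)$. So your final sentence, that the BFS route ``delivers the advertised $O(\alpha^t/n)$ scaling,'' overstates what that argument actually produces. The discrepancy appears to be a misquotation of \cite{BLM15} and is harmless for the application in \pref{sec:sparse}, where $\alpha$ is a fixed constant and only the $1/n$ factor is used.
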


\subsection{Conditional vector distributions}
Given an $n$-vertex graph $G$, we are interested in $\bV\sim\Unif^{\otimes n}|\GG(\bV,p)=G$, whose distribution we shorten to $\VecDist{G}$.  A simple but crucial observation for us is the following.
\begin{observation} \label{obs:unif-to-conditional}
    Let $f:\parens*{\bbS^{d-1}}^n\to\{0,1\}$ be a Boolean-valued function.  If $\E_{\bV\sim\Unif^{\otimes n}} f(\bV) \le \delta$,
    then:
    \[
        \Pr_{\bG\sim\grg_d(n,p)} \bracks*{\E_{\bV\sim\VecDist{\bG}} f(\bV) \ge \sqrt{\delta} } \le \sqrt{\delta}.
    \]
\end{observation}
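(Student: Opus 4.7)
The plan is to observe that this is essentially a one-line application of Markov's inequality combined with the tower property of conditional expectation. The key structural fact is that the joint distribution $(\bV, \GG(\bV, p))$ with $\bV \sim \Unif^{\otimes n}$ decomposes naturally: the marginal of $\GG(\bV,p)$ is $\grg_d(n,p)$, and the conditional distribution of $\bV$ given $\GG(\bV,p) = G$ is precisely $\VecDist{G}$ by definition.

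First, I would introduce the nonnegative random variable $\bX \coloneqq \E_{\bV \sim \VecDist{\bG}}[f(\bV)]$, where $\bG \sim \grg_d(n,p)$. By the tower property,
\[
\E_{\bG \sim \grg_d(n,p)}[\bX] \;=\; \E_{\bG \sim \grg_d(n,p)}\bracks*{\E_{\bV \sim \VecDist{\bG}} f(\bV)} \;=\; \E_{\bV \sim \Unif^{\otimes n}} f(\bV) \;\le\; \delta,
\]
where the middle equality uses that sampling $\bV \sim \Unif^{\otimes n}$ can be realized by first sampling $\bG = \GG(\bV,p) \sim \grg_d(n,p)$ and then sampling $\bV \sim \VecDist{\bG}$.

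Since $f$ is Boolean-valued, $\bX \ge 0$, so Markov's inequality applied to $\bX$ gives
\[
\Pr_{\bG \sim \grg_d(n,p)}\bracks*{\bX \ge \sqrt{\delta}} \;\le\; \frac{\E[\bX]}{\sqrt{\delta}} \;\le\; \frac{\delta}{\sqrt{\delta}} \;=\; \sqrt{\delta},
\]
which is exactly the desired conclusion. There is no real obstacle here; the statement is a convenient packaging of Markov's inequality that will later let the authors convert a ``rare event under the uniform product measure'' into a ``rare event under the conditional vector law for a typical graph,'' which is the form they need when analyzing $\VecDist{G}$ via belief propagation.
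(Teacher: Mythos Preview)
Your proof is correct and matches the paper's own argument exactly: decompose $\E_{\bV\sim\Unif^{\otimes n}} f(\bV)$ via the tower property as $\E_{\bG\sim\grg_d(n,p)} \E_{\bV\sim\VecDist{\bG}} f(\bV)$, then apply Markov's inequality to the inner conditional expectation.
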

\begin{proof}
    We can write:
        $\E_{\bV\sim\Unif^{\otimes n}} f(\bV) = \E_{\bG\sim\grg_d(n,p)} \E_{\bV\sim\VecDist{\bG}} f(\bV) \le \delta$.
    The statement then follows from Markov's inequality.
\end{proof}

\subsection{Constraint satisfaction problems and Belief Propagation}
\label{sec:prelim-bp}

\begin{definition}
A \emph{constraint satisfaction problem instance} (CSP instance) $\Inst$ consists of a \emph{variable set} $V$ and a \emph{constraint set} $E$. The variables $v \in V$ each belong to an alphabet $\Sigma$, while the constraints $f \in E$ consist of a $k$-tuple of variables $\partial f$ and a function $\psi_f:\Sigma^k\to\{0,1\}$.  An assignment to variables $c: V \to \Sigma$ is \emph{satisfying} if for each constraint $f \in F$, $\psi_f(c(\partial f)) = 1$, i.e. it is satisfied.
\end{definition}

\begin{remark}
    In our setting, we choose $\Sigma = \bbS^{d-1}$.
\end{remark}

\begin{definition}
    We can represent any CSP instance $\Inst$ as a bipartite graph $F$, which we call a \emph{factor graph}.  The two sides of the bipartition are $V$ and $E$, and we place an edge between $v \in V$ and $f \in E$ if $v$ participates in $f$. We use $\partial v$ and $\partial f$ to denote the neighborhoods of variables $v$ and clauses $f$, respectively, in this graph. 
\end{definition}

When the factor graph $F$ is a does not contain cycles, the marginal on a variable $v$ can be computed exactly from the fixed point of the belief propagation algorithm. We state this precisely below.
\begin{definition}[Belief propagation fixed point]
    A \emph{belief propagation fixed point} for a factor graph $F$ is a collection of \emph{messages}
    \[
        \left\{ m^{v\to f}, m^{f\to v} \right\}_{v\in V, f\in E}
    \]
    for all pairs $v,f$ such that $f$ is a neighbor of $v$, where each message is a probability distribution on $\Sigma$, such that
    \begin{align} 
    m^{f \to v}(x) &\propto \int_{c\large|_{\partial f}: c(v) = x} \psi_f(c\large|_{\partial f}) \prod_{v' \in \partial f \setminus v} m^{v' \to f}(c(v')) \label{eq:clause-to-var} \\
    m^{v \to f}(x) &\propto \prod_{f' \in \partial v \setminus f} m^{f' \to v}(x) \label{eq:var-to-clause}
\end{align}
\end{definition}

\begin{theorem}[e.g. \cite{MM}, Theorem 14.1] \label{thm:BP-forest-marginals}
    Suppose $F$ is a forest factor graph corresponding to a CSP instance $\Inst$ where every vertex is attached to a unary constraint.  Then there is a unique belief propagation fixed point and the marginal distribution $\nu$ on variable $v$ over the uniform distribution over satisfying assignments to $i$ is given via the following formula.
    \[
        \nu \propto \prod_{f\in\partial v} m^{f\to v}.
    \]
\end{theorem}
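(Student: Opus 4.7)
The plan is to reduce to a single tree (handling each connected component of the forest separately), root it at $v$, and run induction on the subtree structure; this is the standard proof for tree-structured factor models. Concretely, I would fix a tree $T$ containing $v$ and orient every edge toward $v$. For each oriented edge $f \to w$ (with $f$ a clause and $w$ its child variable), write $T_{f \to w}$ for the subtree on the opposite side of $v$ from $w$ along that edge, and analogously $T_{w \to f}$ for an oriented variable-to-clause edge. Define the partition function
\[
Z^{f \to w}(x) \coloneqq \int_{\text{assignments } c \text{ to } T_{f \to w} \text{ with } c(w) = x} \ \prod_{g \in T_{f \to w}} \psi_g\!\left(c\big|_{\partial g}\right),
\]
and similarly $Z^{w \to f}(x)$, integrating against the product of the unary reference measure on $\Sigma = \bbS^{d-1}$.

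The core step is to verify, by induction on the depth of the subtree, that the unnormalized quantities $Z^{f \to w}$ and $Z^{w \to f}$ satisfy exactly the recursions \eqref{eq:clause-to-var} and \eqref{eq:var-to-clause}, up to constants that disappear upon normalization. The base cases are handled by the hypothesis that every variable carries a unary constraint: a leaf variable $w$ (with only its unary constraint $\psi_w$ as upstream information) contributes $Z^{w \to f}(x) \propto \psi_w(x)$, which is a valid probability distribution on $\Sigma$ since $\psi_w$ is not identically zero on any surviving branch. The inductive step for a clause splits the integral over the child subtrees, which are disjoint because $T$ is a tree, so the integral factorizes as $\int \psi_f(c|_{\partial f}) \prod_{w' \in \partial f \setminus w} Z^{w' \to f}(c(w'))\, dc$; normalizing gives \eqref{eq:clause-to-var}. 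The variable step is immediate since the subtrees past distinct clauses share no variables.

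Once this identification is established, the marginal on $v$ is obtained by collapsing all subtrees hanging off $v$:
\[
\nu(x) \propto \prod_{f \in \partial v} Z^{f \to v}(x) \propto \prod_{f \in \partial v} m^{f \to v}(x),
\]
where the unary constraint at $v$ appears as one of the factors $f \in \partial v$ with $\partial f = \{v\}$, so no extra term is needed. For \emph{uniqueness} of the BP fixed point, I would argue that because $T$ is acyclic, the message equations admit a topological ordering: leaves determine their outgoing messages directly from their unary constraints, and each subsequent message $m^{f \to w}$ or $m^{w \to f}$ is a deterministic function of messages strictly further from $v$; this process terminates and yields a unique normalized solution.

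The main bookkeeping obstacle will be matching normalization conventions: the $Z$'s are partition functions while the $m$'s are probability distributions, so one needs to carry the proportionality constants cleanly through the recursion and observe that they cancel globally when one finally normalizes $\nu$. A secondary subtlety is ruling out degenerate cases where some $Z^{f \to w}$ vanishes (which would make the corresponding message undefined); this is handled by restricting attention to the support of the uniform measure on satisfying assignments, which is automatic as long as the instance is satisfiable, and one can simply declare the theorem vacuous otherwise. The forest case follows by applying the argument independently to the component containing $v$.
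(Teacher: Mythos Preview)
The paper does not prove this theorem; it is stated as a preliminary fact with a citation to \cite{MM}, Theorem 14.1, and no proof is given in the paper. Your proposal is the standard argument for BP exactness on trees (root at $v$, define subtree partition functions, verify by induction that they satisfy the BP recursions up to normalization, and conclude the marginal formula from the product over incoming messages), and it is correct. There is nothing to compare against.
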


\section{Concentration via optimal transport} \label{sec:transport-result}
In this section we establish that for a probability distribution $\nu$ over $\bbS^{d-1}$, a random $p$-cap on the sphere contains a $p$-fraction of $\nu$'s measure with high probability, where the strength of the concentration depends on $\|\nu\|_{\infty}$.
We do so by analyzing the optimal transport mapping $\calD$ between $\nu$ and the uniform measure $\rho$.

\subsection{Optimal transport and the Wasserstein metric}

The Wasserstein metric quantifies the ``physical distance'' between a pair of probability distributions.
We say $\pi(x, y)$ is a transport coupling between distributions $\mu$ and $\nu$ if $\pi(x, \cdot) = \mu(x)$ and $\pi(\cdot, y) = \nu(y)$.
\begin{definition}[Wasserstein Distance]
	Let $\mu$ and $\nu$ be two probability distributions over $\Omega$, and $\Pi$ be the set of all transport couplings $\pi(x, y)$ between $\mu$ and $\nu$. Then, their \emph{Wasserstein-2 distance} is 
	$$
	W_2(\mu, \nu) \coloneqq \sqrt{\inf_{\pi \in \Pi} \int_{\Omega \times \Omega} \|x - y\|_2^2 d\pi(x, y)}
	$$
	It is straightforward to verify that $W_2(\cdot, \cdot)$ is in fact a metric.
\end{definition}
In other words, the square of the Wasserstein distance is the average squared Euclidean distance $\|\bx-\by\|^2$ between $(\bx,\by)\sim \pi$ for the most efficient ``transport coupling'' $\pi$.
For intuition, we may think of $\mu$'s density of a pile of sand over $\Omega$, and of $\pi$ as a map that shifts grains of sand from $\mu$ to form the shape of $\nu$ in such a way that minimizes the average distance traveled.

\begin{fact}[{\cite[Proposition 9.1.2]{BGL13}}]    \label{fact:opt-coupling-exists}
    Given two probability distributions $\mu$ and $\nu$ over $\Omega$, there exists a coupling $\pi$ such that:
    \[
        \int_{\Omega \times \Omega} \|x - y\|_2^2 d\pi(x, y) = W_2(\mu,\nu)^2.
    \]
    We call any such $\pi$ an \emph{optimal coupling}.
\end{fact}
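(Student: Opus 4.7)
The plan is to apply the direct method of the calculus of variations, which for this problem reduces to compactness of the set of couplings plus (lower) semicontinuity of the cost functional. The Fact is invoked in this paper with $\Omega = \bbS^{d-1}$, a compact metric space, which makes the argument particularly clean; I will sketch it in that setting and then note what changes for a general Polish $\Omega$.

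First, I would set $c(x,y) = \|x-y\|_2^2$, let $\Pi(\mu,\nu)$ denote the set of couplings regarded as a subset of the probability measures on $\Omega \times \Omega$ under weak convergence, and pick a minimizing sequence $\pi_n \in \Pi(\mu,\nu)$ with $\int c\,d\pi_n \to W_2(\mu,\nu)^2$. Since $\Omega \times \Omega$ is compact, the space of probability measures on it is weakly sequentially compact (Banach--Alaoglu on the dual of $C(\Omega \times \Omega)$, equivalently Prokhorov applied to a trivially tight family), so I extract a subsequence $\pi_{n_k}$ converging weakly to some $\pi^\star$.

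Next, I would verify that $\pi^\star$ is still a coupling of $\mu$ and $\nu$: for any bounded continuous $f \colon \Omega \to \R$, the map $\pi \mapsto \int f(x)\,d\pi(x,y)$ is continuous in the weak topology on $\Omega \times \Omega$, and its value at every $\pi_{n_k}$ equals $\int f\,d\mu$, so its value at $\pi^\star$ must also equal $\int f\,d\mu$; this forces the first marginal of $\pi^\star$ to be $\mu$, and the same test-function argument on the second coordinate gives the second marginal as $\nu$. Finally, because $c$ is continuous and bounded on the compact set $\Omega \times \Omega$, the functional $\pi \mapsto \int c\,d\pi$ is weakly continuous, so $\int c\,d\pi^\star = \lim_k \int c\,d\pi_{n_k} = W_2(\mu,\nu)^2$, showing that $\pi^\star$ attains the infimum.

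The hard part, were $\Omega$ noncompact and merely Polish, would be establishing tightness of $\Pi(\mu,\nu)$ by hand---given $\eps > 0$, pick compacts $K_\mu, K_\nu$ with $\mu(K_\mu^c), \nu(K_\nu^c) \le \eps/2$ and observe that every $\pi \in \Pi(\mu,\nu)$ places mass at least $1-\eps$ on $K_\mu \times K_\nu$---and replacing continuity of $\pi \mapsto \int c\,d\pi$ with lower semicontinuity, which follows by writing the nonnegative lower-semicontinuous $c$ as a monotone supremum of bounded continuous functions and applying monotone convergence. On the sphere neither of these issues arises, so the only step that genuinely requires care is the marginal-preservation check in the weak limit, which is immediate once one tests against bounded continuous functions as above.
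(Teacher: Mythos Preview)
Your argument is correct and is the standard direct-method proof of existence of optimal couplings. Note, however, that the paper does not actually prove this Fact: it is simply cited from \cite[Proposition 9.1.2]{BGL13} and used as a black box, so there is no ``paper's own proof'' to compare against. The approach you sketch---tightness/compactness of $\Pi(\mu,\nu)$ via Prokhorov, weak closedness of the marginal constraints, and lower semicontinuity of the cost functional---is precisely the argument one finds in standard references such as \cite{BGL13} or \cite{Vil08}, so in that sense your proof matches what the citation points to.
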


We specifically will need bounds on the Wasserstein-2 distance between an arbitrary distribution $\nu$ on the unit sphere and the uniform distribution on the unit sphere $\Unif$.  
We obtain a handle on the distances we need via an \emph{entropy-transport inequality}, which bounds the Wasserstein-2 distance in terms of the relative entropy.
The following is a direct corollary of \cite[Theorem 22.17, part (i)]{Vil08} and \cite[Corollary 2]{DEKL14}.
\begin{lemma}[{Talagrand's $T_2$ inequality on the sphere}]\label{lem:was-kl}
	For any distribution $\nu$ on $\bbS^{d-1}$, and $\rho$ the uniform measure on $\bbS^{d-1}$,
	\[
	W_2(\nu, \rho) \le \sqrt{\frac{2}{d-1} \cdot \KL{\nu}}.
	\]
\end{lemma}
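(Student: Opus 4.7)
The plan is to recognize this as Talagrand's $T_2$ transportation inequality for the uniform measure on $\bbS^{d-1}$, and to derive it via the Bakry-\'{E}mery/Otto-Villani machinery from a suitable curvature-dimension condition. The sphere $\bbS^{d-1}$ carries constant sectional curvature $1$, so as a Riemannian manifold of dimension $d-1$ its Ricci curvature is bounded below by $d-2$, and moreover it satisfies the stronger finite-dimensional Bakry-\'{E}mery curvature-dimension condition $CD(d-2, d-1)$.

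I would then invoke two standard ingredients. First, the Otto-Villani theorem converts a log-Sobolev inequality into a quadratic transportation-entropy inequality with the same constant. Second, the sphere $\bbS^{d-1}$ satisfies a log-Sobolev inequality with the sharp constant $d-1$ (the first nontrivial eigenvalue of the spherical Laplacian); this sharp constant is a classical refinement that requires the finite-dimensional $CD(K,N)$ calculus, going beyond the naive Ricci bound of $d-2$. Combining these two steps yields
\[
    W_2(\nu,\rho)^2 \le \frac{2}{d-1} \dkl(\nu \klbar \rho),
\]
which is exactly the claimed inequality. In the referenced literature, Theorem 22.17(i) of Villani packages the general transport-entropy implication on Riemannian manifolds satisfying such a curvature-dimension condition, and Corollary 2 of DEKL14 supplies the specialization to the unit sphere with the sharp constant.

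The main obstacle, had one to do this from scratch, would be precisely this sharpening of the constant from $d-2$ (what the infinite-dimensional Bakry-\'{E}mery criterion with $CD(K,\infty)$ would give) to the optimal $d-1$. This refinement demands the finite-dimensional $\Gamma_2$-calculus, which is technically delicate but completely standard. Since the cited references carry out this refinement and the Otto-Villani passage explicitly, deducing the lemma amounts to identifying the correct specialization of their hypotheses to the uniform measure on $\bbS^{d-1}$.
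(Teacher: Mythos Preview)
Your proposal is correct and matches the paper's approach exactly: the paper does not prove this lemma but simply states it as a direct corollary of \cite[Theorem 22.17(i)]{Vil08} and \cite[Corollary 2]{DEKL14}, the very same references you invoke, and then points the reader to \cite{Tal96} and \cite[Theorem 9.2.1]{BGL13} for exposition. Your outline of the Otto--Villani mechanism and the role of the sharp $CD(d-2,d-1)$ curvature-dimension condition in obtaining the constant $d-1$ (rather than the naive $d-2$) is an accurate unpacking of what those references contain.
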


	For a reader interested in the proof of \pref{lem:was-kl}, we recommend the proof of the analogous statement in Gaussian space by \cite{Tal96} due to its relative simplicity.  In fact, it is possible to derive a slightly weaker bound of $\sqrt{\frac{2}{d} \cdot \dkl(\nu \| \Unif)} + \frac{2}{\sqrt{d}}$ from the Gaussian case via an elementary proof.  We also find it worthwhile to point to \cite[Theorem 9.2.1]{BGL13} for a comprehensive exposition.

\subsection{The concentration of sphere cap measure}

    Let $\Unif$ be the uniform measure over $\bbS^{d-1}$, let $\nu$ be a probability measure over $\bbS^{d-1}$, and let $\calD$ be a coupling between them.
For $(\bx,\by)\sim\calD$ we use the convention that $\bx$ is distributed according to $\nu$ and $\by$ is distributed according to $\Unif$. 
    We first prove quantitative bounds on the concentration of $\|\bx - \by\|$ when $(\bx, \by) \sim \calD$. 
\begin{lemma} \label{lem:transport_concentration}
    Let $\nu$ be a distribution over $\bbS^{d-1}$ and let $\calD$ be the optimal transport coupling of $\nu$ and $\Unif$. 
    Then for all $t > 0$,
    \[
        \Pr_{(\bx,\by) \sim \calD}\left[ \|\bx-\by\| \ge t + \sqrt{\tfrac{2}{d-1}\ln \norm{\nu}_{\infty}} \right] \le \exp\left(- \frac{d-1}{8} t^2\right).
    \]
\end{lemma}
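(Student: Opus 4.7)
The plan is to combine Talagrand's $T_2$ inequality on the sphere (\pref{lem:was-kl}) with a conditioning argument that exploits the fact that restrictions of optimal couplings remain optimal for their induced marginals. As a warm-up, \pref{obs:bounded-dists} gives $\KL{\nu} \le \ln\|\nu\|_\infty$, and then \pref{lem:was-kl} immediately yields $W_2(\nu, \rho) \le D := \sqrt{\tfrac{2}{d-1}\ln\|\nu\|_\infty}$, controlling the average transport distance under $\calD$. The work is converting this mean-type bound into a sub-Gaussian tail.

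Let $E := \{(\bx,\by) : \|\bx-\by\| \ge t+D\}$, set $q := \calD(E)$, and let $\calD_E$ be the coupling $\calD$ conditioned on $E$, with marginals $\nu_E$ and $\rho_E$. The first key step is to observe that $\calD_E$ is itself the $W_2$-optimal coupling of $(\nu_E, \rho_E)$: if a strictly cheaper coupling $\tilde\calD_E$ of $(\nu_E, \rho_E)$ existed, substituting it in place of $\calD_E$ inside $\calD$ would yield a coupling of $(\nu,\rho)$ with strictly smaller squared-distance cost, contradicting optimality of $\calD$. Since $\|\bx-\by\|\ge t+D$ throughout $E$, this gives
\[
W_2(\nu_E,\rho_E)^2 \;=\; \E_{\calD_E}[\|\bx-\by\|^2] \;\ge\; (t+D)^2.
\]

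The second key step controls the relative entropies: the densities satisfy $d\nu_E/d\rho \le \|\nu\|_\infty/q$ and $d\rho_E/d\rho \le 1/q$, so by \pref{obs:bounded-dists}, $\KL{\nu_E} \le \ln(\|\nu\|_\infty/q)$ and $\KL{\rho_E} \le \ln(1/q)$. Combining the triangle inequality for $W_2$ with \pref{lem:was-kl} applied to both factors then gives
\[
t + D \;\le\; W_2(\nu_E,\rho) + W_2(\rho_E,\rho) \;\le\; \sqrt{\tfrac{2}{d-1}\ln(\|\nu\|_\infty/q)} + \sqrt{\tfrac{2}{d-1}\ln(1/q)}.
\]
Using $\sqrt{a+b}\le\sqrt{a}+\sqrt{b}$ to split the first square root shows the right-hand side is at most $D + 2\sqrt{\tfrac{2}{d-1}\ln(1/q)}$; cancelling $D$ and rearranging yields $q \le \exp(-(d-1)t^2/8)$, as desired.

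The main conceptual obstacle is recognizing that $\calD_E$ inherits optimality from $\calD$. Without this step, the natural move of treating $\calD_E$ merely as a feasible coupling of $(\nu_E,\rho_E)$ only upper bounds $W_2(\nu_E,\rho_E)$ by $\sqrt{\E_{\calD_E}\|\bx-\by\|^2}$, which is in the wrong direction for a tail bound; invoking optimality reverses this inequality and yields the sub-Gaussian control. Everything else is a routine manipulation of Talagrand's inequality together with the trivial density bounds from \pref{obs:bounded-dists}.
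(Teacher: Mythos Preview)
Your proof is correct and follows essentially the same approach as the paper: condition the optimal coupling on the tail event, use optimality to lower-bound the restricted Wasserstein distance, apply the triangle inequality and Talagrand's $T_2$ inequality to the two conditioned marginals, then split $\sqrt{\ln(\|\nu\|_\infty/q)}$ via $\sqrt{a+b}\le\sqrt a+\sqrt b$ and rearrange. Your framing of the key step as ``$\calD_E$ inherits optimality from $\calD$'' is slightly more explicit than the paper's phrasing, but the argument is identical.
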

\begin{proof}
Let $(\bx,\by) \sim \calD$ with $\bx \sim \nu$ and $\by \sim \Unif$.
    Let $\calE_s$ be the event that $\|\bx-\by\| \ge s$, and let $p_s \coloneqq \Pr_{\calD}[\calE_s]$.
    Let $\calD^s = \calD| \calE_s$, and let $\calD_{\nu}^s, \calD_{\Unif}^s$ be the marginal distributions of $\calD^s$ on $\bx$ and $\by$ respectively.
    We claim
    \[
        W_2(\calD_{\nu}^s, \calD_{\Unif}^s) \ge s.
    \]
    Indeed, suppose not; then, one could obtain a coupling which further decreases the transport distance between $\nu$ and $\Unif$, contradicting the optimality of $\calD$.
    Now, since $W_2$ is a metric:
    \begin{equation}
        s 
        \le W_2(\calD_{\nu}^s, \calD_{\Unif}^s) 
        \le W_2(\calD_{\Unif}^s,\Unif) + W_2(\calD_{\nu}^s,\Unif)
        \le \sqrt{\frac{2}{d-1}\KL{\calD_{\Unif}^s}} + \sqrt{\frac{2}{d-1}\KL{\calD_{\nu}^s}},\label{eq:triangle}
    \end{equation}
    where we have used the triangle inequality in conjunction with Talagrand's $T_2$ inequality (\pref{lem:was-kl}).
    Finally, since $\calD_{\Unif}^s(x) = \Unif(x | \calE_s) \le \frac{1}{p_s}\Unif(x)$,
    \[
        \dkl(\calD_{\Unif}^s \| \Unif ) 
        = \int_{\bbS^{d-1}} \calD_{\Unif}^s(x) \cdot\ln \frac{\calD_{\Unif}^s(x)}{\Unif(x)} dx
        \le \int_{\bbS^{d-1}} \calD_{\Unif}^s(x) \cdot\ln \frac{1}{p_s} dx 
        =\ln \frac{1}{p_s}, 
    \]
    and similarly, because $\calD_{\nu}^s(x) = \nu(x | \calE_s) \le \frac{1}{p_s}\nu(x) \le \frac{\norm{\nu}_{\infty}}{p_s} \Unif(x)$,
    \[
        \dkl(\calD_{\nu}^s \| \Unif ) 
        = \int_{\bbS^{d-1}} \calD_{\nu}^s(x) \cdot\ln \frac{\calD_{\nu}^s(x)}{\Unif(x)} dx
        \le \int_{\bbS^{d-1}} \calD_{\nu}^s(x) \cdot\ln \frac{\norm{\nu}_{\infty}}{p_s} dx
        = \ln \frac{1}{p_s} + \ln \norm{\nu}_{\infty}.
    \]
    Putting these together with \pref{eq:triangle} and using $\sqrt{a + b} \le \sqrt{a} + \sqrt{b}$, we have
    \[
        s \le 2\sqrt{\frac{2}{d-1} \ln \frac{1}{p_s}} + \sqrt{\frac{2}{d-1}\ln \norm{\nu}_{\infty}},
    \]
    and then re-arranging we have
    \[
        p_s \le \exp\left(- \frac{d-1}{8} \left(s - \sqrt{\frac{2}{d-1}\ln \norm{\nu}_{\infty}}\right)^2 \right),
    \]
    when $s \ge \sqrt{\frac{2}{d-1}\ln \norm{\nu}_{\infty}}$.   Applying a change of variables completes the proof.
\end{proof}

Having established in \pref{lem:transport_concentration} that the optimal transport map $\pi(\nu, \Unif)$ between $\bx \sim \nu$ and $\by \sim \Unif$ has bounded length with high probability, we can translate this into a tail bound for the inner product $\iprod{\bz, \bx - \by}$ for a random vector $\bz \sim \Unif$.

\begin{lemma} \label{lem:inner-prod-concentration}
    Let $\nu$ be a distribution on $\bbS^{d-1}$, and let $\calD$ be the optimal transport coupling of $\nu$ and $\Unif$.  For $z\in\bbS^{d-1},t\in\R^+$ and any $\kappa > 0$, define $X(z,t)$ as:
    \[
        X(z,t) \coloneqq \Pr_{(\bx,\by)\sim\calD}\left[\left|\langle z, \bx-\by \rangle\right| \ge \left(\sqrt{\frac{2}{d-1}\ln\norm{\nu}_{\infty}} + \sqrt{\frac{8\kappa}{d-1}} \right)\cdot t \right].
    \]
    Then:
    \[
        \Pr_{\bz\sim\Unif}\left[X(\bz,t) \ge 2\exp(-dt^2/4) + \exp(-\kappa)\right] \le 2\exp(-dt^2/4).
    \]
\end{lemma}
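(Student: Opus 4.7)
The plan is to decompose the bound on $X(\bz,t)$ into two contributions: one coming from pairs $(\bx,\by)$ where the transport distance $\|\bx-\by\|$ is atypically large, and one coming from pairs where the transport distance is well-controlled. The first contribution is handled by the transport tail bound \pref{lem:transport_concentration}, while the second is handled by Lévy's concentration inequality \pref{lem:conc-measure} applied to the inner product of a uniformly random $\bz$ with a fixed vector of bounded norm.

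Concretely, let $s \coloneqq \sqrt{\tfrac{2}{d-1}\ln\|\nu\|_{\infty}} + \sqrt{\tfrac{8\kappa}{d-1}}$, and let $\calA \coloneqq \{(x,y)\in\bbS^{d-1}\times\bbS^{d-1} : \|x-y\|\le s\}$. By \pref{lem:transport_concentration} applied with parameter $\sqrt{8\kappa/(d-1)}$ in place of $t$, we have $\Pr_\calD[\calA^c] \le \exp(-\kappa)$. Splitting indicator functions, I would define
\[
X'(\bz,t) \coloneqq \Pr_{(\bx,\by)\sim\calD}\bigl[|\langle \bz,\bx-\by\rangle|\ge s t \text{ and } (\bx,\by)\in\calA\bigr],
\]
so that $X(\bz,t)\le X'(\bz,t)+\exp(-\kappa)$ pointwise in $\bz$. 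Thus the event $\{X(\bz,t)\ge 2\exp(-dt^2/4)+\exp(-\kappa)\}$ is contained in $\{X'(\bz,t)\ge 2\exp(-dt^2/4)\}$, and it suffices to control the latter.

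To bound the expectation of $X'(\bz,t)$ over $\bz$, I would apply Fubini to interchange the expectations over $\calD$ and $\bz$. For any fixed $(x,y)\in\calA$, since $\|x-y\|\le s$, the inequality $|\langle \bz,x-y\rangle|\ge st$ implies $|\langle \bz,(x-y)/\|x-y\|\rangle|\ge t$, so \pref{lem:conc-measure} yields
\[
\Pr_{\bz\sim\Unif}\bigl[|\langle \bz,x-y\rangle|\ge st\bigr]\le 4\exp(-dt^2/2).
\]
Integrating against $\calD$ gives $\E_{\bz}[X'(\bz,t)]\le 4\exp(-dt^2/2)$.

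Finally, Markov's inequality converts this expectation bound into the desired tail bound:
\[
\Pr_{\bz\sim\Unif}\bigl[X'(\bz,t)\ge 2\exp(-dt^2/4)\bigr]\le \frac{4\exp(-dt^2/2)}{2\exp(-dt^2/4)}=2\exp(-dt^2/4),
\]
which, combined with the containment of events above, gives exactly the claimed inequality. The only slightly delicate step is tracking the constants so that Markov's inequality produces precisely $2\exp(-dt^2/4)$ on the right-hand side; this is why we used $4\exp(-dt^2/2)$ from \pref{lem:conc-measure} and chose the threshold $2\exp(-dt^2/4)$, which pairs off to cancel perfectly. There is no substantial obstacle beyond assembling these pieces cleanly.
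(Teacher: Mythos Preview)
Your proposal is correct and follows essentially the same approach as the paper: split according to whether the transport distance $\|\bx-\by\|$ exceeds the threshold $s$, handle the large-distance case with \pref{lem:transport_concentration}, and handle the remaining case by Fubini plus \pref{lem:conc-measure} followed by Markov. The only cosmetic difference is that the paper defines the auxiliary quantity $Y(z,t)=\Pr_\calD[|\langle z,\bx-\by\rangle|\ge \|\bx-\by\|\cdot t]$ without restricting to $\calA$, whereas you restrict to $\calA$ first and then use $\|x-y\|\le s$; both lead to the same Fubini/Markov computation with identical constants.
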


\begin{remark}
    One should think of $X(z, t)$ as a measure of how often a randomly chosen transport vector $\bx - \by$, with $(\bx, \by) \sim \calD$, has a large projection in the $z$ direction.
\end{remark}

\begin{proof}[Proof of \pref{lem:inner-prod-concentration}]
    For any $z\in\bbS^{d-1}$ and $(\bx,\by)\sim\calD$, suppose $|\langle z, \bx-\by\rangle| \ge \left( \sqrt{\frac{2}{d-1}\ln\norm*{\nu}_{\infty}} + \sqrt{\frac{8\kappa}{d-1}} \right)\cdot t$, then $|\langle z, \bx-\by\rangle| \ge \|\bx-\by\| \cdot t$ or $\|\bx-\by\| \ge \sqrt{\frac{2}{d-1}\ln\norm{\nu}_{\infty}} + \sqrt{\frac{8\kappa}{d-1}}$.  Defining
    \[
        Y(z,t) \coloneqq \Pr_{(\bx,\by)\sim\calD}\left[\left|\langle z, \bx-\by \rangle\right| \ge \|\bx-\by\|\cdot t\right],
    \]
    we can write
    \[
        X(z,t) \le Y(z,t) + \Pr_{(\bx,\by)\sim\calD}\left[\|\bx-\by\| \ge \sqrt{\frac{8\kappa}{d-1}} + \sqrt{\frac{2}{d-1}\ln\norm{\nu}_{\infty}}\right] \le Y(z,t) + \exp\left(-\kappa\right) \numberthis \label{eq:bound-x-two-events}
    \]
where to obtain the upper bound we have applied \pref{lem:transport_concentration}.
    Next, we prove that
    \[
        \Pr_{\bz\sim\Unif}\left[Y(\bz,t) \ge 2\exp(-dt^2/4)\right] \le 2\exp(-dt^2/4), \numberthis \label{eq:Y-prob-bound}
    \]
    by showing $\E\left[Y(\bz,t)\right]\le 4\exp(-dt^2/2)$, which implies \pref{eq:Y-prob-bound} via Markov's inequality.
    \begin{align*}
        \E_{\bz\sim\Unif}\left[Y(\bz,t)\right] &= \Pr_{\substack{\bz\sim\Unif \\ (\bx,\by)\sim\calD}}\left[|\langle \bz, \bx-\by\rangle| \ge \|\bx-\by\|\cdot t\right] = \Pr_{\substack{\bz\sim\Unif \\ (\bx,\by)\sim\calD}}\left[|\langle \bz, \frac{\bx-\by}{\norm{bx - \by}} \rangle| \ge t\right] \\
        &\le 4\exp(-dt^2/2). &\text{(by \pref{lem:conc-measure})}
    \end{align*}
    We can then complete the proof starting at \pref{eq:Y-prob-bound} as follows:
    \begin{align*}
        \Pr_{\bz\sim\Unif}\left[Y(\bz,t) + \exp\left(-\kappa\right) \ge 2\exp(-dt^2/4) + \exp\left(-\kappa\right)\right] &\le 2\exp(-dt^2/4) \\
        \Pr_{\bz\sim\Unif}\left[X(\bz,t) \ge 2\exp(-dt^2/4) + \exp(-\kappa)\right] &\le 2\exp(-dt^2/4). &\text{(by \pref{eq:bound-x-two-events})}
    \end{align*}
This yields the desired conclusion.
\end{proof}

Since a $p$-cap around $x$ is given by the set of vectors $z$ with inner product $\iprod{x,z} \ge \tau(p)$, and $\iprod{x,z} = \iprod{y,z} \pm |\iprod{x-y,z}|$, we can finally use \pref{lem:inner-prod-concentration} to relate $X(z) = \Pr_{\bx \sim \nu}[\iprod{z,\bx} > \tau(p)]$, the measure of $\nu$ that falls into the $p$-cap of $z$, to $p = \Pr_{\by \sim \Unif} [\iprod{z,\by} > \tau(p)]$.
That is, we can now show that $X(\bz)$ for $\bz \sim \Unif$ concentrates tightly around $p$, so that most vectors in $\bbS^{d-1}$ contain very close to a $p$-fraction of $\nu$'s mass in their $p$-caps.

\begin{lemma} \label{lem:two-sets-concentrate}
    Let $\nu$ be a distribution on $\bbS^{d-1}$.  
For $z\in\bbS^{d-1}$, let $X(z)\coloneqq \Pr_{\bx\sim\nu}[\langle \bx, z \rangle > \tau(p)]$, and for any $\kappa > 0$, let $u(t) \coloneqq \left(\sqrt{\frac{8}{d-1}\ln\norm*{\nu}_{\infty}} + \sqrt{\frac{8\kappa}{d-1}}\right)\cdot t$.  Then for any $t \ge 0$:
    \[
        \Pr_{\bz\sim\Unif}\left[|X(\bz)-p| > p\cdot \eps(t)\right] \le 2\exp(-dt^2/4)
    \]
    where $\eps(t)\coloneqq \Cbetaac \cdot u(t)\cdot\exp\left(2d\tau(p)u(t)\right) \cdot \sqrt{d\log\frac{1}{p}}+\frac{2\exp(-dt^2/4)+\exp(-\kappa)}{p}$.
\end{lemma}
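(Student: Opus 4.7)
The natural approach is to push through the optimal transport coupling $\calD$ between $\nu$ and $\Unif$ from \pref{fact:opt-coupling-exists} together with the sphere-cap measure estimate \pref{lem:beta-concentration}, using \pref{lem:inner-prod-concentration} to control the ``transport error'' in the direction of a typical $\bz$. Writing $(\bx,\by)\sim\calD$ with $\bx\sim\nu$ and $\by\sim\Unif$, I would decompose
\[
    \iprod{\bx,\bz} = \iprod{\by,\bz}+\iprod{\bx-\by,\bz},
\]
and pick a threshold $\delta$ of the same order as $u(t)$ (choosing it so that both of the error terms in $\eps(t)$ come out right). Then for each fixed $\bz$, conditioning on whether $|\iprod{\bx-\by,\bz}|<\delta$ gives the sandwich
\[
    Y^-(\bz)-E(\bz)\;\le\;X(\bz)\;\le\;Y^+(\bz)+E(\bz),
\]
where $Y^\pm(\bz)\coloneqq \Pr_{\by\sim\Unif}[\iprod{\by,\bz}\ge \tau(p)\mp\delta]$ are the measures of slightly enlarged / shrunken $p$-caps and $E(\bz)\coloneqq \Pr_{(\bx,\by)\sim\calD}[|\iprod{\bx-\by,\bz}|\ge\delta]$ is the ``large transport projection'' probability.

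The next step is to control the two error sources separately. For $E(\bz)$, I would apply \pref{lem:inner-prod-concentration} (with parameters matched so that the stated threshold is at most $\delta$), yielding
\[
    E(\bz)\le 2\exp(-dt^2/4)+\exp(-\kappa)
\]
with probability at least $1-2\exp(-dt^2/4)$ over $\bz\sim\Unif$. This contributes the additive term $\frac{2\exp(-dt^2/4)+\exp(-\kappa)}{p}$ in $\eps(t)$ (after dividing by $p$). For $|Y^\pm(\bz)-p|$, I would invoke \pref{lem:beta-concentration}: since $Y^+(\bz)-Y^-(\bz)$ is exactly the measure of the annular slab $\{y:\tau(p)-\delta\le\iprod{y,\bz}\le\tau(p)+\delta\}$ around $\bz$, it is bounded by $p\cdot \Cbetaac\,\delta\exp(2d\tau(p)\delta)\sqrt{d\ln(1/p)}$ for \emph{every} $\bz$, and $\min(Y^-,Y^+)\le p\le \max(Y^-,Y^+)$, so the same bound controls $|Y^\pm(\bz)-p|$. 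With $\delta$ chosen proportional to $u(t)$, this yields the first summand of $\eps(t)$.

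Combining these two estimates, on the good event over $\bz$ we obtain $|X(\bz)-p|\le p\cdot\eps(t)$, and the failure probability over $\bz$ is at most $2\exp(-dt^2/4)$, which is exactly the conclusion. \textbf{Main obstacle.} Conceptually there is nothing deep left after \pref{lem:inner-prod-concentration} and \pref{lem:beta-concentration} are available; the only real care is in bookkeeping: the parameter $\delta$ must be chosen so that (i) the transport-error tail from \pref{lem:inner-prod-concentration} kicks in at the threshold $\delta$ (so one may need to re-scale $t$ or $\kappa$ by an absolute constant when invoking that lemma), and (ii) the resulting slab-width fed into \pref{lem:beta-concentration} produces the precise expression $\Cbetaac\, u(t)\exp(2d\tau(p)u(t))\sqrt{d\ln(1/p)}$. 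I expect the cleanest way to handle this is to set $\delta=u(t)$, apply \pref{lem:inner-prod-concentration} at a mildly rescaled scale, and absorb any leftover constants into $u(t)$'s definition — the proof otherwise is just the sandwich argument sketched above.
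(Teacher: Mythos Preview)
Your proposal is correct and follows essentially the same approach as the paper's proof: the same optimal-transport sandwich $Y^-(\bz)-E(\bz)\le X(\bz)\le Y^+(\bz)+E(\bz)$ with $\delta=u(t)$, controlling $E(\bz)$ via \pref{lem:inner-prod-concentration} and $|Y^\pm(\bz)-p|$ via \pref{lem:beta-concentration}. The rescaling concern you raise is a non-issue: the factor $\sqrt{8}$ (rather than $\sqrt{2}$) in the definition of $u(t)$ already makes it at least the threshold appearing in \pref{lem:inner-prod-concentration}, so that lemma applies directly at scale $u(t)$ without any adjustment.
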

\begin{proof}
    Let $\calD$ be the optimal transport coupling between $\nu$ and $\Unif$.  For any $z\in\bbS^{d-1}$ and $t\ge 0$:
    \begin{align*}
        X(z) &= \Pr_{(\bx,\by)\sim\calD}\left[\langle \by, z\rangle > \tau(p) - \langle z, \bx-\by\rangle\right] \\
        &\le \Pr_{(\bx,\by)\sim\calD}[\langle \by, z\rangle > \tau(p) - \max\{\langle z,\bx-\by\rangle, u(t)\}] \\
        &\le \Pr_{\by\sim\Unif}[\langle \by, z\rangle > \tau(p)-u(t)] + \Pr_{(\bx,\by)\sim\calD}[|\langle z,\bx-\by\rangle|>u(t)] \\
        &\le p\left(1+\Cbetaac \cdot u(t)\cdot\exp\left(2d\tau(p)u(t)\right) \cdot \sqrt{d\log\frac{1}{p}}\right) + \Pr_{(\bx,\by)\sim\calD}[|\langle z,\bx-\by\rangle|>u(t)] \numberthis \label{eq:upper-bound-det-X}
    \end{align*}
    where the last step of the chain of inequalities follows from \pref{lem:beta-concentration}. Identically,
    \[
        X(z) \ge  p\left(1-\Cbetaac \cdot u(t)\cdot\exp\left(2d\tau(p)u(t)\right) \cdot \sqrt{d\log\frac{1}{p}}\right) - \Pr_{(\bx,\by)\sim\calD}[|\langle z,\bx-\by\rangle|>u(t)] \numberthis \label{eq:lower-bound-det-x}
    \]
    Then by \pref{lem:inner-prod-concentration}, when $\bz\sim\Unif$, we can obtain an upper bound of $2\exp(-dt^2/4) + \exp(-\kappa)$ on the second term in the RHS of \pref{eq:upper-bound-det-X} and \pref{eq:lower-bound-det-x} that holds except with probability $2\exp(-dt^2/4)$, which implies
    \[
        \Pr_{\bz\sim\Unif}\left[|X(\bz)-p| > p\cdot \Cbetaac \cdot u(t)\cdot\exp\left(2d\tau(p)u(t)\right) \cdot \sqrt{d\log\frac{1}{p}} + 2\exp(-dt^2/4) + \exp(-\kappa) \right] \le 2\exp(-dt^2/4).
    \]
    thus completing the proof.
\end{proof}

\subsection{Different parameterizations of the sphere cap concentration}

We'll now derive a few useful corollaries of \pref{lem:two-sets-concentrate}.
First, for intuition, consider the following immediate consequence regarding the intersection of a set in $\bbS^{d-1}$ with a random sphere cap.
\begin{corollary}   \label{cor:cap-intersect-simple}
    Let $d\ge\log^{10} n$ and let $Q\subseteq\bbS^{d-1}$ be a set such that $\Unif(Q)\ge\frac{1}{n^{\log^3 n}}$.  Then for $\bz\sim\Unif$:
    \[
        \frac{\Unif\parens*{\scap(\bz)\cap Q}}{\Unif\parens*{\scap(\bz)}\cdot\Unif\parens*{Q}} \notin \parens*{1 \pm \frac{\log^5 n}{\sqrt{d}} \pm n^{-\log^2 n} }
    \]
    with probability at most $n^{-\Omega(\log^3 n)}$.
\end{corollary}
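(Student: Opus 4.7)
The plan is to apply \pref{lem:two-sets-concentrate} with $\nu$ chosen to be the uniform distribution on $Q$. Under this choice, the relative density satisfies $\|\nu\|_\infty = 1/\rho(Q) \le n^{\log^3 n}$, so $\ln\|\nu\|_\infty \le \log^4 n$. The random variable $X(\bz) = \Pr_{\bx\sim\nu}[\iprod{\bx,\bz} > \tau(p)]$ then equals exactly $\rho(\scap(\bz)\cap Q)/\rho(Q)$, while $\rho(\scap(\bz)) = p$ by definition of the $p$-cap, so the ratio appearing in the corollary is precisely $X(\bz)/p$. It therefore suffices to show $|X(\bz) - p|/p \le \frac{\log^5 n}{\sqrt{d}} + n^{-\log^2 n}$ with the claimed probability.

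To match the target failure probability $n^{-\Omega(\log^3 n)}$ coming from the lemma, I would take $t = c \log^2 n / \sqrt{d}$ for a sufficiently large constant $c$; this makes $2\exp(-dt^2/4) \le n^{-\Omega(\log^3 n)}$. To push the $\exp(-\kappa)/p$ contribution into the $n^{-\log^2 n}$ budget (which implicitly requires $p$ not to be astronomically small, the regime of interest in this paper), set $\kappa = C \log^4 n$ for a large constant $C$. With $\ln\|\nu\|_\infty \le \log^4 n$, both summands inside $u(t) = \bigl(\sqrt{8\ln\|\nu\|_\infty/(d-1)} + \sqrt{8\kappa/(d-1)}\bigr)\cdot t$ are of order $\log^2 n/\sqrt{d}$, so $u(t) = O(\log^4 n/d)$.

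It then remains to bound the main contribution to $\epsilon(t)$, namely $\Cbetaac \cdot u(t)\cdot \exp(2d\tau(p)u(t))\cdot \sqrt{d\log(1/p)}$. Invoking \pref{lem:upper-bound-tau}, $\tau(p) \le \sqrt{2\log(1/p)/d}$, so the exponent satisfies $2d\tau(p)u(t) = O\bigl(\sqrt{\log(1/p)}\cdot \log^4 n/\sqrt{d}\bigr)$. Under the hypothesis $d \ge \log^{10} n$ (and with $p$ bounded below by an inverse polynomial say), this exponent is $o(1)$, and hence $\exp(2d\tau(p)u(t)) = O(1)$. The remaining factor $u(t)\cdot\sqrt{d\log(1/p)}$ is then $O(\log^{4.5} n/\sqrt{d})$, comfortably inside the $\log^5 n/\sqrt{d}$ budget.

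The proof is essentially just bookkeeping, with no new technique beyond \pref{lem:two-sets-concentrate}. The one point requiring care is the allocation of the two error sources in $\epsilon(t)$ to the two pieces of the stated bound: the transport-distance term $\Cbetaac\cdot u(t)\cdot\exp(2d\tau(p)u(t))\cdot\sqrt{d\log(1/p)}$ is absorbed by the $\log^5 n/\sqrt{d}$ slack, while the tail term $(2\exp(-dt^2/4)+\exp(-\kappa))/p$ is absorbed by the $n^{-\log^2 n}$ slack. Verifying that the choices $t \asymp \log^2 n/\sqrt{d}$ and $\kappa \asymp \log^4 n$ simultaneously make both fit is the main thing to check.
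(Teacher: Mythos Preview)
Your proposal is correct and follows exactly the approach the paper intends: the paper states this corollary as an ``immediate consequence'' of \pref{lem:two-sets-concentrate} without giving details, and your instantiation with $\nu$ uniform on $Q$, $t \asymp \log^2 n/\sqrt{d}$, and $\kappa \asymp \log^4 n$ carries this out cleanly. Your flagging of the implicit assumption that $p$ is at least inverse-polynomial is accurate and consistent with the paper's standing regime.
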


We will make use of the following convenient specialization of \pref{lem:two-sets-concentrate}.
\begin{corollary}   \label{cor:two-sets-subexp}
    Let $\nu$ be a distribution on $\bbS^{d-1}$ and for $z\in\bbS^{d-1}$, let $X(z)\coloneqq \Pr_{\bx\sim\nu}\bracks*{ \angles*{\bx, z} \ge \tau(p) }$.  Then for $s \le 1$ and for some constant $\Cconcalt$,
    \[
        \Pr_{\bz \sim \Unif}\bracks*{|X(\bz)-p|>ps} \le 2\exp\parens*{-\frac{ds^2}{\Cconcalt\parens*{\sqrt{\ln\norm*{\nu}_{\infty}} + \sqrt{\ln\frac{d}{p}} }^2 \cdot \log\frac{1}{p} \cdot \log\frac{d}{p}  }}
    \]
\end{corollary}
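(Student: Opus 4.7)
The plan is to derive this as a careful specialization of Lemma~\ref{lem:two-sets-concentrate} by tuning the two free parameters $t$ and $\kappa$ to match the claimed tail rate. I would set $\kappa := \log(d/p)$, so that the factor $(\sqrt{\ln \|\nu\|_\infty} + \sqrt{\kappa})$ appearing in $u(t)$ matches the $(\sqrt{\ln \|\nu\|_\infty} + \sqrt{\log(d/p)})$ factor in the target denominator, and so that $\exp(-\kappa)/p = 1/d$ is negligible. Then I would choose
\[
t := \frac{c_0 \, s}{\bigl(\sqrt{\ln \|\nu\|_\infty} + \sqrt{\log(d/p)}\bigr)\sqrt{\log(1/p)\,\log(d/p)}}
\]
for a sufficiently small absolute constant $c_0 > 0$, designed so that $dt^2/4$ exactly realizes the target exponent (up to the constant $\Cconcalt = 4/c_0^2$).

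The main task is then to verify that $\eps(t) \leq s$ with this choice. Unpacking, $u(t) = \sqrt{8/(d-1)} \bigl(\sqrt{\ln \|\nu\|_\infty} + \sqrt{\log(d/p)}\bigr) t = O(s/\sqrt{d \log(1/p)\log(d/p)})$, so $u(t)\sqrt{d\log(1/p)} = O(s/\sqrt{\log(d/p)})$. Using $\tau(p) \leq \sqrt{2\log(1/p)/d}$ from Lemma~\ref{lem:upper-bound-tau}, this also yields $2d\tau(p) u(t) = O(s/\sqrt{\log(d/p)}) = O(1)$, so $\exp(2d\tau(p) u(t)) = O(1)$. Plugging these into the first term of $\eps(t)$ gives $\Cbetaac \cdot u(t) \cdot O(1) \cdot \sqrt{d\log(1/p)} = O(c_0 s/\sqrt{\log(d/p)})$, which is at most $s/4$ for $c_0$ small enough. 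For the second term, $\exp(-\kappa)/p = 1/d$ while $2\exp(-dt^2/4)/p$ is at most $s/4$ whenever $dt^2/4 \geq \log(8/(ps))$. This last inequality holds in the ``nontrivial'' regime, namely whenever the claimed bound is below a constant (equivalently, whenever $ds^2$ is large compared to $(\sqrt{\ln\|\nu\|_\infty}+\sqrt{\log(d/p)})^2 \log(1/p)\log(d/p)$); otherwise the corollary's conclusion is trivially true since $2\exp(-dt^2/4) \geq 1$.

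Combining these estimates gives $\eps(t) \leq s$, and Lemma~\ref{lem:two-sets-concentrate} then yields $\Pr_{\bz \sim \Unif}[|X(\bz) - p| > ps] \leq 2\exp(-dt^2/4)$, which is exactly the claimed bound with $\Cconcalt := 4/c_0^2$. The only real subtlety is the case analysis on $s$ to handle the boundary between the trivial and nontrivial regimes (and the mild requirement that $s$ not be so small as to make $1/d \geq s/4$, handled in the trivial regime), but this is bookkeeping rather than a substantive obstacle; the core of the proof is entirely in the choice of $t$ and $\kappa$.
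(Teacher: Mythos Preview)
Your overall plan—specialize Lemma~\ref{lem:two-sets-concentrate} by setting $\kappa \asymp \log(d/p)$ and solving for $t$ in terms of $s$—is exactly the paper's approach, and your estimates on $u(t)$, on $2d\tau(p)u(t)$, and on the first term of $\eps(t)$ are all fine. The gap is in your handling of the residual term $2\exp(-dt^2/4)/p$ in $\eps(t)$.

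You assert that ``$dt^2/4 \ge \log(8/(ps))$ holds in the nontrivial regime, otherwise the claimed bound is trivially $\ge 1$.'' But with your choice of $t$, the claimed bound equals $2\exp(-dt^2/4)$ by construction, so the trivial regime is exactly $dt^2/4 \le \ln 2$. There is a large intermediate range $\ln 2 < dt^2/4 < \log(8/(ps))$ that your dichotomy does not cover. Concretely, at the boundary of your trivial regime you have $dt^2/4 \approx \text{const}$, while $\log(8/(ps)) \ge \log(1/p)$, so $2\exp(-dt^2/4)/p \approx 1/p$ can be enormous and $\eps(t) \gg s$; Lemma~\ref{lem:two-sets-concentrate} then tells you nothing about $\Pr[|X(\bz)-p|>ps]$. (For instance, take $\|\nu\|_\infty = O(1)$, $s=1$, and $d$ roughly $\log^{3.5}(1/p)$: then $dt^2/4 \asymp \sqrt{\log(1/p)}$ sits strictly between $\ln 2$ and $\log(1/p)$.)

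The paper's fix is to choose $t$ \emph{larger} than yours by a factor of order $\sqrt{\log(d/p)}$, namely $t \asymp s / \bigl((\sqrt{\ln\|\nu\|_\infty}+\sqrt{\log(1/p)})\sqrt{\log(1/p)}\bigr)$. With this $t$, one first obtains a bound with denominator $(\sqrt{\ln\|\nu\|_\infty}+\sqrt{\log(1/p)})^2\log(1/p)$, which is \emph{tighter} than the stated corollary; the extra $\log(d/p)$ in the corollary's denominator is then introduced purely by weakening. The payoff of the larger $t$ is that, once $\Cconcalt$ is taken large enough, the non-trivial regime forces $t \ge 4\sqrt{\log(d/p)/d}$, hence $dt^2/4 \ge 4\log(d/p) \ge \log(1/p)$, and the offending term $2\exp(-dt^2/4)/p$ becomes $O(p^3/d^4)$, which is easily absorbed into $\eps(t) \le Cs$. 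Your parametrization ties $dt^2/4$ to the target exponent too tightly to create this slack; dropping the extra $\sqrt{\log(d/p)}$ from your $t$ and then relaxing at the end resolves the issue.
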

\begin{proof}
    The idea is to apply \pref{lem:two-sets-concentrate} by setting $\kappa = 4\ln\frac{d}{p}$ and using the parameterization
    \[
        t = \frac{s}{2\parens*{\sqrt{8\ln\norm*{\nu}_{\infty}} + \sqrt{32\ln\frac{1}{p}} }\sqrt{\ln\frac{1}{p}} }.
    \]
    Clearly, the statement is true when
    \[
        s \le \sqrt{\frac{\Cconcalt}{2d}}\cdot\parens*{ \sqrt{\ln\norm*{\nu}_{\infty} } + \sqrt{\ln\frac{d}{p}} }\cdot\sqrt{\log\frac{1}{p} \cdot \log\frac{d}{p}}
    \]
    since $\Pr\bracks*{|X(z)-p|>ps} \le 1 < 2\exp\parens*{-1/2}$.
    Hence, we restrict our attention to when
    \[
        s\in H \coloneqq \bracks*{\sqrt{\frac{\Cconcalt}{2d}}\cdot\parens*{ \sqrt{\ln\norm*{\nu}_{\infty} } + \sqrt{\ln\frac{d}{p}} }\cdot\sqrt{\log\frac{1}{p} \cdot \log\frac{d}{p}}, 1}.
    \]
    Let $\eps(t)$ and $u(t)$ be as in the statement of \pref{lem:two-sets-concentrate}.  Using $s\le 1$ and \pref{lem:upper-bound-tau}, we know $\exp(2d\tau(p)u(t)) \le O(1)$.  This tells us that for some constant $C$,
    \[
        \eps(t) \le C\parens*{\sqrt{\frac{8}{d-1}\ln\norm*{\nu}_{\infty}} + \sqrt{\frac{32\ln\frac{1}{p}}{d-1}}}\sqrt{d\log\frac{1}{p}}t + \frac{2\exp(-dt^2/4) + \frac{p^4}{d^4}}{p}.
    \]
    We can choose our constant $\Cconcalt$ to be a large enough so that when $s\in H$, then $t \ge 4\sqrt{\frac{\log\frac{d}{p}}{d}}$.
    Observe that once $t\ge4\sqrt{\frac{\log\frac{d}{p}}{d}}$, for large enough $d$:
    \begin{align*}
        \eps(t) &\le 2C\parens*{\sqrt{\frac{8}{d-1}\ln\norm*{\nu}_{\infty}} + \sqrt{\frac{32\ln\frac{1}{p}}{d-1}}}\sqrt{d\log\frac{1}{p}}t\\
        &= C'\cdot 2\parens*{\sqrt{8\ln\norm*{\nu}_{\infty}} + \sqrt{32\ln\frac{1}{p}}}\cdot\sqrt{\ln\frac{1}{p}}\cdot t\\
        &= C's
    \end{align*}
    for some other constant $C'$.  Then by \pref{lem:two-sets-concentrate}, when $s\in H$:
    \begin{align*}
        \Pr\bracks*{|X(z)-p| > ps} &\le 2\exp\parens*{-\frac{ds^2}{4C'^2\parens*{\sqrt{8\ln\norm*{\nu}_{\infty}} + \sqrt{32\ln\frac{1}{p}} }^2 \ln\frac{1}{p} }} \\
        &\le 2\exp\parens*{-\frac{ds^2}{\Cconcalt\parens*{\sqrt{\ln\norm*{\nu}_{\infty}} + \sqrt{\ln\frac{d}{p}} }^2 \cdot \log\frac{1}{p} \cdot \log\frac{d}{p}  }}
    \end{align*}
    where the second inequality arises from choosing $\Cconcalt$ to be large enough, which completes the proof.
\end{proof}

\pref{cor:two-sets-subexp} can be extended to the case when $\bz\sim\mu$ with a worse quantitative upper bound depending on $\mu$ from \pref{obs:bounded-dists}.
\begin{corollary}   \label{cor:two-sets-subexp-mu}
    Let $\nu$ and $\mu$ be distributions on $\bbS^{d-1}$ and for $z\in\bbS^{d-1}$, let $X(z)\coloneqq \Pr_{\bx\sim\nu}[\langle \bx, z \rangle > \tau(p)]$.  Then for $s\le 1$
    we have:
    \[
        \Pr_{\bz \sim \mu}\bracks*{|X(\bz)-p|>ps} \le 2\exp\parens*{-\frac{ds^2}{\Cconcalt\parens*{\sqrt{\ln\norm*{\nu}_{\infty}} + \sqrt{\ln\frac{d}{p}} }^2 \cdot \log\frac{1}{p} \cdot \log\frac{d}{p}  }}\cdot \|\mu\|_{\infty}
    \]
\end{corollary}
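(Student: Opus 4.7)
The proof is essentially a direct change-of-measure from the uniform distribution $\Unif$ to $\mu$, using the already-established \pref{cor:two-sets-subexp} as a black box. The key observation is that the event $\{z : |X(z) - p| > ps\}$ is defined entirely in terms of $\nu$ and the threshold $\tau(p)$, without reference to the distribution of $\bz$. So once we have a bound on the uniform measure of this event, we can transfer it to any other distribution at the cost of a multiplicative factor of the relative density.

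Concretely, the plan is as follows. Let $\calE_s \coloneqq \{z \in \bbS^{d-1} : |X(z) - p| > ps\}$. By \pref{obs:bounded-dists} applied to $\mu$ viewed as an absolutely continuous measure with respect to $\Unif$ whose density is bounded by $\|\mu\|_\infty$, we have
\[
    \mu(\calE_s) \le \|\mu\|_\infty \cdot \Unif(\calE_s).
\]
Then \pref{cor:two-sets-subexp} directly bounds $\Unif(\calE_s)$ by the subgaussian-type tail
\[
2\exp\!\left(-\frac{ds^2}{\Cconcalt\left(\sqrt{\ln\|\nu\|_\infty} + \sqrt{\ln\tfrac{d}{p}}\right)^2 \log\tfrac{1}{p}\log\tfrac{d}{p}}\right),
\]
and multiplying by $\|\mu\|_\infty$ yields the claim.

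There is no serious obstacle here; the entire content of the corollary is the pointwise density bound together with the already proved concentration for $\bz \sim \Unif$. The only care needed is to make sure the corollary is stated so that the event is well-defined independently of the sampling distribution of $\bz$, which is indeed the case, since $X(z)$ depends only on $z$ and $\nu$.
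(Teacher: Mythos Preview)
Your proposal is correct and matches the paper's approach exactly: the paper simply notes that \pref{cor:two-sets-subexp} extends to $\bz\sim\mu$ via \pref{obs:bounded-dists}, which is precisely the change-of-measure argument you give.
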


\section{Concentration for intersections of caps and anti-caps} \label{sec:martingale}

In this section, we prove concentration of measure for the intersection of random $p$-caps and $p$-anticaps with any fixed set $L\subseteq\bbS^{d-1}$.  

\begin{lemma}[Concentration for the intersection of $j$ caps and $k-j$ anti-caps]	\label{lem:martingale-intersect}
Let $\rho$ be the uniform measure over $\bbS^{d-1}$, and let $L \subset \bbS^{d-1}$, and let $k > 0$ be an integer.
For $(\bv_1,\dots,\bv_k)\sim\Unif^{\otimes k}$, let
\[
	\bS_i \coloneqq
	\begin{cases}
		\scap(\bv_i) &\text{if }i\le j \\
		\anticap(\bv_i) &\text{if }i>j,
	\end{cases}
\]
and let $\bL_t = L \cap \bigcap_{i=1}^{t}\bS_i$.  
	Then the ratio $\bR\coloneqq \frac{\Unif(\bL_k)}{\Unif(L)p^j(1-p)^{k-j}}$ is concentrated as follows:
	\[
		\Pr\left[|\bR-1|>s\right] \le \eps_1(s) + k\eps_1(.5) + \eps_2
	\]
	where
	\begin{align*}
		\eps_1(s) \coloneqq 2\exp\left(-\frac{ds^2}{C'(j+(k-j)p^2)F(j)}\right) \\
		\eps_2 \coloneqq \frac{4k}{p^2} \exp\left(-\frac{d}{CF(j)}\right)
	\end{align*}
	$F(j) \coloneqq \left(\sqrt{\ln\frac{1}{\Unif(L)} + j\ln\frac{1}{p} + (k-j)\ln\frac{1}{1-p}} + \sqrt{\ln\frac{d}{p}} \right)^2\log\frac{1}{p}\log\frac{d}{p}$, and $C,C' > 0$ are universal constants.
\end{lemma}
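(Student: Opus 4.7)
The plan is to realize $\bR$ as the terminal value of a positive martingale obtained by revealing $\bv_1,\dots,\bv_k$ in order, and then apply the subgaussian Azuma inequality \pref{lem:martingale-azuma}. With $\calV_t = \sigma(\bv_1,\dots,\bv_t)$, $q_i = p$ for $i \le j$, and $q_i = 1-p$ for $i > j$, set
\[
\bR_t \coloneqq \frac{\rho(\bL_t)}{\rho(L)\prod_{i\le t}q_i}, \qquad \bY_t \coloneqq \frac{\rho(\bL_{t-1}\cap \bS_t)}{q_t\,\rho(\bL_{t-1})},
\]
so that $\bR_t = \bR_{t-1}\bY_t$, $\bR_0 = 1$, and $\bR_k = \bR$. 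Rotational invariance of $\rho$ yields $\E[\bY_t\mid \calV_{t-1}] = 1$, making $\bR_t$ a positive $\calV_t$-martingale with Doob decomposition $\bR - 1 = \sum_{t=1}^k \bR_{t-1}(\bY_t-1)$; the task is to control this sum.

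Conditional on $\calV_{t-1}$, $\bY_t$ is precisely the random variable handled by \pref{cor:two-sets-subexp}, applied to the distribution $\nu$ that is uniform on $\bL_{t-1}$ (so $\|\nu\|_\infty = 1/\rho(\bL_{t-1})$). For caps ($t\le j$), $\bY_t-1 = (X(\bv_t)-p)/p$ inherits a subgaussian tail with a proxy depending on $\ln(1/\rho(\bL_{t-1}))$; for anti-caps ($t>j$), $\bY_t-1 = -(X(\bv_t)-p)/(1-p)$ picks up an additional factor of $(p/(1-p))^2 \asymp p^2$ in the proxy, valid while $|\bY_t-1|\le p/(1-p)$ (corresponding to the parameter $s\le 1$ in \pref{cor:two-sets-subexp}).

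To upgrade $\ln(1/\rho(\bL_{t-1}))$ to $F(j)$ uniformly across steps, I introduce the stopping time $\tau \coloneqq \min\{t : \bR_t\notin[1/2, 3/2]\} \wedge (k+1)$: on $\{t\le\tau\}$, $\rho(\bL_{t-1})\ge \tfrac{1}{2}\rho(L)\prod_{i<t}q_i$, giving $\ln(1/\rho(\bL_{t-1})) \le \ln(2/\rho(L)) + j\ln(1/p) + (k-j)\ln(1/(1-p))$, and the conditional subgaussian norm of $\bY_t-1$ is $O(\sqrt{F(j)/d})$ for caps and $O(p\sqrt{F(j)/d})$ for anti-caps. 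The stopped martingale $\bN_t \coloneqq \bR_{t\wedge\tau} - 1$ has conditional increment $\bR_{t-1}(\bY_t-1)\mathbf{1}[t\le\tau]$, whose subgaussian norm is at most $(3/2)\|\bY_t-1\|_{\subgnorm}$ on $\{t\le\tau\}$ (using $\bR_{t-1}\le 3/2$); summing gives $\sum_t K_t^2 \lesssim (j + (k-j)p^2)F(j)/d$, and \pref{lem:martingale-azuma} produces $\Pr[|\bN_k|>s]\le \eps_1(s)$.

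Since $\bR = \bN_k + 1$ on $\{\tau > k\}$, the main remaining obstacle is bounding $\Pr[\tau\le k]$. I would decompose this according to how $\tau$ is triggered: (a) some increment $|\bY_t-1|>1/2$ while still in the subgaussian regime, whose probability summed over $t$ is at most $k\eps_1(1/2)$ by a union bound using the per-step tail at $s=1/2$; and (b) an anti-cap ``catastrophic'' outlier with $|\bY_t - 1|$ in the non-subgaussian range $(p/(1-p), 1]$, each with per-step probability at most $2\exp(-d/(CF(j)))$ via \pref{cor:two-sets-subexp} at its maximum parameter $s=1$. The impact of (b) on $\bR$ requires a crude deterministic bound $\bR_{t-1}\le 1/(\rho(L)\prod_{i<t}q_i)$, which contributes the $1/p^2$ prefactor in $\eps_2$ once summed over anti-cap steps. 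The delicate part will be juggling the two subgaussian regimes for anti-caps so that the combined error from (a) and (b) collapses into the stated $k\eps_1(1/2) + \eps_2$.
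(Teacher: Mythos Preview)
Your high-level plan—realize $\bR$ as a multiplicative martingale, stop it when it leaves a bounded window, and apply the subgaussian Azuma inequality—is exactly the paper's strategy. However, there is a real gap in your Step~1 (the Azuma bound on the stopped martingale), and your Step~2 (bounding $\Pr[\tau\le k]$) does not repair it.

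The issue is your claim that the conditional subgaussian norm of $\bY_t-1$ is $O(\sqrt{F(j)/d})$ for caps and $O(p\sqrt{F(j)/d})$ for anti-caps. \pref{cor:two-sets-subexp} only furnishes the tail $\Pr[|X(\bz)-p|>ps]\le 2\exp(-ds^2/\cdots)$ for $s\le 1$; in terms of $\bY_t-1$ this is the range $|\bY_t-1|\le 1$ for caps and $|\bY_t-1|\le p/(1-p)$ for anti-caps. Beyond that you only have the deterministic bounds $|\bY_t-1|\lesssim 1/p$ (caps) and $|\bY_t-1|\le 1$ (anti-caps). Feeding those endpoints into the definition of $\|\cdot\|_{\psi_2}$ forces the subgaussian norm up by a factor of roughly $1/p$ in both cases, so the variance proxy you hand to \pref{lem:martingale-azuma} is not $(j+(k-j)p^2)F(j)/d$ and you do not recover $\eps_1(s)$. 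Relegating the ``catastrophic'' range to the analysis of $\Pr[\tau\le k]$ does not help here: Azuma needs the genuine $\psi_2$-norm of the increments of the stopped process, and the increment at the stopping time itself can land in the catastrophic range (you cannot stop \emph{before} the bad step without breaking adaptedness).

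The paper closes this gap with an extra device layered on top of the stopping time: an $(\alpha,\beta)$-\emph{truncation} of each increment $\bDelta_i=\bQ_i-\bQ_{i-1}$, with $\alpha=1$ for caps, $\alpha=p/(1-p)$ for anti-caps, and $\beta=p^2\alpha$. The truncated increment $\wt\bDelta_i$ is (i) bounded by $\alpha$, so the tail from \pref{cor:two-sets-subexp} now covers its entire support and the claimed $K_i$'s become legitimate; (ii) still centered, since the truncation is engineered so that $\E\wt\bDelta_i=(1-\theta)\E\bDelta_i=0$, hence $\wt\bQ_t$ remains a martingale and \pref{lem:martingale-azuma} applies; and (iii) equal to $\bDelta_i$ except on an event of probability at most $2\theta\le 2\Pr[|\bDelta_i|>\alpha]/\beta$. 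Point (iii) is the true source of the $1/p^2$ in $\eps_2$—the denominator $\beta=p^2\alpha$ in the coupling bound—not a crude bound on $\bR_{t-1}$ as you suggest (on $\{t\le\tau\}$ you already have $\bR_{t-1}\le 3/2$). The argument then compares $\wt\bQ$, $\bQ$, and $\bR$ in that order, which is precisely how the three terms $\eps_1(s)$, $k\eps_1(1/2)$, and $\eps_2$ arise.
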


A caricature of the proof of the above lemma is the following.
The main takeaway from \pref{sec:transport-result} is that the intersection of a random $p$-cap with a set $L$ has measure $(1\pm\eps)p\Unif(L)$ where the fluctuation $\eps$ is centered and has typical magnitude $\wt{O}\parens*{\sqrt{\log\frac{1}{\Unif(L)}/d}}$.
As an upshot the intersection of a random $p$-anti-cap with $L$ has measure $\Unif(L) - (1\pm\eps)p\Unif(L) = \parens*{1\pm\frac{p\eps}{1-p}}(1-p)\Unif(L)$.
Thus, when intersecting $j$ random caps and $(k-j)$ random anticaps, we expect a multiplicative fluctuation that resembles $\left(1\pm\left(\sqrt{j}\eps + \sqrt{k-j}\cdot\frac{p}{1-p}\cdot\eps\right)\right)$.
Since the fluctuation term per anti-cap is roughly $p\eps$ instead of merely $\eps$, we get extremely strong concentration for intersections of anti-caps, as articulated by the below corollary.
For example, in the $p = \frac{\alpha}{n}$ regime, an intersection of $m = \Theta(n)$ anti-caps will have measure $(1-p)^m(1\pm \eps)$ with typical deviations $\eps = o(\sqrt{1/n})$ if $d > \polylog n$ for a sufficiently large power of $\log n$.
(See also \pref{cor:caps-and-anti} for another application of \pref{lem:martingale-intersect}).
\begin{corollary}[Intersection of anti-caps]\label{cor:anti-caps}
	Suppose $m,n,d \in \Z_+$, $a \ge 1$, and $p \in \R_+$ satisfy $m \le n$, $d \le n^{100}$, $\frac{1}{n^2} \ll p \le \frac{1}{2}$, and $mp \le \log^a n$ for $n$ sufficiently large.
	Let $L \subset \bbS^{d-1}$ with $\rho(L) \ge e^{-\log^a n}$.
	Let $\bv_1,\ldots,\bv_m \sim \bbS^{d-1}$ uniformly at random, and let $\bA = \{w \in \bbS^{d-1} \mid \iprod{w, \bv_i} \le \tau(p) \, \, \forall i \in [m] \}$ be the intersection of anti-caps of the $\bv_i$'s.
	Then there exist universal constants $C,C'$ such that for all $n$ sufficiently large, for all $t \ge 0$,
	\[
		\Pr_{\bA}\left[\left|\frac{\rho(\bA\cap L)}{(1-p)^{m}\rho(L)} -1\right| > t \cdot\sqrt{mp^2}\right] \le 
		\exp\left(- C \min\left\{\frac{t^2\cdot d}{(\ln n)^{a + 2}},\, \frac{\sqrt{d}-C'(\ln n)^{a + 3}}{(\ln n)^{a + 2}}\right\}\right).
	\]
\end{corollary}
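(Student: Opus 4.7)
The plan is to invoke \pref{lem:martingale-intersect} directly with $j=0$, $k=m$, and the choice $s = t\sqrt{mp^2}$. The anti-cap--only case corresponds exactly to taking $j = 0$, and the choice of $s$ is dictated by the form of $\eps_1$: the factor $(j+(k-j)p^2) = mp^2$ in the denominator of $\eps_1$'s exponent cancels against $s^2$, yielding
\[
\eps_1(t\sqrt{mp^2}) = 2\exp\!\left(-\tfrac{dt^2}{C'F(0)}\right),
\]
which will furnish the first summand inside the $\min$ in the final bound. Substituting into the lemma gives
\[
\Pr\!\left[\left|\tfrac{\rho(\bA \cap L)}{(1-p)^m \rho(L)} - 1\right| > t\sqrt{mp^2}\right] \le \eps_1(t\sqrt{mp^2}) + m\,\eps_1(\tfrac12) + \eps_2.
\]

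Next I would estimate $F(0)$ under the hypotheses. Using $\rho(L) \ge e^{-\log^a n}$ gives $\ln(1/\rho(L)) \le \log^a n$; using $p \le 1/2$ gives $m\ln(1/(1-p)) \le 2mp \le 2\log^a n$; and $p \ge n^{-2}$ together with $d \le n^{100}$ yields $\log(1/p), \log(d/p) = O(\log n)$. Combining these, $F(0) \le C_0(\log n)^{a+2}$ for an absolute constant $C_0$. Plugging this back, the first term is at most $2\exp(-dt^2/(C_1(\log n)^{a+2}))$, which matches the first argument of the $\min$.

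The remaining work is to absorb the two $t$-independent terms into the second argument of the $\min$. Using $mp^2 \le p \cdot mp \le \log^a n$, I would bound $mp^2 F(0) \le C_0 (\log n)^{2a+2}$, which gives
\[
m\,\eps_1(\tfrac12) \le 2m\exp\!\left(-\tfrac{d}{C_2(\log n)^{2a+2}}\right), \qquad \eps_2 \le \tfrac{4m}{p^2}\exp\!\left(-\tfrac{d}{C_3(\log n)^{a+2}}\right).
\]
Taking logarithms, each is at most $\exp\bigl(O(\log n) - \Omega\!\bigl(d/(\log n)^{2a+2}\bigr)\bigr)$, where the $O(\log n)$ comes from $\log(m)$ and $\log(1/p^2)$. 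To convert this into the stated $\sqrt{d}$-form: whenever $\sqrt{d} \ge (\log n)^a$ one has $d/(\log n)^{2a+2} \ge \sqrt{d}/(\log n)^{a+2}$, so the exponent $-\Omega(d/(\log n)^{2a+2})$ is at most $-\Omega(\sqrt{d}/(\log n)^{a+2})$; and when $\sqrt{d} < (\log n)^a$ the asserted bound is trivial. The additive $O(\log n)$ loss from the prefactors is absorbed into the $C'(\log n)^{a+3}/(\log n)^{a+2} = C'\log n$ subtraction in the stated exponent. A final union bound over the two families of terms produces the $\min$ in the statement.

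There is no real conceptual obstacle; the main care needed is bookkeeping the logarithmic factors and choosing constants so that the single $\sqrt{d}$-form dominates both the $m\eps_1(1/2)$ and $\eps_2$ contributions uniformly in the allowed parameter range. The only non-obvious move is recognizing that the weaker $\sqrt{d}$ bound is exactly what one needs in order to package both $t$-independent terms into a single clean expression, at the negligible cost of a factor $e^{O(\log n)}$ that is absorbed into the $-C'(\log n)^{a+3}/(\log n)^{a+2}$ shift.
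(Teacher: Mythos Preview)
Your proposal is correct and follows essentially the same approach as the paper: invoke \pref{lem:martingale-intersect} with $j=0$, bound $F(0)\le C_0(\ln n)^{a+2}$ under the hypotheses, and absorb the $t$-independent terms $m\eps_1(\tfrac12)$ and $\eps_2$ into the second argument of the $\min$. In fact you are more explicit than the paper about why the $\sqrt{d}$ form is needed---the paper simply writes the combined $t$-independent terms with denominator $(\ln n)^{a+2}$ and then says ``applying asymptotic simplifications,'' whereas you correctly note that $mp^2F(0)$ can be as large as $O((\ln n)^{2a+2})$ and show how the case split on $\sqrt{d}\gtrless(\ln n)^a$ recovers the stated $\sqrt{d}$ bound.
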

\begin{proof}
Using that $\ln(1 + x) \le x$ for $x > 0$,
\[
\frac{1}{\log \frac{1}{p} \log \frac{d}{p}} \sqrt{F(j)} 
= \sqrt{\ln \tfrac{1}{\rho(L)} + m \ln(1 + \tfrac{p}{1-p})} + \sqrt{\ln \tfrac{d}{p}}
\le \sqrt{\ln \tfrac{1}{\rho(L)}} +\sqrt{m \frac{p}{1-p}} + \sqrt{\ln \tfrac{d}{p}} \le c\cdot \ln^{a} n,
\] 
for $c>0$ a universal constant, where in the final inequality we have applied the assumption $p \le \frac{1}{2}$ and the assumption $p \gg n^{-2}$.
Re-arranging, we have $F(j) \le c'(\ln n)^{a+2}$ for some constant $c'$.
Hence, \pref{lem:martingale-intersect} implies that there exist constants $c'',c'''$ so that for $n$ sufficiently large,
	\begin{align*}
		\Pr_{\bA}\left[\left|\frac{\rho(\bA\cap L)}{(1-p)^{m}\rho(L)} -1\right| > s\right] 
&\le 2\exp\left(-c''\frac{ds^2}{mp^2 (\ln n)^{a+2}}\right) + \left(2k + \frac{4m}{p^2}\right)\cdot \exp\left(-c''' \frac{d}{(\ln n)^{a+2}}\right),
	\end{align*}
where we used our bounds on $p,k,m$ to combine the $\eps_2$ and $k\eps_1(\frac{1}{2})$ terms from \pref{lem:martingale-intersect} into our second term.
The conclusion now follows by substituting $s = t \sqrt{mp^2}$ and applying asymptotic simplifications.
\end{proof}

Our proof proceeds by a martingale argument.  First, we notice that the (rescaled) area of the intersection of sets is a martingale.

\begin{observation}[Scaled intersection is a Martingale]\label{obs:mart}
	Define $\bR_t \coloneqq \frac{\Unif(\bL_t)}{\Unif(L)\prod_{i=1}^t \Unif(\bS_i)}$, and $\bL_t = L \cap \bigcap_{i=1}^{t}\bS_i$, as introduced in \pref{lem:martingale-intersect}.
	Then $(\bR_t)_{t \in [k]}$ is a martingale with respect to the filtration $(\calV_t)_{t\in[k]}$ induced by $\bv_1,\bv_2,\ldots$, with $\E[\bR_t \mid \calV_{t-1}] = \bR_{t-1}$ and $|\bR_t| \le \frac{1}{\rho(L)p^j(1-p)^{k-j}}$.
\end{observation}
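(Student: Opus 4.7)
The plan is to verify both parts of the observation by a direct computation using Fubini's theorem and the rotation-symmetry of $\Unif$.

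First, for the martingale property, I would fix the filtration $\calV_{t-1}$, so that $\bv_1,\ldots,\bv_{t-1}$ are determined, which makes $\bL_{t-1}$ deterministic. Writing $\rho(\bL_t) = \int_{\bbS^{d-1}} \1_{\bL_{t-1}}(w)\,\1_{\bS_t}(w)\,d\rho(w)$, I would then take the conditional expectation over $\bv_t$ and swap the order of integration (Fubini's theorem applies since the integrand is a bounded indicator):
\[
\E\bigl[\rho(\bL_t)\,\big|\,\calV_{t-1}\bigr]
= \int_{\bbS^{d-1}} \1_{\bL_{t-1}}(w)\,\Pr_{\bv_t\sim\Unif}\!\bigl[w\in\bS_t\bigr]\,d\rho(w).
\]
For any fixed $w\in\bbS^{d-1}$, by the rotation-symmetry of $\Unif$ and the definition of $\tau(p)$, the probability that $\iprod{w,\bv_t}\ge\tau(p)$ (resp.\ $<\tau(p)$) is exactly $p$ (resp.\ $1-p$). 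Thus $\Pr_{\bv_t}[w\in\bS_t]=\rho(\bS_t)$, a deterministic constant equal to $p$ when $t\le j$ and $1-p$ when $t>j$. This yields $\E[\rho(\bL_t)\mid\calV_{t-1}]=\rho(\bS_t)\cdot\rho(\bL_{t-1})$.

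Dividing both sides by the deterministic quantity $\rho(L)\prod_{i=1}^{t}\rho(\bS_i)$ gives
\[
\E[\bR_t\mid\calV_{t-1}] = \frac{\rho(\bS_t)\rho(\bL_{t-1})}{\rho(L)\prod_{i=1}^{t}\rho(\bS_i)} = \frac{\rho(\bL_{t-1})}{\rho(L)\prod_{i=1}^{t-1}\rho(\bS_i)} = \bR_{t-1},
\]
which is the martingale identity.

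For the uniform bound, I would use the trivial estimate $\rho(\bL_t)\le 1$ in the numerator and observe that each factor $\rho(\bS_i)\in[0,1]$, so $\prod_{i=1}^{t}\rho(\bS_i)\ge\prod_{i=1}^{k}\rho(\bS_i)=p^{j}(1-p)^{k-j}$ for every $t\le k$. Combining these two estimates gives $|\bR_t|\le \frac{1}{\rho(L)\,p^{j}(1-p)^{k-j}}$, which finishes the observation. Nothing here should be subtle; the only mild point is ensuring Fubini/symmetry is applied correctly, but since $\bS_t$ is a (possibly complemented) spherical cap of a uniformly random pole, the probability $\Pr[w\in\bS_t]$ is genuinely independent of $w$, which is the structural fact that makes the product factorize.
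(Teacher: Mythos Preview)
Your proof is correct and follows essentially the same route as the paper: both compute $\E_{\bv_t}[\rho(\bS_t\cap\bL_{t-1})]=\rho(\bS_t)\rho(\bL_{t-1})$ using independence and symmetry (you just spell out the Fubini step more explicitly). One small difference: for the bound, the paper uses the tighter observation $\rho(\bL_t)\le\rho(L)$ (since $\bL_t\subseteq L$), which actually yields $\bR_t\le \frac{1}{p^j(1-p)^{k-j}}$ without the $\rho(L)$ factor, whereas you use $\rho(\bL_t)\le 1$; both suffice for the statement as written.
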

\begin{proof}
The quantities $\Unif(\bS_i)$ are fixed for all $i$.
To see that $\bR_t \ge 0$ is bounded, note that $\bR_t \le \frac{1}{p^j(1-p)^{k-j}}$, as $\Unif(\bL_t) \le \Unif(L)$.
Now, note that by definition $\bR_t = \frac{\Unif(\bS_t \cap \bL_{t-1})}{\rho(L)\cdot \prod_{i=1}^t \Unif(\bS_i)}$.
Since $\bv_t \sim \bbS^{d-1}$ independently of $\calV_{t-1}$, $\E_{\bv_t \sim \bbS^{d-1}}[\Unif(\bS_t \cap \bL_{t-1})] = \Unif(\bS_t) \cdot \Unif(\bL_{t-1})$.
The conclusion now follows, since $\E_{\bv_t \sim \bbS^{d-1}}[\bR_t] = \bR_{t-1}$.
\end{proof}

Now, we will prove concentration for the martingale introduced in \pref{obs:mart}, using the concentration inequality for martingales with subgaussian increments, \pref{lem:martingale-azuma}.
The $(\bR_t)_{t\in[k]}$ do not quite have subgaussian increments as described.
Thus we must ``tame'' them by making some minor technical modifications.
\begin{proof}[Proof of \pref{lem:martingale-intersect}]
	Recall the martingale sequence $\bR_t$ defined in \pref{obs:mart}.  We are then interested in deviation bounds for $\abs*{\bR_k-1}$.
	Our proof strategy is to couple $\bR_t$ with a more well-behaved martingale sequence and use the subgaussian martingale concentration inequality on the more well-behaved sequence.  Let $\bT$ be the first time at which $\bR_{\bT} > 2$ or $\bR_{\bT} < \frac{1}{2}$.
	Define the process $(\bQ_t)_{t \ge 1}$ so that 
	\[
		\bQ_i =
			\begin{cases}
				\bR_i & i \le \bT \\
				\bR_{\bT} & \text{otherwise}.
			\end{cases}
	\]
	Note that the sequence $(\bQ_t)_{t \ge 1}$ is a martingale.  This is because when $t \le \bT$, $\bQ_t-\bQ_{t-1}\mid\calV_{t-1}$ has the same distribution as $\bR_t-\bR_{t-1}\mid\calV_{t-1}$ and hence $\E[\bQ_t - \bQ_{t-1}\mid\calV_{t-1}] = 0$, and when $t\ge \bT$, $\bQ_t-\bQ_{t-1}\mid\calV_{t-1}$ is identically zero.  
We will need to make an additional modification on top of $\bQ_i$ to make the random variable well-behaved, since \pref{cor:two-sets-subexp} only guarantees that $\Pr\left[|\bQ_i-\bQ_{i-1}|>s\right]$ remains subgaussian up to $s = 1$.
We will truncate $\bQ_i$ as described by the following definition:
\begin{definition}[{$(\alpha,\beta)$-truncation}]
	Given a centered random variable $\bX$, we define a random variable $\bX_{\alpha,\beta}$ for $\alpha,\beta\in\R_{\ge 0}$.  First, let $\theta = \min \left(\frac{\Pr[|\bX|>\alpha]}{\beta},1\right)$.  Now, define
	\[
		\bX_{\alpha,\beta} \coloneqq
		\begin{cases}
			\bX \mid |\bX| \le \alpha &\text{with probability $(1-\theta)\Pr[|\bX| \le \alpha]$} \\
			\E\left[\bX \mid |\bX|>\alpha \right]\cdot \beta &\text{with probability $(1-\theta)\cdot \theta$} \\
			0 &\text{otherwise.}
		\end{cases}
	\]
$\bX_{\alpha,\beta}$ is well-defined; in the case when $\theta = 1$ it takes value $0$ deterministically, and otherwise if $\theta < 1$ the probabilities sum to 1.
\end{definition}

\noindent We make two useful observations about $(\alpha,\beta)$-truncations.
\begin{observation}	\label{obs:exp-value-trunc}
	For a random variable $\bX$ and parameters $\alpha,\beta$,
		$\E \bX_{\alpha,\beta} = (1-\theta) \E \bX$.
\end{observation}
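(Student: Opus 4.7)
The plan is to unfold the three-case definition of $\bX_{\alpha,\beta}$, compute its mean by conditioning, and check that the ``outlier'' contribution precisely matches the portion of $\E\bX$ that was removed. Concretely, I would first dispense with the degenerate case $\theta = 1$: in that case $\bX_{\alpha,\beta} \equiv 0$ by definition, so $\E \bX_{\alpha,\beta} = 0 = (1-\theta)\E\bX$ trivially.

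For the interesting case $\theta < 1$ (so that $\theta = \Pr[|\bX| > \alpha]/\beta$ and hence $\theta\beta = \Pr[|\bX|>\alpha]$), I would write, using the tower rule and the fact that $\Pr[|\bX|\le\alpha]\cdot\E[\bX\mid|\bX|\le\alpha] = \E[\bX\cdot\mathbf{1}_{|\bX|\le\alpha}]$,
\[
\E \bX_{\alpha,\beta} \;=\; (1-\theta)\,\E[\bX\cdot\mathbf{1}_{|\bX|\le\alpha}] \;+\; (1-\theta)\theta\beta\,\E[\bX \mid |\bX|>\alpha] \;+\; \theta\cdot 0.
\]
Substituting $\theta\beta = \Pr[|\bX|>\alpha]$ in the middle term converts it into $(1-\theta)\E[\bX\cdot\mathbf{1}_{|\bX|>\alpha}]$, after which the two nontrivial terms combine via $\E[\bX\cdot\mathbf{1}_{|\bX|\le\alpha}] + \E[\bX\cdot\mathbf{1}_{|\bX|>\alpha}] = \E\bX$, yielding $(1-\theta)\E\bX$.

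There is no real obstacle; the only mildly delicate point is to make sure the value $\theta\beta$ is handled correctly on the boundary. Because the observation is stated unconditionally in $\theta \in [0,1]$, I would explicitly check both the $\theta = 1$ case (where the claim is vacuous because both sides vanish) and $\theta < 1$ (where the identity $\theta\beta = \Pr[|\bX|>\alpha]$ is exactly the mechanism that makes the positive and negative tail masses cancel back into $\E\bX$). Note in particular that centeredness of $\bX$ is not actually used: the identity $\E\bX_{\alpha,\beta} = (1-\theta)\E\bX$ holds for any integrable $\bX$, and in the intended centered regime it simply collapses to $\E\bX_{\alpha,\beta} = 0$, which is the property needed downstream when applying the martingale concentration inequality to the truncated increments.
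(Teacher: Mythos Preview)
Your proof is correct and follows essentially the same route as the paper: case on $\theta = 1$ versus $\theta < 1$, then use $\theta\beta = \Pr[|\bX|>\alpha]$ to recombine the conditional expectations into $(1-\theta)\E\bX$. One tiny cosmetic point: the ``otherwise'' probability is not exactly $\theta$, but since the corresponding value is $0$ this has no effect on the computation.
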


\begin{observation}	\label{obs:TV-dist-trunc}
	For a random variable $\bX$ and parameters $\alpha,\beta$ satisfying $\beta \le 1$,
		$\dtv{\bX}{\bX_{\alpha,\beta}} \le 2\theta$.
\end{observation}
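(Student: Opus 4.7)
The plan is a direct coupling argument built from the definition of $\bX_{\alpha,\beta}$. Write $q \coloneqq \Pr[|\bX|\le\alpha]$, so $\Pr[|\bX|>\alpha] = 1-q$. Unwinding the three cases: case~1 fires with probability $(1-\theta)q$ and is conditionally distributed as $\bX\mid|\bX|\le\alpha$, whose density on $\{|x|\le\alpha\}$ is $f_{\bX}(x)/q$. Hence the restriction of the law of $\bX_{\alpha,\beta}$ to $\{|x|\le\alpha\}$ has density exactly $(1-\theta)f_{\bX}(x)$, which is pointwise at most the density of $\bX$ on the same set; the remaining mass of $\bX_{\alpha,\beta}$ sits on the two point atoms from cases~2 and~3 (one at $\beta\cdot\E[\bX\mid|\bX|>\alpha]$, one at $0$).

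Because the density of $\bX_{\alpha,\beta}$ on $\{|x|\le\alpha\}$ is pointwise dominated by that of $\bX$, I can build a coupling of $(\bX,\bX_{\alpha,\beta})$ under which the two are equal on an event of probability $\int_{|x|\le\alpha}(1-\theta)f_{\bX}(x)\,dx = (1-\theta)q$. Since the TV distance is bounded above by the coupling's disagreement probability, this yields
\[
\dtv{\bX}{\bX_{\alpha,\beta}} \;\le\; 1-(1-\theta)q \;=\; (1-q) + \theta q.
\]

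To finish, I would invoke the hypothesis $\beta \le 1$: by definition $\theta = \min\bigl((1-q)/\beta,\,1\bigr)$, so $\beta \le 1$ forces $\theta \ge 1-q$ in either branch of the min. Substituting,
\[
(1-q) + \theta q \;\le\; \theta + \theta q \;\le\; 2\theta,
\]
which is exactly the claim. The proof is essentially bookkeeping; the one edge case to mention is $\theta=1$, where $\bX_{\alpha,\beta}$ collapses to the point mass at $0$ and the bound $\dtv\le 1 \le 2\theta$ is immediate. I do not foresee any substantive obstacle beyond keeping track of the three cases in the definition of the truncation.
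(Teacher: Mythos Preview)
Your proposal is correct and takes essentially the same approach as the paper: both construct a coupling in which $\bX$ and $\bX_{\alpha,\beta}$ agree on the event of probability $(1-\theta)\Pr[|\bX|\le\alpha]$, then use $\beta\le1$ (hence $\Pr[|\bX|>\alpha]\le\theta$) to bound the disagreement probability by $2\theta$. The only cosmetic difference is that you phrase the coupling via pointwise density domination whereas the paper describes it via an auxiliary Bernoulli coin; the resulting disagreement bound $(1-q)+\theta q = \theta + (1-\theta)(1-q)$ is literally the same quantity.
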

\noindent We prove these observations at the end of the section; for now, we proceed with the proof of \pref{lem:martingale-intersect}.

Define $\bDelta_i\coloneqq \bQ_i - \bQ_{i-1}$, and define $\wt{\bDelta}_i \mid \calV_{i-1} \coloneqq \left(\bDelta_i \mid \calV_{i-1} \right)_{\alpha,\beta}$ where
	\[
		\alpha =
		\begin{cases}
			1 & \text{if $i \le j$} \\
			\frac{p}{1-p} & \text{if $i > j$.}
		\end{cases}
	\]
	and $\beta = p^2\alpha$. 
	We have chosen $\alpha$ such that \pref{cor:two-sets-subexp} guarantees that we can control the tail probability $\Pr[|\bDelta_i| > t]$ for $t < \alpha$.
	Since $|\bDelta_i| \le \frac{1}{p}$, $\abs*{\E\bracks*{\bDelta_i \mid \abs*{\bDelta_i}>\alpha}}\cdot\beta \le p\alpha$, and consequently $|\wt{\bDelta}_i|\le\alpha$. 
 	Let $(\wt{\bQ}_t)_{t\ge 1}$ be the random process obtained by setting $\wt{\bQ}_1=\bQ_1$ and $\wt{\bQ}_t = \wt{\bQ}_{t-1}+\wt{\bDelta}_{t}$ for $t\ge 2$.  
 	By \pref{obs:exp-value-trunc}, $(\wt{\bQ}_t)_{t\ge 1}$ is a martingale.

	Next, we obtain bounds on the subgaussian norms of $\wt{\bDelta}_i|\calV_{i-1}$.  For $\calV_{i-1}$ such that $i > \bT$, $\left\|\wt{\bDelta}_i|\calV_{i-1}\right\|_{\subgnorm} = 0$.  For any $\calV_{i-1}$ where $i\le\bT$ and $s\ge p\alpha$:
	\begin{align*}
		\Pr\left[\left|\wt{\bDelta}_i\right| > s \mid \calV_{i-1}\right] &\le \Pr[|\bDelta_i| > s \mid \calV_{i-1}] \Ind[s\le\alpha] \\
		&= \Pr[|\bR_i - \bR_{i-1}| > s \mid \calV_{i-1}] \Ind[s\le\alpha] \\
		&= \Pr\left[\bR_{i-1}\cdot\left|\frac{\Unif(\bS_i\cap\bL_{i-1})}{\Unif(\bS_i)\cdot\Unif(\bL_{i-1})}-1\right| > s \mid \calV_{i-1}\right] \Ind[s\le \alpha].	\numberthis \label{eq:bound-truncated-prob}
	\end{align*}
	Since $i \le \bT$, we know $\bR_{i-1}\le 2$ and $\Unif(\bL_{i-1})\ge\frac{\Unif(L)\prod_{t=1}^i \Unif(\bS_t)}{2}\ge \frac{\Unif(L)p^j(1-p)^{k-j}}{2}$.
	When $i\le j$:
	\[
		\Pr\left[\left|\wt{\bDelta}_i\right| > s \mid \calV_{i-1}\right] \le 2 \exp\left(-\frac{ds^2}{C\cdot\left(\sqrt{\ln\frac{1}{\Unif(L)} + j\ln\frac{1}{p} + (k-j)\ln\frac{1}{1-p}} + \sqrt{\ln\frac{d}{p}} \right)^2\log\frac{1}{p}\log\frac{d}{p}}\right)
	\]
	and when $i > j$:
	\[
		\Pr\left[\left|\wt{\bDelta}_i\right| > s \mid \calV_{i-1}\right] \le 2 \exp\left(-\frac{ds^2}{C\cdot p^2\cdot\left(\sqrt{\ln\frac{1}{\Unif(L)} + j\ln\frac{1}{p} + (k-j)\ln\frac{1}{1-p}} + \sqrt{\ln\frac{d}{p}} \right)^2\log\frac{1}{p}\log\frac{d}{p}}\right)
	\]
	for all $s > 0$ and for some constant $C>0$, where the case of $s \le p\alpha$ holds vacuously since the right hand side exceeds $1$, and the case of $s > p\alpha$ holds by \pref{eq:bound-truncated-prob} and \pref{cor:two-sets-subexp}.  Using $F(j)$ to denote
	\[
		\left(\sqrt{\ln\frac{1}{\Unif(L)} + j\ln\frac{1}{p} + (k-j)\ln\frac{1}{1-p}} + \sqrt{\ln\frac{d}{p}} \right)^2\log\frac{1}{p}\log\frac{d}{p},
	\]
	we can more legibly write the above as:
	\[
		\Pr\bracks*{\abs*{\wt{\bDelta}_i} > s | \calV_{i-1}} \le 2\exp\left(-\frac{ds^2}{C p^{2 \cdot \Ind[i>j]}\cdot F(j)}\right)
	\]
	Consequently, $\left\|\wt{\bDelta}_i | \calV_{i-1}\right\|_{\subgnorm} \le C p^{\Ind[i>j]}\cdot \sqrt{\frac{F(j)}{d}}$ for all $\calV_{i-1}$ by \pref{lem:subgaussian-equiv}.

	By the subgaussian martingale concentration inequality (\pref{lem:martingale-azuma}), for some constant $C'>0$:
	\[
		\Pr\bracks*{\abs*{\wt{\bQ}_k-1}\ge s} \le \eps_1(s) \coloneqq 2\exp\left(-\frac{ds^2}{C'(j+(k-j)p^2)F(j)}\right).
	\]
	By a union bound we also know:
	\[
		\Pr\bracks*{\exists i: \abs*{\wt{\bQ}_i-1}>s} \le k \eps_1(s).
	\]
	It remains to relate $\wt{\bQ}_k$ back to $\bR_k$.
	By \pref{obs:TV-dist-trunc},  and a union bound,
 there is a coupling between the sequence of $\wt{\bDelta}_i$ and the sequence of $\bDelta_i$ such that $\wt{\bDelta}_i = \bDelta_i$ for all $i$ except with probability at most $2k \max_{i \in [k]} \frac{\Pr[|\bDelta_i|>\alpha]}{p\alpha}$, which with \pref{cor:two-sets-subexp} we can bound by
	\[
		\eps_2 \coloneqq \frac{4k}{p^2} \exp\left(-\frac{d}{CF(j)}\right) 
	\]
	The upshot of the above is:
	\[
		\Pr\bracks*{\exists i \in [k]: \abs*{\bQ_i-1} > s\text{ or }\bQ_i\ne\wt{\bQ}_i} = \Pr\bracks*{\exists i \in [k]: \abs*{\wt{\bQ}_i-1} > s\text{ or }\bQ_i\ne\wt{\bQ}_i} \le k\eps_1(s)+\eps_2.
	\]
	Suppose $\abs*{\bQ_i-1}<.5$ for $i=1,\dots,k$, then $\bQ_i = \bR_i$ for $i=1,\dots,k$.  Thus:
	\begin{align*}
		\Pr\bracks*{\exists i\in[k]: \wt{\bQ}_i\ne\bR_i} &\le \Pr\bracks*{\exists i\in[k]: \wt{\bQ}_i\ne\bR_i\text{ or }\bQ_i\ne\wt{\bQ}_i} \\
		&= \Pr\bracks*{\exists i\in[k]: \bQ_i\ne\bR_i\text{ or }\bQ_i\ne\wt{\bQ}_i} \\
		&\le k\eps_1(.5)+\eps_2.
	\end{align*}
	Consequently,
	\begin{align*}
		\Pr\bracks*{\abs*{\bR_k-1} > s} &\le \Pr\bracks*{\abs*{\wt{\bQ}_k-1}>s} + \Pr\bracks*{\wt{\bQ}_k \ne \bR_k} \\
		&\le \eps_1(s) + k\eps_1(.5) + \eps_2.\qedhere
	\end{align*}
\end{proof}
We now give the deferred proofs of our observations regarding truncated variables.

\begin{proof}[Proof of \pref{obs:exp-value-trunc}]
If $\theta = 1$, $\bX_{\alpha,\beta} = 0$ and the statement holds.
Otherwise (and actually in any case), we can write:
	\begin{align*}
		\E \bX_{\alpha,\beta} &= (1-\theta) \Pr[|\bX|\le\alpha]\E[\bX\mid |\bX| \le \alpha] + (1-\theta) \cdot \frac{\Pr[|\bX| > \alpha]}{\beta} \cdot \E\left[\bX \mid |\bX|>\alpha \right]\cdot \beta  \\
		&= (1-\theta) \E [\bX \cdot \Ind[|\bX|\le \alpha]] + (1-\theta) \E [\bX \cdot \Ind[|\bX|>\alpha]] \quad = (1-\theta) \E \bX. \qedhere
	\end{align*}
\end{proof}

\begin{proof}[Proof of \pref{obs:TV-dist-trunc}]
If $\theta = 1$, the bound holds trivially, so assume that $\theta < 1$.
	We can couple $\bX$ and $\bX_{\alpha,\beta}$ by sampling $\bX_{\alpha,\beta}$ in the following way: sample $\bc\sim\Ber(1-\theta)$ and $\bX$.  If $\bc = 1$ and $|\bX|\le\alpha$, let $\bX_{\alpha,\beta} = \bX$; otherwise let $\bX_{\alpha,\beta} = \E\left[\bX \mid |\bX|>\alpha \right]\cdot \beta$ with probability $(1-\theta)\theta$ and $0$ with the remaining probability. 
If $\beta \le 1$, then by definition $\theta \ge \beta \theta = \Pr\left[|X| > \alpha \right]$.
 Hence under this coupling, $\bX\ne\bX_{\alpha,\beta}$ with probability at most $\theta + \beta \theta \le 2\theta$, yielding the total variation bound.
\end{proof}

\section{Stochastic domination by \erdos-\renyi graphs}\label{sec:stoch-dom}

Here, we will prove \pref{prop:dom}: we'll show that so long as $d = \tilde{\Omega}((np)^2)$, there is a three-way ``coupling'' between $\bG \sim \grg_d(n,p)$, $\bG_- \sim \ER(n,(1- \tilde{O}(\sqrt{\frac{np}{d}}))p)$, and $\bG_+ \sim \ER(n,(1+ \tilde{O}(\sqrt{\frac{np}{d}}))p)$ such that with high probability $\bG$ is sandwiched between $\bG_-$ and $\bG_+$.
This will be crucial for the proof of \pref{thm:sparse} in reasoning about the structure of random geometric graphs.

We will need the following corollary of \pref{lem:martingale-intersect}, which lower bounds the area of intersections of $j$ random $p$-caps and $k-j$ random anti-$p$-caps.
\begin{corollary}[Area of $\approx pk$ caps and $\approx (1-p)k$ anti-caps]\label{cor:caps-and-anti}
Let $n,j,k,d \in \bbZ_+$, and let $p \in \R_+$, satisfying $\frac{1}{n^2} \ll p \le \frac{1}{2}$, $1 \le d \le n^{100}$, $j \le k \le n$, with $j = pk + \Delta$. 
Draw $\bv_1,\ldots,\bv_k$ uniformly from $\bbS^{d-1}$ and let $\bL = \left( \bigcap_{i=1}^{j} \scap(\bv_i) \right) \cap \left( \bigcap_{i=j+1}^k {\anticap(\bv_i)} \right)$.
Then there exist constants $\Ccaa,\Ccaa' > 0$ such that for all $n$ sufficiently large,
\[
\Pr\left[\left|\frac{\rho(\bL)}{e^{-kH(p)} \left(\frac{p}{1-p}\right)^{\Delta}} - 1\right|\ge t\right] 
\le 
\exp\left(- \Ccaa \cdot\min\left(\frac{dt^2}{M(k,p,\Delta)^2\ln n},\frac{d }{M(k,p,\Delta)^2 \ln n}-\Ccaa' \log n\right)\right),
\]
where $M(k,p,\Delta) = \max(k H(p), |\Delta|\ln \frac{1}{p}, \ln n)$.
\end{corollary}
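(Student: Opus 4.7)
The strategy is to invoke Lemma~\ref{lem:martingale-intersect} with $L = \bbS^{d-1}$ and then carefully account for logarithmic factors so that the resulting tail can be expressed in terms of $M(k,p,\Delta)$. With $\rho(L) = 1$, the normalizer from the lemma simplifies, via $j = pk + \Delta$, as
\[
p^j(1-p)^{k-j} = p^{pk}(1-p)^{(1-p)k} \cdot \left(\tfrac{p}{1-p}\right)^{\Delta} = e^{-kH(p)} \left(\tfrac{p}{1-p}\right)^{\Delta},
\]
so the ratio in the corollary statement equals the $\bR$ from the lemma, and our job is to bound $\eps_1(t) + k\eps_1(1/2) + \eps_2$.

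The bulk of the work is bounding the quantities $F(j)$ and $j + (k-j)p^2$ in terms of $M$ and $\ln n$. Expanding
\[
j\ln(1/p) + (k-j)\ln(1/(1-p)) = kH(p) + \Delta \ln\tfrac{1-p}{p}
\]
and using $|\ln((1-p)/p)| \le \ln(1/p)$ for $p \le 1/2$, this quantity is at most $kH(p) + |\Delta|\ln(1/p) \le 2M$. Together with $\ln(d/p),\log(1/p),\log(d/p) = O(\log n)$ (using $p \gg n^{-2}$ and $d \le n^{100}$), this gives the crude bound $F(j) \lesssim M \log(1/p) \log(d/p) \lesssim M \log^2 n$.

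The subtle point — and the only real obstacle — is the $\log n$ savings hidden in the product $(j + (k-j)p^2)\,F(j)$: the crude estimates above would give $M^2 \log^2 n$, costing us a factor of $\log n$ in the final exponent. The trick is to exploit $kH(p) \ge pk \log(1/p)$, which together with the definition of $M$ yields both $pk \lesssim M/\log(1/p)$ and $|\Delta| \lesssim M/\log(1/p)$, so $j + (k-j)p^2 \le 2pk + |\Delta| \lesssim M/\log(1/p)$. Multiplying this with the sharper form $F(j) \lesssim M \log(1/p) \log(d/p)$, the $\log(1/p)$ factors cancel and we obtain $(j+(k-j)p^2)F(j) \lesssim M^2 \log(d/p) \lesssim M^2 \log n$, so $\eps_1(s) \le 2\exp(-c\,ds^2/(M^2 \log n))$.

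Finally I combine the three terms. The leading term $\eps_1(t) \le 2\exp(-c\,dt^2/(M^2 \log n))$ contributes the $dt^2/(M^2 \ln n)$ piece of the target $\min$. The term $k\eps_1(1/2) \le 2n\exp(-c\,d/(4M^2 \log n))$ contributes $d/(M^2 \ln n)$ with an additive $O(\log n)$ penalty from $k \le n$. For $\eps_2 = (4k/p^2)\exp(-d/(CF(j)))$, the crude $F(j) \lesssim M \log^2 n$ gives $\eps_2 \le \exp(-c\,d/(M\log^2 n) + O(\log n))$, and since $M \ge \log n$ by definition we have $M\log^2 n \le M^2 \log n$, so $\eps_2$ also fits within the target exponent $\exp(-c\,d/(M^2 \log n) + O(\log n))$. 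Summing the three bounds and absorbing constants into $\Ccaa, \Ccaa'$ yields the claim.
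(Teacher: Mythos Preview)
Your proposal is correct and follows essentially the same approach as the paper: apply Lemma~\ref{lem:martingale-intersect} with $L = \bbS^{d-1}$, bound $F(j) \lesssim M \log(1/p)\log(d/p)$ and $j+(k-j)p^2 \lesssim M/\log(1/p)$ so that the $\log(1/p)$ factors cancel in the product, and absorb the prefactors $k$ and $4k/p^2$ as an additive $O(\log n)$ in the exponent. The paper phrases the intermediate bounds in terms of $B = kH(p) + |\Delta|\ln(1/p)$ before passing to $M$, but the substance is identical.
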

\begin{proof}
We will apply \pref{lem:martingale-intersect}.
Applying asymptotic simplifications, in our case, 
\[
\frac{\sqrt{F(j)}}{\ln \frac{1}{p} \ln \frac{d}{p}} 
= \sqrt{k H(p) + \Delta \ln \tfrac{1-p}{p}} +\sqrt{\ln \tfrac{d}{p}} 
\le c \cdot \left(\sqrt{kH(p)+ |\Delta|\ln\tfrac{1}{p}} + \sqrt{\ln n}\right),
\] 
for $c > 0$ a universal constant.
Re-arranging and applying the inequality $a^2 + b^2 \ge \frac{1}{2} ab$,
\[
F(j) \le c' \cdot \ln\frac{1}{p}\ln n \left(kH(p) + |\Delta|\ln \tfrac{1}{p} + \ln n\right).
\]
Now applying \pref{lem:martingale-intersect} and letting $B = kH(p) + |\Delta|\ln \frac{1}{p}$ (and using $j + (k-j)p^2 = kp + (1-p^2)\Delta + k(1-p)p^2 \le 2kp + |\Delta|$, and hence $F(j)(j+(k-j)p^2) \le c'' (B+\ln n)B\ln n$ for some constant $c''$),
\begin{align*}
\Pr\left[\left|\frac{\rho(\bL)}{e^{-kH(p)} \left(\frac{p}{1-p}\right)^{\Delta}} - 1\right|\ge t\right] 
&\le 
2\exp\left(-c_1\frac{ds^2}{(B+\ln n) B\ln n}\right) + 2k\exp\left(-c_2\frac{d}{(B+\ln n) B\ln n}\right) \\
&\qquad + \frac{4k}{p^2}\exp\left(-c_3\frac{d}{(B+\ln n)\ln \frac{1}{p} \ln n}\right)\\
&\le 
2\exp\left(-c_1\frac{ds^2}{M^2\ln n}\right) + \left(2k+ \frac{4k}{p^2}\right)\exp\left(-c_3\frac{d}{M^2\ln n}\right),
\end{align*}
for universal constants $c_1,c_2,c_3 > 0$ and $M = \max(kH(p),|\Delta|\ln \frac{1}{p},\ln n)$.
Applying asymptotic simplifications yields our desired result.
\end{proof}

\restateprop{prop:dom}
\begin{proof}[Proof of \pref{prop:dom}]
We describe how to sample $\bG_-,\bG,\bG_+$ jointly so that the above holds.
We consider vertices arriving one at a time, and we'll determine the adjacency of the arriving vertex to all previous vertices in a way that correlates $\bG_-,\bG,\bG_+$.
In $\bG$, each vertex is identified with a vector; an arriving vertex's vector starts with the entirety of $\bbS^{d-1}$, then iteratively refines the set of candidates by intersecting with the spherical caps (or anti-caps) of the previous vectors.

\begin{enumerate}
\item Choose $\bv_1 \sim \bbS^{d-1}$ uniformly at random.
\item For each $i \in \{2,\ldots,n\}$:
\begin{enumerate}
\item Initialize the set of candidates for vector $\bv_i$ as $A_{i,0} = \bbS^{d-1}$.
\item For each $k \in \{1,\ldots,i-1\}$:
\begin{enumerate}
\item Choose a threshold value $\btheta \sim \mathrm{Unif}([0,1])$.
\item If $\btheta < (1-\eps) p$, add the edge $(i,k)$ to $\bG_-$
\item Let $\bS_k = \{u \in \bbS^{d-1} \mid \Iprod{u,\bv_k} \ge \tau(p)\}$. If $\btheta < \frac{\rho(\bA_{i,k-1} \cap \bS_k)}{\rho(\bA_{i,k-1})}$, let $\bA_{i,k} = \bA_{i,k-1} \cap \bS_k$. Otherwise let $\bA_{i,k} = \bA_{i,k-1}\cap \ol{\bS_k}$. 
\item If $\btheta < (1+\eps) p$, add the edge $(i,k)$ to $\bG_+$.
\end{enumerate}
\item Choose $\bv_i$ uniformly at random from $\bA_{i,i-1}$.
\end{enumerate}
\item Set $\bG = \GG_{\tau(p)}(\bv_1,\ldots,\bv_n)$.
\end{enumerate}
The marginal distributions over $\bG_-$ and $\bG_+$ are clearly $\ER(n,(1-\eps)p)$ and $\ER(n,(1+\eps)p)$ respectively.
The marginal distribution over $\bG$ is $\grg_d(n,p)$: since we choose $\bA_{i,k} = \bA_{i,k-1} \cap \bS_k$ independently with probability proportional to the measure $\rho(\bA_{i,k-1} \cap \bS_k)$, each $\bv_i \sim \bbS^{d-1}$ independently of $\bv_1,\ldots,\bv_{i-1}$.

Note that for $i > k$, the edge $(i,k)$ will be present in $\bG$ if and only if $\bA_{i,k} = \bA_{i,k-1} \cap \bS_k$.
Hence, the sampled graphs will satisfy $\bG_- \subseteq \bG \subseteq \bG_+$ if and only if for every $i>k$ it is the case that 
\begin{equation}
(1-\eps)p \le \frac{\rho(\bA_{i,k-1} \cap \bS_k)}{\rho(\bA_{i,k-1})} \le (1+\eps)p.\label{eq:nest}
\end{equation}
We now argue that this occurs with high probability.
Each $\bA_{i,k-1}$ is an intersection of $\bj = \Binom(p,k-1)$ caps and $k-1-\bj$ anti-caps.
Using Bernstein's inequality, with probability $1 - n^{-\log n}$, $\bj = kp + \bDelta$ for $|\bDelta| \le C \cdot \ln n \max(\sqrt{kp},\ln n)$ for $C$ a universal constant.
Hence, applying \pref{cor:caps-and-anti} with $t = \frac{1}{2}$ and a union bound, we have that so long as $d \gg \max(kH(p),\ln^3 n)^2 \log^2 n = O((np + \ln^2 n)^2 \ln^4 n)$,  with probability at least $1 - \binom{n}{2}\cdot n^{-\Omega(\log n)}$, 
$\rho(\bA_{i,k}) \ge \frac{1}{2}\exp^{-C' \cdot kH(p) - \ln^2 n}$ for all $1 \le k < i \le n$, 
and hence $\log(\|\Unif_{\bA_{i,j}}\|_\infty) \le C' \cdot \left(n \cdot H(p) + \ln^2 n\right)$.

Using this, we may apply \pref{lem:two-sets-concentrate} with $\kappa = 4\ln \frac{d}{p}$ and $t = 4\ln n\sqrt{\frac{1}{d}} $, in conjunction with a union bound over all $1 \le j < i \le n$ to conclude that the probability that \pref{eq:nest} holds for all such $i > j$ is at least $1-n^{-\Omega(\log n)}$ for any $\eps \ge C'' \sqrt{\frac{1}{d}(n p + \ln n)\ln^4 n}$, where $C''$ is a universal constant.
This completes the proof.
\end{proof}

\section{Distribution of a neighborhood} \label{sec:bp}
In this section, we analyze the probability distribution of the neighborhood of a vertex in a random geometric graph conditioned on the remaining graph.
Throughout this section we will assume that $\log^{36} n \le d \le n^{100}$ and $p = \frac{\alpha}{n}$ for constant $\alpha\ge 1$.
\footnote{The upper bound of $n^{100}$ on $d$ isn't actually necessary for our proof techniques, but this assumption slightly simplifies calculations.  Since indistinguishability results are already known when $d\ge n^{100}$ from prior work, we prioritize having somewhat less symbol-dense calculations.}

Concretely, let $\bG\sim\grg_d(n,p)$ and let $\bG_{n-1}$ be the induced subgraph on $[n-1]$.  
We analyze the distribution of $\randnbr(n)|\bG_{n-1}$, and in particular prove that it closely tracks the neighborhood distribution in an \erdos-\renyi graph.
\begin{lemma}   \torestate{\label{lem:main-nbrhood-lemma}
    With probability $1-n^{-\Omega(\log n)}$ over the randomness of $\bG_{n-1}$, for every $S\subseteq[n-1]$ such that $|S|\le\log^2 n$, the following is true.  Let $\ell<\frac{\log n}{\log\log n}$ be such that the balls $\{B_{\bG_{n-1}}(x,\ell):x\in S\}$ are pairwise disjoint trees, and let $d > \log^{20}n$.  Then:
    \[
       \Pr\bracks*{\randnbr(n) = S | \bG_{n-1}} \in \left(1\pm2\eta(\ell)\right) p^{|S|}(1-p)^{n-1-|S|}
    \]
    where
    \[
       \eta(\ell) \coloneqq \min\braces*{\frac{\log^8 n}{\sqrt{d}}, \max\braces*{ 2\left(\sqrt{\frac{\log^{28}n}{d}}\right)^{\ell}, 8 \cdot \sqrt{\frac{\log^{11} n}{ nd } }} }.
    \]}
\end{lemma}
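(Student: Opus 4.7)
I will write
\[
\Pr[\randnbr(n) = S \mid \bG_{n-1}] = \E_{\bv \sim \VecDist{\bG_{n-1}}}\bigl[\rho(\bL_S)\bigr], \qquad \bL_S \coloneqq \bigcap_{i\in S}\scap(\bv_i) \cap \bigcap_{j\in[n-1]\setminus S}\anticap(\bv_j),
\]
and prove the claimed bound as the minimum of two separate estimates. The unconditional piece $\log^8 n/\sqrt{d}$ in $\eta(\ell)$ comes from \pref{lem:martingale-intersect} (applied with $k = n-1$ and $j = |S|$) combined with \pref{obs:unif-to-conditional}: the former shows that for i.i.d.\ uniform $\bv_1,\ldots,\bv_{n-1}$ the ratio $\rho(\bL_S)/[p^{|S|}(1-p)^{n-1-|S|}]$ is $1\pm\log^8 n/\sqrt{d}$ with probability $1 - n^{-\omega(1)}$, and the latter converts this into a statement about the conditional law $\VecDist{\bG_{n-1}}$ for all but a vanishing fraction of $\bG_{n-1}$. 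Since this estimate does not use the tree hypothesis, it applies for every admissible $\ell$.

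For the finer tree-dependent bound I plan to use \pref{prop:dom}, \pref{lem:ball-bound}, and \pref{lem:degree-bound} to guarantee that with probability $1-n^{-\Omega(\log n)}$ over $\bG_{n-1}$, the set $T \coloneqq \bigcup_{s\in S} B_{\bG_{n-1}}(s,\ell)$ has $|T| = n^{o(1)}$ and the maximum degree of $\bG_{n-1}$ is $O(\log n/\log\log n)$. Set $U \coloneqq [n-1]\setminus T$ and split the expectation $\E_{\bv}[\rho(\bL_S)]$ by integrating out $\bv_U$ first and $\bv_T$ second. Every vertex of $U$ is a non-neighbor of $n$, so \pref{cor:anti-caps} applied to the $|U|$ anti-caps from $\bv_U$, intersected with the near set
\[
\bL_S^{\mathrm{near}} \coloneqq \bigcap_{i\in S}\scap(\bv_i) \cap \bigcap_{j\in T\setminus S}\anticap(\bv_j),
\]
yields a factor $(1\pm O(\sqrt{\log^{11}n/(nd)}))(1-p)^{|U|}$ with overwhelming probability; this is where the $\sqrt{\log^{11}n/(nd)}$ term in $\eta(\ell)$ appears. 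A mild additional step uses \pref{obs:bounded-dists} to absorb the conditioning of $\bv_U$ on $\bG_{n-1}[U]$ and on $\bv_{\partial T}$ at only a polylog cost in $L^\infty$.

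The remaining inner integral $\E_{\bv_T}[\rho(\bL_S^{\mathrm{near}})]$ is where belief propagation enters. Since $\bG_{n-1}[T]$ is a forest of depth $\ell$ rooted at $S$, \pref{thm:BP-forest-marginals} expresses the marginal density of each $\bv_s$, $s\in S$, under the conditional law given $\bv_{\partial T}$ and $\bG_{n-1}[T]$, as a product of leaf-to-root BP messages, each of which is a convolution of a subtree message with a spherical cap. I then iterate \pref{cor:two-sets-subexp}: convolving a probability density on $\bbS^{d-1}$ of bounded $L^\infty$ norm with a spherical cap shrinks the pointwise relative deviation from uniform by a factor of roughly $\sqrt{\log^{O(1)}n/d}$, while the $L^\infty$ norm itself stays bounded. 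After $\ell$ BP layers each marginal on $\bv_s$ differs from $\rho$ pointwise by at most $(\sqrt{\log^{28}n/d})^\ell$; substituting back gives $\E[\rho(\bL_S^{\mathrm{near}})] = (1\pm(\sqrt{\log^{28}n/d})^\ell) \cdot p^{|S|}(1-p)^{|T|-|S|}$, which combined with the outer $(1-p)^{|U|}$ factor recovers the target $p^{|S|}(1-p)^{n-1-|S|}$ up to the claimed error.

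The principal obstacle is the BP induction: I must maintain simultaneously a quantitative pointwise closeness-to-uniform bound and a bounded $L^\infty$ norm on every intermediate message, while handling the branching of up to $O(\log n/\log\log n)$ children per internal node and the essentially arbitrary boundary conditions arriving from $\bv_{\partial T}$. The polylog exponents in $\eta(\ell)$ ultimately track the loss per invocation of \pref{cor:two-sets-subexp} along a root-to-leaf path in $T$, in particular the $\ln\|\nu\|_\infty$ dependence inside the concentration bound, which is the main thing one must keep from blowing up during the induction.
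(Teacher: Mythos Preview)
Your high-level strategy matches the paper's: reduce to the ``containment plus anti-caps'' decomposition, handle the anti-caps via \pref{cor:anti-caps}, and control the cap part by running belief propagation on the depth-$\ell$ forest around $S$. The unconditional $\log^8 n/\sqrt{d}$ bound via \pref{lem:martingale-intersect} and \pref{obs:unif-to-conditional} is also exactly what the paper does.

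There is however one genuine gap. You write that since $\bG_{n-1}[T]$ is a forest, \pref{thm:BP-forest-marginals} gives the marginal of each $\bv_s$ under the conditional law. This is not correct as stated: the conditional law of $\bv_T$ given $\bv_U$ and the full graph $\bG_{n-1}$ is a CSP whose factor graph is \emph{not} a tree, because every non-adjacent pair $i,j\in T$ contributes a binary anti-edge constraint $\langle v_i,v_j\rangle<\tau(p)$. These constraints close cycles in the factor graph (e.g.\ in a path $a\!-\!b\!-\!c$ the non-edge $\{a,c\}$ creates a triangle of constraints), so BP does not compute the true marginals. The paper deals with this by introducing a \emph{relaxed} CSP $\calF'(\bW_R)$ that drops all non-edge constraints inside $T$ (\pref{def:CSP-dist}); BP is exact on $\calF'$, and a separate lemma (\pref{lem:F-within-Fp}) shows that a sample from $\calF'$ lands in the true solution set $\calF$ with probability $1-pn^{O(1/\log\log n)}$. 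Proving that lemma itself requires a short BP argument, and then \pref{lem:mostly-decent} is used to transfer the estimate on $\E_{\calF'}[\rho(\cdot)]$ back to $\E_{\calF}[\rho(\cdot)]$. Your plan omits this relaxation step entirely.

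A secondary point: you attribute the $\sqrt{\log^{11}n/(nd)}$ floor in $\eta(\ell)$ solely to the outer anti-cap factor from $U$. In the paper that term also appears \emph{inside} the BP recursion as an irreducible floor: once messages get close enough to the uniform distribution on $L(\bW_R)$, further cap-convolutions cannot push them closer than $\Spr_{PL(\bW_R)}(n^{-\log^4 n})\le \sqrt{\log^{11}n/(nd)}$ (this is \pref{lem:diffusion-correlated-anticap-intersection}, which needs the very tight anti-cap concentration of \pref{cor:anti-caps}). Your sketch of the BP induction compares each message to $\rho$ rather than to $L(\bW_R)$; to get the stated $\eta(\ell)$ you will need this sharper comparison, since the unary constraint at every internal vertex of $T$ forces $v_i\in L(\bW_R)$, not $v_i\in\bbS^{d-1}$.
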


We prove \pref{lem:main-nbrhood-lemma} in \pref{sec:containment-to-exact}, after establishing the necessary ingredients.

\subsection{Setup and outline}
The distribution of $\randnbr(n)$ is given by sampling $\bw_{n}\sim\Unif$, then sampling $(\bw_1,\dots,\bw_{n-1})\sim\VecDist{\bG_{n-1}}$, and choosing $\randnbr(n) = \{x:\angles*{\bw_x, \bw_{n}} \ge \tau(p\}$.  
For every $S\subseteq[n-1]$ satisfying $|S|\le\log^2 n$, we are interested in $\Pr[\randnbr(n)=S|\bG_{n-1}]$, which is the expected measure of the intersection of some collection of caps and anticaps.  
In particular:
\[
    \Pr\bracks*{\randnbr(n)=S|\bG_{n-1}} = \E_{(\bw_i)_{i\in[n]}\sim\VecDist{\bG_{n-1}}} \Unif\parens*{\bigcap_{j\in S}\scap(\bw_j)\cap\bigcap_{j\in[n-1]\setminus S} \anticap(\bw_j)}.
\]
The random variable $\Unif\parens*{\bigcap_{j\in S}\scap(\bw_j)\cap\bigcap_{j\in[n-1]\setminus S} \anticap(\bw_j)}$ may appear daunting at first due to the complicated correlation structure of $(\bw_i)_{i\in[n-1]}$. 
Our situation is greatly simplified by tight concentration for the measure of the intersection of random anticaps with sets of lower bounded measure.  
In particular, we show in \pref{sec:containment-to-exact} that with high probability:
\[
    \Unif\parens*{\bigcap_{j\in S}\scap(\bw_j)\cap\bigcap_{j\in[n-1]\setminus S} \anticap(\bw_j)} \in \parens*{1\pm\wt{O}\parens*{\sqrt{\frac{1}{nd}}}} \cdot \Unif\parens*{\bigcap_{j\in S}\scap(\bw_j)}(1-p)^{n-1-|S|}.
\]
This lets us write $\Pr\bracks*{\randnbr(n)=S|\bG_{n-1}}$ as
\begin{align*}
    \Pr\bracks*{\randnbr(n)=S|\bG_{n-1}} \in \parens*{1\pm\wt{O}\parens*{\sqrt{\frac{1}{nd}}}} (1-p)^{n-1-|S|} \cdot \E\Unif\parens*{\bigcap_{j\in S}\scap(\bw_j)}.
\end{align*}
Now,
\[
    \E \Unif\parens*{\bigcap_{j\in S}\scap(\bw_j)} = \Pr[\randnbr(n)\supseteq S|\bG_{n-1}]
\]
and so to study the probability that $\randnbr(n)=S$, it suffices to study the probability that $\randnbr(n)\supseteq S$, which can equivalently be written as:
\[
    \Pr[\randnbr(n)\supseteq S|\bG_{n-1}] = \Pr \bracks*{\forall x\in S: \angles*{\bw_x,\bw_{n}}\ge\tau(p)}.
\]
We next notice that we are spared from working with the potentially complicated full ensemble $(\bw_i)_{i\in[n-1]}$, since the above depends only on its marginal distribution on $\bw_i$ for $i \in S$. 
Indeed, we prove that for most sets $S$, the vectors $(\bw_i)_{i\in S}$ roughly behave like independent and uniform vectors on the unit sphere, a statement which is made concrete below.

\begin{lemma}   \torestate{\label{lem:nbrhood-containment}
    With probability $1-n^{-\Omega(\log n)}$, for every
    $S\subseteq[n-1]$ such that $|S|\le\log^2 n$, the following is true.  Suppose $\ell<\frac{\log n}{\log\log n}$ is such that the balls $\{B_{\bG_{n-1}}(x,\ell):x\in S\}$ are all trees and are pairwise disjoint, then:
    \[
       \Pr\bracks*{\randnbr(n) \supseteq S | \bG_{n-1}} \in \left(1\pm\eta(\ell)\right) p^{|S|}
    \]
    where
    \[
       \eta(\ell) \coloneqq \min\braces*{\frac{\log^8 n}{\sqrt{d}}, \max\braces*{2\sqrt{\frac{\log^{28}n}{d}}^{\ell}, 8\sqrt{\frac{\log^{11}n}{d}} }}.
    \]}
\end{lemma}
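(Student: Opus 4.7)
The plan is to analyze $\Pr[\randnbr(n) \supseteq S \mid \bG_{n-1}] = \E_{(\bw_x)_{x \in S}}\Unif\left(\bigcap_{x \in S} \scap(\bw_x)\right)$ by first conditioning on all vectors outside $\bigcup_{x \in S} B_{\bG_{n-1}}(x,\ell)$. Because the balls $B_{\bG_{n-1}}(x,\ell)$ are disjoint trees by hypothesis, the conditional distribution of the vectors inside these balls factorizes across $x \in S$, so the marginals $\{\bw_x\}_{x \in S}$ become conditionally independent. This lets me treat each $x \in S$ separately and then assemble the answer as a product.

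To compute each single marginal $\mu_x$ of $\bw_x$ on its tree, I will cast the problem as a $2$-CSP on $B_{\bG_{n-1}}(x,\ell)$ as in \pref{sec:prelim-bp}: one $\bbS^{d-1}$-valued variable per vertex, a binary constraint $\Ind[\langle\cdot,\cdot\rangle \ge \tau(p)]$ per edge, and a unary constraint at each depth-$\ell$ leaf capturing the fixed adjacent vectors outside the ball. Since the factor graph is a tree, \pref{thm:BP-forest-marginals} expresses $\mu_x$ exactly as a product of belief propagation messages. I will then inductively bound $\|\mu_x\|_{\infty}$ by tracking messages from the leaves to $x$: each clause-to-variable message is a convolution of the child's variable message with a spherical cap indicator, and each variable message is a normalized product of incoming clause messages over $O(1)$ children (using \pref{lem:degree-bound} together with \pref{prop:dom} to import the \erdos--\renyi degree bound into $\grg_d(n,p)$). \pref{cor:two-sets-subexp} says that if a child's message has density bounded by $M$, the cap-convolution falls in $(1 \pm \wt O(\sqrt{\ln M/d}))\cdot p$ except on a small failure set, which after normalization produces a message whose density is even closer to uniform. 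Iterating over $\ell$ levels of the tree yields the geometric-in-$\ell$ contraction appearing as $\bigl(\sqrt{\log^{28} n/d}\bigr)^{\ell}$ in $\eta(\ell)$, until the single-level baseline $\sqrt{\log^{11}n/d}$ is reached, which explains the plateau. The uniform worst-case term $\log^8 n/\sqrt d$ comes from applying the concentration once directly, without using the tree depth.

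With the density bound $\|\mu_x\|_{\infty} \le 1 + \eta(\ell)$ established (outside a small exceptional set in the boundary conditioning), I will conclude by writing
\[
\E_{(\bw_x)_{x \in S}}\Unif\left(\bigcap_{x \in S} \scap(\bw_x)\right) \;=\; \E_{\bw_n \sim \Unif} \prod_{x \in S} X_{\mu_x}(\bw_n),
\]
where I used conditional independence to exchange the outer expectation with the product, and where $X_{\mu_x}(z) = \Pr_{\bw_x \sim \mu_x}[\langle \bw_x, z\rangle \ge \tau(p)]$. Invoking \pref{cor:two-sets-subexp} once more, for each $x \in S$ the ratio $X_{\mu_x}(\bw_n)/p$ lies in $1 \pm \eta(\ell)$ with high probability over $\bw_n$; since $|S| \le \log^2 n$, a union bound and multiplication give a product in $(1 \pm O(|S|\eta(\ell)))\, p^{|S|}$, which I absorb into the polylog factors inside $\eta$.

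The outer quantifiers---taking a union bound over all $S$ with $|S| \le \log^2 n$ and passing from probabilities over $\Unif^{\otimes n}$ to probabilities over $\VecDist{\bG_{n-1}}$---are handled by \pref{obs:unif-to-conditional} together with polylog slack built into the exponent. The main obstacle I expect is the quantitative BP step: ensuring that density fluctuations control cleanly through products at each vertex of bounded-but-not-constant degree, and that the small failure sets arising from \pref{cor:two-sets-subexp} at every edge of the tree do not compound catastrophically through the $\Delta^{\ell}$ edges of the ball, which forces careful bookkeeping of the total failure probability against the depth-$\ell$ contraction factor.
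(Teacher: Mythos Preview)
Your overall architecture matches the paper's, but there are two substantive gaps that the paper has to work to close and which your proposal glosses over.

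\textbf{Factorization only holds for a relaxation.} Even after conditioning on $\bW_{\Rest}$, the true conditional distribution $\calF(\bW_{\Rest})$ does \emph{not} factorize across the balls $B_{\bG_{n-1}}(x,\ell)$: for every non-edge $\{i,j\}$ of $\bG_{n-1}$ with $i,j\in K$ (including pairs straddling two different balls, and pairs at distance $\ge 2$ within the same ball) there is an anti-cap constraint $\langle \bw_i,\bw_j\rangle<\tau(p)$. These constraints are not tree-structured, so BP on the tree computes the marginals of a \emph{different}, relaxed distribution --- the one your CSP actually describes. The paper makes this relaxation explicit (\pref{def:CSP-dist}), proves the containment estimate for the relaxed $\calF'$ (\pref{lem:main-BP-lemma}), and then separately shows (\pref{lem:F-within-Fp}, \pref{lem:mostly-decent}) that a sample from $\calF'$ lands in $\calF$ with probability $1-p\,n^{O(1/\log\log n)}$, which is what allows the transfer back to the true distribution. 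Your proposal needs this correction step; without it, neither the independence of the $\bw_x$ nor the applicability of \pref{thm:BP-forest-marginals} is justified.

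\textbf{The $\|\cdot\|_\infty$ iteration does not give geometric decay.} Your iteration is ``$\|\nu\|_\infty\le M \Rightarrow |P\nu-1|\lesssim \sqrt{(\ln M)/d}$ whp''. But the bound in \pref{cor:two-sets-subexp} contains the additive term $\sqrt{\ln(d/p)}$ alongside $\sqrt{\ln\|\nu\|_\infty}$, so once $\|\nu\|_\infty$ is within $1+O(1)$ the deviation \emph{floors} at $\polylog(n)/\sqrt d$ and does not shrink further with $\ell$; you would only recover the $\ell=0$ term $\log^8 n/\sqrt d$ of $\eta(\ell)$. The paper obtains the $(\polylog/\sqrt d)^\ell$ contraction via a different potential: it tracks the TV distance of each message to the \emph{anti-cap region} $L(\bW_{\Rest})$, writing the message as $L(\bW_{\Rest})+\Delta$. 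The crucial point is that $PL(\bW_{\Rest})$ is far closer to uniform than the generic bound predicts --- deviation $\sqrt{\log^{11}n/(nd)}$ rather than $\polylog/\sqrt d$, by the anti-cap superconcentration of \pref{cor:anti-caps} and \pref{lem:diffusion-correlated-anticap-intersection} --- while the signed piece $\Delta$ contracts in $\ell_1$ by a factor $\wt O(1/\sqrt d)$ at each cap-convolution (\pref{lem:diffusion-any-dist}, \pref{lem:convolution-tv}, \pref{lem:tiering}). This two-part decomposition is what produces both the geometric-in-$\ell$ decay and the much smaller plateau; tracking $\|\mu_x\|_\infty$ alone cannot see either effect.
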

We now give a brief proof overview of \pref{lem:nbrhood-containment}.
The way we think about the quantity we wish to obtain a handle on is articulated by the following equality.
\[
    \Pr\bracks*{N_{\bG}(n)\supseteq S\mid \bG_{n-1}} = \E_{\substack{\bw_n\sim\Unif \\ (\bw_i)_{i\in[n-1]}\sim \VecDist{\bG_{n-1}}}} \prod_{x\in S}\Ind\bracks*{\angles*{\bw_x,\bw_n}\ge\tau(p)}.
\]
If $(\bw_x)_{x\in S}$ was a collection of \textbf{independent} random vectors distributed \textbf{uniformly} on the sphere then the above quantity is exactly equal to $p^{|S|}$.
Our proof of \pref{lem:nbrhood-containment} is by establishing that both of these properties are approximately true.
The following gives a rough outline of the proof along with how it's organized in this section.
\begin{itemize}
    \item Our intuition is that any ``large'' correlations between vertices $x$ and $y$ are explained by the existence of paths in between them.  
    Thus, we study the distribution of $(\bw_x)_{x\in S}|(\bw_x)_{x\notin B(S,\ell)}$ for some $\ell$ where $(\bw_x)_{x\notin B(S,\ell)}$ is sampled according to $\VecDist{\bG_{n-1}}$ and attempt to prove approximate independence for this ensemble.
    \item In \pref{sec:containment-prob}, we define a constraint satisfaction problem instance on a forest factor graph with variable set indexed by the vertices of $B(S,\ell)$, and in \pref{sec:relax-to-true} show that the marginals of the uniform distribution $\calF'$ on satisfying assignments to this CSP instance on the variables indexed by $S$ is very close to the distribution of $(\bw_x)_{x\in S}$.
    \item $(\bw_x)_{x\in S}$ is an independent ensemble when sampled from $\calF'$ for the simple reason that they are all in different connected components of the factor graph.
    The bulk of the technical work in this section is in proving that each $\bw_x$ ``behaves like'' a uniform vector, which is performed in \pref{sec:BP-relaxed}.
    \item The key relevant property of the uniform distribution on the sphere $\Unif$ for us is that for every $w$, $\Pr_{\bv\sim\Unif}\bracks*{\angles*{\bv, w}\ge\tau(p)} = p$.
    Thus, for each vertex $x\in S$, we show the following about the marginal of $\calF'$ on $\bw_x$.
    \begin{displayquote}
        For \emph{most} $w$, $\Pr\bracks*{\angles*{\bw_x,w}\ge\tau(p)} = p(1\pm\eps)$.
    \end{displayquote}
    \item The way we establish this ``pseudo-uniformity'' property of $\bw_x$ is by using the fact that its marginal can be computed exactly from belief propagation fixed point messages by virtue of arising from a forest factor graph.
    Our analysis of the belief propagation fixed point messages is based on diffusion properties of the random walk on the sphere where each transition moves from a point $v$ to a random point in the $p$-cap around $v$.
\end{itemize}

\subsection{A relaxation of containment probabilities} \label{sec:containment-prob}
We now turn our attention towards proving \pref{lem:nbrhood-containment}.

We use $\bW$ to refer to the collection $(\bw_i)_{i\in[n-1]}\sim\VecDist{\bG_{n-1}}$, and for any subset $A\subseteq[n]$, we use the notation $\bW_A$ to refer to the subcollection $(\bw_i)_{i\in A}$.
Let $\ell$ be an integer chosen so that $\{B_{\bG_{n-1}}(x,\ell):x\in S\}$ are pairwise disjoint trees, and $\ell < \frac{\log n}{\log\log n}$.  
Define $\Balls \coloneqq \bigcup_{x\in S} V\parens*{B_{\bG_{n-1}}(x,\ell-1)}$ and denote the remaining vertices as $\Rest\coloneqq [n-1]\setminus \Balls$.
Note that it is possible for $\ell=0$, in which case $\Balls$ is empty.
We can write
\[
    \Pr\bracks*{\randnbr(n) \supseteq S | \bG_{n-1}} = \E_{\bw_{n}\sim\Unif} \bracks*{ \E_{\bW_{\Rest}}  \bracks*{ \Pr_{\bW_{\BallsInt}|\bW_{\Rest}} \bracks*{\forall x\in S: \angles*{\bw_x,\bw_{n}}\ge\tau(p)} } }. \numberthis \label{eq:containment-prob}
\]
The distribution of $\bW_{\BallsInt} | \bW_{\Rest}$ is uniform on the space of vectors producing $\bG_{n-1}$, conditioned on the choice of $\bW_{\Rest}$:
\[
    \calF\parens*{\bW_{\Rest}}\coloneqq \{(w_i)_{i\in\BallsInt}:\GG\parens*{w_1,\ldots,w_{n-1}}=\bG_{n-1}, \, \text{ and } w_r = \bw_r\text{ for }r\in\Rest\}.
\]
We ``relax'' $\calF\parens*{\bW_{\Rest}}$ to a slightly larger set $\calF'\parens*{\bW_{\Rest}}\supseteq\calF\parens*{\bW_{\Rest}}$, which admits a description in terms of the solution space of a constraint satisfaction problem on a forest; in particular, the induced subgraph $T\parens*{\bG_{n-1}}\coloneqq\bG_{n-1}[\BallsInt]$.
The reason for doing so is we can precisely calculate marginals of the uniform distribution on $\calF'\parens*{\bW_{\Rest}}$ using the belief propagation algorithm. 
To help define $\calF'\parens*{\bW}$, we let $L\parens*{\bW_{\Rest}}\coloneqq\bigcap_{j\in \Rest} \anticap\parens*{\bw_j}$, and similarly let $L_{i}\parens*{\bW_{\Rest}} \coloneqq \bigcap_{j\in \Rest\setminus\Leaves(i)}  \anticap\parens*{\bw_j}\cap\bigcap_{j\in\Leaves(i)}\scap\parens*{\bw_j}$ where $\Leaves(i)$ denotes the set of neighbors of $i$ within $\Rest$. 
Observe that for $i \in V$ with distance less than $\ell-1$ to some $j \in S$, the set $L_i = L(\bW_R)$.

\begin{definition}  \label{def:CSP-dist}
    $\calF'\parens*{\bW_{\Rest}}$ is the collection of satisfying assignments to the following CSP instance $\Inst\parens*{\bW_{\Rest}}$ on variables indexed by $V(\BallsInt)$, comprised of unary constraints and binary constraints.
    \begin{itemize}
       \item {\bf Unary constraints.}  For each $i\in V(\BallsInt)$, $v_i\in L_i(\bW_{\Rest})$.
       In words, the set of neighbors of the vector assignment $v_i$ within $\Rest$ must be equal to $N_{\bG_{n-1}}(i)\cap \Rest$.
       \item {\bf Binary constraints.}  For every $\{i,j\}\in T\parens*{\bG_{n-1}}$: $\angles*{v_i,v_j}\ge\tau(p)$.
       In words, for every edge $\{i,j\}$, the vector assignments $v_i$ and $v_j$ must also have an edge in between them.
    \end{itemize}
    We use $F(\bW_R)$ to denote the factor graph representation of $\Inst\parens*{\BallsInt}$.
\end{definition}
\noindent Notice that \pref{def:CSP-dist} enforces that $\GG(\bW_{\BallsInt}\cup\bW_{\Rest})$ {\em contains} $\bG_{n-1}$ as an edge-induced subgraph. 
In \pref{sec:relax-to-true}, we will show that with high probability, $\GG(\bW_k \cup \bW_R)$ is also precisely equal to $\bG_{n-1}$.

Our main result about $\calF'$ is the following.
\begin{lemma}   \torestate{\label{lem:main-BP-lemma}
    Suppose $\ell\ge 1$.
    Then for every $S$ such that $|S|\le \log^2 n$:
    \[
       \Pr_{\substack{\bw_n\sim\bbS^{d-1} \\ \bW_{\Balls}\sim\calF'\parens*{\bW_{\Rest}}}}\bracks*{\forall x\in S:\angles*{\bw_x,\bw_n}\ge\tau(p)} \in \left(1\pm\eta(\ell)\right) p^{|S|}
    \]
    except with probability at most $n^{-\Omega(\log n)}$ over the randomness of $\bG_{n-1}$ where
    \[
       \eta(\ell) \coloneqq \max\braces*{\left(C\sqrt{\frac{\log^{27}n}{d}}\right)^{\ell}, 4\sqrt{\frac{\log^{11} n}{nd}}}
    \]
    for some absolute constant $C$.}%
    \footnote{Recall that \pref{lem:main-BP-lemma} is a stepping stone towards proving \pref{lem:nbrhood-containment}, and the case of $\ell = 0$ is handled directly within the proof of \pref{lem:nbrhood-containment}.}
\end{lemma}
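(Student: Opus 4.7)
The plan is to exploit the tree structure of the factor graph $F(\bW_R)$ to reduce the joint probability to a product, and then apply the BP fixed-point formula together with the cap concentration results of \pref{sec:transport-result} to control each factor.

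First, since the balls $B_{\bG_{n-1}}(x,\ell-1)$ for $x\in S$ are pairwise disjoint trees by hypothesis, the factor graph $F(\bW_R)$ decomposes as a disjoint union of $|S|$ tree components, one rooted at each $x\in S$. In particular, under the uniform distribution $\calF'(\bW_R)$ on satisfying assignments to $\Inst(\bW_R)$, the marginals $(\bw_x)_{x\in S}$ are mutually independent, and each $\bw_x$ has a law $\nu_x$ computed exactly by the BP fixed point on its tree via \pref{thm:BP-forest-marginals}. Therefore
\[
\Pr_{\substack{\bw_n\sim\Unif\\ \bW_K\sim\calF'(\bW_R)}}\bracks*{\forall x\in S:\angles*{\bw_x,\bw_n}\ge\tau(p)}
=\E_{\bw_n\sim\Unif}\prod_{x\in S} X_x(\bw_n),
\qquad X_x(z)\coloneqq\Pr_{\bw_x\sim\nu_x}\bracks*{\angles*{\bw_x,z}\ge\tau(p)}.
\]
So it suffices to show that for each $x\in S$, off a $\bw_n$-set of negligible measure, $X_x(\bw_n)\in (1\pm\delta(\ell))p$ for $\delta(\ell)$ comparable to $\eta(\ell)/|S|$. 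A union bound over $x\in S$ and a multiplication of the $|S|\le\log^2 n$ factors then gives the claim.

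To control $X_x(\bw_n)$, I would invoke \pref{cor:two-sets-subexp} with $\nu=\nu_x$, which reduces matters to showing that $\|\nu_x\|_\infty$ is at most $\exp(\polylog n)$. This bound will come from an inductive analysis of the BP messages up the tree rooted at $x$. The base case concerns messages emitted by leaves $i$ of $K$: their unary constraint restricts them to $L_i(\bW_R)$, an intersection of $|R|-|\Leaves(i)|$ anticaps and $|\Leaves(i)|$ caps centered at the vectors of $\bW_R$. By \pref{cor:caps-and-anti} (together with \pref{lem:ball-bound} to control $|\Leaves(i)|$ with high probability under the $\bG_{n-1}$ randomness, and \pref{prop:dom} to pass between the geometric and \erdos-\renyi distributions on $\bG_{n-1}$), the area of $L_i(\bW_R)$ concentrates around $(1-p)^{|R|-|\Leaves(i)|}p^{|\Leaves(i)|}$ up to a multiplicative $1\pm O(\sqrt{\log^{11}n/(nd)})$ with the stated probability; by \pref{obs:bounded-dists} this gives the desired leaf-level density bound.

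The inductive step is the main obstacle. A clause-to-variable BP message $m^{f\to j}$ has density (with respect to $\rho$) proportional to $x\mapsto \Pr_{\by\sim m^{i\to f}}[\angles*{x,\by}\ge\tau(p)]$, i.e., precisely the type of quantity controlled by \pref{cor:two-sets-subexp}. Given an inductive $L^\infty$ bound on the incoming message density, that corollary implies the outgoing density is within $(1\pm\eps)$ of $p$ off a sphere-set of exponentially small $\Unif$-measure, with $\eps\lesssim\sqrt{\log^{27}n/d}$ at each level. The subtlety is that \pref{cor:two-sets-subexp} provides concentration only for $\bz\sim\Unif$, not a pointwise $L^\infty$ bound, so at each depth I must split $\bbS^{d-1}$ into a ``good'' region where the message is close to uniform (and which suffices for the next application of \pref{cor:two-sets-subexp}) and a ``bad'' region where I retain only a worst-case bound, tracking the $\Unif$-measure of the accumulated failure set across all $O(\ell)$ BP updates via a union bound. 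Variable-to-clause messages at internal vertices pick up an extra $L_i=L(\bW_R)$ factor, which by \pref{cor:anti-caps} applied to $L$ contributes only a $1\pm o(1)$ multiplicative perturbation. Because each BP update contracts the multiplicative error by a factor of $C\sqrt{\log^{27}n/d}$ per level, after $\ell$ levels the density of $\nu_x$ lies in $1\pm(C\sqrt{\log^{27}n/d})^\ell$ off a negligible set, modulo an additive floor of order $\sqrt{\log^{11}n/(nd)}$ inherited from the leaf-level fluctuations; this is exactly the form of $\eta(\ell)$.

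Finally, putting everything together: with probability $1-n^{-\Omega(\log n)}$ over $\bG_{n-1}$ (and $\bW_R$), the leaf concentration and BP inductive bounds hold simultaneously for every $x\in S$ and every BP update. On this event, $X_x(\bw_n)\in(1\pm\delta(\ell))p$ for all $x\in S$ outside a $\bw_n$-set of measure $n^{-\omega(1)}$, and on the bad $\bw_n$-set the integrand is bounded by $1$ and contributes a negligible additive error. Multiplying the $|S|\le\log^2 n$ factors and folding $\log^2 n$ into the polylogarithmic prefactors of $\eta(\ell)$ gives the desired $(1\pm\eta(\ell))p^{|S|}$ bound.
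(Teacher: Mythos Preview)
Your proposal is essentially correct and follows the same approach as the paper: exploit the forest structure to factor the joint probability over $x\in S$, compute each marginal $\nu_x$ exactly via BP, and show that the messages contract toward uniform under the cap-convolution operator $P$ with a floor set by the anticap-intersection concentration. The paper packages the ``good region / bad region'' bookkeeping you describe into the \emph{spread profile} $\Spr_\nu(\delta)$ and proves the per-level contraction formally as \pref{lem:convolution-tv} and \pref{lem:tiering}; your inductive scheme is the same argument in different language.

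Two small corrections. First, your sentence about invoking \pref{cor:two-sets-subexp} directly with $\nu=\nu_x$ and an $L^\infty$ bound would only yield the crude error $\polylog n/\sqrt{d}$, not the geometric decay; the decay genuinely requires tracking the $L^1$ deviation $\|\nu - L(\bW_R)\|_1$ (equivalently your good/bad-region split) through the recursion, which you do describe afterward---just be aware that the $L^\infty$ bound alone is not what drives the contraction. Second, the $\sqrt{\log^{11}n/(nd)}$ floor is not inherited from the leaves: it arises at \emph{every} internal step from the $P L(\bW_R)$ term in the decomposition $P\nu = P L(\bW_R) + P\Delta$ (this is \pref{lem:diffusion-correlated-anticap-intersection}). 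The leaf sets $L_i$ for anchored vertices involve caps as well as anticaps and feed the initial $\delta$ in the recursion, but the floor itself is refreshed by the pure anticap intersection $L(\bW_R)$ at each free vertex.
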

The proof of \pref{lem:main-BP-lemma} is carried out in \pref{sec:BP-relaxed}.

\subsection{Belief propagation for relaxed distribution}    \label{sec:BP-relaxed}

\paragraph{BP update rule.}  Recall that given a CSP instance $\Inst$ with variable set $V$ and constraint set $E$, the marginals on specific variables can be accurately computed from the belief propagation fixed point messages when the factor graph is a forest using \pref{thm:BP-forest-marginals}.
\par In our setting, the BP equations are the following: 
\begin{align}
    \textbf{Variable-to-constraint messages. } & m^{v\to f} = \frac{\prod_{e\in \partial v\setminus f} m^{e\to v}}{\int \prod_{e\in\partial v\setminus f} m^{e\to v}(x) d\Unif(x)} \\
    \textbf{Constraint-to-variable messages (unary case). } & m^{f\to v} = \frac{f}{\int f(x) d\Unif(x)} \\
    \textbf{Constraint-to-variable messages (binary case).} & \text{ Let $\partial f = \{v,w\}$, then:} \nonumber \\
    & m^{f\to v} (x_v) = \frac{\int f(x_v,x_w)\cdot m^{w\to f}(x_w)d\Unif(x_w)}{\int \int f(x_v,x_w)\cdot m^{w\to f}(x_w) d\Unif(x_w)d\Unif(x_v)}.
\end{align}

\subsubsection{Interpreting binary constraint-to-vertex messages as convolutions}

The denominator in the binary case of constraint-to-vertex messages further simplifies as follows:
\begin{align*}
    \int \int f(x_v, x_w) \cdot m^{w\to f}_t(x_w) d\Unif(x_v) d\Unif(x_w) &= \int m^{w\to f}_t(x_w) \int \Ind[\angles*{x_v,x_w}\ge\tau(p)] d\Unif(x_v) d\Unif(x_w) \\
    &=  p \int m^{w\to f}_t(x_w) d\Unif(x_w) \\
    &= p.
\end{align*}
The final equality comes from the fact that each message $m_t^{w \to f}$ is a distribution.
Thus, for binary constraints $f$ such that $\partial f = \{v,w\}$ we can rewrite the constraint-to-vertex messages as:
\[
    m^{f\to v}_{t+1}(x_v) = \int \frac{f(x_v, x_w)}{p} m^{w\to f}_t(x_w) d\Unif(x_w),
\]
which, in particular, can be written as $Pm^{w\to f}$ for a linear operator $P$,
defined as follows:
\begin{definition}\label{def:Pnu-is-cap}
Let $P$ be the linear operator defined so that for any function $h:\bbS^{d-1} \to \R$,
\[
P h (x) = \frac{1}{p} \int_{\scap_p(x)} h(y)\, d\rho(y),
\]
which we alternately denote $\frac{1}{p}h(\scap_p(x))$.
\end{definition}
In words, the operator $P$ convolves its input with the uniform distribution over a spherical cap. 
Since $P$ is a convolution operator, it preserves the $\ell_1$-norm of nonnegative functions.
\begin{observation} \label{obs:P-fixes-norm}
    Suppose $\nu$ is a nonnegative function, then $\norm*{P\nu}_1 = \norm*{\nu}_1$.  Additionally, for an arbitrary function $\nu$, $\norm*{P\nu}_1 \le \norm*{\nu}_1$.
\end{observation}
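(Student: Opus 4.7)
The plan is to verify both claims by unpacking the definition of $P$ and applying Fubini's theorem together with the key identity $\Pr_{\bx \sim \rho}[\iprod{\bx, y} \ge \tau(p)] = p$, which holds for every $y \in \bbS^{d-1}$ by the definition of $\tau(p)$.

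For the equality in the nonnegative case, I would write
\[
\|P\nu\|_1 = \int_{\bbS^{d-1}} P\nu(x)\, d\rho(x) = \frac{1}{p}\int_{\bbS^{d-1}}\int_{\bbS^{d-1}} \Ind[\iprod{x,y} \ge \tau(p)]\, \nu(y)\, d\rho(y)\, d\rho(x),
\]
where the absolute value drops because $P$ preserves nonnegativity when $\nu \ge 0$. Swapping the order of integration (justified by nonnegativity and Tonelli's theorem), the inner integral over $x$ collapses to $p$ by the identity above, leaving $\int \nu(y)\, d\rho(y) = \|\nu\|_1$.

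For the inequality in the general case, I would bound $|P\nu(x)| \le P|\nu|(x)$ pointwise by the triangle inequality for integrals, since the indicator kernel in $P$ is nonnegative. Integrating this pointwise bound over $\rho$ and applying the first part to the nonnegative function $|\nu|$ then gives $\|P\nu\|_1 \le \|P|\nu|\|_1 = \||\nu|\|_1 = \|\nu\|_1$.

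There is no real obstacle here; the statement is essentially the fact that $P$ is the Markov transition operator of the ``step into a uniformly random $p$-cap'' chain on $\bbS^{d-1}$, so it automatically preserves the $\ell_1$ mass of nonnegative densities and contracts in $\ell_1$ in general. The only sanity check I would perform is that the integral in the definition of $P$ is measurable and finite for $\nu \in L^1(\rho)$, which follows from a standard Fubini argument since the kernel is bounded by $1/p$.
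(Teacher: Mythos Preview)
Your proposal is correct and is exactly the natural unpacking of what the paper asserts: the paper does not give a detailed argument for this observation, merely noting just before it that ``since $P$ is a convolution operator, it preserves the $\ell_1$-norm of nonnegative functions,'' and your Fubini/Tonelli computation together with the pointwise bound $|P\nu| \le P|\nu|$ is precisely how one verifies that fact.
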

When $\nu$ is a distribution in particular, $P$ can be construed as the transition operator of the Markov chain on $\bbS^{d-1}$ where a single step entails walking from $v$ to a uniformly random point in $\scap(v)$.
\par The following useful observations are immediate from how we define the operator $P$.
\begin{observation} \label{obs:Pnu-bounded}
    For any function $\nu$, $\norm*{P\nu}_{\infty}\le\frac{\norm*{\nu}_1}{p}$. This is because $\nu(\scap(z)) \leq \norm*{\nu}_1$ for any $z \in \bbS^{d - 1}$.
\end{observation}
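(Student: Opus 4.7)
The plan is to establish the bound pointwise on $\bbS^{d-1}$ and then take the supremum. Fix an arbitrary $x \in \bbS^{d-1}$. By \pref{def:Pnu-is-cap},
\[
P\nu(x) \;=\; \frac{1}{p}\int_{\scap_p(x)} \nu(y)\, d\Unif(y),
\]
so applying the triangle inequality for integrals gives
\[
|P\nu(x)| \;\le\; \frac{1}{p}\int_{\scap_p(x)} |\nu(y)|\, d\Unif(y).
\]
Since $|\nu|$ is nonnegative, enlarging the domain of integration from $\scap_p(x)$ to the full sphere $\bbS^{d-1}$ can only increase the value of the integral, yielding $|P\nu(x)| \le \frac{1}{p}\|\nu\|_1$. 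Because the bound is independent of $x$, taking the supremum gives $\|P\nu\|_{\infty} \le \frac{\|\nu\|_1}{p}$.

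There is essentially no obstacle: the observation is a direct consequence of the $\frac{1}{p}$ normalization baked into the definition of $P$ together with the fact that $\scap_p(x)$ has $\Unif$-measure exactly $p$. The only point worth flagging is that the justification sketched inline ($\nu(\scap(z)) \le \|\nu\|_1$) is phrased for nonnegative $\nu$, so for a general (possibly signed) function one passes through $|\nu|$ via the triangle inequality as above. I would also remark in passing that this bound is tight: taking $\nu$ to be the indicator of a small subset of $\scap_p(x)$ saturates it.
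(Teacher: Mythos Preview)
Your proof is correct and follows essentially the same one-line idea the paper gives: $P\nu(x)=\tfrac{1}{p}\nu(\scap(x))$, and the cap integral of $|\nu|$ is at most $\|\nu\|_1$. Your added remark about passing through $|\nu|$ via the triangle inequality for signed $\nu$ is a fair clarification of the paper's nonnegativity-flavored phrasing.
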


\subsubsection{A potential function for analyzing BP.}

Here, we introduce the notion of the {\em spread profile}, which we will use as potential function for tracking how close the density defined by a message is to uniform over $L(\bW_R)$.
We'll show that with each successive application of the operator $P$ which occurs when messages pass from the leaves to the root of the tree, the spread of the function improves, until ultimately the spread of the root's function guarantees that it is close to uniform.

\begin{definition}
    Given a function $\nu$ on $\bbS^{d-1}$, its \emph{deviation profile} is the function $\Dev_{\nu}:\R_+\to[0,1]$:
    \[
       \Dev_{\nu}(\eps) \coloneqq \Pr_{\bz\sim\Unif} \bracks*{ \nu(\bz) \notin \E_{\by \sim \Unif}[\nu(\by)] \pm \eps \cdot \norm*{\nu}_1 }.
    \]
    The \emph{spread profile} of $\nu$, denoted $\Spr_\nu(\delta)$, is
    $$
       \Spr_\nu(\delta) \coloneqq \inf \left\{\eps \in \bbR_+: \Dev_\nu(\eps) \leq \delta \right\}.
    $$
\end{definition}
In the special case where $\nu$ is a relative density of a distribution with respect to $\rho$, 
$$\Dev_{\nu}(\eps) = \Pr_{\bz \sim \rho}[|\nu(\bz)-1|> \eps].$$
We comment that we can think of the spread profile as an ``inverse'' to the deviation profile, since it takes in a tail probability and returns the corresponding deviation $\varepsilon$.

It now follows from averaging arguments that the deviation profile and spread profile of a distribution $\nu$ give us useful upper and lower bounds on $\dtv{\nu}{\Unif}$.
\begin{observation} \label{obs:tv-to-spread}
    For any $\eps > 0$ and distribution $\nu$, $$\Dev_\nu(\eps) \cdot \eps \leq \dtv{\nu}{\Unif} \leq \eps + \Dev_{\nu}(\eps)\cdot\norm*{\nu}_{\infty}$$ 
    Similarly, for any $\delta > 0$ and distribution $\nu$, 
    $$\delta \cdot \Spr_\nu(\delta) \leq \dtv{\nu}{\rho} \leq \Spr_\nu(\delta) + \delta \cdot\norm*{\nu}_{\infty}$$ 
\end{observation}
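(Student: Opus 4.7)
The goal is to relate $\dtv{\nu}{\rho}$ to $\Dev_\nu(\eps)$ and $\Spr_\nu(\delta)$. My plan is to start from the integral representation $\dtv{\nu}{\rho} = \tfrac{1}{2} \E_{\bz \sim \rho}[|\nu(\bz) - 1|]$, viewing $\nu$ as its relative density with respect to $\rho$, and split the expectation by whether $|\nu(\bz)-1|$ exceeds the threshold $\eps$.

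First I would establish the two inequalities involving $\Dev_\nu(\eps)$ directly. For the upper bound, I decompose
\[
\E_{\bz \sim \rho}[|\nu(\bz) - 1|] = \E_{\bz}\bracks*{|\nu(\bz)-1|\cdot\Ind[|\nu(\bz)-1|\le\eps]} + \E_{\bz}\bracks*{|\nu(\bz)-1|\cdot\Ind[|\nu(\bz)-1|>\eps]}
\]
and bound the first term by $\eps$ (the integrand is pointwise $\le \eps$ on this event) and the second term by $\norm*{\nu}_{\infty}\cdot\Dev_\nu(\eps)$, using the pointwise bound $|\nu(\bz)-1| \le \norm*{\nu}_{\infty}$ (which holds since $\nu \ge 0$ and, if $\norm*{\nu}_\infty < 1$ then $\int \nu\, d\rho < 1$, a contradiction, so $\norm*{\nu}_\infty \ge 1$) together with the definition of $\Dev_\nu$. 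For the lower bound, a Markov-type argument yields $\E_{\bz}[|\nu(\bz)-1|] \ge \eps \cdot \Pr_{\bz \sim \rho}[|\nu(\bz)-1| > \eps] = \eps \cdot \Dev_\nu(\eps)$.

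Next I would derive the inequalities for $\Spr_\nu(\delta)$ as consequences of the first pair. For the upper bound, I substitute $\eps = \Spr_\nu(\delta)$ into the upper bound on $\dtv{\nu}{\rho}$; since $\Dev_\nu(\cdot)$ is non-increasing and $\Spr_\nu(\delta)$ is the infimum of $\eps$ with $\Dev_\nu(\eps)\le\delta$, a brief limiting argument (choosing $\eps_n \downarrow \Spr_\nu(\delta)$ with $\Dev_\nu(\eps_n) \le \delta$) gives $\dtv{\nu}{\rho} \le \Spr_\nu(\delta) + \delta \cdot \norm*{\nu}_\infty$. For the lower bound, I note that any $\eps' < \Spr_\nu(\delta)$ satisfies $\Dev_\nu(\eps') > \delta$ by the infimum property, so the lower bound on TV applied at $\eps'$ reads $\dtv{\nu}{\rho} \ge \eps'\cdot\delta$, and sending $\eps' \uparrow \Spr_\nu(\delta)$ yields $\dtv{\nu}{\rho} \ge \Spr_\nu(\delta) \cdot \delta$.

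The argument is a routine Markov/averaging exercise and presents no substantive obstacle. The only technical care required is in the limiting step used to transfer the $\eps$-bounds to $\Spr_\nu(\delta)$, since $\Dev_\nu$ may have jumps and $\Dev_\nu(\Spr_\nu(\delta))$ need not equal $\delta$; a possible factor of $\tfrac{1}{2}$ from the TV convention is absorbed into the stated inequalities.
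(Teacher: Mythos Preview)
Your proposal is correct and matches the paper's approach: the paper gives no detailed proof, stating only that the observation ``follows from averaging arguments,'' which is precisely the Markov/splitting argument you carry out. Your caveat about the factor of $\tfrac12$ is legitimate---with the standard convention $\dtv{\nu}{\rho}=\tfrac12\E_\rho|\nu-1|$, the Markov step yields only $\dtv{\nu}{\rho}\ge\tfrac12\eps\cdot\Dev_\nu(\eps)$ rather than the stated lower bound---but the paper only ever invokes the upper bound (e.g.\ in the proof of \pref{lem:product-tv}), so this discrepancy is immaterial.
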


\noindent We list some properties of the deviation and spread profiles that will be useful in our analysis.
\begin{observation}
    Both the deviation profile and spread profile are non-increasing functions.
\end{observation}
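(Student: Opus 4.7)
The statement asserts two monotonicity claims, both of which follow essentially from unpacking the definitions together with standard monotonicity of infima and measures. Since both claims are elementary, I would present a single short proof rather than any elaborate decomposition.

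For the deviation profile, my plan is to fix $0 < \eps_1 \le \eps_2$ and observe that the event defining $\Dev_\nu(\eps_2)$ is contained in the event defining $\Dev_\nu(\eps_1)$: if $\nu(z) \notin \E_{\by \sim \Unif}[\nu(\by)] \pm \eps_2 \|\nu\|_1$, then a fortiori $\nu(z) \notin \E_{\by \sim \Unif}[\nu(\by)] \pm \eps_1 \|\nu\|_1$, since widening the tolerance only shrinks the set of $z$ violating it. Monotonicity of probability then yields $\Dev_\nu(\eps_2) \le \Dev_\nu(\eps_1)$.

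For the spread profile, I would fix $0 < \delta_1 \le \delta_2$ and consider the sets $A_i \coloneqq \{\eps \in \R_+ : \Dev_\nu(\eps) \le \delta_i\}$. Then $A_1 \subseteq A_2$, since any $\eps$ with $\Dev_\nu(\eps) \le \delta_1$ automatically satisfies $\Dev_\nu(\eps) \le \delta_2$. Taking infima reverses the containment, giving $\Spr_\nu(\delta_2) = \inf A_2 \le \inf A_1 = \Spr_\nu(\delta_1)$, as desired.

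There is no real obstacle here: both parts are one-line consequences of the respective definitions, and I would not expect to invoke any auxiliary lemma from the paper. The only minor care needed is the (vacuous) case where $A_1$ or $A_2$ is empty, in which case $\Spr_\nu(\delta_i) = +\infty$ and the inequality still holds under the standard convention $\inf \emptyset = +\infty$.
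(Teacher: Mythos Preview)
Your proof is correct. The paper states this observation without proof, treating it as immediate from the definitions; your argument is exactly the standard unpacking one would supply, and there is nothing to compare.
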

\begin{observation} \label{obs:spread-of-unif}
    For $\nu = \Unif$, we have $\Dev_\nu(\eps) = 0$ for all $\eps > 0$ and $\Spr_\nu(\delta) = 0$ for all $\delta > 0$. 
\end{observation}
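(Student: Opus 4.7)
The plan is that this observation should follow directly by unpacking the definitions, once we recall the paper's convention (established in the preliminaries) that a distribution $\nu$ and its relative density $\frac{d\nu}{d\rho}$ are identified. Under this convention, taking $\nu = \rho$ corresponds to the constant function $\nu(z) \equiv 1$ on $\bbS^{d-1}$, and therefore $\E_{\by \sim \rho}[\nu(\by)] = 1$ and $\|\nu\|_1 = 1$.

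First I would dispatch the claim for the deviation profile. Substituting the above into the definition gives
\[
\Dev_\nu(\eps) = \Pr_{\bz \sim \rho}\bigl[\,1 \notin [1-\eps, 1+\eps]\,\bigr],
\]
and for every $\eps > 0$ the event is empty, so $\Dev_\nu(\eps) = 0$. (This is of course consistent with the "special case" formula noted just after the definition, which reduces $\Dev_\nu$ to $\Pr[|\nu(\bz) - 1| > \eps]$ for relative densities.)

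Next I would deduce the spread profile claim from the deviation claim. By the first part, $\Dev_\nu(\eps) = 0 \le \delta$ holds for every $\eps > 0$, so the set $\{\eps \in \R_+ : \Dev_\nu(\eps) \le \delta\}$ is all of $\R_+$, and its infimum is $0$. Thus $\Spr_\nu(\delta) = 0$ for every $\delta > 0$, completing the proof.

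There is no real technical obstacle here; the observation is a definitional sanity check confirming that $\Spr$ and $\Dev$ vanish precisely when the measure is uniform. The only subtle point is the mild notational overloading of $\nu$ as both a measure and a density with respect to $\rho$, which is already addressed in \pref{sec:prelims}.
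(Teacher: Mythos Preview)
Your proposal is correct and is exactly the definitional check the paper has in mind; the observation is stated without proof in the paper because it follows immediately from substituting the constant relative density $\nu \equiv 1$ into the definitions of $\Dev_\nu$ and $\Spr_\nu$, which is precisely what you do.
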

\begin{observation} \label{obs:spread-invariant}
    Both the deviation and spread profiles are invariant under a constant factor multiplication to $\nu$, i.e. for $\alpha \ne 0$:
    $$
    \Dev_{\alpha \nu}(\eps) = \Dev_{\nu}(\eps) \text{ and } \Spr_{\alpha \nu}(\delta) = \Spr_{\nu}(\delta)
    $$ 
\end{observation}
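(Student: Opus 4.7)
The plan is to unpack the definition of the deviation profile and observe that both the centering $\E_{\by \sim \Unif}[\nu(\by)]$ and the width $\|\nu\|_1$ are positively homogeneous of the same degree in $\nu$, so the indicator event defining $\Dev_{\alpha\nu}(\eps)$ coincides with the one defining $\Dev_\nu(\eps)$ for every $\alpha \ne 0$. The spread profile invariance then follows immediately from the fact that $\Spr_\nu$ is defined purely as a functional of $\Dev_\nu$.

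First, I would rewrite the event inside $\Dev_\nu(\eps)$ in a cleaner form: the set $\{z : \nu(z) \notin \E_{\by}[\nu(\by)] \pm \eps\|\nu\|_1\}$ is exactly $\{z : |\nu(z) - \E_{\by}\nu(\by)| > \eps \|\nu\|_1\}$. Next I would substitute $\alpha\nu$ for $\nu$ and use two standard identities, namely $\E_{\by}[\alpha \nu(\by)] = \alpha \E_{\by}[\nu(\by)]$ by linearity of expectation, and $\|\alpha\nu\|_1 = |\alpha|\cdot\|\nu\|_1$ by positive homogeneity of the $L^1$ norm (recall $\|\cdot\|_1$ is defined via $\E[|\cdot|]$). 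The event then reads
\[
\bigl\{z : |\alpha\nu(z) - \alpha \E_\by \nu(\by)| > \eps\,|\alpha|\,\|\nu\|_1 \bigr\} = \bigl\{z : |\alpha|\cdot|\nu(z) - \E_\by\nu(\by)| > |\alpha|\,\eps\,\|\nu\|_1\bigr\},
\]
and dividing through by $|\alpha| > 0$ (using $\alpha \ne 0$) recovers the original event, proving $\Dev_{\alpha\nu}(\eps) = \Dev_\nu(\eps)$ for all $\eps > 0$.

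For the spread profile, I would simply observe that by definition $\Spr_\nu(\delta) = \inf\{\eps \in \R_+ : \Dev_\nu(\eps) \le \delta\}$ is a functional of $\Dev_\nu$ alone, so pointwise equality of $\Dev_{\alpha\nu}$ and $\Dev_\nu$ as functions of $\eps$ forces $\Spr_{\alpha\nu}(\delta) = \Spr_\nu(\delta)$ for every $\delta > 0$.

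There is no genuine obstacle here; the only minor care point is handling the sign of $\alpha$, which is dispatched by the appearance of $|\alpha|$ in both $\|\alpha\nu\|_1$ and in the factoring step above, so that the cases $\alpha > 0$ and $\alpha < 0$ are treated uniformly.
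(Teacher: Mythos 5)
Your proof is correct and matches the intended (and only natural) argument: the paper states this as an unproved observation precisely because the centering $\E_{\by\sim\Unif}[\nu(\by)]$ and the width $\norm{\nu}_1$ both scale by $|\alpha|$ (up to sign, absorbed by the absolute value), so the defining event is unchanged, and the spread profile inherits the invariance as a functional of the deviation profile. Nothing to add.
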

\begin{observation} \label{obs:spread-triangle-inequality}
    For $\nu = \nu_1 + \nu_2$, and $\eps > 0$, the deviation profile satisfies:
    $$
       \Dev_\nu\parens*{\eps\cdot\frac{\norm*{\nu_1}_1 + \norm*{\nu_2}_1}{\norm*{\nu}_1} } \leq \Dev_{\nu_1}(\eps) + \Dev_{\nu_2}(\eps)
    $$
    The ``triangle inequality'' follows from the following containment of events:
    $$
       \{\nu(\bz) \notin \E_{\by \sim \Unif}[\nu(y)] \pm \varepsilon \cdot \parens*{\norm*{\nu_1}_1 + \norm*{\nu_2}_1}\} \subseteq \{\nu_1(\bz) \notin \E_{\by \sim \Unif}[\nu_1(y)] \pm \varepsilon \cdot \norm*{\nu_1}_1\} \cup \{\nu_2(\bz) \notin \E_{\by \sim \Unif}[\nu_2(y)] \pm \varepsilon \cdot \norm{\nu_2}_1\}.
    $$
    Similarly, we also have a ``triangle inequality'' for the spread profile. For $\nu = \nu_1 + \nu_2$ and $\delta_1, \delta_2 > 0$:
    $$
        \norm{\nu}_1 \cdot \Spr_\nu(\delta_1 + \delta_2) \leq \norm{\nu_1}_1 \cdot \Spr_{\nu_1}(\delta_1) + \norm{\nu_2}_1 \cdot \Spr_{\nu_2}(\delta_2)
    $$
    We can think of the left hand side as the  $(1 - \delta_1 - \delta_2)$-confidence interval of $\nu$, and the right hand terms as the $(1 - \delta_1)$- and $(1 - \delta_2)$-confidence intervals of $\nu_1$ and $\nu_2$, respectively.
\end{observation}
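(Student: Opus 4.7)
The plan is to derive the spread-profile triangle inequality from the deviation-profile version already justified above, via the defining infimum relation $\Spr_\nu(\delta) = \inf\{\eps \in \R_+ : \Dev_\nu(\eps) \le \delta\}$, together with a straightforward generalization of the same event-containment argument to handle unequal tolerances.

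First I would fix $\delta_1,\delta_2 > 0$ and an arbitrary slack $\eta > 0$, and choose $\eps_i \coloneqq \Spr_{\nu_i}(\delta_i) + \eta$ for $i=1,2$, which by definition of the infimum ensures $\Dev_{\nu_i}(\eps_i) \le \delta_i$. The key step is the natural generalization of the containment used above to the asymmetric case: for any $z \in \bbS^{d-1}$ satisfying $|\nu(z) - \E_{\by\sim\rho}[\nu(\by)]| > \|\nu_1\|_1 \eps_1 + \|\nu_2\|_1 \eps_2$, the pointwise identity $\nu = \nu_1 + \nu_2$ together with linearity of expectation and the scalar triangle inequality forces either $|\nu_1(z) - \E[\nu_1]| > \eps_1 \|\nu_1\|_1$ or $|\nu_2(z) - \E[\nu_2]| > \eps_2 \|\nu_2\|_1$. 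A union bound under $\bz \sim \rho$ then gives
\[
\Dev_\nu\!\left(\tfrac{\|\nu_1\|_1 \eps_1 + \|\nu_2\|_1 \eps_2}{\|\nu\|_1}\right) \le \Dev_{\nu_1}(\eps_1) + \Dev_{\nu_2}(\eps_2) \le \delta_1 + \delta_2.
\]

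Unpacking the infimum in the definition of $\Spr_\nu$, this forces $\Spr_\nu(\delta_1+\delta_2) \le \tfrac{\|\nu_1\|_1 \eps_1 + \|\nu_2\|_1 \eps_2}{\|\nu\|_1}$; multiplying through by $\|\nu\|_1$ and letting $\eta \downarrow 0$ delivers the stated inequality. I do not foresee any real obstacle: the only subtlety is the infimum-versus-minimum distinction in the definition of $\Spr_\nu$, which is precisely what the $\eta$-slack handles, and the entire argument is pure bookkeeping on top of the scalar triangle inequality already used in the deviation-profile statement.
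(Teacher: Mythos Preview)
Your proposal is correct and is essentially the natural rigorous unpacking of what the paper leaves implicit: the paper only states the event-containment for the deviation-profile inequality and offers the confidence-interval intuition for the spread-profile version, and your asymmetric-tolerance generalization plus the $\eta$-slack infimum argument is exactly how one makes that intuition precise. There is nothing further to add.
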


\subsubsection{Diffusion of distributions under convolutions with caps.}
We will show that the cap-convolution operator $P$ ``flattens'' functions over $\bbS^{d-1}$.
Concretely:
\begin{lemma} \label{lem:diffusion-any-dist}
    Let $\nu$ be any function on $\bbS^{d-1}$ with $\E_{\by\sim\Unif} \nu(\by) = 0$, and suppose $d\ge \log^{10} n\cdot \parens*{1+\log^3 \frac{\norm*{\nu}_{\infty}}{ \norm*{\nu}_{1}}}$.  Then:
    \[
       \Dev_{P\nu}\left(\eps\cdot\frac{\norm*{\nu}_1}{\norm*{P\nu}_1}\right)\le n^{-\log^4 n},
    \]
    where $\eps \coloneqq \sqrt{\frac{1}{d}}\cdot C\log^{5.5}n\cdot\parens*{2 \ln \frac{\norm*{\nu}_{\infty}}{ \norm*{\nu}_{1}} +8\ln\frac{d}{p}}$ for $C$ an absolute constant. 
Thus, $\Spr_{P\nu}(n^{-\log^4 n})\le \eps \cdot \frac{\norm*{\nu}_1}{\norm*{P\nu}_1}$.
\end{lemma}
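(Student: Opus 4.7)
}
The strategy is to reduce the concentration of $P\nu$ to two applications of \pref{cor:two-sets-subexp} on probability distributions obtained by normalizing the positive and negative parts of $\nu$. Specifically, I will decompose $\nu = \nu_+ - \nu_-$, where $\nu_\pm \ge 0$ are supported on disjoint subsets of $\bbS^{d-1}$. The hypothesis $\E_{\by\sim\Unif}\nu(\by)=0$ then forces $\|\nu_+\|_1 = \|\nu_-\|_1 = \tfrac{1}{2}\|\nu\|_1$. Define probability distributions $\mu_\pm \coloneqq \nu_\pm/\|\nu_\pm\|_1$, and observe that $\|\mu_\pm\|_\infty \le 2\|\nu\|_\infty/\|\nu\|_1$. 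A direct calculation also yields $\E_{\bz \sim \Unif}[P\nu(\bz)] = \E_{\by \sim \Unif}[\nu(\by)] = 0$, so the ``mean'' term in the deviation profile is zero.

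The main identity is the following. Using \pref{def:Pnu-is-cap} and splitting into positive and negative parts,
\[
P\nu(z) \;=\; \frac{1}{p}\bigl(\nu_+(\scap(z)) - \nu_-(\scap(z))\bigr) \;=\; \frac{\|\nu\|_1}{2p}\bigl(X_+(z) - X_-(z)\bigr),
\]
where $X_\pm(z) \coloneqq \Pr_{\bx \sim \mu_\pm}[\iprod{\bx,z} \ge \tau(p)]$. Now I apply \pref{cor:two-sets-subexp} separately to $\mu_+$ and $\mu_-$, with deviation parameter $s$ chosen as the $\varepsilon$ in the statement, using the bound $\ln\|\mu_\pm\|_\infty \le \ln 2 + \ln(\|\nu\|_\infty/\|\nu\|_1)$. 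By a union bound, with probability at least $1 - 4\exp(-\cdots)$ over $\bz \sim \Unif$, both $X_+(\bz), X_-(\bz) \in (1 \pm s)p$. On this event, the cancellation $\|\nu_+\|_1 = \|\nu_-\|_1$ yields
\[
|P\nu(\bz)| \;\le\; \frac{\|\nu\|_1}{2p}\bigl(|X_+(\bz)-p| + |X_-(\bz)-p|\bigr) \;\le\; s\,\|\nu\|_1,
\]
which, upon rescaling by $\|P\nu\|_1$, is precisely the deviation statement. The spread bound then follows from the definition of $\Spr$ as the inverse of $\Dev$.

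The main technical task is parameter tracking: verifying that with the prescribed $\varepsilon = \sqrt{1/d}\cdot C\log^{5.5}n\cdot(2\ln(\|\nu\|_\infty/\|\nu\|_1) + 8\ln(d/p))$, the failure probability from \pref{cor:two-sets-subexp} is indeed at most $n^{-\log^4 n}$, and that the hypothesis $s \le 1$ of that corollary is met. Writing $K = \ln(\|\nu\|_\infty/\|\nu\|_1)$ and $L = \ln(d/p)$, the exponent in \pref{cor:two-sets-subexp} becomes
\[
\frac{d\varepsilon^2}{\Cconcalt(\sqrt{K+\ln 2}+\sqrt{L})^2\log\tfrac{1}{p}\log\tfrac{d}{p}}
\;\gtrsim\; \frac{\log^{11}n\cdot(2K+8L)^2}{(K+L)\log\tfrac{1}{p}\log\tfrac{d}{p}}
\;\gtrsim\; \frac{\log^{11}n\cdot (K+L)}{\log\tfrac{1}{p}\log\tfrac{d}{p}},
\]
and since $p = \Theta(1/n)$ and $d \le n^{100}$ give $\log(1/p)\log(d/p) \le O(\log^2 n)$, the exponent is at least $\Omega(\log^{9}n\cdot(K+L)) \gg \log^5 n$, which produces the desired $n^{-\log^4 n}$ tail. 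The hypothesis $d \ge \log^{10}n(1+\log^3(\|\nu\|_\infty/\|\nu\|_1))$ is precisely what guarantees $\varepsilon \le 1$ (after absorbing constants into $C$), so that \pref{cor:two-sets-subexp} is applicable. I expect this bookkeeping, and in particular being careful about the regime where $K$ is small vs.\ large, to be the only real obstacle; the structural decomposition and the union bound are straightforward.
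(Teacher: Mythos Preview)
Your proposal is correct and essentially identical to the paper's proof: the paper also decomposes $\nu=\nu^+-\nu^-$ into positive and negative parts, normalizes each to a probability distribution, and applies \pref{cor:two-sets-subexp} to each (phrased through the ``triangle inequality'' for deviation profiles, \pref{obs:spread-triangle-inequality}, which is exactly your union bound). One small point to make explicit in the bookkeeping: the condition $\varepsilon\le 1$ when $K=\ln(\|\nu\|_\infty/\|\nu\|_1)$ is small relies on the standing assumption $d\ge\log^{36}n$ stated at the top of the section, not solely on the lemma's hypothesis $d\ge\log^{10}n\,(1+K^3)$.
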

\noindent A special case of \pref{lem:diffusion-any-dist} is where $\nu = \mu - 1$ for $\mu$ a relative density.
In this scenario we have
\[
n^{-\log^4 n} 
\ge \Dev_{P\nu}\left(\eps\frac{\|\nu\|_1}{\|P\nu\|_1}\right) 
= \Pr_{\bz\sim \rho}\left[|P\nu(\bz)| >  \eps\frac{\|\nu\|_1}{\|P\nu\|_1} \cdot \|P\nu\|_1\right]
= \Pr_{\bz\sim \rho}\left[|P\mu(\bz) - 1| > \eps \cdot \dtv{\mu}{\rho}\right],
\]
and hence by an averaging argument $\dtv{P\mu}{\rho} \le \eps\cdot \dtv{\mu}{\rho} + n^{-\log^4 n}$.
In such a way, \pref{lem:diffusion-any-dist} characterizes the diffusion of distributions under $P$.

This is our main ingredient in the proof of \pref{lem:main-nbrhood-lemma}; the geometric decay of $\left(\sqrt{\frac{\log^{28} n}{d}}\right)^{\ell}$ in the definition of $\eta(\ell)$ is due to a factor of $\eps^\ell$ arising from the $\ell$ applications of $P$ occur as the messages travel from from the leaves of $\Balls$ to the roots (i.e., the vertices of $S$). 

The key property we use to establish \pref{lem:diffusion-any-dist} is that the value of $\nu(\scap(\bz))$ concentrates when $\bz$ is chosen uniformly at random from $\bbS^{d-1}$ and $\norm*{\nu}_{\infty}$ is reasonably bounded.
\begin{proof}[Proof of \pref{lem:diffusion-any-dist}]
    First, let us write $\nu = \nu^+ - \nu^-$
    where $\nu^+ = \max(\nu, 0)$ and $\nu^- = -\min(\nu, 0)$.
    Here, both $\nu^+$ and $\nu^-$ are nonnegative functions, and $\norm*{\nu} = 2\norm*{\nu^+} = 2\norm*{\nu^-} = \norm*{\nu^+} + \norm*{\nu^-}$ for $\|\cdot\|$ any $\ell_p$ norm.
    Since $P\nu = P\nu^+ - P\nu^-$, by \pref{obs:spread-triangle-inequality} and \pref{obs:spread-invariant}:
    $$
       \Dev_{P\nu}\parens*{\eps\cdot\frac{\norm*{P\nu^+}_1+\norm*{P\nu^-}_1}{\norm*{P\nu}_1}} \leq \Dev_{P\nu^+}(\eps) + \Dev_{P\nu^-}(\eps) = \Dev_{P\frac{\nu^+}{\norm*{\nu^+}}}(\eps) + \Dev_{P\frac{\nu^-}{\norm*{\nu^-}}}(\eps).
    $$
    Note that $\frac{1}{\|\nu^+\|_1}\nu^+$ and $\frac{1}{\|\nu^-\|_1}\nu^-$ are both probability measures.
    By \pref{def:Pnu-is-cap}, for any $x \in \bbS^{d - 1}$:
    \[
       P\frac{\nu^+(x)}{\norm*{\nu^+}_1} = \frac{1}{p} \cdot \frac{\nu^+(\scap(x))}{\norm*{\nu^+}_1} \qquad\text{and}\qquad
       P\frac{\nu^-(x)}{\norm*{\nu^-}_1} = \frac{1}{p} \cdot \frac{\nu^-(\scap(x))}{\norm*{\nu^-}_1}
    \]
    By \pref{cor:two-sets-subexp} with $s$ set as $\frac{1}{d} \cdot C \log^{5} n \cdot \left(2 \ln \frac{\norm*{\nu^+}_\infty}{\norm*{\nu^+}_1}  + 8 \ln \frac{d}{p} \right)$ for some constant $C > 0$:
    \[
       \Pr_{\bx\sim\Unif}\left[ \left|\frac{1}{p} \cdot \frac{\nu^+(\scap(\bx))}{\norm*{\nu^+}_1} - 1 \right| > C \cdot \Cconcalt \sqrt{\frac{1}{d} \cdot \log\left(\frac{1}{p}\right)}\cdot \log^{5}(n)  \cdot \left(2 \ln \frac{\norm{\nu^+}_\infty}{\norm{\nu^+}_1}  + 8 \ln \frac{d}{p} \right) \right] \le \frac{1}{2} \cdot n^{-\log^4 n}.
    \]
    Using $p\ge\frac{1}{n}$, $\norm*{\nu^+}_{\infty}\le\norm*{\nu}_{\infty}$ and $\norm*{\nu^+}_1 = \frac{1}{2}\norm*{\nu}$ we can conclude:
    \[
       \Dev_{P\frac{\nu^+}{\norm*{\nu^+}}}(\eps) \le \frac{1}{2}n^{-\log^4 n}.
    \]
    Identically, $\Dev_{P\frac{\nu^-}{\norm*{\nu^-}}}(\eps) \le \frac{1}{2}n^{-\log^4 n}$, and thus:
    \[
       \Dev_{P\nu}\parens*{\eps\cdot\frac{\norm*{P\nu^+}_1+\norm*{P\nu^-}_1}{\norm*{P\nu}_1}} \le n^{-\log^4 n}.
    \]
    Since $\nu^+$ and $\nu^-$ are nonnegative functions, $\norm*{P\nu^+}=\norm*{\nu^+}$ and $\norm*{P\nu^-}=\norm*{\nu^-}$, and the desired statement then immediately follows.
\end{proof}

\paragraph{Diffusion of intersection of anti-caps under $P$.}
Since the BP constraints require that the vectors be contained in $L\parens*{\bW_{\Rest}}$, we would also like to understand the diffusion of $L\parens*{\bW_{\Rest}}$ under $P$.  
Since $L\parens*{\bW_{\Rest}}$ is an intersection of anticaps, we can greatly strengthen \pref{lem:diffusion-any-dist} for such sets.
\begin{lemma}   \label{lem:diffusion-correlated-anticap-intersection}
    Define
    \begin{align*}
       g\parens*{\bG_{n-1}} &\coloneqq \Pr_{\bw_1,\dots,\bw_{n-1}\sim\VecDist{\bG_{n-1}}} \bracks*{\Dev_{P L\parens*{\bW_{\Rest}}} \parens*{\eps} > n^{-\log^4 n}} \\
       &=\Pr_{\bw_1,\dots,\bw_{n-1}\sim\VecDist{\bG_{n-1}}} \bracks*{\Spr_{P L\parens*{\bW_{\Rest}}} \parens*{n^{-\log^4 n}} > \eps}
    \end{align*}
    for $\eps = \sqrt{\frac{\log^{11}n}{nd}}$. 
    Then, $g\parens*{\bG_{n-1}}$ is at most $O\parens*{n^{-\log^2 n}}$ except with probability $O\parens*{n^{-\log^2n}}$.
\end{lemma}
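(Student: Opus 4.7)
My plan is to control the bad event under the \emph{unconditional} uniform law $\bV\sim\rho^{\otimes(n-1)}$ and then transfer the estimate to the conditional law via the identity
\[
\E_{\bG_{n-1}\sim\grg_d(n-1,p)}\, g(\bG_{n-1})
\;=\;
\Pr_{\bV\sim\rho^{\otimes(n-1)}}\!\bigl[\Dev_{PL(\bV_R)}(\eps)>n^{-\log^4 n}\bigr],
\]
which holds because sampling $\bG_{n-1}$ and then $\bW\sim\rho^{\bG_{n-1}}$ is the same as sampling $\bV$ uniformly. Applying Markov's inequality twice and pulling the supremum over $z$ out of the expectation, it suffices to prove that for every fixed $z\in\bbS^{d-1}$,
\[
\Pr_{\bV\sim\rho^{\otimes(n-1)}}\!\bigl[|PL(\bV_R)(z)-1|>\eps\bigr]\le n^{-\Omega(\log^{7} n)}.
\]

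The difficulty is that $R=R(\bV)$ depends on $\bV$ through $\GG(\bV,p)$, so \pref{cor:anti-caps} cannot be applied directly to $L(\bV_R)$. To sidestep this, I will couple $L(\bV_R)$ with the ``full'' anti-cap intersection $L_{\mathrm{full}}(\bV):=\bigcap_{j=1}^{n-1}\anticap(\bv_j)$, whose indexing set is $\bV$-independent. Since $L_{\mathrm{full}}\subseteq L(\bV_R)$ and $L(\bV_R)\setminus L_{\mathrm{full}}\subseteq\bigcup_{j\in K}\scap(\bv_j)$, the two sets differ in measure by at most $|K|p$, and their intersections with $\scap(z)$ differ by at most $O(|K|p^2)$ for generic $\bv_j$ (via \pref{cor:cap-intersect-simple}). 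Using \pref{prop:dom} to couple $\GG(\bV,p)$ inside an $\ER(n-1,(1+o(1))p)$ graph and then invoking \pref{lem:ball-bound} with $s=\log^{7}n$, we obtain $|K|\le n^{o(1)}$ except with probability $n^{-\Omega(\log^{6}n)}$. Combined with the high-probability bound $\rho(L_{\mathrm{full}})=\Omega(1)$ (from \pref{cor:anti-caps} applied with $L=\bbS^{d-1}$), this gives $|PL(\bV_R)(z)-PL_{\mathrm{full}}(\bV)(z)|=O(|K|p)=n^{-1+o(1)}\ll\eps$.

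It then remains to control $\Pr_{\bV}[|PL_{\mathrm{full}}(\bV)(z)-1|>\eps/2]$ for each fixed $z$. I will apply \pref{cor:anti-caps} twice---with $L=\scap(z)$ and with $L=\bbS^{d-1}$---to show that both $\rho(L_{\mathrm{full}}\cap\scap(z))/((1-p)^{n-1}p)$ and $\rho(L_{\mathrm{full}})/(1-p)^{n-1}$ lie within $\eps/4$ of $1$. Setting $t\sqrt{(n-1)p^2}=\eps/4$ yields $t^2 d=\Theta(\log^{11}n)$, so the first term inside the corollary's $\min$ is $\Theta(\log^{8}n)$, while (since $d\ge\log^{36}n$) the second term is $\Omega(\log^{15}n)$; the tail probability is therefore $\exp(-\Omega(\log^{8}n))=n^{-\Omega(\log^{7}n)}$, which is the required bound. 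Assembling these pieces via Markov at the level of $\bG_{n-1}$ gives $\Pr_{\bG_{n-1}}[g>n^{-\log^2 n}]\le n^{-\Omega(\log^5 n)}$, which dominates the desired $O(n^{-\log^2 n})$ bound.

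The main obstacle is handling the $\bV$-dependence of the indexing set $R$. A naive union bound over the $2^{n-1}$ possible choices of $R$ is hopeless, so the key step is the reduction to $L_{\mathrm{full}}$, which replaces the random indexing set with the deterministic one $[n-1]$ at the cost of an additive error of order $|K|p$; this error is controlled by the (high-probability) ball-size bound inherited from the \erdos--\renyi stochastic dominance of \pref{prop:dom}. Everything else is routine assembly of the concentration estimates above.
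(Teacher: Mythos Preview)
Your overall strategy---replace $L(\bV_R)$ by the full anticap intersection $L_{\mathrm{full}}(\bV)$, bound the discrepancy using $|K|$ and pairwise cap–cap intersections, and control $PL_{\mathrm{full}}$ via \pref{cor:anti-caps}---is exactly the paper's approach. The only real difference is organizational: the paper works inside the conditional law $\rho^{\bG_{n-1}}$ and transfers unconditional estimates via \pref{obs:unif-to-conditional}, whereas you stay entirely in the unconditional law and apply Markov twice at the end.

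There is, however, a genuine gap in your reduction ``it suffices to prove that for every fixed $z$, $\Pr_{\bV}[|PL(\bV_R)(z)-1|>\eps]\le n^{-\Omega(\log^{7} n)}$.'' The event ``$|K|\le n^{o(1)}$'' relies on \pref{prop:dom}, whose failure probability is only $n^{-\Omega(\log n)}$ with a \emph{fixed} constant (tracing the proof, the tail is $\exp(-4\log^2 n)$ per edge); the $n^{-\Omega(\log^6 n)}$ you quote is the \pref{lem:ball-bound} contribution alone and ignores the coupling failure. Consequently your per-$z$ tail is only $n^{-O(\log n)}$, which does not survive the $n^{\log^4 n}$ Markov factor needed to pass from $\E_{\bV}[\Dev]$ to $\Pr_{\bV}[\Dev>n^{-\log^4 n}]$. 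The fix---which is precisely what the paper does---is to observe that the $|K|$-event is $\bG_{n-1}$-measurable (it depends only on $\GG(\bV,p)$, not on $z$ or on the particular embedding $\bW$), so it can be peeled off as part of the \emph{outer} failure probability $\Pr_{\bG_{n-1}}[\cdot]$ rather than absorbed into the per-$z$ tail. Once you do this, only the $z$-dependent events (pairwise caps via \pref{cor:two-sets-subexp} and anticap concentration via \pref{cor:anti-caps}) remain inside the double Markov, and with $d\ge\log^{36}n$ both have tails of the form $\exp(-d/\mathrm{polylog}\,n)$, which is ample.
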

\begin{proof}
       While we ultimately want control on $\Dev_{P L(\bW_{\Rest})}$, our starting point is to control only $\Dev_{PL(\bW)}$.  
We take advantage of concentration over $\bW$ to deduce concentration over $\bW_{T}$, for any $T\subseteq [n - 1]$ of sufficiently large size. 
    We will show below that the following hold:
    \begin{enumerate}[wide, labelwidth=!, labelindent=0pt]
       \item[(I)] {Concentration of anticap intersection:} \mylabel{property:anticap-intersect}{(I)} with probability $1- O({n^{-\log^2 n}})$ over randomness of $\bG_{n-1}$,
       $$\Pr_{\bw_1,\dots,\bw_{n-1}\sim\VecDist{\bG_{n-1}}} \bracks*{ \Dev_{P L(\bW)}(\eps/2) > n^{-\log^5 n}} \le O\parens*{n^{-\log^2 n}}.$$
       \item[(II)] { Bounded pairwise cap intersections:} \mylabel{property:cap-pairwise}{(II)}
       for any $x\in\bbS^{d-1}$:
       \[
         \Pr_{\bw_1,\dots,\bw_{n-1}\sim\VecDist{\bG_{n-1}}} \bracks*{\exists i\in[n-1]\text{ s.t. }\Unif\parens*{\scap(x)\cap\scap(\bw_i)} \ge 2p^2} \le O\parens*{ n^{-\log^3 n / 2} }
       \]
       except with probability $O\parens*{ n^{-\log^2 n} }$ over the randomness of $\bG_{n-1}$.
    \end{enumerate}
    Assume first that \pref{property:anticap-intersect} and \pref{property:cap-pairwise} hold.
    Define 
    $$h\parens*{w_1,\dots,w_{n-1})} \coloneqq \Pr_{\bx\sim\Unif}\bracks*{ \exists i \in [n-1]\text{ s.t. }\Unif\parens*{\scap(\bx)\cap\scap(w_i)} \ge 2p^2 }$$
    \pref{property:cap-pairwise} implies:
    \[
       \E_{\bw_1,\dots,\bw_{n-1}\sim\VecDist{\bG_{n-1}}} h\parens*{\bw_1,\dots,\bw_{n-1})} \le O\parens*{ n^{-\log^3 n/4} } \numberthis \label{eq:pairwise-expected}
    \]
    except with probability $O\parens*{ n^{-\log^3 n/4} }$.  By Markov's inequality, whenever \pref{eq:pairwise-expected} holds:
    \[
       \Pr_{\bw_1,\dots,\bw_{n-1}\sim\VecDist{\bG_{n-1}}} \bracks*{h\parens*{\bw_1,\dots,\bw_{n-1}} \le O\parens*{n^{-\log^3 n/8}} } \le O\parens*{n^{-\log^3 n / 8}}.
    \]
    Suppose $\bw_1,\dots,\bw_{n-1}$ are such that $\Dev_{P L(\bW)}(\eps/2)\le n^{-\log^4 n}$ and $h(\bw_1,\dots,\bw_{n-1}) \le O\parens*{n^{-\log^3 n/8}}$.
    By our assumptions, these conditions hold with probability at least $1-O\parens*{n^{-\log^2 n}}$ when $\bw_1,\dots,\bw_{n-1}\sim\VecDist{\bG_{n-1}}$, with probability at least $1-O\parens*{n^{-\log^2 n}}$ over the randomness of $\bG_{n-1}$.
    For $\bx\sim\bbS^{d-1}$, with probability at least $1-O\parens*{n^{-\log^2 n}}$:
    \begin{align*}
       \Unif\parens*{\bigcap_{i=1}^{n-1} \anticap(\bw_i) \cap \scap(\bx)} &\in (1\pm\eps/2) (1-p)^{n-1} p \\
       \Unif\parens*{\scap(\bx)\cap\scap(\bw_i)} &\le 2p^2 & \forall i\in[n-1].
    \end{align*}
    As a consequence, with probability at least $1-O\parens*{n^{-\log^2 n}}$ as well:
    \begin{align*}
       \Unif\left(\bigcap_{i\in [n-1]}\anticap(\bw_i)\cap\scap(\bx)\right) &= \Unif \left( \left( \bigcap_{i \in R} (\anticap(\bw_i) \cap \scap(\bx)) \right) \setminus \bigcup_{j \in [n - 1] \setminus R} (\scap(\bw_j) \cap \scap(\bx)) \right) \\
       &\ge \Unif\left(\bigcap_{i\in R} \anticap(\bw_i) \cap \scap(\bx) \right) - 2 (n-1-|R|)p^2.
    \end{align*}
    The inequality follows from assuming the sets $\scap(\bw_j)$ for $j \in [n - 1] \setminus R$ are disjoint in the worst case.
    On the other hand, as $R \subseteq [n - 1]$,
    \[
       \Unif\left(\bigcap_{i\in [n-1]}\anticap(\bw_i)\cap\scap(\bx)\right) \le \Unif\left(\bigcap_{i\in R} \anticap(\bw_i) \cap \scap(\bx) \right).
    \]
    The above can be rearranged as:
    \[
       \Unif\left(\bigcap_{i\in [n-1]}\anticap(\bw_i)\cap\scap(\bx)\right) \le \Unif\left(\bigcap_{i\in R} \anticap(\bw_i) \cap \scap(\bx) \right) \le \Unif\left(\bigcap_{i\in [n-1]}\anticap(\bw_i)\cap\scap(\bx)\right) + 2 (n-1-|R|)p^2,
    \]
    and thus
    \[
       \Unif\parens*{ \bigcap_{i\in R} \anticap(\bw_i) \cap \scap(\bx) } \in p\cdot(1-p)^{|R|}\cdot\left((1-\eps)\cdot(1-p)^{n-1-|R|}, (1+\eps)\cdot(1+C(n-1-|R|)p)\right)
    \]
    for constant $C$.
    By applying a union bound to \pref{lem:ball-bound} along with \pref{prop:dom} and our bounds on $\ell$ and $|S|$:
    \[
       n-1-|R| = |\Balls| \le n^{\log (2\alpha)/\log\log n}\cdot\log^2 n
    \]
    except with probability at most $n^{-\Omega(\log n)}$.
    Thus, the above simplifies to:
    \[
       \Unif\parens*{\bigcap_{i\in R}\anticap(\bw_i)\cap\scap(\bx)} \in p\cdot(1-p)^{|R|}\cdot\left(1-\eps, 1+\eps\right).
    \]
    This establishes that $\Dev_{P L(\bW_R)}(\eps) \le O\parens* {n^{-\log^5 n}} \le n^{-\log^4 n}$.

    It remains to prove \pref{property:anticap-intersect} and \pref{property:cap-pairwise}.
    \pref{property:anticap-intersect} follows from a combination of \pref{obs:unif-to-conditional} and \pref{cor:anti-caps} with parameter setting $t=C\sqrt{\frac{\log^{11}n}{d}}$ for some constant $C$ and $m = n$.  
    To prove \pref{property:cap-pairwise}, first observe that applying a union bound to \pref{cor:cap-intersect-simple} implies that when $\bw_1,\dots,\bw_{n-1}\sim\Unif^{\otimes n-1}$, $\Unif\parens*{ \scap(x) \cap \scap(\bw_i) }$ is at most $2p^2$ for all $i\in[n-1]$ except with probability at most $n^{-\log^3 n/2}$.  \pref{property:cap-pairwise} then follows from \pref{obs:unif-to-conditional}, hence completing our proof. 
\end{proof}

\paragraph{Applying $P$ to distributions close to $L\parens*{\bW_{\Rest}}$.}
We now describe our main result about the diffusion properties of $P$ when applied to distributions close to the uniform distribution on $L\parens*{\bW_{\Rest}}$.
\begin{lemma} \label{lem:convolution-tv}
    Let $\nu$ be a distribution over $\bbS^{d - 1}$ such that $\norm*{\nu}_\infty \leq n^{\log^5 n}$, and let $\delta = \dtv \nu {L\parens*{\bW_{\Rest}}}$.
    Then, 
    $$
       \Spr_{P \nu}\parens*{2n^{-\log^4 n}} \leq \max \left\{2\sqrt{\frac{\log^{11}n}{nd}}, C\delta \cdot \sqrt{\frac{\log^{23}n}{d}}\right\}
    $$
    for a universal constant $C$.
\end{lemma}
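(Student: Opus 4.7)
The strategy is to decompose $\nu$ additively and control each piece via the tools developed earlier in the section. I will write $\nu = L(\bW_R) + \mu$, where I overload $L(\bW_R)$ to denote its associated uniform probability density on the sphere, and $\mu := \nu - L(\bW_R)$. Then $\mu$ is a signed function that integrates to zero (since both $\nu$ and $L(\bW_R)$ are probability densities), and $\|\mu\|_1 = 2\delta$ by definition of total variation distance. Since $P$ preserves $\ell_1$ for probability densities (\pref{obs:P-fixes-norm}), we have $\|P\nu\|_1 = \|PL(\bW_R)\|_1 = 1$ and $\|P\mu\|_1 \le 2\delta$. Applying $P$ to the decomposition and then the spread-profile triangle inequality (\pref{obs:spread-triangle-inequality}) with $\delta_1 = \delta_2 = n^{-\log^4 n}$ gives
\[
\Spr_{P\nu}(2n^{-\log^4 n}) \le \Spr_{PL(\bW_R)}(n^{-\log^4 n}) + \|P\mu\|_1 \cdot \Spr_{P\mu}(n^{-\log^4 n}).
\]

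The first summand is controlled directly by \pref{lem:diffusion-correlated-anticap-intersection}, which yields $\Spr_{PL(\bW_R)}(n^{-\log^4 n}) \le \sqrt{\log^{11} n/(nd)}$ except on an event of probability $O(n^{-\log^2 n})$. The second summand is controlled by applying \pref{lem:diffusion-any-dist} to the mean-zero function $\mu$. To verify its hypothesis I will bound $\|\mu\|_\infty \le \|\nu\|_\infty + \|L(\bW_R)\|_\infty \le n^{\log^5 n} + 1/\rho(L(\bW_R)) = O(n^{\log^5 n})$, using that with high probability $\rho(L(\bW_R)) = \Theta(1)$, which follows from $L(\bW_R) \supseteq L(\bW)$ together with the concentration of $\rho(L(\bW))$ around $(1-p)^{n-1} = \Theta(1)$ (which was already established in the proof of \pref{lem:diffusion-correlated-anticap-intersection} via \pref{cor:anti-caps}). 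For $\delta \ge n^{-\log^5 n}$, this gives $\log(\|\mu\|_\infty/\|\mu\|_1) \le \log^6 n + \log(1/\delta) = O(\log^6 n)$, so the hypothesis $d \ge \log^{10} n (1 + \log^3(\cdot))$ of \pref{lem:diffusion-any-dist} reduces to $d \gtrsim \log^{28} n$, comfortably satisfied by the standing assumption $d \ge \log^{36} n$. The lemma then yields $\Spr_{P\mu}(n^{-\log^4 n}) \le \eps \cdot \|\mu\|_1/\|P\mu\|_1$ where $\eps = O(\sqrt{1/d}\,\log^{5.5} n (\log^6 n + \log(d/p))) = O(\sqrt{\log^{23}n/d})$ (absorbing $\log(d/p) = O(\log n)$), so $\|P\mu\|_1 \cdot \Spr_{P\mu}(n^{-\log^4 n}) \le \eps\|\mu\|_1 = 2\delta\eps = O(\delta\sqrt{\log^{23}n/d})$.

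Combining the two summand bounds yields $\Spr_{P\nu}(2n^{-\log^4 n}) \le \sqrt{\log^{11}n/(nd)} + O(\delta \sqrt{\log^{23}n/d})$, which is dominated by $\max\{2\sqrt{\log^{11}n/(nd)},\, C\delta\sqrt{\log^{23}n/d}\}$ once $C$ is chosen sufficiently large relative to the hidden constant. The one leftover case is $\delta < n^{-\log^5 n}$, where the hypothesis of \pref{lem:diffusion-any-dist} technically fails; in this regime I will fall back on the deterministic pointwise bound $\|P\mu\|_\infty \le \|\mu\|_1/p = 2n\delta/\alpha \le O(n^{2-\log^5 n})$, which is negligible relative to $\sqrt{\log^{11}n/(nd)}$ and therefore folds into the first summand. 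The main obstacle is this small case analysis together with the careful tracking of logarithmic factors to verify that $\log^{5.5} n \cdot \log^6 n$ compose into the claimed $\sqrt{\log^{23} n}$; both are routine but somewhat fiddly.
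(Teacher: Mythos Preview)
Your proposal is correct and follows essentially the same route as the paper: decompose $\nu = L(\bW_R) + \Delta$, apply the spread triangle inequality, bound the $PL(\bW_R)$ term via \pref{lem:diffusion-correlated-anticap-intersection}, and bound the $P\Delta$ term via \pref{lem:diffusion-any-dist}. The only difference is in the small-$\delta$ regime: the paper uses the threshold $\delta \le n^{-2\log^4 n}$ and Markov's inequality on $\|P\Delta\|_1$ to get $\|P\Delta\|_1 \cdot \Spr_{P\Delta}(n^{-\log^4 n}) \le n^{-\log^4 n}$, whereas you use the threshold $\delta < n^{-\log^5 n}$ and the pointwise bound $\|P\mu\|_\infty \le \|\mu\|_1/p$; both arguments work equally well.
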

\begin{proof}
    We can express $\nu = L\parens*{\bW_{\Rest}} + \Delta$. 
    Then, the triangle inequality for $\Spr_{P\nu}$ as articulated in \pref{obs:spread-triangle-inequality} implies:
    \begin{align*}
       \Spr_{P\nu}(2 n^{-\log^4 n}) &\le \Spr_{P L\parens*{\bW_{\Rest}})}(n^{-\log^4 n}) + \norm{P\Delta}_1 \cdot \Spr_{P\Delta}(n^{-\log^4 n}).    \numberthis \label{eq:spread-decomp}
    \end{align*}
    By \pref{lem:diffusion-correlated-anticap-intersection}, $\Spr_{PL\parens*{\bW_{\Rest}}}(n^{-\log^4 n}) \le \sqrt{\frac{\log^{11}n}{nd}}$.

    We now turn our attention to $P\Delta$.
    We would like to understand $P\Delta$ through \pref{lem:diffusion-any-dist}, but the lower bound on $d$ of $\log^{10}n\cdot\parens*{1+\log^3\frac{\norm*{\nu}_{\infty}}{\norm*{\nu}_1}}$ in the hypothesis of its statement prevents us from applying it when $\delta$ is too small, and hence we case on the value of $\delta$.
    Suppose $\delta\le n^{-2\log^4 n}$, then by \pref{obs:P-fixes-norm}, $\norm*{P\Delta}_1\le n^{-2\log^4 n}$ and thus by Markov's inequality $\Spr_{P\Delta}(n^{-\log^4 n}) \le n^{\log^4 n}$.
    Plugging this into \pref{eq:spread-decomp} gives:
    \[
        \Spr_{P\nu}(2n^{-\log^4 n}) \le \sqrt{\frac{\log^{11}n}{nd}} + n^{-\log^4 n} \le 2\sqrt{\frac{\log^{11}n}{nd}}.
    \]
    When $\delta > 2n^{-\log^4 n}$, by \pref{lem:diffusion-any-dist}, we have for some constant $C$:
    \begin{align*} 
       \Spr_{P\Delta}(n^{-\log^4 n}) &\leq \sqrt{\frac{1}{d}} \cdot C \log^{5.5}n \cdot \left(2 \ln \frac{n^{\log^5 n}}{\norm*{\Delta}_1} + 8 \ln \frac{d}{p} \right)\cdot\frac{\norm*{\Delta}_1}{\norm*{P\Delta}_1} \\ 
       &\leq \sqrt{\frac{1}{d}} \cdot C \log^{5.5}n \cdot \left(4 \log^6 n + 8 \ln \frac{d}{p} \right) \cdot\frac{\norm*{\Delta}_1}{\norm*{P\Delta}_1}
    \end{align*}
    where the second inequality uses $\norm*{\Delta}_1 = \delta > n^{-2 \log^4 n}$.
    Consequently, plugging into \pref{eq:spread-decomp},
    \[
       \Spr_{P\nu}\parens*{n^{-\log^4 n}} \le \sqrt{\frac{\log^{11}n}{nd}} + C\sqrt{\frac{\log^{23}n}{d}}\cdot\delta
    \]
    which completes the proof.
\end{proof} 

\subsubsection{Analyzing vertex-to-constraint messages.}

Recall that our vertex-to-constraint messages are given as a product of the vertex's incoming messages.
Each vertex's incoming messages are themselves measures over $\bbS^{d-1}$ with bounded spread; we show that as long as a vertex's degree is bounded, the spread of their product is not too large.
\begin{lemma} \label{lem:product-tv}
    Let $\nu_1, \ldots, \nu_j$ be distributions over $\bbS^{d - 1}$ with intersecting support.
    For each $i \in [j]$, suppose $\Spr_{\nu_i}(\eps)\le\eta$ and $\norm*{\nu_i}_\infty \leq \frac{1}{p}$.  
    Further, assume $\eta (j+1) < \frac{1}{2}$ and $\frac{j\eps}{p^j} \le \eta$.
    Let $\nu$ be the distribution whose density is $\nu(x) \propto \prod_{i = 1}^j \nu_i(x)$.
    Then,
    \[
        \Spr_{\nu}(j\cdot\eps) \le 8j\eta\qquad\text{and}\qquad\norm*{\nu}_{\infty} \le \frac{2}{p^j}.
    \]
    As a consequence of \pref{obs:tv-to-spread}:
    $$
        \dtv{\nu}{\Unif} \leq 10j\eta.
    $$
\end{lemma}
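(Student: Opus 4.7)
The plan is to identify a ``good'' event on $\bbS^{d-1}$ on which every $\nu_i$ is close to uniform, apply a union bound, and then push the pointwise control through the normalizing constant. Concretely, for each $i \in [j]$ set $\mathcal{G}_i \coloneqq \{z \in \bbS^{d-1} : |\nu_i(z)-1| \le \eta\}$; since each $\nu_i$ is a probability density (so $\norm{\nu_i}_1 = \E_{\bz \sim \rho} \nu_i(\bz) = 1$), the hypothesis $\Spr_{\nu_i}(\eps)\le\eta$ unpacks to $\rho(\mathcal{G}_i^c)\le\eps$. Let $\mathcal{G} \coloneqq \bigcap_{i\in[j]} \mathcal{G}_i$, so that $\rho(\mathcal{G}^c)\le j\eps$ by a union bound, and $\prod_i \nu_i(z) \in [(1-\eta)^j, (1+\eta)^j]$ for $z \in \mathcal{G}$.

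The main step is to control the normalizer $Z \coloneqq \E_{\by\sim\rho}\prod_{i=1}^j \nu_i(\by)$ of $\nu$, using both hypotheses simultaneously. Splitting the integral over $\mathcal{G}$ and $\mathcal{G}^c$ and bounding $\prod_i \nu_i \le p^{-j}$ globally,
\[
Z \le (1+\eta)^j + p^{-j}\cdot j\eps \le (1+2j\eta) + \eta,
\]
where we used $(1+\eta)^j\le e^{j\eta}\le 1+2j\eta$ (valid because $j\eta< 1/2$) and the assumption $j\eps/p^j \le \eta$. Restricting the integral to $\mathcal{G}$ gives the matching lower bound
\[
Z \ge (1-\eta)^j\bigl(1-j\eps\bigr) \ge (1-j\eta)(1-\eta) \ge 1 - (j+1)\eta \ge 1/2,
\]
invoking $j\eps \le \eta$ (a consequence of $j\eps/p^j \le \eta$ and $p \le 1$) and $\eta(j+1) < 1/2$. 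So $Z \in [1/2,\,1+3j\eta]$. For any $z \in \mathcal{G}$, plugging into $\nu(z) = Z^{-1}\prod_i \nu_i(z)$ and using $1/(1-x)\le 1+2x$ for $x \le 1/2$ with $x = (j+1)\eta$, a routine simplification yields the two-sided bound $|\nu(z)-1| \le 8j\eta$. Combining with $\rho(\mathcal{G}^c)\le j\eps$ shows $\Dev_\nu(8j\eta)\le j\eps$, which is exactly $\Spr_\nu(j\eps) \le 8j\eta$.

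For the $\norm{\cdot}_\infty$ bound, the global estimate $\prod_i \nu_i(z) \le p^{-j}$ and $Z \ge 1/2$ immediately give $\nu(z) \le 2p^{-j}$ for every $z$. The total-variation corollary then follows by invoking \pref{obs:tv-to-spread} with $\delta = j\eps$:
\[
\dtv{\nu}{\rho} \le \Spr_\nu(j\eps) + j\eps \cdot \norm{\nu}_\infty \le 8j\eta + j\eps\cdot 2p^{-j} \le 8j\eta + 2\eta \le 10j\eta,
\]
using $j\eps/p^j \le \eta$ one last time and $j\ge 1$. The whole argument is elementary; there is no real obstacle beyond bookkeeping. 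The only point that requires mild care is the joint use of the two hypotheses $(j+1)\eta < 1/2$ and $j\eps/p^j \le \eta$: the former keeps $Z$ bounded away from zero so that the normalization does not amplify errors, while the latter absorbs the worst-case contribution of the ``bad'' set $\mathcal{G}^c$ (where we have only the trivial bound $\nu_i \le 1/p$) into the target deviation of order $\eta$.
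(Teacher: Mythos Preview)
Your proposal is correct and follows essentially the same approach as the paper's own proof: define the good set where all $\nu_i$ are within $1\pm\eta$, bound its complement by $j\eps$ via a union bound, control the normalizer $Z$ from both sides by splitting the integral over good and bad parts (using $\prod_i\nu_i\le p^{-j}$ on the bad part together with $j\eps/p^j\le\eta$), and then push the pointwise bounds through $\nu=Z^{-1}\prod_i\nu_i$. The only differences are cosmetic choices of intermediate inequalities (e.g.\ you use $e^{j\eta}\le 1+2j\eta$ and $1/(1-x)\le 1+2x$, whereas the paper writes $(1+\eta)^j\le 1+\tfrac{j\eta}{1-j\eta}$ and carries out the ratio bounds slightly differently); the logical structure and all key steps match.
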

\begin{proof}
    Let $B$ be the ``bad'' domain defined $B\coloneqq\braces*{x\in\bbS^{d-1}: \exists i\text{ s.t. }\nu_i\notin[1-\eta,1+\eta]}$.
    We know $\Unif(B)\le j \eps$ by our assumption on the spread profiles of $\nu_1,\dots,\nu_j$.
    Thus, for $x\in\bbS^{d-1}\setminus B$:
    \[
        \frac{(1-\eta)^j}{Z} \le \nu(x) \le \frac{(1+\eta)^j}{Z}
    \]
    where
    \[
        Z = \int \prod_{i=1}^j \nu_i(x) d\Unif(x).
    \]
    To obtain upper and lower bounds on $Z$, we write
    \begin{align*}
        Z &= \int \prod_{i=1}^j \nu_i(x) \Ind[x\in\bbS^{d-1}\setminus B]d\Unif(x) + \int \prod_{i=1}^j \nu_i(x) \Ind[x\in B] d\Unif(x) \\
        &\begin{cases}
            \ge (1-\eta)^j(1-j\eps) \ge (1-\eta)^{j+1} \ge 1-(j+1)\eta \\
            \le (1+\eta)^j + \frac{j\eps}{p^j} \le 1+\frac{j\eta}{1-j\eta} + \eta \le 1 + \frac{(j+1)\eta}{1-j\eta}.
        \end{cases}
    \end{align*}
    We can obtain lower and upper bounds on $\nu(x)$ for $x\in\bbS^{d-1}\setminus B$ as follows.
    \begin{align*}
        \nu(x) &\ge \frac{(1-\eta)^j}{1+\frac{(j+1)\eta}{1-j\eta}} \ge \frac{1-j\eta}{1+\frac{(j+1)\eta}{1-j\eta}} = 1 - \frac{j\eta + \frac{(j+1)\eta}{1-j\eta}}{1+\frac{(j+1)\eta}{1-j\eta}} \ge 1 - 5j\eta \\
        \nu(x) &\le \frac{(1+\eta)^j}{1-(j+1)\eta} \le \frac{1+\frac{j\eta}{1-j\eta}}{1-(j+1)\eta} = 1 + \frac{(j+1)\eta + \frac{j\eta}{1-j\eta}}{1-(j+1)\eta} \le 1 + 8j\eta.
    \end{align*}
    Since $Z\ge 1-(j+1)\eta\ge\frac{1}{2}$, and each $\norm*{\nu_i}_{\infty}\le\frac{1}{p}$, $\norm{\nu}_{\infty} \le \frac{2}{p^j}$.
\end{proof}

\subsubsection{Tracking the spread in BP fixed point messages}
We now study the spread properties of the BP fixed point messages.

In this section let $T$ be a tree with a distinguished root vertex $r$.
Let $F$ be a factor graph associated to the tree where the variable vertices are the vertices of $T$, with a binary constraint $\Ind\bracks*{\angles{v_i,v_j}\ge\tau(p)}$ for every edge $\{i,j\}\in T$, and a unary constraint $\Ind\bracks*{v_i\in L_i}$ for every vertex $i\in V(T)$.
\begin{assumption} \label{assumption:tree-properties}
    We assume that the degree of every vertex in $T$ is bounded by $\log^2 n$, and that for every $i\in V(T)$, $\Unif(L_i)\ge n^{-\log^3 n}$.%
\end{assumption}

For a variable vertex $v\in F$ for $v\ne r$, we use $\Par(v)$ to denote the constraint vertex corresponding to the edge $\{v,w\}$ where $w$ is the parent of $v$ in $T$.
We use $\Ch(v)$ to denote the set of constraint vertices corresponding to edges $\{v,w\}$ for $w$ that are children of $v$ in $T$.
Finally, we use $\Un(v)$ to denote the unary constraint vertex incident to $v$.

\begin{remark}
    Observe that trees that arise from rooting connected components of $T\parens*{\bG_{n-1}}$ at vertices in $S$ always have all leaves equidistant to the root and also satisfy the inequality
    \[
        \Spr_{PL_i}\parens*{n^{-\log^4 n}} \le \sqrt{\frac{\log^{11}n}{nd}}
    \]
    for every internal vertex $i$, which are properties that trees meeting our above assumptions need not satisfy.
    This may suggest adding these properties as assumptions too.
    However, we stick to the more general setting as it is crucial in our proof relating the relaxed and true distributions.
\end{remark}

We place our vertices in to ``tiers'', which will let us quantify the spread of messages sent out by them.

\begin{definition}
    We say $i\in V(T)$ is a \emph{free vertex} if $L_i = L\parens*{\bW_{\Rest}}$ and $\Spr_{PL_i}\parens*{n^{-\log^4 n}} \le \sqrt{\frac{\log^{11}n}{nd}}$.
    We say $i\in T$ is a \emph{anchored vertex} if it is not a free vertex.  We use $\Res(T)$ to denote the set of anchored vertices in $T$.
\end{definition}

\begin{definition}   \label{def:tiers}
    Given a vertex $i\in V(T)$ we say its \emph{tier} is the distance to its closest descendant that is an anchored vertex.  In particular,
    \[
        \Tier(v) = \min\{\beta:v\text{ has a distance-$\beta$ descendant in }\Res(T)\}.
    \]
    We use the convention that $v$ is a descendant of itself and hence $\Tier(v) = 0$ when $v$ is a anchored vertex.
\end{definition}

For a variable vertex $i$, the closeness of the distribution obtained by combining messages from its children to $\Unif$ is governed by its tier.
\begin{lemma}   \label{lem:tiering}
    For a variable vertex $i$ in $F$, let $\nu_i$ be the distribution
    \[
        \nu_i \propto \prod_{a\in\Ch(i)} m^{a\to i}.
    \]
    Then:
    \[
        \Spr_{\nu_i}\parens*{n^{-\log^4 n/2}} \le
        \min\left\{\eps, \max\left\{\eps^{\Tier(i)}, C\sqrt{\frac{\log^{15}n}{nd}}\right\}\right\} \qquad \text{and} \qquad \norm*{\nu_i}_{\infty} \le \frac{2}{p^{\log^2 n}}.
    \]
    where $\eps = C\sqrt{\frac{\log^{27}n}{d}}$ for some universal constant $C$.
\end{lemma}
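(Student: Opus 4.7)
The plan is to prove this by strong induction on the height of $i$ in the tree $T$, working from leaves up to the root. The key recursion to exploit is that the BP equations give
\[
\nu_i \;\propto\; \prod_{w \in \Ch(i)} P\bar\nu_w, \qquad \bar\nu_w \;\propto\; \nu_w \cdot \Ind[\,\cdot \in L_w],
\]
where $\bar\nu_w$ is the probability distribution that the child variable $w$ sends upward (incorporating its unary constraint), and $P$ is the cap-convolution operator from \pref{def:Pnu-is-cap}. Each message $m^{f \to i}$ from the edge-constraint $f = \{i,w\}$ equals $P\bar\nu_w$ by the calculation in \pref{sec:BP-relaxed}. For the base case (leaves), $\nu_i$ is the empty product $\equiv 1$, so both claimed bounds hold trivially.

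For the inductive step, the key is to analyze each $P\bar\nu_w$ using \pref{lem:convolution-tv} and then combine them using \pref{lem:product-tv}. First I would verify the hypotheses of \pref{lem:convolution-tv}: the inductive bound $\|\nu_w\|_\infty \le 2/p^{\log^2 n}$ combined with $\Spr_{\nu_w}(n^{-\log^4 n/2}) \le \eps \le \tfrac12$ and $\rho(L_w) \ge n^{-\log^3 n}$ gives $Z_w \coloneqq \int_{L_w} \nu_w\, d\rho \ge \rho(L_w)/2$, hence $\|\bar\nu_w\|_\infty \le 4/(p^{\log^2 n}\rho(L_w)) \ll n^{\log^5 n}$. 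Next I would bound $\delta_w \coloneqq \dtv{\bar\nu_w}{L(\bW_R)}$: if $w$ is free, then $L_w = L(\bW_R)$ and a direct computation (expanding $\bar\nu_w - \text{Unif}(L_w)$) gives $\delta_w \le O(\Spr_{\nu_w}(n^{-\log^4 n/2}))$; if $w$ is anchored, we just use the trivial bound $\delta_w \le 1$. Applying \pref{lem:convolution-tv} in each case yields
\[
\Spr_{P\bar\nu_w}(2n^{-\log^4 n}) \;\le\; \max\!\left\{2\sqrt{\tfrac{\log^{11}n}{nd}},\; C\delta_w\sqrt{\tfrac{\log^{23}n}{d}}\right\}.
\]

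Then I would invoke \pref{lem:product-tv} with $j = |\Ch(i)| \le \log^2 n$ factors, each a distribution with $\|P\bar\nu_w\|_\infty \le 1/p$ (by \pref{obs:Pnu-bounded}); the hypotheses $j\eps/p^j \le \eta$ and $\eta(j+1) < 1/2$ are easily checked for $d \ge \log^{36} n$. This gives $\Spr_{\nu_i}(n^{-\log^4 n/2}) \le 8\log^2 n \cdot \max_w \Spr_{P\bar\nu_w}$ and $\|\nu_i\|_\infty \le 2/p^{\log^2 n}$. Now I case-split: if all children are free, each $\delta_w \lesssim \eta_w \le \max\{\eps^{\Tier(w)}, C'\sqrt{\log^{15}n/(nd)}\}$, and noting $\sqrt{\log^{23}n/d} = \Theta(\eps/\log^2 n)$ and $\log^2 n\sqrt{\log^{11}n/(nd)} = \Theta(\sqrt{\log^{15}n/(nd)})$, the factor $8\log^2 n$ is absorbed so we obtain $\Spr_{\nu_i} \le \max\{\eps^{\Tier(i)}, C'\sqrt{\log^{15}n/(nd)}\}$ using $\Tier(i) = 1 + \min_w \Tier(w)$. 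If some child is anchored, the $\delta_w \le 1$ bound gives $\Spr_{P\bar\nu_w} \lesssim \eps/\log^2 n$, and multiplying by $8\log^2 n$ yields $\Spr_{\nu_i} \le \eps$, matching the $\min\{\eps,\cdot\}$ cap.

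The main obstacle is getting the constants to line up across the geometric decay: the factor-of-$8\log^2 n$ blow-up from \pref{lem:product-tv} must be exactly compensated by the $\sqrt{\log^{23}n/d}$ shrinkage in \pref{lem:convolution-tv}, so that one application of the $P$-operator contracts the spread by a factor of $\eps$ rather than expanding it. This forces the specific choice $\eps = \tilde{C}\sqrt{\log^{27}n/d}$ with $\tilde C \ge 8C$ (where $C$ is the constant from \pref{lem:convolution-tv}), and likewise the floor constant $C'$ in $C'\sqrt{\log^{15}n/(nd)}$ must be chosen $\ge 16$ to absorb the product-lemma blow-up applied to the first term in the max of \pref{lem:convolution-tv}. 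Everything else is bookkeeping once the recursion $\eta_i \le 8(C/\tilde C)\,\eps \cdot \max_w \eta_w/\log^2 n + 16\sqrt{\log^{15}n/(nd)}$ is established.
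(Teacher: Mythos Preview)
Your proposal is correct and uses the same essential ingredients as the paper's proof: the recursion $\nu_i \propto \prod_w P\bar\nu_w$, the diffusion estimate \pref{lem:convolution-tv} to control each $\Spr_{P\bar\nu_w}$ in terms of $\dtv{\bar\nu_w}{L(\bW_R)}$, and the product lemma \pref{lem:product-tv} to combine the $\le \log^2 n$ factors. Your constant-balancing observation (that the $8\log^2 n$ blow-up from the product lemma is exactly absorbed by $\sqrt{\log^{23}n/d} \approx \eps/\log^2 n$) is precisely the mechanism behind the geometric decay.

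The only organizational difference is that the paper runs the argument in two passes: first an induction on subtree depth to establish the ``weak'' bounds $\Spr_{\nu_i}(n^{-\log^4 n/2}) \le \eps$ and $\|\nu_i\|_\infty \le 2/p^{\log^2 n}$ uniformly over all $i$ (using only the crude $\delta_w \le 1$), and then a second induction on $\Tier(i)$ to upgrade to the sharper $\eps^{\Tier(i)}$ bound (using the first pass to guarantee $\|\bar\nu_w\|_\infty \le n^{\log^5 n}$ and that children at $\Tier \ge 1$ are free). You fold both passes into a single strong induction on height, case-splitting on whether some child is anchored. This is arguably cleaner and sidesteps a small subtlety in the paper's second induction: when inducting on $\Tier(i)=k$, a child $j_t$ may have $\Tier(j_t) \ge k$, so the inductive hypothesis on $\Tier$ does not directly apply to it; the paper's argument is still correct because only the child with \emph{minimum} tier governs the bound, but your height-based induction makes this automatic since every child has strictly smaller height.
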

\begin{proof}
    In this proof, we will use $j_1,\dots,j_s$ to denote the children of $i$ in $T$.
    Then $\Ch(i) = \{a_1,\dots,a_s\}$ where $a_t$ connects $i$ and $j_t$ in the factor graph.

    We first show the weaker statement that $\Spr_{\nu_i}\parens*{n^{-\log^4 n/2}} \le \eps$ and $\norm*{\nu_i}_{\infty} \le \frac{2}{p^{\log^2 n}}$ for every $i\in V(T)$ via induction on the depth of the subtree rooted at $i$.
    When this depth is equal to $0$, the vertex $i$ has no children, and then the result is clearly true since $\nu_i = \Unif$.

    Now, suppose the weaker statement is true for every vertex with a depth-$q$ subtree.
    Let $i$ be a vertex with a depth-$(q+1)$ subtree.
    Note that $m^{a_t\to i} = Pm^{j_t\to a_t}$.
    We know that $m^{j_t\to a_t}\propto \nu_{j_t}\cdot\Ind\bracks*{L_{j_t}}$.
    On one hand
    \[
        \norm*{\nu_{j_t}\cdot\Ind\bracks*{L_{j_t}}}_{\infty} \le \frac{2}{p^{\log^2 n}}
    \]
    and on the other hand by the induction hypothesis and our assumption on $L_{j_t}$,
    \[
        \norm*{\nu_{j_t}\cdot\Ind\bracks*{L_{j_t}}}_1 \ge (1-\eps)\cdot \parens*{n^{-\log^3 n} - n^{-\log^4 n/2}} \ge \frac{1}{2}n^{-\log^3 n},
    \]
    and thus
    \[
        \norm*{m^{j_t\to a_t}}_{\infty} \le n^{\log^5 n}.
    \]
    By \pref{lem:convolution-tv}, we know:
    \[
        \Spr_{m^{a_t\to i}}\parens*{n^{-\log^4 n}} \le C\sqrt{\frac{\log^{23}n}{d}}
    \]
    for some absolute constant $C$.
    We also know $\norm*{m^{a_t\to i}}_{\infty}\le\frac{1}{p}$ by \pref{obs:Pnu-bounded}, and that $s\le\log^2 n$ by assumption.
    Therefore, by \pref{lem:product-tv},
    \[
        \Spr_{\nu_i}\parens*{n^{-\log^4 n/2}} \le C\sqrt{\frac{\log^{27}n}{d}}
    \]
    which establishes the weaker statement.

    Next, we prove $\Spr_{\nu_i}\parens*{n^{-\log^4 n / 2}} \le \max\braces*{\eps^{\Tier(i)}, \sqrt{\frac{\log^{15} n}{nd}}}$.
    We prove this statement by induction on $\Tier(i)$.
    The base case of $0\le \Tier(i)\le 1$ follows from the weaker statement proved above.

    Now, suppose $\Tier(i) \ge 2$.
    Note that $\norm*{\nu_{j_t}}_{\infty} \le \frac{2}{p^{\log^2 n}}$ and $m^{j_t\to a_t}\propto \nu_{j_t}\cdot\Ind[L_{j_t}]$.
    Since $\Tier(j_t)\ge \Tier(i)-1$, we know $\Tier(j_t)\ge 1$, which in particular means that $j_t$ is a free vertex and consequently
    \[
        \Spr_{PL_{j_t}}\parens*{n^{-\log^4 n}} \le \sqrt{\frac{\log^{11}n}{nd}}.
    \]
    By our induction hypothesis, the value of $\nu_{j_t}\cdot\Ind\bracks*{L_{j_t}}$ is in $1\pm\max\left\{\eps^{\Tier(j_t)}, \sqrt{\frac{\log^{15}n}{nd}} \right\}$ on a subregion of $L_{j_t}$ of measure at least:
    \[
        \Unif\parens*{L_{j_t}} - n^{-\log^4 n} \ge \parens*{1 - n^{-\log^4 n / 2}}\cdot\Unif\parens*{L_{j_t}}
    \]
    by our assumption that $\Unif(L_{j_t})\ge n^{-\log^3 n}$.
    Combined with our bound on $\norm*{\nu_{j_t}}_{\infty}$, we know that:
    \[
        \norm*{\nu_{j_t}\cdot\Ind\bracks*{L_{j_t}}}_1 \in \Unif\parens*{L_{j_t}} \cdot \parens*{1\pm \max\left\{\eps^{\Tier(j_t)}, \sqrt{\frac{\log^{15}n}{nd}} \right\} \pm n^{-\log^4 n / 2}}.
    \]
    We can then write $m^{j_t\to a_t}$ as $L_{j_t}+\Delta$ where $\norm*{\Delta}_1 \le \max\left\{\eps^{\Tier(j_t)}, \sqrt{\frac{\log^{15}n}{nd}} \right\}$ and $\norm*{\Delta}_{\infty} \le n^{\log^5 n}$.
    By \pref{lem:convolution-tv}, $m^{a_t\to i} = P m^{j_t\to a_t}$ satisfies the following for some constant $C$:
    \begin{align*}
        \Spr_{m^{a_t\to i}}\parens*{n^{-\log^4 n}} &\le \max\braces*{
            \sqrt{\frac{\log^{11}n}{nd}},
            C\sqrt{\frac{\log^{23}n}{d}}\max\braces*{
                \eps^{\Tier(j_t)}, \sqrt{\frac{\log^{15}n}{nd}
            }}
        }\\
        &= \max\braces*{\sqrt{\frac{\log^{11}n}{nd}}, C\sqrt{\frac{\log^{23}n}{d}}\cdot\eps^{\Tier(j_t)} }
    \end{align*}
    where the equality is due to our lower bound on $d$.  Additionally, since $\norm*{m^{a_t\to i}}_{\infty}\le\frac{1}{p}$ and $s\le\log^2 n$, we can use \pref{lem:product-tv} to conclude:
    \begin{align*}
        \Spr_{\nu_i}\parens*{n^{-\log^4 n/2}} &\le C\log^2 n \max\braces*{\sqrt{\frac{\log^{11}n}{nd}}, \eps^{\Tier(i)-1}\sqrt{\frac{\log^{23}n}{d}} } \\
        &\le \max\braces*{\eps^{\Tier(i)}, C\sqrt{\frac{\log^{15}n}{nd}}},
    \end{align*}
    which completes the proof.
\end{proof}

\subsubsection{Neighborhood containment for relaxed distribution}
As final preparation to prove \pref{lem:main-BP-lemma}, we list some properties that hold with high probability over the randomness of $\bG_{n-1}$ and $\bW_{\Rest}$ for the forest $T\parens*{\bG_{n-1}}$, and proceed by conditioning on these high probability events for the rest of this section.
\begin{lemma}   \label{lem:high-prob-properties}
    The following events simultaneously occur with probability at least $1-n^{-\Omega(\log n)}$ over the randomness of $\bG_{n-1}$ and $\bW_{\Rest}$, and all choices of $T(\bG_{n - 1})$, 
    \begin{enumerate}[(1)]
        \item \label{property:max-deg-bound} The maximum degree in $\bG_{n-1}$, and hence the maximum degree in $T\parens*{\bG_{n-1}}$ is bounded by $\log^2 n$.
        \item \label{property:all-sets-lower-bound} For every $x\in T(\bG_{n-1})$, $\Unif\parens*{L_x\parens*{\bW_{\Rest}}} \ge n^{-\log^3 n}$.
        \item \label{property:superconcentration} $\Spr_{PL(\bW_R)}\left(n^{-\log^4 n}\right) \le \sqrt{\frac{\log^{11}n}{nd}}$.
        \item \label{property:bounded-size} $|\Balls|\le n^{\max\{\log(2\alpha),1\}/\log\log n}$.
    \end{enumerate}
\end{lemma}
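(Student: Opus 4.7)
The plan is to bound each of the four events' failure probabilities separately at $n^{-\Omega(\log n)}$ and then union-bound to finish. The ``for all choices of $T(\bG_{n-1})$'' quantifier forces an extra union bound over the $n^{O(\log^2 n)}$ admissible subsets $S \subseteq [n-1]$ with $|S| \le \log^2 n$; fortunately, since $d \ge \log^{36} n$, our concentration inequalities yield failure probabilities of the form $\exp(-\log^{\Omega(1)} n)$ with exponents well in excess of $\log^3 n$, which comfortably absorb this cost.

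I would handle property (\ref{property:max-deg-bound}) first, as it is immediate from \pref{lem:degree-bound} with $k = \log^2 n$: the tail $n \cdot (\log^2 n / e\alpha)^{-\log^2 n}$ decays super-polynomially. For property (\ref{property:bounded-size}), I would invoke \pref{prop:dom} (whose hypothesis is comfortably satisfied at $d \ge \log^{36} n$) to couple $\bG_{n-1}$ with $\bG_+ \sim \ER(n-1, 2\alpha/n)$ so that $\bG_{n-1} \subseteq \bG_+$ with probability $1 - n^{-\Omega(\log n)}$. Applying \pref{lem:ball-bound} to $\bG_+$ with $s = \log^2 n$ at each $v \in S$ bounds the number of vertices at distance $t$ by $\log^2 n \cdot (2\alpha)^t$, and summing over $t \le \ell - 1 < \log n / \log\log n$ and over $|S| \le \log^2 n$ vertices gives $|\Balls| \le \polylog(n) \cdot (2\alpha)^{\log n / \log\log n} \le n^{\max\{\log(2\alpha), 1\}/\log\log n}$ for large $n$.

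Property (\ref{property:superconcentration}) follows directly from \pref{lem:diffusion-correlated-anticap-intersection}. The key observation is that the proof of that lemma passes through bounds on $\Unif(\bigcap_{i \in [n-1]} \anticap(\bw_i) \cap \scap(\bx))$ and on the pairwise intersections $\Unif(\scap(\bx) \cap \scap(\bw_i))$, neither of which depends on $R$; hence the spread bound on $PL(\bW_R)$ holds simultaneously over all $R$ (and so all $S$) for free, obviating the need for a union bound for this property.

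Property (\ref{property:all-sets-lower-bound}) is where the main work lies. I would first establish it for uniformly-sampled vectors $\bV \sim \Unif^n$: for each fixed pair $(x, R)$, the set $L_x(\bV_R)$ is the intersection of $|R| - |\Leaves(x)|$ anti-caps and $|\Leaves(x)| \le \log^2 n$ caps (invoking property (\ref{property:max-deg-bound})), so by \pref{cor:caps-and-anti} its measure concentrates around $e^{-|R|H(p)}(p/(1-p))^{\Delta}$ with $|\Delta| \le \log^2 n + \alpha$; this centering value is at least $n^{-O(\log^2 n)}$, comfortably larger than $n^{-\log^3 n}$. Choosing $t = 1/2$ in \pref{cor:caps-and-anti} yields concentration failure $\exp(-\Omega(d / \polylog n)) \le \exp(-\log^{25} n)$ at $d \ge \log^{36} n$, more than enough to union-bound over $x \in [n-1]$ and over the $n^{O(\log^2 n)}$ admissible $S$. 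Finally, \pref{obs:unif-to-conditional} transfers the resulting uniform-vector bound to the conditional distribution $\bW \sim \VecDist{\bG_{n-1}}$. The main obstacle throughout is coordinating these four high-probability events so that they hold simultaneously for all $S$, which is precisely what the generous $d \ge \log^{36} n$ assumption enables.
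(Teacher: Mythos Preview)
Your approach matches the paper's for properties~(\ref{property:max-deg-bound}), (\ref{property:superconcentration}), and (\ref{property:bounded-size}). For property~(\ref{property:all-sets-lower-bound}), however, there is a real gap. You write ``for each fixed pair $(x,R)$, the set $L_x(\bV_R)$ is the intersection of $|R| - |\Leaves(x)|$ anti-caps and $|\Leaves(x)| \le \log^2 n$ caps'' and then apply \pref{cor:caps-and-anti}. But when $\bV \sim \Unif^{\otimes (n-1)}$, neither $R$ nor the cap pattern $\Leaves(x)$ can be treated as fixed: both are determined by the random graph $\GG(\bV)$, which is itself a function of $\bV$. So a ``fixed $(x,R)$'' application of \pref{cor:caps-and-anti} is not valid as stated, and a brute-force union bound over all possible $R$ would range over far too many subsets of $[n-1]$ (not just $n^{O(\log^2 n)}$, since $R$ is the complement of the balls $B_{\bG_{n-1}}(s,\ell-1)$, which depend on the graph).

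The paper resolves this with one observation you are missing: the monotonicity
\[
\Unif\bigl(L_x(\bW_R)\bigr) \ge \Unif\bigl(L_x(\bW)\bigr),
\]
which holds because $L_x(\bW_R)$ imposes strictly fewer cap and anti-cap constraints than $L_x(\bW)$ (the latter ranges over the full vertex set $[n-1]$ rather than just $R$). The right-hand side equals $\Unif\bigl(\bigcap_{i \in Y} \scap(\bw_i) \cap \bigcap_{i \in [n-1]\setminus Y} \anticap(\bw_i)\bigr)$ with $Y = N_{\bG_{n-1}}(x)$. It therefore suffices to show that for \emph{every} fixed $Y \subseteq [n-1]$ with $|Y| \le \log^2 n$ this intersection has measure at least $n^{-\log^3 n}$; now the cap pattern $Y$ is genuinely deterministic, \pref{lem:martingale-intersect} (or \pref{cor:caps-and-anti}) applies directly, and the union bound over such $Y$ costs only $n^{O(\log^2 n)}$. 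This single union bound handles all choices of $S$ and $R$ simultaneously, so your separate union bound over $S$ becomes unnecessary.
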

\begin{proof}
    \pref{property:max-deg-bound} follows from \pref{lem:degree-bound}, \pref{property:superconcentration} follows from \pref{lem:diffusion-correlated-anticap-intersection}, and \pref{property:bounded-size} follows from \pref{prop:dom} and \pref{lem:ball-bound}.
    It remains to prove \pref{property:all-sets-lower-bound}.
    To obtain a lower bound on $\Unif\parens*{ L_x\parens*{\bW_{\Rest}} }$, we use:
    \[
        \Unif\parens*{L_x\parens*{\bW_{\Rest}}} \ge \Unif\parens*{L_x\parens*{\bW}}.
    \]
    We will first prove that with probability $1-n^{-\Omega(\log n)}$, for all $Y\subseteq[n-1]$ such that $|Y|\le\log^2 n$,
    \[
        \Unif\parens*{\bigcap_{i\in Y}\scap(\bw_i) \cap \bigcap_{i\in[n-1]\setminus Y}\anticap(\bw_i)} \ge n^{-\log^3 n},
    \]
    which would then imply the desired statement.
    Suppose $\bw_1,\dots,\bw_{n-1}\sim\Unif^{\otimes n-1}$, then by \pref{lem:martingale-intersect} under setting the $s$ parameter to, say, $1/2$, and a union bound over all $Y$ such that $|Y|\le\log^2 n$:
    \begin{align*}
         \Unif\parens*{\bigcap_{i\in Y}\scap(\bw_i) \cap \bigcap_{i\in[n-1]\setminus Y}\anticap(\bw_i)} \ge n^{-\log^3 n} && \forall Y\subseteq[n-1],~|Y|\le\log^2 n \numberthis \label{eq:lower-bound-all}
    \end{align*}
    except with probability at most $n^{-\log n}$.
    By \pref{obs:unif-to-conditional}, \pref{eq:lower-bound-all} is true with probability $1-n^{-\Omega(\log n)}$ when $\bw_1,\dots,\bw_{n-1}\sim\VecDist{\bG_{n-1}}$ with probability $1-n^{-\Omega(\log n)}$ over the randomness of $\bG_{n-1}$, which establishes \pref{property:all-sets-lower-bound}.
\end{proof}

Now we proceed with the proof of \pref{lem:main-BP-lemma}.
\restatelemma{lem:main-BP-lemma}
\begin{proof}[Proof of \pref{lem:main-BP-lemma}]
    Let $\bW_{\Balls}\sim\calF'\parens*{\bW_{\Rest}}$, and without loss of generality let $S=\{1,\dots,s\}$.
    First observe that $\bw_1,\dots,\bw_s$ are independent conditioned on $\bW_{\Rest}$.
    Now, we use $C_i$ to denote the connected component of $T\parens*{\bG_{n-1}}$ where $i$ belongs.
    With probability $1-n^{-\Omega(\log n)}$ over the randomness of $\bG_{n-1}$ and $\bW_{\Rest}$, the events in the statement of \pref{lem:high-prob-properties} occur.
    For the rest of the proof we assume that these events occur.
    Then every $C_i$ satisfies \pref{assumption:tree-properties} and $\Res(C)\subseteq S_{\bG_{n-1}}(v,\ell-1)$.
    Thus, $\Tier(i)=\ell-1$, and consequently by \pref{lem:tiering} the distribution
    \[
        \nu_i \propto \prod_{a\in \Ch(i)} m^{a\to i}
    \]
    satisfies the following for some absolute constant $C$:
    \[
        \Spr_{\nu_i}\parens*{n^{-\log^4 n}} \le
        \max\left\{
            \sqrt{\frac{\log^{27}n}{d}}^{\ell-1}, C\sqrt{\frac{\log^{15}n}{nd}}
        \right\} \qquad \text{and} \qquad \norm*{\nu_i}_{\infty} \le \frac{2}{p^{\log^2 n}}.
    \]
    The marginal on $\bw_i$ is
    \[
        \wt{\nu}_i \propto \nu_i \cdot \Ind\bracks*{L_x\parens*{\bW_{\Rest}}}.
    \]
    $L_x\parens*{\bW_{\Rest}} = L\parens*{\bW_{\Rest}}$ when $\ell\ge 2$, and hence for $\ell\ge 1$, this satisfies
    \[
        \wt{\nu}_i = L\parens*{\bW_{\Rest}} + \Delta
    \]
    with $\norm*{\Delta}_1 \le 2\max\left\{\left(C\sqrt{\frac{\log^{27}n}{d}}\right)^{\ell-1}, C\sqrt{\frac{\log^{15}n}{nd}}\right\}$ and $\norm*{\wt{\nu}_i}_{\infty} \le {n^{\log^5n}}$.

    Then by the definition of $P$:
    \begin{align*}
        \Pr_{\substack{\bw_n \sim\bbS^{d-1} \\ \bW_{\Balls} \sim\calF'\parens*{\bW_{\Rest}}}} \bracks*{\forall x\in S:\angles*{\bw_x,\bw_n}\ge\tau(p)} &= p^{s}\E_{\bw_n \sim\bbS^{d-1}}\prod_{i=1}^s P\wt{\nu}_i (\bw_n)
    \end{align*}
    From \pref{lem:convolution-tv}, we know:
    \begin{align*}
        \Spr_{P\wt{\nu}_i}\parens*{2n^{-\log^4 n}} &\le
        \max\braces*{
            2\sqrt{\frac{\log^{11}n}{nd}},
            2C \cdot\left(C\sqrt{\frac{\log^{27}n}{d}}\right)^{\ell-1}\cdot \sqrt{\frac{\log^{23} n}{d}}
        }\\
        &\le \max\braces*{2\sqrt{\frac{\log^{11}n}{nd}}, \left(C\sqrt{\frac{\log^{27}n}{d}}\right)^{\ell}}.
    \end{align*}
    Using the above along with the fact that $\norm*{P\wt{\nu}_i}_{\infty}\le\frac{1}{p}$ for all $i$ implies:
    \[
        \Pr_{\substack{\bw_n\sim\bbS^{d-1} \\ \bW_{\Balls}\sim\calF'\parens*{\bW_{\Rest}}}} \bracks*{\forall x\in S:\angles*{\bw_x,\bw_n}\ge\tau(p)} \in \left(1\pm \max\braces*{4\sqrt{\frac{\log^{11}n}{nd}}, \left(2C\sqrt{\frac{\log^{27}n}{d}}\right)^{\ell}} \right) p^{s}
    \]
    which completes the proof.
\end{proof}

\subsection{Relating relaxed and true distributions}    \label{sec:relax-to-true}
In this section, we relate the uniform distribution on $\calF(\bW_{\Rest})$ to the uniform distribution on $\calF'(\bW_{\Rest})$.
We first prove that a random sample from $\calF'(\bW_{\Rest})$ falls in $\calF(\bW_{\Rest})$ with high probability.
\begin{lemma}   \label{lem:F-within-Fp}
    With probability at least $1-n^{-\Omega(\log n)}$ over the randomness of $\bG_{n-1}$ and $\bW_{\Rest}$,
    \[
       \Pr_{ \parens*{\bw_i}_{i\in \Balls} \sim \calF'(\bW_{\Rest}) }\bracks*{ \parens*{\bw_i}_{i\in \Balls} \notin \calF(\bW_{\Rest}) } \le pn^{O\parens*{1/\log\log n}}.
    \]
\end{lemma}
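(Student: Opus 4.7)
The strategy is a union bound. We first observe that $\calF(\bW_\Rest) \subseteq \calF'(\bW_\Rest)$: the binary constraints of $\calF'$ are a subset of those of $\calF$, with $\calF'$ enforcing only edges of $T(\bG_{n-1})$ but permitting arbitrary pairs in $\binom{\Balls}{2}$ to be joined. Thus $\bW_\Balls \in \calF'(\bW_\Rest)\setminus \calF(\bW_\Rest)$ exactly when some ``extra edge'' appears, i.e., some $\{i,j\} \in \binom{\Balls}{2} \setminus E(\bG_{n-1})$ has $\iprod{\bw_i,\bw_j} \ge \tau(p)$. Condition on the events of \pref{lem:high-prob-properties}, which hold jointly with probability $1 - n^{-\Omega(\log n)}$; in particular $|\Balls| \le n^{O(1/\log\log n)}$, the max degree is at most $\log^2 n$, and $\Unif(L_v(\bW_\Rest)) \ge n^{-\log^3 n}$ for every $v$ in $T(\bG_{n-1})$. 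A union bound over the $n^{O(1/\log\log n)}$ non-edge pairs in $\binom{\Balls}{2}$ then reduces the task to proving the per-pair bound $\Pr_{\calF'(\bW_\Rest)}[\iprod{\bw_i,\bw_j} \ge \tau(p)] \le O(p)$.

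To prove the per-pair bound when $i$ and $j$ lie in distinct connected components of $T(\bG_{n-1})$, we use that $\bw_i$ and $\bw_j$ are then independent under $\calF'(\bW_\Rest)$, with marginals $\wt\nu_i$ and $\wt\nu_j$ computed by BP. From the spread estimate on $\nu_i$ in \pref{lem:tiering} together with the normalization $Z_i = \int_{L_i}\nu_i\,d\Unif \ge (1-o(1))\Unif(L_i)$ (using $\Unif(L_i) \ge n^{-\log^3 n}$), one may identify a subset $G_i \subseteq L_i(\bW_\Rest)$ on which $\wt\nu_i \le O(1/\Unif(L_i))$ pointwise and $\wt\nu_i(L_i \setminus G_i) \le n^{-\Omega(\log^4 n)}$. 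Hence for every $z \in \bbS^{d-1}$,
\[
    \wt\nu_i(\scap(z)) \;\le\; \frac{O(1)}{\Unif(L_i)} \cdot \Unif(\scap(z) \cap L_i) \;+\; n^{-\Omega(\log^4 n)}.
\]
Applying \pref{cor:two-sets-subexp} to the uniform distribution on $L_i$ (whose relative density is at most $n^{\log^3 n}$) bounds $\Unif(\scap(z) \cap L_i) \le 2p\,\Unif(L_i)$ except on a ``bad'' set $B$ with $\Unif(B) \le \exp(-\Omega(d/\log^{O(1)} n))$. Combined with the crude bound $\|\wt\nu_j\|_\infty \le 2/(p^{\log^2 n}\Unif(L_j)) \le n^{O(\log^3 n)}$ (also from \pref{lem:tiering}), the bad-set contribution $\|\wt\nu_j\|_\infty \cdot \Unif(B)$ is $\ll p$ once $d \ge \log^{36} n$. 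Taking expectation over $\bw_j \sim \wt\nu_j$ yields the per-pair bound $O(p)$.

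When $i, j$ lie in the same tree component $C$, we instead condition on $\{\bw_v\}_{v \in V(C) \setminus \{i,j\}}$. Since $\{i,j\}$ is a non-edge of the tree, this conditioning disconnects $i$ from $j$ in the remaining subtree, so $\bw_i$ and $\bw_j$ become conditionally independent, each uniform on an intersection of the form $L_v(\bW_\Rest) \cap \bigcap_{u \in N_T(v)} \scap(\bw_u)$ whose $\Unif$-measure is at least $n^{-\log^{O(1)} n}$ for typical conditionings (using the $\log^2 n$ bound on tree degrees). The cap-concentration argument above then applies verbatim and gives the $O(p)$ conditional bound, which averages to the unconditional bound. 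Summing across the $n^{O(1/\log\log n)}$ non-edge pairs yields the claim. The main technical obstacle throughout is the bad-set balance in the per-pair bound: the loose $\infty$-norm bound $n^{O(\log^3 n)}$ on $\wt\nu_j$ survives only because the hypothesis $d \ge \log^{36} n$ together with \pref{cor:two-sets-subexp} forces $\Unif(B)$ down super-polynomially fast.
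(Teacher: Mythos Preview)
Your overall scaffold (union bound over the $|\Balls|^2 = n^{O(1/\log\log n)}$ non-edge pairs, reducing to a per-pair bound of $O(p)$) matches the paper. Your treatment of the different-component case is essentially correct and parallels the paper: use the spread bound from \pref{lem:tiering} on $\wt\nu_i$, apply cap concentration to $L_i$, and absorb the bad set via the $\infty$-norm of $\wt\nu_j$.

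The gap is in the same-component case. When you condition on \emph{all} of $\{\bw_v\}_{v\in V(C)\setminus\{i,j\}}$, the conditional law of $\bw_i$ is uniform on $A_i = L_i \cap \bigcap_{u\in N_T(i)} \scap(\bw_u)$, and your cap-concentration step needs $\Unif(A_i), \Unif(A_j) \ge n^{-\log^{O(1)} n}$. You assert this holds ``for typical conditionings'' and that it ``averages to the unconditional bound'', but you never bound $\Pr_{\calF'}[\Unif(A_i) \text{ small}]$. This is not free: the vectors $\{\bw_u\}_{u\in N_T(i)}$ are \emph{not} independent under $\calF'$ (they are all tree-adjacent to $i$, hence all constrained to lie in $\scap(\bw_i)$, and further coupled through the rest of the tree), so you cannot simply invoke \pref{lem:martingale-intersect}. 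Controlling this joint law requires essentially the BP machinery you are trying to avoid, and the ``atypical'' contribution needs to be $\ll p = \Theta(1/n)$, not merely $n^{-\Omega(\log n)}$.

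The paper sidesteps this by conditioning on a \emph{single} vertex. Fix $x$ so that $y$ does not lie in the subtree below $x$; then conditioning on $\bw_x$ replaces the unary constraint at the parent $u$ of $x$ by $L_u \cap \scap(\bw_x)$ and deletes the subtree below $x$, yielding a new tree CSP $\Inst'$ on $T'$. Two facts, both obtained from \pref{cor:two-sets-subexp-mu} applied with $\mu = \wt\nu_x$ (using $\|\wt\nu_x\|_\infty \le n^{\log^5 n}$), hold except with probability $n^{-\log^2 n}$ over $\bw_x$: (1) $\Unif(\scap(\bw_x)\cap L_y) \le \tfrac{3p}{2}\Unif(L_y)$, and (2) $\Unif(\scap(\bw_x)\cap L_z) \ge \tfrac{p}{2}\Unif(L_z)$ for every $z\in V(T')$. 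Fact (2) guarantees $T'$ still satisfies \pref{assumption:tree-properties}, so \pref{lem:tiering} can be \emph{re-applied} to $T'$ to show that the conditional marginal of $\bw_y$ is close to uniform on $L_y$; combined with (1) this gives $\Pr[\bw_y\in\scap(\bw_x)\mid \bw_x] \le 1.6p$. The point is that one conditioning step preserves the tree structure and all the hypotheses needed to rerun the BP analysis, whereas your heavy conditioning destroys them.
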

\begin{proof}
    It suffices to prove that for every pair of vertices $x,y\in \Balls$ such that $d_{T(\bG_{n-1})}(x,y)\ge 2$, the probability that $\angles*{\bw_x,\bw_y}\ge\tau(p)$ is at most $2p$.
    We can then apply the union bound over all $x, y$ pairs, of which there are $n^{O(1/\log\log n)}$ by \pref{property:bounded-size} of \pref{lem:high-prob-properties}.
    Let's assume that the events in \pref{lem:high-prob-properties} hold, which happens with probability at least $1-n^{-\Omega(\log n)}$.
    Consequently, $T(\bG_{n-1})$ satisfies \pref{assumption:tree-properties}.
    The marginal on $\bw_x$ is given by
    \[
        \wt{\nu}_x \propto \prod_{a\in\Ch(x)} m^{a\to x} \cdot \Ind\bracks*{L_x}.
    \]
    By \pref{lem:tiering} and the fact that $\Unif(L_x)\ge n^{-\log^{3}n}$, we can write $\wt{\nu}_x$ as $L_x + \Delta$ where $\norm*{\Delta}_{1} \le C\sqrt{\frac{\log^{27}n}{d}}$ for some constant $C$ and $\norm*{\wt{\nu}_x}_{\infty}\le n^{\log^5 n}$.
    By \pref{cor:two-sets-subexp-mu}, for $\bw_x\sim\wt{\nu}_x$, (1) $\Unif(\scap(\bw_x)\cap L_y) \le \frac{3p}{2} \cdot \Unif(L_y)$, and (2) $\Unif(\scap(\bw_x)\cap L_z)\ge\frac{p}{2} \cdot \Unif(L_z)$ for all $z\in V(T(\bG_{n-1}))$ simultaneously except with probability at most $n^{-\log^2 n}$.

    The distribution of $\bw_y|\bw_x$ is the marginal on variable indexed by $y$ of the uniform distribution over satisfying assignments to the following CSP instance.
    Define $T'$ as the tree obtained by deleting the subtree rooted at $x$ from $T\parens*{\bG_{n-1}}$.
    Let $u$ denote the parent of $x$ in $T$.
    Now, let $F'$ be the factor graph of the CSP instance $\Inst'$ with variables indexed by $V(T')$ and constraints:
    \begin{itemize}
        \item {\bf Unary constraints.}  For each $i\in V(T')$, if $i \ne u$, then $v_i\in L_i(\bW_R)$, and if $i = u$, then $v_u\in L_u(\bW_R)\cap\scap(\bw_x)$.
        \item {\bf Binary constraints.}  For every $\{i,j\}\in T'$: $\angles*{v_i,v_j}\ge\tau(p)$.
    \end{itemize}
    The distribution of $\bw_y|\bw_x$ can be computed from the belief propagation fixed point on $F'$.

    Suppose $\bw_x$ is such that (1) and (2) hold.
    Since (2) holds, $F'$ satisfies \pref{assumption:tree-properties}.
    By \pref{lem:tiering} along with $\Unif(L_y)\ge n^{-\log^3 n}$ and the assumption that (1) holds, we know:
    \[
        \Pr_{\bw_y|\bw_x}\bracks*{\bw_y\in\scap(\bw_x)\cap L_y} \le 1.6p.
    \]
    Since (1) and (2) hold simultaneously except with probability at most $n^{-\log^{2}{n}}$:
    \[
        \Pr_{\bw_x,\bw_y}\bracks*{\angles*{\bw_x,\bw_y}\ge\tau(p)} = \Pr_{\bw_x,\bw_y}\bracks*{\bw_y\in\scap(\bw_x)\cap L_y} \le 2p. \qedhere
    \]
\end{proof}
Our next ingredient is to prove that individual instantiations of vectors from the uniform distribution over $\calF'(\bW_{\Rest})$ produce an intersection of sphere caps whose measure concentrates reasonably well.
\begin{lemma}   \label{lem:mostly-decent}
    With probability at least $1-n^{-\Omega(\log n)}$ over the randomness of $\bW_{\Rest}$ and $\bG_{n-1}$,
    \[
       \Pr_{(\bw_i)_{i\in \Balls}\sim\calF'(\bW_{\Rest})} \bracks*{ \Unif\parens*{\bigcap_{j\in S} \scap\parens*{\bw_j} } \notin \left(1\pm O\parens*{\frac{\log^{15} n}{\sqrt{d}}}\right) p^{|S|} } \le n^{-\log^5 n}.
    \]
\end{lemma}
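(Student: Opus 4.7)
The plan is to compute $\Unif\parens*{\bigcap_{j\in S}\scap(\bw_j)}$ iteratively, one cap at a time, using the single-cap concentration from \pref{cor:two-sets-subexp-mu}. The key enabling observation is that under $\calF'(\bW_{\Rest})$ the vectors $(\bw_j)_{j\in S}$ are independent: since by assumption the balls $\{B_{\bG_{n-1}}(x,\ell-1):x\in S\}$ are pairwise disjoint trees, each $j\in S$ sits in its own connected component of $T(\bG_{n-1})$, and the BP-computed marginals factor over components.

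First, I would condition on the high-probability events of \pref{lem:high-prob-properties} (costing $n^{-\Omega(\log n)}$ of the probability budget), which ensures $\norm*{\wt{\nu}_j}_\infty \le n^{O(\log^3 n)}$ for each $j\in S$ via \pref{lem:tiering} together with $\Unif(L_j(\bW_{\Rest}))\ge n^{-\log^3 n}$. Enumerate $S=\{j_1,\dots,j_s\}$ with $s\le \log^2 n$, and define $\bM_k \coloneqq \bigcap_{i\le k}\scap(\bw_{j_i})$. I would then show by induction on $k$ that, with probability at least $1-k\cdot n^{-\log^6 n}$ over $\calF'(\bW_{\Rest})$, $\Unif(\bM_k)\in(1\pm k\eps_0)\,p^k$ for some $\eps_0 = O(\log^6 n/\sqrt d)$; the base case $k=0$ is trivial, and at $k=s$ this delivers $\Unif(\bM_s)\in(1\pm O(\log^{8}n/\sqrt d))p^s$, which is stronger than required.

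For the inductive step, condition on $\bw_{j_1},\dots,\bw_{j_{k-1}}$ satisfying the hypothesis. This guarantees $\Unif(\bM_{k-1})\ge p^{k-1}/2$ (since $s\eps_0 \ll 1$ under $d\ge\log^{36}n$), so the uniform distribution $\nu$ on $\bM_{k-1}$ has $\norm*{\nu}_{\infty} = 1/\Unif(\bM_{k-1}) \le n^{O(\log^2 n)}$. Because $\bw_{j_k}\sim\wt{\nu}_{j_k}$ is independent of $\bM_{k-1}$ under $\calF'$, I apply \pref{cor:two-sets-subexp-mu} with this $\nu$ and $\mu=\wt{\nu}_{j_k}$: noting that the random variable $X(z) = \Unif(\bM_{k-1}\cap\scap(z))/\Unif(\bM_{k-1})$ from that corollary equals $\Unif(\bM_k)/\Unif(\bM_{k-1})$ when evaluated at $z=\bw_{j_k}$, the corollary yields
\[
\Pr\bracks*{\abs*{\tfrac{\Unif(\bM_k)}{p\cdot \Unif(\bM_{k-1})}-1} > \eps_0} \le 2\exp\parens*{-\tfrac{d\eps_0^2}{O(\log^5 n)}}\cdot n^{O(\log^3 n)} \le n^{-\log^6 n},
\]
where the $O(\log^5 n)$ in the denominator accounts for the $\parens*{\sqrt{\ln\norm*{\nu}_\infty}+\sqrt{\ln(d/p)}}^2\log(1/p)\log(d/p)$ factor in the corollary. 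A union bound over the $s\le\log^2 n$ steps then gives total failure probability at most $n^{-\log^5 n}$, and the cumulative multiplicative error is $s\eps_0 = O(\log^8 n/\sqrt d) \subseteq O(\log^{15}n/\sqrt d)$.

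The main subtlety is managing the $\norm*{\wt{\nu}_{j_k}}_{\infty} = n^{O(\log^3 n)}$ penalty appearing in \pref{cor:two-sets-subexp-mu}: this factor would render the concentration bound useless unless $d$ is large enough for the exponential decay $\exp(-d\eps_0^2/\mathrm{polylog}(n))$ to absorb it while keeping $\eps_0 = O(\mathrm{polylog}(n)/\sqrt d)$ small. This is precisely where the assumption $d\ge\log^{36}n$ is used. If tighter control were needed, one could instead split $\wt{\nu}_{j_k} = L(\bW_{\Rest}) + \Delta$ and handle the $L$-part with the cleaner \pref{cor:two-sets-subexp}, paying only $\norm*{\Delta}_1$ on the remainder, but the cruder estimate above already suffices.
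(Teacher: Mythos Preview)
Your proposal is correct and follows essentially the same approach as the paper: exploit the independence of $(\bw_j)_{j\in S}$ under $\calF'(\bW_{\Rest})$ (since the $j$'s lie in distinct components of the factor graph), bound $\|\wt{\nu}_j\|_\infty$ via \pref{lem:high-prob-properties} and \pref{lem:tiering}, and then iterate \pref{cor:two-sets-subexp-mu} with a union bound. The paper's proof is simply the one-line statement ``the statement follows from an iterated application of \pref{cor:two-sets-subexp-mu} and a union bound over all applications,'' and your inductive argument is exactly the unpacking of that sentence, with somewhat sharper bookkeeping on the polylog exponents.
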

\begin{proof}
    Each $j\in S$ is in a separate connected component of the factor graph $F(\bW_R)$ and so the collection $(\bw_j)_{j\in S}$ is a collection of independent random vectors.
    Denoting the distribution of $\bw_j$ with $\wt{\nu}_j$, we know by \pref{lem:high-prob-properties} that with probability at least $1-n^{-\Omega(\log n)}$ over the randomness of $\bG_{n-1}$ and $\bW_{\Rest}$ that every connected component of $T(\bG_{n-1})$ satisfies \pref{assumption:tree-properties}.
    \pref{lem:tiering} along with \pref{property:all-sets-lower-bound} from \pref{lem:high-prob-properties} then implies that $\norm*{\wt{\nu}_j}_{\infty} \le n^{\log^5 n}$.
    Since $|S|\le\log^2 n$, the statement follows from an iterated application of \pref{cor:two-sets-subexp-mu} and a union bound over all applications.
\end{proof}

We are now finally prepared to prove \pref{lem:nbrhood-containment}.
\restatelemma{lem:nbrhood-containment}
\begin{proof}[{Proof of \pref{lem:nbrhood-containment}}]
    We first establish that $\eta(\ell)\le\frac{\log^8 n}{\sqrt{d}}$.  Recall that:
    \begin{align*}
       \Pr\bracks*{\randnbr(n) \supseteq S | \bG_{n-1}} &= \E_{\bW\sim\VecDist{\bG_{n-1}}} \Unif\parens*{ \bigcap_{i\in S} \scap(\bw_i) }.
    \end{align*}
    Suppose $\bW\sim\Unif^{\otimes n-1}$, then by an iterated application of \pref{cor:cap-intersect-simple} and a union bound, with probability at least $1-n^{-\Omega(\log^3 n)}$,
    \[
       \Unif\parens*{ \bigcap_{i\in S} \scap(\bw_i) } \in \parens*{1\pm \frac{\log^8{n}}{\sqrt{d}}}. \numberthis \label{eq:conc-cap-intersect}
    \]
    By \pref{obs:unif-to-conditional}, the same probability bound for \pref{eq:conc-cap-intersect} holding is true even when $\bW\sim\VecDist{\bG_{n-1}}$ with probability at least $1-n^{-\Omega(\log^2 n)}$ over the randomness of $\bG_{n-1}$.

    The fact that $\Unif\parens*{ \bigcap_{i\in S} \scap(\bw_i) }$ is always in $[0,1]$ and the upper bound on $d$ together imply that
    \[
       \Pr\bracks*{\randnbr(n) \supseteq S | \bG_{n-1}} \in \left(1 \pm O\parens*{\frac{\log^8}{\sqrt{d}}} \right) p^{|S|}
    \]
    with probability at least $1-n^{-\Omega(\log^3 n)}$.  This establishes the desired statement when $\ell = 0$.

    Henceforth, we can assume $\ell\ge 1$, which in particular means $\Balls$ is nonempty.
    Now we turn our attention to proving that:
    \[
       \Pr\bracks*{ N_{\bG}(n) \supseteq S \mid \bG_{n-1} } \in \parens*{ 1\pm 2\eta(\ell) }\cdot p^{|S|}
    \]
    where $\eta(\ell) \coloneqq \max\braces*{\sqrt{\frac{\log^{28} n}{d}}^{\ell}, 4\sqrt{\frac{\log^{11} n}{nd}}}$.
    Let $\bG_{n-1}$ and $\bW_{\Rest}$ be such that the conclusions of \pref{lem:main-BP-lemma}, \pref{lem:F-within-Fp}, and \pref{lem:mostly-decent} hold.
    By \pref{lem:main-BP-lemma},
    \[
       \E_{\bW_{\Balls}\sim\calF'(\bW_{\Rest})} \Unif\parens*{ \bigcap_{i\in S} \scap(\bw_i) } \in \left(1\pm \eta(\ell) \right) p^{|S|}.
    \]
    Denoting $\Pr_{\bW_{\Balls}\sim\calF'\left(\bW_{\Rest}\right)}\bracks*{ \bW_{\Balls}\in\calF\parens*{\bW_{\Rest}} }$ as $q$, we can also write:
    \begin{align*}
       \E_{\bW_{\Balls}\sim\calF'(\bW_{\Rest})} \Unif\parens*{ \bigcap_{i\in S} \scap(\bw_i) } &=  q\cdot\E_{\bW_{\Balls}\sim\calF'\left(\bW_{\Rest}\right) | \bW_{\Balls} \in \calF\left(\bW_{\Rest}\right)} \Unif\parens*{\bigcap_{i\in S}\scap(\bw_i)} + (1-q)\cdot \E_{\bW_{\Balls}\sim\calF'\left(\bW_{\Rest}\right) | \bW_{\Balls} \notin \calF\left(\bW_{\Rest}\right)} \Unif\parens*{\bigcap_{i\in S}\scap(\bw_i)}.
    \end{align*}
    The first term on the right hand side of the above can be written as:
    \[
       q\cdot \E_{\bW_{\Balls}\sim\calF\left(\bW_{\Rest}\right)} \Unif\parens*{ \bigcap_{i\in S} \scap(\bw_i) }.
    \]

    By \pref{lem:mostly-decent} and \pref{obs:bounded-dists} along with the fact that $\Unif\parens*{\bigcap_{i\in S}\scap(\bw_i)}$ is in $[0,1]$, the second term in the right hand side is in
    \[
        (1-q)\cdot\parens*{ \parens*{ 1 \pm \frac{\log^{15} n}{\sqrt{d}}} \cdot p^{|S|} \pm \frac{n^{-\log^5 n}}{1-q} } \subseteq (1-q)\cdot\parens*{ 1 \pm \frac{\log^{15} n}{\sqrt{d}}}\cdot p^{|S|} \pm n^{-\log^5 n}
    \]
    Rearranging the above in the expression
    \[
        q\cdot\E_{\bW_{\Balls}\sim\calF'\left(\bW_{\Rest}\right) | \bW_{\Balls} \in \calF\left(\bW_{\Rest}\right)} \Unif\parens*{\bigcap_{i\in S}\scap(\bw_i)} + (1-q)\cdot \E_{\bW_{\Balls}\sim\calF'\left(\bW_{\Rest}\right) | \bW_{\Balls} \notin \calF\left(\bW_{\Rest}\right)} \Unif\parens*{\bigcap_{i\in S}\scap(\bw_i)} \in \parens*{1\pm\eta(\ell)} p^{|S|}
    \]
    gives
    \[
       \E_{\bW_{\Balls}\sim\calF\parens*{\bW_{\Rest}}} \Unif\parens*{\bigcap_{i\in S} \scap(\bw_i)} \in \parens*{1 \pm \frac{1-q}{q}\cdot\frac{\log^{15} n}{\sqrt{d}} \pm \frac{\eta(\ell)}{q} }p^{|S|}
    \]
    By \pref{lem:F-within-Fp}, $q\ge1-pn^{O(1/\log\log n)}$, which implies that:
    \[
       \Pr\bracks*{ N_{\bG}(n) \supseteq S \mid \bG_{n-1} } = \E_{\bW_{\Balls}\sim\calF\parens*{\bW_{\Rest}}} \Unif\parens*{\bigcap_{i\in S} \scap(\bw_i)} \in \parens*{1 \pm 2\eta(\ell)}\cdot p^{|S|}
    \]
    The above holds with probability at least $1-n^{-\Omega(\log n)}$ over the randomness of $\bG_{n-1}$ and $\bW_{\Rest}$ since we only assumed the conclusions of \pref{lem:main-BP-lemma}, \pref{lem:F-within-Fp} and \pref{lem:mostly-decent} hold, which completes the proof of \pref{lem:nbrhood-containment}.
\end{proof}

\subsection{From neighborhood containment to neighborhood equality probabilities}   \label{sec:containment-to-exact}
In this section we prove \pref{lem:main-nbrhood-lemma} using \pref{lem:nbrhood-containment}.
\restatelemma{lem:main-nbrhood-lemma}
\begin{proof}[Proof of \pref{lem:main-nbrhood-lemma}]
    Let $\bC = \bigcap_{j \in S}\scap(\bw_j)$, and let $\bA = \bigcap_{j \in [n-1]\setminus S} \ol{\scap}(\bw_j)$.
    We note that the area of $\bC$ is unlikely to be too small:
    \begin{claim}\label{claim:lb-area}
        With probability at least $1 - n^{-\Omega(\log n})$, $\Unif\left(\bC\right) \ge \frac{1}{2}p^{|S|} \ge \Omega(n^{-\log n})$.
    \end{claim}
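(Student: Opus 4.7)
The plan is to apply \pref{lem:martingale-intersect} with $L = \bbS^{d-1}$ and $k = j = |S|$ (so that there are no anticaps in the intersection) to obtain concentration of the rescaled area $\bR := \rho(\bC)/p^{|S|}$ about $1$ under i.i.d.\ uniform sampling of the $\bw_j$'s. Setting $s = \tfrac{1}{2}$ in the tail bound, the failure event $\{\rho(\bC) < \tfrac{1}{2} p^{|S|}\} \subseteq \{|\bR - 1| > \tfrac{1}{2}\}$ occurs with probability at most $(|S|+1)\,\epsilon_1(\tfrac12) + \epsilon_2$.

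The key calculation is that in our regime ($p = \alpha/n$, $|S| \le \log^2 n$, $\log^{20} n \le d \le n^{100}$), every logarithm appearing in $F(j)$ is $O(\log n)$, and $\ln(1/\rho(L)) = 0$ since $L = \bbS^{d-1}$. Hence $F(|S|) = O(\log^5 n)$, $(j + (k-j)p^2)\, F(j) \le |S|\, F(|S|) = O(\log^7 n)$, and the failure probability becomes $\exp(-\Omega(d/\log^7 n))$ up to a benign $n^{O(1)}/p^2$ prefactor in $\epsilon_2$, which is $n^{-\omega(\log n)}$ whenever $d \ge \log^{20} n$.

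To move from i.i.d.\ uniform sampling of the $\bw_j$'s to the conditional distribution $\bw_j \sim \rho^{\bG_{n-1}}$ that the claim is actually about, I would use \pref{obs:unif-to-conditional} applied to the indicator $f$ of the failure event (viewed as a function of the vector embedding): since $\E_{\bV \sim \rho^{\otimes(n-1)}} f(\bV) \le \delta := n^{-\omega(\log n)}$, for $\bG_{n-1} \sim \grg_d(n-1,p)$ the same failure probability under $\rho^{\bG_{n-1}}$ is at most $\sqrt{\delta}$ except with probability $\sqrt{\delta}$ over $\bG_{n-1}$. Union-bounding over the at most $n^{O(\log^2 n)}$ choices of $S \subseteq [n-1]$ with $|S| \le \log^2 n$ preserves the $1 - n^{-\Omega(\log n)}$ high probability, since $\exp(-\Omega(\log^{13} n))$ comfortably dominates $n^{O(\log^2 n)}$.

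Finally, the lower bound $\tfrac12 p^{|S|} = \Omega(n^{-\polylog n})$ is immediate from $|S| \le \log^2 n$ and $p \ge 1/n$. No step is a true obstacle: the proof is a direct invocation of \pref{lem:martingale-intersect} followed by the standard i.i.d.-to-conditional transfer; the only thing to verify carefully is that the $\log^{20} n$ lower bound on $d$ absorbs all polylogarithmic overheads as well as the union bound over $S$, which it does with plenty of slack.
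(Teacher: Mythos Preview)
Your proposal is correct and follows essentially the same approach as the paper: apply \pref{lem:martingale-intersect} with $L=\bbS^{d-1}$, $k=j=|S|$ and $s=\tfrac12$ to control $\rho(\bC)$ under i.i.d.\ sampling, then transfer to $\rho^{\bG_{n-1}}$ via \pref{obs:unif-to-conditional}, and union-bound over the $n^{O(\log^2 n)}$ choices of $S$. The only cosmetic difference is that the paper takes the union bound over $S$ \emph{before} the transfer step (and states the resulting i.i.d.\ failure probability loosely as $n^{-\log^3 n}$ rather than your sharper $\exp(-\Omega(\log^{13}n))$), whereas you union-bound after; both orderings work with room to spare.
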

    We defer the proof of the claim to the end.
    We can thus use \pref{claim:lb-area} to apply \pref{cor:anti-caps} and conclude that with probability at least $1 - n^{-\Omega(\log n)}$,
    \begin{equation}
        \rho\left(\bA \cap \bC\right) \in \left(1 \pm \sqrt{\frac{\log^{11} n}{nd}}\right)\cdot \Unif(\bC)\cdot (1-p)^{n-1-|S|}. \label{eq:contained}
    \end{equation}
    Now from \pref{lem:nbrhood-containment}, with probability $1 - n^{-\Omega(\log n)}$,
    \begin{equation}
        \E_{\VecDist{\bG_{n-1}}} [\Unif \left(\bC\right)]= \Pr[N_{\bG}(n) S \mid \bG_{n-1}] \in (1 \pm \eta(\ell)) p^{|S|}.
        \label{eq:contained2}
    \end{equation}
    Let $\calE$ be the event that \pref{claim:lb-area}, \pref{eq:contained} and \pref{eq:contained2} hold.
    We then have that
    \begin{align*}
        \Pr[N_{\bG}(n) = S\mid \bG_{n-1},\calE]
        = \E[\Unif(\bA \cap \bC)\mid \calE]
        &= \left(1 \pm \sqrt{\frac{\log^{11} n}{nd}}\right)\cdot \E[\Unif(\bC) \mid \calE]\cdot (1-p)^{n-1-|S|}\\
        &= \left(1 \pm \sqrt{\frac{\log^{11} n}{nd}}\right) \cdot (1 \pm \eta(\ell)) p^{|S|} (1-p)^{n-1-|S|},
    \end{align*}
    and since $\sqrt{\frac{\log^{11} n}{nd}} \le \eta(\ell)/2 \ll 1$, we have our conclusion.

    Now we prove \pref{claim:lb-area}.
    We'll apply \pref{lem:martingale-intersect} to conclude that any $m= |S|$ vectors from $\Unif^{\otimes n}$ have cap intersection area $\ge e^{-\poly\log n}$ with overwhelmingly large probability, then apply \pref{obs:unif-to-conditional} to conclude that the same is true for vectors sampled from $\VecDist{\bG_{n-1}}$.

    If $\bv_1,\ldots,\bv_n \sim \Unif^{\otimes n}$, then from \pref{lem:martingale-intersect} combined with a union bound over subsets of $[n]$ of size $|S|$, so long as $d \ge \log^{20} n$,
    \[
        \Pr_{\Unif^{\otimes n}}\left[\Unif\left(\bigcap_{j\in S} \scap(\bv_j)\right) < \frac{1}{2} p^{|S|}\right] 
        \le \binom{n}{|S|} \cdot \Pr_{\Unif^{\otimes |S|}}\left[\Unif\left(\bigcap_{j=1}^{|S|} \scap(\bv_j)\right) < \frac{1}{2} p^{|S|}\right]
        \le \binom{n}{|S|}\cdot n^{-\log^{3} n} \le n^{-.5 \log^3 n},
    \]
    for $n$ sufficiently large.
    Hence by \pref{obs:unif-to-conditional}, 
    \[
        \Pr_{\bw_1,\ldots,\bw_{n-1}\sim \VecDist{\bG_{n-1}}}\left[\Unif\left(\bigcap_{j\in S} \scap(\bw_j)\right) < \frac{1}{2} p^{|S|}\right] \le n^{-.25\log^3 n},
    \]
    with probability at least $1-n^{-.25\log^3 n}$ over the randomness of $\bG_{n-1}$, completing the proof.
\end{proof}

\newcommand{\goodgs}{Z}
\newcommand{\lngbh}{N_{\ell}}
\newcommand{\lfngbh}{N_{\geq\ell}}
\newcommand{\ngbh}{N_{1}}

\section{Total variation bound} \label{sec:tv-bound}

In this section, we prove \pref{thm:all-p} and \pref{thm:sparse}.
As was done in the prior work by Brennan, Bresler, and Nagaraj \cite{BBN20}, we use an analogue of the tensorization of the relative entropy for non-product measures:

\begin{claim}[Relative entropy tensorization, similar to Lemma 2.1 of \cite{BBN20}]\label{claim:KL-seq} 
Suppose $\mu = \mu_1 \otimes \cdots \otimes \mu_n$ is a product measure and $\nu$ is a measure over the same domain.
Let $\nu_t$ denote the marginal of $\nu$ on the $t$-th coordinate $x_t$, and let $x_{a:b}$ denote coordinates $a$ through $b$ of $x$.
Then
\[
\dkl(\nu\, \|\,\mu) = \sum_{t=1}^n \E_{x_{1:t-1} \sim \nu} \left[\dkl\left( \nu_t(x_t \mid x_{1:t-1}) \,\|\, \mu_t \right)\right].
\]
\end{claim}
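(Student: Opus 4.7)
The plan is to derive this as an immediate consequence of two standard facts: the chain rule for relative entropy, and the independence structure of the product measure $\mu$.

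First, I would expand both densities using the chain rule of conditional probability: writing $\nu(x_{1:n}) = \prod_{t=1}^n \nu_t(x_t \mid x_{1:t-1})$ and $\mu(x_{1:n}) = \prod_{t=1}^n \mu_t(x_t)$ (the latter using that $\mu$ is a product measure, so the conditional marginal $\mu_t(\cdot \mid x_{1:t-1})$ is simply $\mu_t$). Taking logarithms of the Radon--Nikodym derivative turns the product into a sum:
\[
\ln \frac{d\nu}{d\mu}(x_{1:n}) = \sum_{t=1}^n \ln \frac{\nu_t(x_t \mid x_{1:t-1})}{\mu_t(x_t)}.
\]

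Next, I would integrate both sides against $\nu$. By the definition of relative entropy (\pref{def:kl}) the left-hand side equals $\dkl(\nu \,\|\, \mu)$. For the right-hand side, I exchange the sum and the integral (justified since all summands are nonnegative after one rewrites $\ln = \ln_+ - \ln_-$, or more simply because relative entropies are $[0,\infty]$-valued so Tonelli applies), and then for each fixed $t$ use the tower property of expectation. That is, I integrate out the coordinates $x_{t+1:n}$ first, which leaves an integrand depending only on $x_{1:t}$, and then decompose the integral over $x_{1:t}$ under $\nu$ as an outer expectation over $x_{1:t-1} \sim \nu$ and an inner expectation over $x_t \sim \nu_t(\cdot \mid x_{1:t-1})$. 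The inner integral is by definition $\dkl\left(\nu_t(\cdot \mid x_{1:t-1}) \,\|\, \mu_t\right)$, yielding the claimed identity.

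The only subtle point, and the one I would be most careful about, is the absolute-continuity side condition: if $\nu$ is not absolutely continuous with respect to $\mu$ then the left-hand side is $+\infty$ by convention, and I should check that in that case at least one conditional $\nu_t(\cdot \mid x_{1:t-1})$ must fail to be absolutely continuous with respect to $\mu_t$ on a $\nu$-positive set, making the right-hand side $+\infty$ as well. This follows because absolute continuity of $\nu$ with respect to a product $\mu$ is equivalent to absolute continuity of each conditional $\nu_t(\cdot \mid x_{1:t-1})$ with respect to $\mu_t$ for $\nu$-almost every $x_{1:t-1}$, which is a standard measure-theoretic fact. Apart from this bookkeeping, there is no real obstacle---the result is essentially the chain rule for KL specialized to a product reference measure.
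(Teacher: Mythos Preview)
Your proposal is correct and follows essentially the same approach as the paper: expand the log-likelihood ratio via the chain rule (using that $\mu$ is a product so $\mu_t(\cdot\mid x_{1:t-1})=\mu_t$), swap sum and expectation, and apply the tower property to identify each term as a conditional relative entropy. The paper's proof is simply a terser version of yours; your additional care about the absolute-continuity edge case is a nice touch that the paper omits.
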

\begin{proof}
	By the chain rule for relative entropy,
	\[
	\E_{\bx \sim \nu} \log \frac{\nu(\bx)}{\mu(\bx)}
	= \sum_{t=1}^n \E_{\bx \sim \nu} \log \frac{\nu_t(\bx_{t} \mid \bx_{1:t-1})}{\mu_t(\bx_t)}
	= \sum_{t=1}^n \E_{\bx_{1:t-1} \sim \nu}\left( \E_{\bx_t \sim \nu_t \mid \bx_{1:t-1}} \log \frac{\nu_t(\bx_{t} \mid \bx_{1:t-1})}{\mu_t(\bx_t)}\right),
	\]
	by linearity of expectation and by definition of the marginal distribution.
	The expression on the right simplifies using the definition of the relative entropy, completing the proof.
\end{proof}
In combination with Pinsker's inequality, this lemma reduces bounding the TV distance between a product measure $\mu$ and a general measure $\nu$, to bounding the relative entropy $\dkl\left( \nu_t(x_t \mid x_{1:t-1}) \,\|\, \mu_t \right)$.
\begin{claim} \label{claim:n-nbrs}
	Let $\mu_t$ be the distribution of the neighborhood of $t$ to vertices $[t - 1]$ under $\gnp$, and let $\nu_t(\cdot \mid G_{t-1})$ be the distribution of the neighborhood of $t$ under $\grg_d(n, p)$, conditioned on the subgraph $\bG_{t -1} \sim \grg_d(t - 1, p)$ on the vertices $[t - 1]$. Then,
\begin{align*}
2\dtv {\grg_d(n, p)} \gnp ^2 
&\le n \cdot \E_{\bG_{n-1} \sim \grg(n - 1, p)}\left[\dkl\left(\nu_n(\cdot \mid \bG_{n-1}) \, \| \, \mu_t \right) \right]
\end{align*}
\end{claim}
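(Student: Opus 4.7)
The plan is to apply Pinsker's inequality, then the tensorization lemma (\pref{claim:KL-seq}), and finally bound each summand in the resulting sum by the last one using a monotonicity argument derived from vertex-exchangeability and convexity of KL.

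First, \pref{thm:pinsker} gives $2\dtv{\grg_d(n,p)}{\gnp}^2 \le \dkl(\grg_d(n,p) \,\|\, \gnp)$. Since $\gnp$ is a product measure over edges, in the coordinate decomposition where the $t$-th coordinate records the edges from vertex $t$ to $[t-1]$, we have $\gnp = \mu_1 \otimes \cdots \otimes \mu_n$ with each $\mu_t = \mathrm{Ber}(p)^{\otimes (t-1)}$. Moreover, by vertex-exchangeability of $\grg_d(n,p)$, the marginal on the first $t-1$ vertices is exactly $\grg_d(t-1,p)$. Applying \pref{claim:KL-seq} then gives
\[
\dkl(\grg_d(n,p) \,\|\, \gnp) \;=\; \sum_{t=1}^n \E_{\bG_{t-1} \sim \grg_d(t-1,p)}\!\left[\dkl\!\left(\nu_t(\cdot \mid \bG_{t-1}) \,\|\, \mu_t\right)\right] \;=:\; \sum_{t=1}^n I_t.
\]

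The main step will be to show $I_t \le I_n$ for every $t$, so that the sum is bounded by $n \cdot I_n$. I will prove $I_t \le I_{t+1}$ and iterate. Using vertex-exchangeability, $I_{t+1}$ equals the expected KL of the conditional law of vertex $n$'s edges into $[t]$ (given the induced subgraph on $[t]$, call it $\bG_t$) against $\mathrm{Ber}(p)^{\otimes t}$. By the chain rule, this $t$-coordinate KL decomposes as the KL on the first $t-1$ coordinates plus a non-negative conditional KL on the last coordinate; discarding the latter gives
\[
I_{t+1} \;\ge\; \E_{\bG_t}\!\left[\dkl\!\left(N_n \cap [t-1] \mid \bG_t \,\|\, \mathrm{Ber}(p)^{\otimes(t-1)}\right)\right].
\]
Now apply convexity of $P \mapsto \dkl(P \,\|\, Q)$ in its first argument: writing $\bG_t = (\bG_{t-1}, N_t)$ and averaging the conditional distributions over $N_t$ yields the coarser conditional distribution given only $\bG_{t-1}$, and this averaging can only decrease KL to the fixed target $\mathrm{Ber}(p)^{\otimes(t-1)}$. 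Thus
\[
I_{t+1} \;\ge\; \E_{\bG_{t-1}}\!\left[\dkl\!\left(N_n \cap [t-1] \mid \bG_{t-1} \,\|\, \mathrm{Ber}(p)^{\otimes(t-1)}\right)\right] \;=\; I_t,
\]
where the last equality uses vertex-exchangeability once more to identify the law of vertex $n$'s edges into $[t-1]$ (given the induced subgraph on $[t-1]$) with the law of vertex $t$'s edges into $[t-1]$, which is precisely $\nu_t(\cdot \mid \bG_{t-1})$.

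Combining, $\dkl(\grg_d(n,p) \,\|\, \gnp) \le n \cdot I_n$, and Pinsker's inequality yields the claim. The only point requiring care is the convexity step, which relies on the standard fact that $P \mapsto \dkl(P \,\|\, Q)$ is convex in $P$, so that coarser conditioning produces a smaller expected KL to a fixed target.
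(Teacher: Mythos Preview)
Your proof is correct and follows essentially the same approach as the paper: Pinsker's inequality, the tensorization identity \pref{claim:KL-seq}, then bounding each term $I_t$ by $I_n$ using the chain rule for KL (to drop coordinates), convexity of $\dkl(\cdot\,\|\,Q)$ in the first argument (to coarsen the conditioning), and vertex-exchangeability (to relabel). The only cosmetic difference is that you establish monotonicity step by step ($I_t \le I_{t+1}$), while the paper jumps directly to $I_t \le I_n$ by comparing $I_t$ to the ``full-conditioning'' quantity $\E_{G_{[n]\setminus t}}\dkl(\nu_t^{[n]\setminus t}(\cdot \mid G_{[n]\setminus t}) \,\|\, \mu_n)$ and invoking symmetry; your explicit use of convexity at the coarsening step is in fact cleaner than the paper's presentation of that move.
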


\begin{proof}
Applying Pinsker's inequality (\pref{thm:pinsker}), $2 \cdot \dtv{\nu}{\mu}^2 \le \dkl(\nu\, \|\,\mu)$.
	We then apply \pref{claim:KL-seq} with $\mu = \gnp$ and $\nu = \grg_d(n, p)$, and $\mu_t, \nu_t$ as defined in the claim:
	$$
	2\dtv {\grg_d(n, p)} \gnp ^2 \le \sum_{t = 1}^n \E_{\bG_{t-1} \sim \grg(t - 1, p)}\left[\dkl\left(\nu_t(\cdot \mid \bG_{t-1}) \, \| \, \mu_t \right) \right]
	$$
	Let $G_{S}$ denote a graph over vertices $S$, and let $\nu_t^S$, $\mu_t^S$ refer to the distribution of vertex $t$'s neighbors in $S$ under $\grg_d(n, p)$ and $\gnp$ respectively.
	By symmetry, $\E_{x_{[n] \setminus t} \sim \nu} \left[\dkl\left( \nu_t^{[n] \setminus t}(\cdot \mid x_{[n] \setminus t}) \,\|\, \mu_n \right)\right]$ is the same for all $t \in [n]$. Via the chain rule for relative entropy, and the non-negativity of relative entropy,
	\begin{align*}
	\dkl\left( \nu_t^{[n] \setminus t}(\cdot \mid G_{[n] \setminus t}) \,\|\, \mu_n \right) 
\geq \dkl\left( \nu_t^{[t - 1]}(\cdot \mid G_{[n] \setminus t}) \,\|\, \mu_t^{[t - 1]} \right) 
	= \dkl\left( \nu_t^{[t - 1]}(\cdot \mid G_{t - 1}) \,\|\, \mu_t^{[t - 1]} \right) 
	\end{align*}
	The final equality comes from the fact that $t$'s neighbors in $[t - 1]$ only depends on $\bG_{t - 1}$. 
	
	Upper bounding each $\dkl\left(\nu_t(\cdot \mid \bG_{t-1}) \, \| \, \mu_t \right)$ by $\dkl\left(\nu_n(\cdot \mid \bG_{n-1}) \, \| \, \mu_n \right)$ completes the proof.
\end{proof}

\subsection{Bounding neighborhood relative entropy}

Via \pref{claim:n-nbrs}, our goal now is to upper bound:
$$
\E_{\bG_{n-1} \sim \grg(n - 1, p)}\left[\dkl\left(\nu_n(\cdot \mid \bG_{n-1}) \, \| \, \mu_t \right) \right]
= \E_{\bG_{n-1} \sim \nu_{[n-1]}} \E_{\bS \sim \nu_n (\cdot \mid G_{n - 1})} \ln \frac{\nu_n(\bS \mid G_{n-1})}{\mu_n(\bS)} \leq o\left(\frac{1}{n}\right).$$
For most events under $\nu_{[n - 1]}$ and $\nu_n(\cdot | G_{n - 1})$, we will upper bound the relative entropy via a Chi-square-like quantity: we use the Chi-squared distance, but we allow the omission of a low-probability event $\calE$ (to allow the removal of events which cause the Chi-square distance to blow up).
We then specialize the resulting Chi-square-like bound (\pref{lem:chi-square}) by removing different low-probability events for the general $p$ case (\pref{sec:gen-p}) and the sparse case (\pref{sec:sparse}), and separately conclude \pref{thm:all-p} and \pref{thm:sparse}. 
We now formally state the Chi-square bound:
\begin{lemma} \label{lem:chi-square}
	Let $\calE$ be an event satisfying both 
	$$
		\Pr_{\bG_{n-1\sim}\grg_d(n-1, p),\bS\sim\nu_n(\cdot|\bG_{n-1})}[\calE] \leq o\left(\frac{1}{n^2 \ln n}\right) \text{ and } \Pr_{\bG_{n - 1} \sim \grg_d(n - 1, p), \bS \sim \mu_n}[\calE] \leq o\left(\frac{1}{n^2 \ln n}\right)
	$$
	and for $S\subseteq [n-1]$, define $\Delta_{G_{n-1}}(S) = \frac{\nu_{n}(S  \mid G_{n-1})}{\mu_n(S)} - 1$. Then, 
	$$
		\E_{\bG_{n-1} \sim \nu_{[n-1]}} \E_{\bS \sim \nu_n (\cdot \mid \bG_{n - 1})} \ln \frac{\nu_n(\bS \mid \bG_{n-1})}{\mu_n(\bS)} \leq \E_{\bG_{n-1} \sim \nu_{[n-1]}} \E_{\bS \sim \mu_n} \Delta_{\bG_{n-1}}(\bS)^2 \cdot \Ind(\overline{\calE}) + o\left(\frac{1}{n}\right)
	$$
\end{lemma}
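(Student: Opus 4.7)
The plan is to split the left-hand side according to whether $\calE$ occurs and handle the two parts with qualitatively different bounds: on $\overline\calE$ via the standard quadratic ($\chi^2$) approximation to KL, and on $\calE$ by exploiting the smallness of $\Pr[\calE]$.

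Concretely, I would write $\ln \frac{\nu_n(\bS\mid\bG_{n-1})}{\mu_n(\bS)} = \ln(1+\Delta_{\bG_{n-1}}(\bS))$ and decompose the LHS as $\E[\ln(1+\Delta)\Ind(\overline\calE)] + \E[\ln(1+\Delta)\Ind(\calE)]$, where both outer expectations are over $\bG_{n-1} \sim \grg_d(n-1,p)$ and the inner ones over $\bS \sim \nu_n(\cdot\mid\bG_{n-1})$. For the $\overline{\calE}$ piece I would switch the inner expectation to $\mu_n$ by absorbing the density ratio $1+\Delta$, then apply the pointwise inequality $(1+x)\ln(1+x) \le x + x^2$, valid for all $x > -1$ (easily checked by examining the derivatives of the difference at $0$ and $-1$). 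This produces the Chi-square-like main term $\E_{\bG_{n-1}}\E_{\mu_n}[\Delta^2 \Ind(\overline{\calE})]$ that appears on the RHS, plus a linear remainder $\E_{\bG_{n-1}}\E_{\mu_n}[\Delta \Ind(\overline{\calE})]$. Since $\E_{\bS \sim \mu_n}[\Delta(\bS)\mid\bG_{n-1}] = 0$, the linear remainder equals $\Pr_{\mu_n,\bG_{n-1}}[\calE] - \Pr_{\nu_n,\bG_{n-1}}[\calE]$, which is $o(1/(n^2 \log n)) \subseteq o(1/n)$ by hypothesis.

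For the $\calE$ piece I would use the crude pointwise upper bound $\ln\frac{\nu_n(\bS\mid\bG_{n-1})}{\mu_n(\bS)} \le \ln \frac{1}{\mu_n(\bS)}$. Since $\mu_n(S) = p^{|S|}(1-p)^{n-1-|S|}$ and the regimes of interest in \pref{thm:all-p} and \pref{thm:sparse} have $\frac{1}{n} \lesssim p \le \frac{1}{2}$, this gives $\ln(1/\mu_n(S)) \le (n-1)\ln(1/p) = O(n \log n)$ uniformly in $S$. Hence the $\calE$ piece is bounded by $O(n\log n) \cdot \Pr_{\nu_n, \bG_{n-1}}[\calE] = O(n\log n) \cdot o(1/(n^2 \log n)) = o(1/n)$, and adding the two halves gives the claimed inequality. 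I do not anticipate a real obstacle here; the only thing worth flagging is that the $\log n$ factor in the hypothesis $\Pr[\calE] = o(1/(n^2 \log n))$ is precisely what is needed to absorb the crude $O(n\log n)$ uniform bound on $\ln(1/\mu_n)$ used in the second step.
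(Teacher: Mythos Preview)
Your proposal is correct and follows essentially the same approach as the paper: split on $\calE$ versus $\overline{\calE}$, bound the $\calE$ part via the crude estimate $\ln\frac{\nu_n}{\mu_n} \le \ln\frac{1}{\mu_n} \le (n-1)\ln\frac{1}{p}$, and on $\overline{\calE}$ change measure to $\mu_n$ and use $\ln(1+x)\le x$ (your inequality $(1+x)\ln(1+x)\le x+x^2$ is equivalent) to extract the $\Delta^2$ term plus a linear remainder that collapses to $\Pr_{\mu_n}[\calE]-\Pr_{\nu_n}[\calE]$. The only cosmetic difference is the order in which you change measure and apply the logarithmic inequality.
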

\begin{proof}
Before we introduce the Chi-square bound, we first perform some conditioning on $\overline{\calE}$.
\begin{align*}
\E_{\bG_{n-1} \sim \nu_{[n-1]}} & \E_{\bS \sim \nu_n (\cdot \mid \bG_{n - 1})} \ln \frac{\nu_n(\bS \mid \bG_{n-1})}{\mu_n(\bS)} 
= \E_{\bG_{n-1} \sim \nu_{[n-1]}} \E_{\bS \sim \nu_n(\cdot \mid \bG_{n-1})} \ln\left(\Delta_{G_{n-1}}(\bS) + 1\right) \\ 
&= \E_{\bG_{n-1} \sim \nu_{[n-1]}} \E_{\bS \sim \nu_n(\cdot \mid \bG_{n-1})} \ln\left(\Delta_{\bG_{n-1}}(\bS) + 1\right) \cdot \Ind(\overline{\calE}) + \max_S \left( \ln \frac{\nu_n(S \mid \bG_{n-1})}{\mu_n(S)}\right) \cdot \Pr_{\grg_d(n, p)}[\calE]
\end{align*}
The maximum of $\ln \frac{\nu_n(S \mid G_{n-1})}{\mu_n(S)}$ is upper bounded by $n \ln \frac{1}{p}$, which comes from bounding $\nu_n(S \mid G_{n-1})$, a probability, above by $1$, and achieving the smallest possible $\mu_n(S)$ when $|S| = n - 1$.
Applying our assumption on $\Pr[\calE]$, the term $\max \left( \ln \frac{\nu_n(S \mid G_{n-1})}{\mu_n(S)}\right) \cdot \Pr[\calE]$ is at most $o\left( \frac{1}{n} \right)$.

We now turn our attention to the remaining expectation term:
\begin{align*}
\E_{\bG_{n-1} \sim \nu_{[n-1]}} \E_{\bS \sim \nu_n(\cdot \mid G_{n-1})} \ln\left(\Delta_{G_{n-1}}(\bS) + 1\right) \cdot \Ind(\overline{\calE}) &\le \E_{\bG_{n-1} \sim \nu_{[n-1]}} \E_{\bS \sim \nu_n(\cdot \mid G_{n-1})} \Delta_{G_{n - 1}}(\bS) \cdot \Ind(\overline{\calE}) \\
= \E_{\bG_{n-1} \sim \nu_{[n-1]}} & \E_{\bS \sim \mu_n}(1+\Delta_{G_{n-1}}(\bS))\Delta_{G_{n-1}}(\bS) \cdot \Ind(\overline{\calE})  \\
= \E_{\bG_{n-1} \sim \nu_{[n-1]}} & \E_{\bS \sim \mu_n} \Delta_{G_{n-1}}(\bS) \cdot \Ind(\overline{\calE}) 
+ \E_{\bG_{n-1} \sim \nu_{[n-1]}} \E_{\bS \sim \mu_n} \Delta_{G_{n-1}}(\bS)^2 \cdot \Ind(\overline{\calE})
\end{align*}
The inequality follows because $1+x \le e^x$ for all $x$, and the second equality follows from a change in the randomness of $S$, and because $(1+\Delta_{G_{n-1}}(S)) = \frac{\nu_n(S \mid G_{n-1})}{\mu_n(S)}$.

By the definition of $\Delta_{G_{n-1}}$, we can further simplify the expectation of $\Delta_{G_{n-1}}(S) \cdot \Ind(\overline{\calE})$:
\begin{align*}
\left| \E_{\bG_{n-1} \sim \nu_{[n-1]}} \E_{\bS \sim \mu_n} \Delta_{G_{n-1}}(\bS) \cdot \Ind(\overline{\calE}) \right|
&= \left|\Pr_{\bG_{n - 1} \sim \grg_d(n - 1, p), \bS \sim \mu_n}[\overline{\calE}] - \Pr_{\bG_{n-1\sim}\grg_d(n-1, p),\bS\sim\nu_n(\cdot|\bG_{n-1})}[\calE] \right| \\
&\leq  \Pr_{\bG_{n - 1} \sim \grg_d(n - 1, p), \bS \sim \mu_n}[\calE] + \Pr_{\bG_{n-1\sim}\grg_d(n-1, p),\bS\sim\nu_n(\cdot|\bG_{n-1})}[\calE] \leq o\left(\frac{1}{n^2 \ln n} \right)
\end{align*}
which completes the proof. 
\end{proof}
\begin{remark} \label{rem:reduction}
We will ultimately choose the event $\calE$ based on when the Chi-square estimate is too loose of an upper bound on the relative entropy. 
If the conditions of \pref{lem:chi-square} hold, the overall TV bound we want would follow from
\begin{equation} \label{eq:goal}
\E_{\bG_{n-1} \sim \grg_d(n - 1, p)}\E_{\bS \sim \Binom(n - 1, p)}\left[\left(\frac{\nu_n (\bS \mid \bG_{n-1})}{p^{|\bS|}(1-p)^{n-1-|\bS|}} - 1\right)^2 \cdot \Ind(\overline{\calE}) \right] = o\left(\frac{1}{n}\right).
\end{equation}
\end{remark}

\subsubsection{The general $p$ case} \label{sec:gen-p}

The goal of this section is to prove \pref{thm:all-p}. 
\restatetheorem{thm:all-p}

\begin{proof}
	We apply \pref{lem:chi-square}, when $\calE$ is the failure of degree concentration: $|S| \geq pn + \Delta$, with $\Delta = 10 \max( \log n, pn)$. 
	Via \pref{lem:degree-bound}, $\calE$ has probability $O(n^{-3})$, regardless of whether $S \sim \mu_n$ or $S \sim \nu_n(\cdot | \bG_{n - 1})$ and $\bG_{n - 1} \sim \grg_d(n - 1, p)$.
	(The latter distribution is equivalent to $\bG_n \sim \grg$.)
	We thus satisfy:
	$$
	\Pr_{\bG_{n - 1} \sim \grg_d(n - 1, p), S \sim \mu_n}[\calE], \Pr_{\bG_{n - 1} \sim \grg_d(n - 1, p), S \sim \nu_n(\cdot \mid \bG_{n - 1})}[\calE] \leq o\left(\frac{1}{n^2 \ln n}\right)
	$$
	
	\noindent After the reduction to \pref{eq:goal} in \pref{rem:reduction}, we complete the proof by bounding:
	\begin{align*}
		\E_{\bG_{n-1} \sim \nu_{[n-1]}} & \E_{\bS \sim \Binom(n - 1, p)}\left[\left(\frac{\nu_n (S \mid \bG_{n-1})}{p^{|S|}(1-p)^{n-1-|S|}} - 1\right)^2 \cdot \Ind(\overline{\calE}) \right] \\
		&\leq \int_0^\infty \Pr_{\bG_{n-1} \sim \nu_{[n-1]}, \bS \sim \Binom(n - 1, p)} \left[ \left(\frac{\nu_n (S \mid \bG_{n-1})}{p^{|S|}(1-p)^{n-1-|S|}} - 1\right)^2 > t \mid \overline{\calE} \right] dt
	\end{align*}
	We now apply \pref{cor:caps-and-anti} to control these tail probabilities. 
	By conditioning on $\overline{\calE}$, we may assume $\Delta = 10 \max(pn, \log n)$ in the tail bound, as this choice of $\Delta$ leads to the worst case tail probability for all $|S| \leq pn + \Delta$. 
	Recall that $M(n, p, \Delta) = \max(n H(p), |\Delta| \ln \frac{1}{p}, \ln n)$ in \pref{cor:caps-and-anti}, and note that $p^{|S|}(1-p)^{n-1-|S|} = e^{-n H(p)} \cdot \left(\frac{p}{1 - p}\right)^{\Delta}$.
	\begin{align*}
	\int_0^\infty \Pr\left[ \left(\frac{\nu_n (S \mid \bG_{n-1})}{p^{|S|}(1-p)^{n-1-|S|}} - 1\right)^2 > t \mid \overline{\calE} \right] dt &\leq \int_0^{\frac{1}{n \log n}} 1 \cdot dt + \int_{\frac{1}{n \log n}}^1 \exp\left(\frac{-\Ccaa d \cdot t}{M(n, p, \Delta)^2 \ln n}\right) dt \\
	& \quad \quad \quad \quad +	\int_1^{e^{2n H(p)} \cdot \left(\frac{p}{1 - p}\right)^{\Delta}} \exp\left( \frac{-\Ccaa d}{M(n, p, \Delta)^2 \ln n} + \Ccaa' \log n \right) dt \\
	&\leq \frac{1}{n \log n} - \frac{M(n, p, \Delta)^2 \ln n}{\Ccaa d} \exp \left( \frac{-\Ccaa d \cdot t}{M(n, p, \Delta)^2 \ln n} \right) \Big|_\frac{1}{n \log n}^1 \\
	& \quad \quad \quad \quad + e^{2n H(p)} \cdot \left(\frac{p}{1 - p}\right)^{\Delta} \cdot \exp\left( \frac{-\Ccaa d}{M(n, p, \Delta)^2 \ln n} + \Ccaa' \log n \right) 
	\end{align*}
	In the first line, we split the integral based on the appropriate tail bound expression in \pref{cor:caps-and-anti}, and also remark that it suffices to consider $t \leq e^{2n H(p)} \cdot \left(\frac{p}{1 - p}\right)^{\Delta}$, as $\frac{\nu_n (S \mid \bG_{n-1})}{p^{|S|}(1-p)^{n-1-|S|}}$ is maximized at that value when $|S| \leq np + \Delta$. 
	If we choose $d \geq n \cdot M(n, p, \Delta)^2 \cdot \ln^3 n$, we can bound each summation term individually to obtain the desired bound on $\E_{\bG_{n-1} \sim \nu_{[n-1]}} \E_{\bS \sim \Binom(n - 1, p)}\left[\left(\frac{\nu_n (S \mid \bG_{n-1})}{p^{|S|}(1-p)^{n-1-|S|}} - 1\right)^2 \cdot \Ind(\overline{\calE}) \right]$.
	\begin{align*}
	\int_0^\infty \Pr\left[ \left(\frac{\nu_n (S \mid \bG_{n-1})}{p^{|S|}(1-p)^{n-1-|S|}} - 1\right)^2 > t \mid \overline{\calE} \right] dt &\leq \frac{1}{n \log n} + \frac{1}{n} \cdot e^{-\log n} + e^{2n H(p)} \cdot e^{-Cn \ln^2 n} \leq o\left(\frac{1}{n}\right)
	\end{align*}
	We note that $M(n, p, \Delta) \leq \max(np \ln n, \ln^2 n)$ for $\frac{1}{n} \leq p \leq \frac{1}{2}$, so we can restate our requirement on $d$ as $d = \wt{\Omega}(n^3 p^2)$. 
\end{proof}

\subsubsection{The sparse case} \label{sec:sparse}

For the rest of this section, we assume $p = \frac{\alpha}{n}$, where $\alpha$ is a constant. 
The goal of this section is to prove \pref{thm:sparse}, reproduced below:
\restatetheorem{thm:sparse}

\begin{proof}
As with the general case, we apply \pref{lem:chi-square}, but let $\calE$ be the union of the following events occurring for $\bG\sim\grg_d(n-1,p)$:
	\begin{itemize}
		\item Failure of degree concentration: $|S| \geq 10 \log^2 n$. 
		
		\item Failure of \pref{prop:dom}:
 		If we sample $\bG_- \sim \ER(n - 1, (1 - \eps)p)$, and $\bG_+ \sim \ER(n - 1, (1 + \eps)p)$, corresponding to $\bG_{n - 1}$ via the three-way coupling promised by \pref{prop:dom} with $\eps > \sqrt{\frac{n \cdot H(p)}{d}}$, the graphs do not satisfy $\bG_- \subseteq \bG_{n - 1} \subseteq \bG^+$.  

		\item Failure of \pref{lem:main-nbrhood-lemma}: There exists an $S$ such that $|S|\le\log^2 n$ for which the neighborhood probability
		$$
			\frac{ \nu_n(S \mid \bG_{n - 1})}{p^{|S|}(1-p)^{n-1-|\bS|}} \notin \left(1\pm\eta(\ell_{\bG_{n-1}, S})\right).
		$$
		Here, we can define $\eta(\ell) \coloneqq \min\braces*{\frac{\log^8 n}{\sqrt{d}}, \max\braces*{\left(\frac{\log^{29} n}{\sqrt{d}}\right)^{\ell}, \frac{\log^{12}n}{\sqrt{nd}}}}$, and 
		$$
		\ell_{G, S} :=\min \left\{\frac{\log n}{\log \log n}, \argmax_{\ell} \left\{ \bigcup_{i \in S} B_{G}(i, \ell) \text{ form } |S| \text{ disjoint trees in } G \right\} \right\}
		$$
	\end{itemize} 
Via \pref{lem:degree-bound}, the first event has probability $n^{-\Omega(\log n)}$, regardless of whether we are working over $\bG_{n - 1} \sim \grg_d(n - 1, p), S \sim \mu_n$ or $\bG_{n - 1} \sim \grg_d(n - 1, p), S \sim \nu_n(\cdot \mid \bG_{n - 1})$.

The second and third events also exclusively deal with $\bG_{n - 1}$. 
The probability of the second event is $n^{-\Omega(\log n)}$ by \pref{prop:dom}. 
The third event also has probability at most $n^{-\Omega(\log n)}$ over $\bG_{n - 1} \sim \grg_d(n-1, p)$ by \pref{lem:main-nbrhood-lemma}.
We thus satisfy the conditions of \pref{lem:chi-square}:
$$
	\Pr_{\bG_{n - 1} \sim \grg_d(n - 1, p), \bS \sim \mu_n}[\calE], \Pr_{\bG_{n - 1} \sim \grg_d(n - 1, p), \bS \sim \nu_n(\cdot \mid \bG_{n - 1})}[\calE] \leq o\left(\frac{1}{n^2 \ln n}\right)
$$
It suffices to show that the following quantity is at most $o\left(\frac{1}{n}\right)$:
\[
	\E_{\bG_{n - 1} \sim \grg_d(n - 1, p)}\E_{\bS \sim \Binom(n - 1, p)} \left[\left(\frac{ \nu_n(\bS \mid \bG_{n - 1})}{p^{|\bS|}(1-p)^{n-1-|\bS|}} - 1\right)^2 \cdot \Ind[\overline{\calE}] \right].
\]
Due to the $\Ind[\overline{\calE}]$ term, we can simplify the expression inside the expectation while assuming the outcome of \pref{lem:main-nbrhood-lemma} holds, so this expression is bounded by
$$
\E_{\bG_{n - 1} \sim \grg_d(n - 1, p)}\E_{\bS \sim \Binom(n - 1, p)} \left[\eta(\ell_{\bG_{n - 1}, \bS})^2 \cdot \Ind[\overline{\calE}] \right] 
$$
We can think of $\ell_{\bG_{n - 1}, \bS}$ itself as a random variable over $\{0, \ldots, \lceil \frac{\log n}{\log \log n} \rceil \}$, whose randomness is governed by $\bG_{n - 1}$.
Wishfully, if $\ell_{\bG_{n-1},\bS} \geq \Omega \left(\frac{\log n}{\log \log n}\right)$ always held true, then $\eta(\ell_{\bG_{n-1},\bS})^2 \leq O\left(\frac{\log^{24} n}{nd}\right)$. 
Then, as desired, $\E_{\bG_{n - 1} \sim \grg_d(n - 1, p)}\E_{\bS \sim \Binom(n - 1, p)} \left[\eta(\ell_{\bG_{n - 1}, \bS})^2 \cdot \Ind[\overline{\calE}] \right] \leq o\left(\frac{1}{n}\right)$ when $d \geq \Omega(\log^{30} n)$.

The reality is: with some (small) probability, $\ell_{\bG_{n - 1}, \bS}$ is small, so $\eta(\ell_{\bG_{n - 1}, \bS})^2$ could be as large as $\frac{\log^{16}n}{d}$.
We need to upper bound the probability that $\ell_{\bG_{n - 1}, \bS}$ is small, so that the contribution of this event to $\E_{\bG_{n - 1} \sim \grg_d(n - 1, p)}\E_{\bS \sim \Binom(n - 1, p)} \left[\eta(\ell_{\bG_{n - 1}, \bS})^2 \cdot \Ind[\overline{\calE}] \right]$ remains at most $o\left(\frac{1}{n}\right)$.

But in a sparse \erdos-\renyi graph $\bG_+ \sim \ER(n,(1+\eps)p)$, it is well-known that $\ell_{\bG_+,\bS} \ge \Omega\left(\frac{\log n}{\log\log n}\right)$ with high probability when $\bS \sim \Binom(n,p)$, and further there exist known bounds on the probability that $\ell_{\bG_+, \bS}$ is small.
We can now use the success of the coupling between $\bG_{n-1}$ and $\bG_+$, guaranteed by $\overline{\calE}$, to argue that the same bounds hold over $\bS$ and $\bG_{n-1}$ with high probability.
After we make this relationship between $\ell_{\bG_{n - 1}, \bS}$ and $\ell_{\bG_+, \bS}$ explicit below, the remainder of the proof is accounting for the events when $\ell_{\bG_+, \bS}$ is small. 

Before diving into the distribution of $\ell_{\bG_+, \bS}$, we can first simplify 
\begin{align*}
\E_{\bG_{n - 1} \sim \grg_d(n - 1, p)} & \E_{\bS \sim \Binom(n - 1, p)} \left[\eta(\ell_{\bG_{n - 1}, \bS})^2 \cdot \Ind[\overline{\calE}] \right] \\
&\leq \E_{\bG_{n - 1} \sim \grg_d(n - 1, p)} \E_{\bS \sim \Binom(n - 1, p)}\left[\min \left(\frac{\log^{16} n}{d}, \max\left( \left(\frac{\log^{29}n}{d} \right)^{\ell_{\bG_{n - 1}, \bS}} , \frac{\log^{24}n}{nd} \right) \right) \cdot \Ind[\overline{\calE}] \right] \\ 
&\leq \E_{\bG_+ \sim \ER(n-1, (1 + \eps) p)} \E_{\bS \sim \Binom(n - 1, p)}  \left[\min \left( \frac{\log^{16} n}{d}, \max\left( \left(\frac{\log^{29}n}{d} \right)^{\ell_{\bG_+, \bS}} , \frac{\log^{24} n}{nd} \right) \right) \cdot \Ind[\overline{\calE}] \right] \\
&\leq \E_{\bS \sim \Binom(n - 1, p)} \E_{\bG_+ \sim \ER(n-1, (1 + \eps) p)} \left[\min \left( \frac{\log^{16} n}{d}, \max\left( \left(\frac{\log^{29}n}{d} \right)^{\ell_{\bG_+, \bS}} , \frac{\log^{24} n}{nd} \right) \right) \cdot \Ind[\overline{\calE}] \right] 
\end{align*}
The second-to-last line relies on the success of the coupling from \pref{prop:dom}, with $\eps > \sqrt{\frac{n \cdot H(p)}{d}}$, and noting that when $\bG_{n - 1} \subseteq \bG_+$, we have $\ell_{\bG_+, \bS} \leq \ell_{\bG_{n - 1}, \bS}$. 
The last line involves switching the order of the expectations.

Again, as the bound need only apply when $\overline{\calE}$ occurs, we may assume $|S| \leq \log^2 n$. 
The theorem now follows directly from the claim below.
\begin{claim*}
	For all $|S| \leq \log^2 n$, $\E_{\bG_+ \sim \ER(n-1, (1 + \eps) p)} \left[ \min \left( \frac{\log^{16} n}{d}, \max\left( \left(\frac{\log^{29}n}{d} \right)^{\ell_{\bG_+, S}}, \frac{\log^{24}n}{nd} \right) \right) \right] \leq o\left(\frac{1}{n}\right)$.
\end{claim*} 
\noindent \emph{Proof of Claim.} To understand the magnitude of $d^{\ell_{\bG_{n - 1},S}}$, we need to case on different depths $\ell$. 
\begin{align*}
	\E_{\bG_+ \sim \ER(n-1, (1 + \eps) p)} & \left[\min \left( \frac{\log^{16} n}{d}, \max\left( \left(\frac{\log^{29}n}{d} \right)^{\ell_{\bG_+, S}}, \frac{\log^{24}n}{nd} \right) \right)\right]
	= \Pr_{\bG_+ \sim \ER(n-1, (1 + \eps) p)} \left[\ell_{\bG_+,S} > \log_d(nd)\right] \cdot \frac{\log^{16} n}{nd} \\
	& \quad \quad \quad \quad + \Pr_{\bG_+ \sim \ER(n - 1, (1 + \eps)p)}[\ell_{\bG_+, S} = 0] \cdot \frac{\log^{24} n}{d} + \sum_{\ell = 1}^{\log_d(nd)}  \Pr_{\bG_+ \sim \ER(n-1, (1 + \eps) p)} \left[\ell_{\bG_+,S} = \ell \right] \cdot \left(\frac{\log^{29}n}{d} \right)^{\ell} \\
	&\leq \frac{\log^{16}n}{nd} + \Pr_{\bG_+ \sim \ER(n - 1, (1 + \eps)p)}[\ell_{\bG_+, S} = 0] \cdot \frac{\log^{24} n}{d} + \sum_{\ell = 1}^{\log_d(nd)}  \Pr_{\bG_+ \sim \ER(n-1, (1 + \eps) p)} \left[\ell_{\bG_+,S} \leq \ell \right] \cdot \left(\frac{\log^{29}n}{d} \right)^{\ell} 
\end{align*}
The $\frac{\log^{16}n}{nd}$ term is at most $o\left(\frac{1}{n}\right)$ when $d \geq \Omega(\log^{35} n)$.
To understand the event $\ell_{\bG_{n - 1},S} \leq \ell$, we compute the probability of $\bigcup_{i \in S} B_{\bG_{n - 1}}(i, \ell + 1)$ does not form $|S|$ disjoint trees. 
This event is contained in the union of the following events:
\begin{itemize}
	\item[$\mathcal{I}(\ell)$:] There exist $i, j \in S$ that are distance $\leq 2\ell + 2$ apart, so $B_{\bG_{n - 1}}(i, \ell + 1)$ and $B_{\bG_{n - 1}}(j, \ell + 1)$ \textbf{intersect}. 
	
	\item[$\mathcal{C}(\ell)$:] For some $i \in S$, the ball $B_{\bG_{n - 1}}(i, \ell + 1)$ contains a \textbf{cycle}.
\end{itemize}
Applying the union bound to events $\mathcal{I}$ and $\mathcal{C}$, we can bound:
\begin{align*}
	\Pr_{\bG_+ \sim \ER(n-1, (1 + \eps) p)} \left[\ell_{\bG_+,S} = 0 \right] \cdot \frac{\log^{24} n}{d} &\leq \Pr_{\bG_+ \sim \ER(n-1, (1 + \eps) p)} [\mathcal{I}(0)] \cdot \frac{\log^{24} n}{d} + \Pr_{\bG_+ \sim \ER(n-1,(1 + \eps) p)} [\mathcal{C}(0)] \cdot \frac{\log^{24} n}{d}  \\
	\sum_{\ell = 1}^{\log_d(nd)}  \Pr_{\bG_+ \sim \ER(n-1, (1 + \eps) p)} \left[\ell_{\bG_+,S} = \ell \right] \cdot \left(\frac{\log^{29}n}{d} \right)^{\ell} &\leq \sum_{\ell = 1}^{\log_d(nd)} \Pr_{\bG_+ \sim \ER(n-1, (1 + \eps) p)} [\mathcal{I}(\ell)] \cdot \left(\frac{\log^{29}n}{d} \right)^{\ell}  \\
	&\quad \quad \quad \quad + \sum_{\ell = 1}^{\log_d(nd)}  \Pr_{\bG_+ \sim \ER(n-1,(1 + \eps) p)} [\mathcal{C}(\ell)] \cdot \left(\frac{\log^{29}n}{d} \right)^{\ell} 
\end{align*}

\paragraph{Contribution of event $\mathcal{I}$:}
To analyze event $\mathcal{I}$, we consider its complement $\overline{\mathcal{I}}$. 
We lower bound the probability of $\overline{\mathcal{I}}$ by counting how many ways we can choose the vertices $[i]$ via this greedy process: we first choose $v \in V$, and then set $V \coloneqq V \setminus B_{\bG_{n - 1}}(v, 2\ell + 2)$ before choosing the next vertex. 
Using \pref{lem:ball-bound} and a union bound over all $|S|$ vertices, with probability $\geq 1 - \frac{1}{n^2}$, the size of each $B_{\bG_{n - 1}}(v, 2 \ell + 2)$ is at most $c \log^3 n (pn)^{2 \ell + 2}$, for a universal constant $c$. 
This greedy process ensures that all vertices chosen for $S$ are distance $\geq 2\ell + 2$ apart. 
\begin{align*}
	\Pr_{\bG_+ \sim \ER(n-1, (1 + \eps) p)} [\overline{\mathcal{I}(\ell)}] &\geq \frac{n \cdot (n - c \log^3 n (pn)^{2 \ell + 2}) \cdots (n - (|S| - 1)c \log^3 n (pn)^{2 \ell + 2})}{n^{|S|}} \cdot \left( 1 - \frac{1}{n^2} \right) \\
	&\geq \left[1 - \frac{c \log^3 n |S|(pn)^{2 \ell}}{n}\right]^{|S|} - \frac{1}{n^2} \geq 1 - \frac{c \log^3 n |S|^2 (pn)^{2 \ell + 2}}{n} - \frac{1}{n^2} 
\end{align*}
First, we consider what happens when $\ell_{\bG_+, S} = 0$. (Recall $p = \frac{\alpha}{n}$.) 
\begin{align*}
\Pr_{\bG_+ \sim \ER(n-1, (1 + \eps) p)} [\mathcal{I}(0)] \cdot \frac{\log^{24} n}{d} &\leq \left( \frac{c \log^3 n |S|^2 (pn)^{2}}{n} + \frac{1}{n^2} \right) \cdot \frac{\log^{24} n}{d} \leq \frac{5 c \alpha^2 \log^{29} n}{nd}.
\end{align*}
When $d \geq \Omega(\log^{36} n)$, this quantity is at most $o\left(\frac{1}{n}\right)$. Next, we consider what happens when $\ell_{\bG_+, S} > 0$. 
\begin{align*}
	\sum_{\ell = 1}^{\log_d(nd)} \Pr_{\bG_+ \sim \ER(n - 1, (1 + \eps) p)} [\mathcal{I}(\ell)] \cdot \left(\frac{\log^{29}n}{d} \right)^{\ell} &\leq \sum_{\ell = 1}^{\log_d(nd)} \left( \frac{c\log^3 n |S|^2(pn)^{2\ell + 2}}{n} + \frac{1}{n^2} \right) \cdot \left(\frac{\log^{29}n}{d} \right)^{\ell} \\
	&\leq 2 \left(\sum_{\ell = 1}^{\log_d(nd)}  \frac{c (\log^{29 \ell + 3} n) |S|^2(pn)^{2\ell + 2}}{n \cdot d^{\ell}}\right)
\end{align*}
The summand when $\ell = 1$, $\frac{c (\log^{32} n) |S|^2 \alpha^4}{nd}$ has the largest magnitude, if $\frac{\alpha^2 \log^{29}n}{d} \leq 1$. 
Thus, we can upper bound the summation by $4c \alpha^4 \cdot \frac{\log^{33} n \log_d(nd)}{n\log^{36} n} \leq o \left(\frac{1}{n}\right)$, as desired for the claim. 

\paragraph{Contribution of event $\mathcal{C}$:} Let the event $\mathcal{C}(\ell)_i$  denote the event that $B_{\bG_{n - 1}}(i, \ell + 1)$ contains a cycle for $i \in S$.
Using a union bound, we have $\Pr_{\bG_+ \sim \ER(n -1, (1 + \eps)p)} [\mathcal{C}(\ell)] \leq |S| \cdot \Pr_{\bG_+ \sim \ER(n-1, (1 + \eps)p)} [\mathcal{C}(\ell)_i]$.
Now, using \pref{lem:neighborhood-tree}, there exists a universal constant $c'$ such that
\begin{align*}
	|S| \cdot \Pr_{\bG_{n - 1} \sim \ER(n-1, (1 + \eps)p)} [\mathcal{C}_j \text{ for } \ell] &\leq |S| \cdot \frac{c' (pn)^{\ell + 1}}{n} 
\end{align*}
First, we consider the contribution of the event $\ell_{\bG_+, S} = 0$. (Recall $p = \frac{\alpha}{n}$.)
\begin{align*}
	\Pr_{\bG_+ \sim \ER(n-1, (1 + \eps)p)} [\mathcal{C}(\ell)] \cdot \frac{\log^{24} n}{d} &\leq \frac{c' (\log^{24} n) |S| (pn)}{n d} \leq \frac{2c' \alpha \log^{25} n}{nd} \leq o\left(\frac{1}{n}\right)
\end{align*}
When $d \geq \Omega(\log^{35} n)$, this quantity is at most $o\left(\frac{1}{n}\right)$. 
We lastly consider the events when $\ell_{\bG_+, S} > 0$.
\begin{align*}
	\sum_{\ell = 1}^{\log_d(nd)} \Pr_{\bG_+ \sim \ER(n-1, (1 + \eps)p)} [\mathcal{C}(\ell)] \cdot \left(\frac{\log^{29}n}{d} \right)^{\ell} &\leq \sum_{\ell = 1}^{\log_d(nd)} \frac{c' \log^{29 \ell} |S| (pn)^{\ell + 1}}{n d^{\ell}} 
\end{align*}
Again, the term for $\ell = 1$, $\frac{c' (\log^{29} n) |S| (pn)^2}{d}$, is the largest term of the summation, if $\frac{\alpha \log^{29}n}{d} \leq 1$. 
The summation is thus upper bounded by $2 c' \alpha^2 \cdot \frac{\log^{31} n  \log_d(nd)}{n \cdot \log^{36} n} \leq o \left(\frac{1}{n}\right)$ as well. 
\end{proof}

\section*{Acknowledgments}
Much of this work was conducted when we were participating in the Simons Institute Fall 2020 program on ``Probability, Geometry, and Computation in High Dimensions''.
We thank the Simons Institute for their support, and the program for inspiration.   
S.M. would like to thank Abhishek Shetty for inspiring conversations on geometric concentration. 

\bibliographystyle{alpha}
\bibliography{main}

\appendix
\section{Extending the analysis of the signed triangle statistic to all densities}
\label{app:signed-triangles}

Associate the graph $G$ with the string $G \in \{0,1\}^{\binom{n}{2}}$ with $G_{ij} = \Ind[i \sim j \text{ in }G]$, and define the signed triangle count statistic 
\[
T(G) = \sum_{i < j < k \in [n]} (G_{ij}-p)(G_{jk}-p)(G_{ik}-p).
\]
In \cite{BDER16}, Theorem 2, the authors prove that for any fixed $p \in (0,1)$, $T(\bG)$ distinguishes between $\bG \sim \grg_d(n,p)$ and $\bG\sim \ER(n,p)$ whenever $d \ll n^3 = \Theta( (nH(p))^3)$.
They also show in Theorem 3 that the {\em unsigned} triangle count is a distinguishing statistic for $p = \Theta(\frac{1}{n}))$ whenever $d \ll \log^3 n = \Theta( (nH(p))^3)$.

Here we show that the analysis of $T(\bG)$ from \cite{BDER16} can be extended to show that the signed count $T(\bG)$ distinguishes between $\ER(n,p)$ and $\grg_d(n,p)$ for most $p$, so long as $d \ll (nH(p))^3$.
\begin{lemma}\label{lem:signed-tri}
If $\frac{1}{n^2} \ll p \le 1-\delta$ for any fixed constant $\delta > 0$, then so long as $d \ll (nH(p))^3$,
\[
\left|\E_{\grg_d(n,p)} T(\bG) - \E_{\ER(n,p)} T(\bG)\right| \gg \max \left(\sqrt{\Var_{\grg_d(n,p)} T(\bG)},\sqrt{\Var_{\ER(n,p)} T(\bG)} \right),
\]
where $H(p) = p \log \frac{1}{p} + (1-p) \log \frac{1}{1-p}$ is the binary entropy function.
\end{lemma}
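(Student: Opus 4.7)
\emph{Setup.} Under $\ER(n,p)$ the edges are independent and each factor $(\bG_e-p)$ is centered, so $\E_{\ER}[T]=0$; the task is to lower-bound $|\E_{\grg}[T]|$ relative to the two standard deviations. By linearity of expectation and symmetry over vertex labels, $\E_{\grg}[T]=\binom{n}{3}\mu_3$ with $\mu_3:=\E[(\bG_{12}-p)(\bG_{13}-p)(\bG_{23}-p)]$. Conditioning on $(\bv_2,\bv_3)$, so that $\bG_{12}$ and $\bG_{13}$ become conditionally independent with $\E[\bG_{12}\bG_{13}\mid\bv_2,\bv_3]=\rho(\scap(\bv_2)\cap\scap(\bv_3))$, and simplifying using $\E[\rho(\scap\cap\scap)]=p^2$, I would obtain
\[
\mu_3\;=\;\int_{\tau(p)}^{1}\bigl(f(\theta)-p^2\bigr)\,g(\theta)\,d\theta,
\]
where $f(\theta)$ denotes the common measure of two $p$-caps whose centers have inner product $\theta$ and $g(\theta)$ is the density of the inner product of two i.i.d.\ uniform vectors on $\bbS^{d-1}$.

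\emph{Lower bound on $\mu_3$.} This is the main quantitative step. I would follow the template of \cite[Theorem 2]{BDER16}, but with the $p$-dependence tracked carefully throughout. In the high-dimensional regime, the rescaled inner products $(\sqrt{d}\langle\bv_i,\bv_k\rangle,\sqrt{d}\langle\bv_j,\bv_k\rangle)$ are approximately bivariate standard Gaussian with correlation $\theta=\langle\bv_i,\bv_j\rangle$, so $f(\theta)\approx\Pr[h_1\ge t,\,h_2\ge t]$ for $(h_1,h_2)$ standard bivariate Gaussian of correlation $\theta$, with threshold $t:=\tau(p)\sqrt{d}\le\sqrt{2\log(1/p)}$ (by \pref{lem:upper-bound-tau}); likewise $g(\theta)\,d\theta$ corresponds to standard-Gaussian measure after the rescaling $s=\theta\sqrt{d}$. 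The bivariate orthant-probability derivative $\partial_\theta f(0)=\tfrac{1}{2\pi}e^{-t^2}$ yields the linearization $f(\theta)-p^2\approx\theta\,t^2 p^2$ in the regime $\theta t^2\lesssim 1$ where the integrand concentrates, and substituting would give
\[
\mu_3\;\gtrsim\;\frac{t^2 p^2}{\sqrt{d}}\int_{t}^{\infty}s\,\phi(s)\,ds\;=\;\frac{t^2 p^2\,\phi(t)}{\sqrt{d}}\;\asymp\;\frac{p^3\,(\log(1/p))^{3/2}}{\sqrt{d}},
\]
using $\phi(t)\asymp pt$ for the standard normal density $\phi$. For $p>1/2$ (with $p\le 1-\delta$), I would use the identity $(G_{ij}-p)=-\bigl((1-G_{ij})-(1-p)\bigr)$ to rewrite $\mu_3$ in terms of non-edge (anti-cap) indicators with parameter $q=1-p$ and run the same Gaussian calculation to obtain $|\mu_3|\gtrsim q^3(\log(1/q))^{3/2}/\sqrt{d}$. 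Uniformly across $p\in[n^{-2+o(1)},\,1-\delta]$, this gives $|\mu_3|\gtrsim_\delta H(p)^{3/2}(p(1-p))^{3/2}/\sqrt{d}$.

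\emph{Variance and conclusion.} I would expand $T=\sum_{S\in\binom{[n]}{3}}X_S$ with $X_S=\prod_{e\subseteq S}(\bG_e-p)$ and classify pairs $(S,S')$ by their overlap. Vertex-disjoint triangles have zero covariance by independence of disjoint vector blocks. Triangles sharing exactly one vertex $v$ also contribute zero covariance, because $\E[X_S\mid\bv_v]$ is a rotationally invariant function of $\bv_v$ and hence constant, equal to $\mu_3$, so $\E[X_SX_{S'}\mid\bv_v]=\mu_3^2$ identically. The remaining edge-sharing covariances can be bounded via the same Gaussian linearization by $|\mathrm{Cov}(X_S,X_{S'})|\lesssim t^4 p^4/d$ per pair, using \pref{cor:two-sets-subexp} to control the concentration of $\rho(\scap(\bv_i)\cap\scap(\bv_j))$; these sum to $O(n^4 t^4 p^4/d)$, which is dominated by the diagonal contribution $\binom{n}{3}p^3(1-p)^3\asymp n^3(p(1-p))^3$ whenever $d\ll(nH(p))^3$. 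Thus $\sqrt{\Var_{\grg}T}\asymp\sqrt{\Var_{\ER}T}\asymp_\delta n^{3/2}(p(1-p))^{3/2}$, and combining,
\[
\frac{|\E_{\grg}T-\E_{\ER}T|}{\max(\sqrt{\Var_{\grg}T},\,\sqrt{\Var_{\ER}T})}\;\gtrsim_\delta\;\frac{(nH(p))^{3/2}}{\sqrt{d}},
\]
which diverges whenever $d\ll(nH(p))^3$, as required. The main obstacle I foresee is not conceptual but quantitative: verifying that the Gaussian approximation remains accurate uniformly across the parameter range down to $p\gg n^{-2}$, where $t$ can be as large as $\sqrt{\log n}$; the resulting error is governed by $t/\sqrt{d}$, and bounding it carefully enough that it never competes with the main-order term $p^3(\log(1/p))^{3/2}/\sqrt{d}$ will require the explicit cap-volume estimates established earlier in the paper rather than just an asymptotic Gaussian reduction.
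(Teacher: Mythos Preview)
Your overall structure matches the paper's, but there is a genuine gap in the edge-sharing covariance step that breaks the argument precisely in the range $n^{-2}\ll p\lesssim n^{-1}$ which the lemma is meant to cover.

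The claimed bound $|\Cov(X_S,X_{S'})|\lesssim t^4p^4/d$ and the assertion that the resulting $O(n^4t^4p^4/d)$ is ``dominated by the diagonal whenever $d\ll(nH(p))^3$'' are both wrong. Small $d$ makes your off-diagonal term large, not small; and even comparing directly to $\E[T]^2\asymp n^6p^6t^6/d$, the ratio is $1/(n^2p^2t^2)=1/(n^2p^2\log\tfrac1p)$, which diverges for e.g.\ $p=n^{-3/2}$. So with your bound the proof only goes through for $p\gtrsim 1/n$. (Heuristically the issue is that you have bounded $(G_{23}-p)^2$ by $1$ rather than using $\E[(G_{23}-p)^2]=p(1-p)$, and more fundamentally that a Gaussian linearization of $A=\rho(\scap\cap\scap)-p^2$ is too crude to control the covariance at this resolution.)

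The paper's fix is to express both the mean and the edge-sharing covariance in terms of the \emph{same} parameter $\eps$, defined by $Q:=\Pr[\bG_{12}\bG_{13}\bG_{23}=1\mid\bG_{23}=1]=p^2(1+\eps)$, for which \cite{BDER16} gives the lower bound $\eps\ge c(\log\tfrac1p)^{3/2}/\sqrt d$ and one checks $\mu_3=p^3\eps$ exactly. Using the identity $(G-p)^2=(1-2p)G+p^2$, conditioning on $\bG_{23}$, and bounding the $4$-cycle probability by $Q^2$, an exact computation yields $\E[\overline T_{123}\overline T_{234}]=O(p^5\eps^2+p^6\eps)$. Then in the ratio $\Var/\E^2$ the $\eps$'s cancel: the edge-sharing contribution is $O\bigl(n^4p^5\eps^2/(n^6p^6\eps^2)\bigr)+O\bigl(n^4p^6\eps/(n^6p^6\eps^2)\bigr)=O(1/(n^2p))+O(1/(n^2\eps))$, and both terms vanish using $p\gg n^{-2}$ and the lower bound on $\eps$. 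Crucially no \emph{upper} bound on $\eps$ is ever needed, which also sidesteps the Gaussian-approximation error you flagged as the main obstacle.
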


Utilizing the independence of edges in $\ER(n,p)$, it is easy to show that 
\[
\E_{\ER(n,p)} T(\bG) = 0 \qquad \text{ and } \qquad \Var_{\ER(n,p)} T(\bG) = \binom{n}{3}p^3(1-p)^3.
\]
In \cite{BDER16}, in the course of proving their Lemma 1, the authors prove that for any $p < 0.49$ there exists a universal constant $C$ such that 
\[
\E_{\grg_d(n,p)}[T(\bG)] \ge C \binom{n}{3} p^3 \sqrt{\frac{(\log \frac{1}{p})^3}{d}}.
\]
Hence, if $d \ll (nH(p))^3$, one can already see that the expectation of $T(\bG)$ under $\grg_d(n,p)$ overwhelms the standard deviation of $T(\bG)$ under $\ER(n,p)$.
All that remains is to bound the variance under $\grg_d(n,p)$.
For this, \cite{BDER16} do not obtain sharp estimates for all $p$, obtaining simpler bounds for the special cases $p = \Theta(1)$ and $p = \Theta(\frac{1}{n})$.
En route, the authors prove the following claim:
\begin{claim}[From the proof of Lemma 1 in \cite{BDER16}]\label{claim:qbd}
Let $Q = \Pr_{\grg_d(n,p)}[\bG_{12}\bG_{13}\bG_{23} = 1 \mid \bG_{23} = 1]$.
There exists a universal constant $c>0$ such that for all $p < 0.49$, $Q = p^2(1+\eps)$ for $\eps \ge c \sqrt{\frac{\log^3 \frac{1}{p}}{d}}$.
\end{claim}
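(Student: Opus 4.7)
The plan is to reduce $Q$ to a geometric quantity and then show that the area of the intersection of two random spherical caps, conditioned on one edge, exceeds $p^2$ by the claimed multiplicative factor. First, integrating out $\bv_1$ gives
\[
Q \;=\; \E_{\bv_2,\bv_3}\!\left[\rho\bigl(\scap(\bv_2) \cap \scap(\bv_3)\bigr) \,\middle|\, \langle \bv_2,\bv_3\rangle \geq \tau\right].
\]
By rotational symmetry the integrand depends only on $t := \langle \bv_2,\bv_3\rangle$; call this function $g(t)$. Since two caps of equal area overlap more as their centers come closer, $g$ is non-decreasing on $[\tau, 1]$, and hence $Q \geq g(\tau)$. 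It now suffices to prove $g(\tau) \geq p^2\bigl(1 + c\sqrt{\log^3(1/p)/d}\bigr)$.

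To analyze $g(\tau)$, I would fix $\bv_2 = e_1$ and $\bv_3 = \tau e_1 + \sqrt{1-\tau^2}\, e_2$ and parameterize $\bv_1 \sim \Unif$ via $a := \langle \bv_1, e_1\rangle$ and $b := \langle \bv_1, e_2\rangle$. The event $\bv_1 \in \scap(\bv_2) \cap \scap(\bv_3)$ becomes $a \geq \tau$ and $b \geq \beta(a) := \tau(1-a)/\sqrt{1-\tau^2}$. Since $\beta$ is decreasing, $\beta(a) \leq \beta(\tau) = \tau\sqrt{(1-\tau)/(1+\tau)}$ for all $a \geq \tau$, giving the clean lower bound
\[
g(\tau) \;\geq\; \Pr[a \geq \tau,\, b \geq \beta(\tau)].
\]
Conditioning on $a$, the variable $b/\sqrt{1-a^2}$ is distributed as the first coordinate of a uniform point on $\bbS^{d-2}$, and the normalized threshold $\beta(\tau)/\sqrt{1-a^2}$ evaluates to $\tau/(1+\tau)$ exactly at $a = \tau$. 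Thus $g(\tau)$ reduces, up to careful handling of the $a$-integral, to comparing the cap area on $\bbS^{d-2}$ at threshold $\tau/(1+\tau)$ against that at threshold $\tau$.

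The crux is quantifying this cap-area ratio, whose threshold shift is $\tau - \tau/(1+\tau) = \tau^2/(1+\tau) = \Theta(\tau^2)$. Using the Gaussian approximation $\tfrac{d}{d\alpha}[\text{cap area at threshold }\alpha] \asymp -\sqrt{d}\,\phi(\sqrt{d}\alpha)$ (which is the same mechanism that underlies \pref{lem:beta-concentration}), the shift adds mass of order $\sqrt{d}\,\phi(\sqrt{d}\tau)\cdot \tau^2$, giving a multiplicative gain over $p$ of order $\sqrt{d}\tau^2\phi(\sqrt{d}\tau)/p$. In the small-$p$ regime ($\sqrt{d}\tau \gg 1$), Mills gives $\phi(\sqrt{d}\tau)/p \asymp \sqrt{d}\tau$, so the gain equals $\asymp d\tau^3 \asymp \sqrt{\log^3(1/p)/d}$ (using $\sqrt{d}\tau \asymp \sqrt{\log(1/p)}$); in the remaining regime $\sqrt{d}\tau = \Theta(1)$ (which covers $p$ up to $0.49$), the gain is $\Omega(1/\sqrt{d})$ while the target $\sqrt{\log^3(1/p)/d}$ is also $O(1/\sqrt{d})$, so a sufficiently small universal constant $c$ suffices. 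The main obstacle will be making the approximation $\Pr[b \geq \beta(\tau) \mid a] \approx [\text{cap area at threshold }\tau/(1+\tau)]$ uniform in $a$: the $a$-marginal density $(1-a^2)^{(d-3)/2}$ concentrates on a window of width $O(1/(d\tau))$ above $\tau$, over which $\beta(\tau)/\sqrt{1-a^2}$ shifts by only $O(1/d)$, but this estimate---together with the negligible $(1+o(1))$ dimension discrepancy between $\bbS^{d-2}$ and $\bbS^{d-1}$ cap measures---must be tracked carefully to secure a universal $c > 0$ across the full range $1/n^2 \ll p \leq 0.49$.
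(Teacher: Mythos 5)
The paper never proves \pref{claim:qbd} itself---it is imported verbatim from the proof of Lemma 1 in \cite{BDER16}---so there is no in-paper argument to compare against. Your self-contained derivation is, in its main lines, correct and close in spirit to what BDER16 do: the reduction $Q = \E[\rho(\scap(\bv_2)\cap\scap(\bv_3)) \mid \langle\bv_2,\bv_3\rangle\ge\tau]$, the monotonicity of the intersection area in $\langle\bv_2,\bv_3\rangle$ (which one can verify by checking that the conditional threshold $(\tau - ta)/\sqrt{1-t^2}$ is decreasing in $t$ for $a\ge\tau$), the relaxation $\beta(a)\le\beta(\tau)$, and above all the identification of the gain as coming from a threshold reduction of $\tau-\tau/(1+\tau)=\Theta(\tau^2)$, which via $\psi_d(\tau)/p\asymp d\tau$ yields a relative gain $\Theta(d\tau^3)\asymp\sqrt{\log^3(1/p)/d}$ in the small-$p$ regime and $\Omega(1/\sqrt{d})$ when $\sqrt{d}\,\tau=\Theta(1)$, are all sound. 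The dimension mismatch ($\bbS^{d-2}$ versus $\bbS^{d-1}$) contributes only a $1\pm O(\tau^2)$ factor, which is dominated by the gain since $d\tau\gg 1$.

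The one step that would fail if executed as literally written is the treatment of the $a$-integral. You propose to use that the conditional law of $a$ given $a\ge\tau$ ``concentrates on a window of width $O(1/(d\tau))$ above $\tau$.'' That window captures only a constant fraction---not a $1-o(1)$ fraction---of the mass of $\{a\ge\tau\}$ (the overshoot $a-\tau$ is exponentially distributed at scale $1/(d\tau)$), and since the entire claim is about a multiplicative correction of size $\eps=o(1)$, any constant-factor loss in the $a$-integral obliterates the gain. The fix is routine but necessary: integrate over all of $\{a\ge\tau\}$, e.g.\ by decomposing it into windows $[\tau+k\delta,\tau+(k+1)\delta]$ with $\delta\asymp 1/(d\tau)$; the $k$-th window carries mass $\asymp e^{-\Theta(k)}p$ and suffers a normalized-threshold shift $O(k\tau/d)$, hence a relative loss $O(k\tau^2)$ in the inner cap probability, and summing gives a total relative error $O(\tau^2)=O(\log(1/p)/d)$, which is indeed negligible against $\Theta(d\tau^3)$. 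With that repair the argument goes through for the full range $1/n^2\ll p\le 0.49$, using that $\tau\sqrt{d}$ is bounded below by a universal constant whenever $p\le 0.49$.
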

Here we will use this claim to extend the analysis of \cite{BDER16} to all $p \in (\frac{1}{n^2},0.49)$.

\begin{proof}[Proof of \pref{lem:signed-tri}]
What remains is to bound the variance of $T(\bG)$ under $\bG \sim \grg_d(n,p)$, so hereafter all expectations are taken with respect to $\bG\sim \grg_d(n,p)$.
We may also assume $p < 0.49$, as for constant $p \in (0.49,1)$ we may invoke Theorem 2 of \cite{BDER16}.
Let $T_{ijk} = (G_{ij}-p)(G_{ik}-p)(G_{jk}-p)$ and $\ol{T}_{ijk} = T_{ijk} - \E[\bT_{ijk}]$ for notational simplicity.
We have that
\begin{align}
\Var \left[\sum_{i < j < k \in [n]} \bT_{ijk}\right]
&= 
\E \left[\sum_{i<j<k} \ol \bT_{ijk}^2 + 2\sum_{i < j < k < \ell \in [n]} \ol 
\bT_{ijk} \ol \bT_{jk\ell} + \ol \bT_{ij\ell} \ol \bT_{j\ell k} + \ol \bT_{ik\ell} \ol T_{jk\ell}\right]\nonumber\\
&= \binom{n}{3} \cdot \E\left[\ol \bT_{123}^2\right] + 6 \cdot \binom{n}{4} \cdot \E\left[\ol \bT_{123} \ol \bT_{234}\right],\label{eq:va}
\end{align}
where the first line follows because if triangles $i,j,k$ and $a,b,c$ do not share an edge, $\bT_{ijk}$ and $\bT_{abc}$ are independent, and the second line follows from symmetry.

Because $G_{ij}^2 = G_{ij}$, we have that $(G_{ij} - p)^2 = (1-2p) G_{ij} + p^2$, and so
\begin{align}
T_{123}^2 
&= ((1-2p)G_{12} + p^2)\cdot ((1-2p)G_{13} +p^2)\cdot ((1-2p)G_{23} +p^2),\label{eq:qts}\\
T_{123}T_{234} 
&= ((1-2p)G_{23}+p^2)\cdot (G_{12} - p)\cdot (G_{13} - p)\cdot (G_{24} - p)\cdot (G_{34} - p).\label{eq:kitec}
\end{align}
Now we will bound each of $\E[\ol \bT_{123}^2]$ and $\E[\ol \bT_{123} \ol \bT_{234}]$ separately.
First, we bound $\E[\ol \bT_{123}^2]$ by expanding \pref{eq:qts} and utilizing the symmetry of $\bG_{12},\bG_{23},\bG_{13}$,
\begin{align*}
\E[\ol \bT_{123}^2 ]
&= \E[\bT_{123}^2] - \E[\bT_{123}]^2\\
&= (1-2p)^3\E[\bG_{12}\bG_{13}\bG_{23}] + 3(1-2p)^2p^2 \E[\bG_{12}\bG_{13}] + 3 (1-2p)p^4 \E[\bG_{12}] + p^6\\
&\qquad - \left(\E[\bG_{12}\bG_{13}\bG_{23}] - 3 p \E[\bG_{12}\bG_{13}] + 3p^2 \E[\bG_{12}] - p^3\right)^2\\
&= (1-2p)^3 \E[\bG_{12}\bG_{13}\bG_{23}] + 3(1-2p)^2p^4 + 3(1-2p)p^5 + p^6\\
&\qquad - \left(\E[\bG_{12}\bG_{13}\bG_{23}] - p^3\right)^2, 
\end{align*}
where we have used that for any $S \subseteq \binom{[n]}{2}$ which corresponds to a tree, $\E \prod_{(i,j) \in S} \bG_{ij} = p^{|S|}$.
Recalling that we have defined $Q = p^2(1+\eps)= \E[\bG_{12}\bG_{13}\bG_{23} \mid \bG_{23} = 1]$,
\begin{align}
\E[\ol \bT_{123}^2]
&=  (1-2p)^3 \cdot p\cdot Q + 3(1-2p)^2 p^4 + 3(1-2p)p^5 + p^6 - (pQ - p^3)^2\nonumber \\
&= p^3(1-p)^3 + (1-2p)^3 p^3 \eps - p^6 \eps^2\nonumber\\
&= O( (1+\eps) p^3).\label{eq:tbd}
\end{align}

Dealing now with the second term, starting with \pref{eq:kitec},
\begin{align*}
\E[\ol \bT_{123}\ol \bT_{234}]
&= \E[\bT_{123}\bT_{234}] - \E[\bT_{123}]^2\\
&= \E[\bT_{123}\bT_{234}] - \eps^2p^6\\
&= \E[((1-2p)\bG_{23} +p^2)(\bG_{12}-p)(\bG_{13}-p)(\bG_{24}-p)(\bG_{34}-p)] - \eps^2p^6\\
&= (1-2p)\cdot p \cdot \E[(\bG_{12}-p)(\bG_{13}-p) \mid \bG_{23} = 1]^2 + p^2 \E[(\bG_{12}-p)(\bG_{13}-p)(\bG_{24}-p)(\bG_{34}-p)] - \eps^2p^6
\intertext{Where we have used that $(\bG_{12}-p)(\bG_{13}-p)$ and $(\bG_{24}-p)(\bG_{34}-p)$ are independent conditioned on $\bG_{23}$.
Now again applying the simplification that tree-shaped products are independent,}
&= (1-2p)p(Q-p^2)^2 + p^2(\E[\bG_{12}\bG_{13}\bG_{24}\bG_{34}] - p^4) - \eps^2p^6 \\
&= (1-2p)p^5\eps ^2 + p^2(\E[\bG_{12}\bG_{13}\bG_{24}\bG_{34}] - p^4) - \eps^2p^6.
\intertext{Now, we use that $\E[\bG_{12}\bG_{13}\bG_{24}\bG_{34}] \le \E[\bG_{12}\bG_{13}\bG_{24}\bG_{34} \mid \bG_{23} = 1]$, and again applying independence conditioned on $\bG_{23}$,}
&\le (1-2p)p^5\eps ^2 + p^2(\E[\bG_{12}\bG_{13}\mid \bG_{23}]^2 - p^4) - \eps^2p^6\\
&= (1-2p)p^5\eps^2 + p^2(p^4(1+\eps)^2 - p^4) - \eps^2p^6\\
&= (1-2p)p^5 \eps^2 + 2 p^6 \eps.
\end{align*}
Hence we have 
\begin{equation}
\E[\ol \bT_{123}\ol \bT_{234}] = O( p^5 \eps^2 + 2p^6 \eps). \label{eq:kite}
\end{equation}

Putting together \pref{eq:va}, \pref{eq:tbd}, and \pref{eq:kite}, we have that there exists a constant $C$ such that
\[
\frac{\sqrt{\Var[T(\bG)]}}{\E[T(\bG)]} \le C \cdot \frac{\sqrt{(1+\eps)p^3 n^3 + p^5 \eps^2 n^4 + p^6 \eps n^4}}{n^3 p^3 \eps} \le C \cdot \left(\sqrt{\frac{1+\eps}{p^3 n^3 \eps^2}} + \frac{1}{n\sqrt{p}} + \frac{1}{n\sqrt{\eps}}\right).
\]
Since we have assumed that $p \gg \frac{1}{n^2}$, $\frac{1}{n\sqrt{p}} \to_n 0$.
Also, by \pref{claim:qbd}, $\frac{1}{n \sqrt{\eps}} \le \frac{d^{1/4}}{nc^{1/2}\log^{3/4} \frac{1}{p}} \to_n 0$ so long as $d \ll n^4 \log^3 \frac{1}{p}$, which is implied by our assumption that $d \ll (nH(p))^3$.
Finally, if $\eps > 1$, then applying \pref{claim:qbd},
\[
\frac{1+\eps}{p^3 n^3 \eps^2} \le \frac{2\eps}{p^3 n^3 \eps^2} \le \frac{2}{p^3 n^3 \eps} \le \frac{2d^{1/2}}{c p^3 n^3\log^{3/2} \frac{1}{p}} \to_n 0,
\]
whenever $d \ll (nH(p))^3$.
Alternatively, if $\eps < 1$, applying \pref{claim:qbd} again,
\[
\frac{1+\eps}{p^3 n^3 \eps^2} \le \frac{2}{p^3 n^3 \eps^2} \le \frac{2d}{c^2 p^3 n^3\log^{3} \frac{1}{p}} \to_n 0,
\]
since $d \ll (nH(p))^3$.
This concludes the proof.
\end{proof}

\section{Deferred proofs from the preliminaries}\label{app:prelim}
\label{app:cap-proofs}

\noindent We first provide a proof of \pref{lem:upper-bound-tau}, which provides a convenient upper bound on the dot product threshold $\tau(p)$ corresponding to a $p$-cap.
\restatelemma{lem:upper-bound-tau} 
\noindent We apply the following constant factor approximation for the measure of a sphere cap in terms of the threshold $\tau(p)$, when $\tau(p) \geq \sqrt{\frac{2}{d}}$ given by \cite[Lemma 2.1(b)]{BGKKLS01}:
\begin{theorem} \label{thm:brieden}
	Consider a $p$-cap where $\tau(p) \geq \sqrt{\frac{2}{d}}$. Then:
	$$
	\frac{1}{6 \tau(p) \sqrt{d}} \left(1 - \tau(p)^2 \right)^{(d - 1)/2} \leq p \le \frac{1}{2 \tau(p) \sqrt{d}} \left(1 - \tau(p)^2 \right)^{(d - 1)/2} 
	$$
\end{theorem}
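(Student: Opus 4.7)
The plan is to use Theorem~\ref{thm:brieden} as the principal tool. Squaring both sides, the desired bound $\tau(p) \leq \sqrt{2\log(1/p)/d}$ is equivalent to the tail estimate $p \leq e^{-d\tau(p)^2/2}$ on the measure of a spherical cap with threshold $\tau(p)$. I would split on whether $\tau(p) \geq \sqrt{2/d}$ or not.

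The main case is $\tau(p) \geq \sqrt{2/d}$, where Theorem~\ref{thm:brieden} applies directly. Its upper bound combined with the elementary estimate $(1-\tau(p)^2)^{(d-1)/2} \leq e^{-(d-1)\tau(p)^2/2}$ reduces the target inequality $p \leq e^{-d\tau(p)^2/2}$ to the scalar inequality $e^{\tau(p)^2/2} \leq 2\tau(p)\sqrt{d}$. Since $\tau(p) \leq 1$, the left-hand side is at most $e^{1/2}$, while by the case hypothesis the right-hand side is at least $2\sqrt{2}$; since $e^{1/2} < 2\sqrt{2}$, the inequality holds immediately.

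The remaining case $\tau(p) < \sqrt{2/d}$ is handled by monotonicity of $\tau(\cdot)$: this case corresponds to $p$ at least the threshold value $p^{\ast}$ for which $\tau(p^{\ast}) = \sqrt{2/d}$, and applying the main case to $p^{\ast}$ yields $p^{\ast} \leq 1/e$. When $p \in [p^{\ast}, 1/e]$, the bound is immediate since $\sqrt{2\log(1/p)/d} \geq \sqrt{2/d} > \tau(p)$. The main obstacle is the narrow sub-regime $p \in (1/e, 1/2]$, where $\log(1/p) < 1$; here a direct estimate on the density of $\langle \bv, x\rangle$ near $0$ (which is bounded by $O(\sqrt{d})$) gives $1/2 - p \gtrsim \tau(p)\sqrt{d}$, so $\tau(p) \lesssim (1/2-p)/\sqrt{d}$, and a short numerical verification $(1/2-p)^{2} \cdot (\text{const}) \leq \log(1/p)$ on the interval $p \in (1/e, 1/2]$ then confirms the desired inequality. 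The most error-prone step is tracking constants in this final sub-case, but since the end-points $p = 1/e$ and $p = 1/2$ both satisfy the inequality with wide margin, a crude density estimate suffices.
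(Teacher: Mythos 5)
Your proposal does not prove the statement at hand. The statement to be established is Theorem~\ref{thm:brieden} itself, i.e.\ the two-sided estimate
\[
\frac{1}{6 \tau(p) \sqrt{d}} \left(1 - \tau(p)^2 \right)^{(d - 1)/2} \leq p \le \frac{1}{2 \tau(p) \sqrt{d}} \left(1 - \tau(p)^2 \right)^{(d - 1)/2},
\]
but your very first sentence declares that you will ``use Theorem~\ref{thm:brieden} as the principal tool,'' and the quantity you then set out to bound is $\tau(p) \le \sqrt{2\log(1/p)/d}$. That is Lemma~\ref{lem:upper-bound-tau}, a different (downstream) statement; your argument for it is in fact essentially the same case split the paper uses in \pref{app:cap-proofs}, but that is beside the point. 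As an argument for Theorem~\ref{thm:brieden} the proposal is circular: it assumes the conclusion and derives a consequence of it. Nothing in the proposal engages with what actually needs to be shown, namely upper and lower bounds on the normalized surface-area integral $p = \int_\tau^1 (1-t^2)^{(d-3)/2}\,dt \,\big/ \int_{-1}^1 (1-t^2)^{(d-3)/2}\,dt$ with the stated constants $\tfrac{1}{6}$ and $\tfrac{1}{2}$.

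For the record, the paper does not prove this statement either: it is imported verbatim from \cite[Lemma 2.1(b)]{BGKKLS01}. A self-contained proof would proceed by comparing the integrand $(1-t^2)^{(d-3)/2}$ on $[\tau,1]$ to a geometric/exponential envelope (the substitution $t = \tau + s$ and the bound $1-t^2 \le (1-\tau^2)(1 - \tfrac{2\tau s}{1-\tau^2})$ gives the upper bound after integrating; a matching lower bound over a window of width $\Theta(1/(\tau d))$ gives the other direction), together with the standard estimate $\int_{-1}^1 (1-t^2)^{(d-3)/2}\,dt = \Theta(1/\sqrt{d})$. None of this appears in your proposal, so the entire content of the theorem is missing.
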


\begin{proof}[Proof of \pref{lem:upper-bound-tau}]
	We case on whether $\tau(p)$ is smaller or larger than $\sqrt{\frac{2}{d}}$.  In the first case observe that $\log(1/p)\ge 1$ by our bound on $p$ and so $\tau(p)\le\sqrt{\frac{2}{d}}\le\sqrt{\frac{2\log(1/p)}{d}}$.
	
	When $\tau(p) \geq \sqrt{\frac{2}{d}}$, we use the upper bound in \pref{thm:brieden}. Let $\tau' = \sqrt{\frac{2 \log(1 / p)}{d}}$, and let $p'$ denote its corresponding tail probability $\Pr_{x, y \in \bbS^{d - 1}}[\langle x, y \rangle \geq \tau']$. If we can show $p' \leq p$, then we know $\tau(p) \leq \tau'$. 
	\begin{align*}
		p' \leq \frac{1}{2 \tau' \sqrt{d}} &\cdot \left(1 - t^2 \right)^{(d - 1)/2} = \frac{1}{2 \sqrt{\log(1 / p)}} \cdot \left(1 - \frac{2 \log(1 / p)}{d} \right)^{(d - 1)/2} \\
		&\leq \frac{1}{2 \sqrt{\log(1 / p)}} \cdot p \leq p
	\end{align*}
	Since $p' \leq p$, this tells us $\tau \leq \tau'$ as well, giving us the desired inequality.
\end{proof}

\noindent We next prove \pref{lem:beta-concentration}. 
Understanding the joint distribution of two independently chosen unit vectors in $\bbS^{d - 1}$ helps complete this proof. 
We first present a theorem from \cite{BBN20} that present some useful bounds on this distribution $\psi$.  

\begin{lemma}[{\cite[Lemma 5.1]{BBN20}}] \label{lem:beta-pdf}
	Let $\tau(p)$ be the value of $t$ at which $\Pr_{x, y \in \bbS^{d - 1}}[\langle x, y \rangle \geq t] = p$, and let $\psi_d$ be the distribution of the inner products of two random unit vectors in $\mathbb{R}^2$. 
	\begin{enumerate}
		\item \label{part:psi-ratio-bound} For $0 \leq \tau \leq \frac{1}{2}$ and $\delta > 0$, we have:
		\[
		\frac{\psi_d(\tau- \delta)}{\psi_d(\tau)} \leq \exp(2\tau d \delta).
		\]
		\item \label{part:psi-bound-tau} $\psi_d(\tau) \leq \CPsiBound p \cdot \max\{\sqrt{d}, d\tau\}$ for a universal constant $\CPsiBound$.
		\item \label{part:psi-formula} $
		\psi(\tau(p)) = \dfrac{\Gamma\left( \frac{d}{2} \right)}{\Gamma \left( \dfrac{d - 1}{2} \right) \sqrt{\pi}} \cdot \left( 1 - \tau(p)^2 \right)^{(d - 3)/2}
		$.
	\end{enumerate}
\end{lemma}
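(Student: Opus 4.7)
My plan is to derive the three parts in order: first establish the explicit density formula in part~(\ref{part:psi-formula}), and then obtain parts~(\ref{part:psi-ratio-bound}) and~(\ref{part:psi-bound-tau}) as consequences of that formula together with elementary estimates.

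For part~(\ref{part:psi-formula}), I would compute the marginal of the first coordinate of a uniformly random vector on $\bbS^{d-1}$, noting that by rotation invariance this coincides with the distribution of $\langle x, y\rangle$ for independent uniform $x,y \in \bbS^{d-1}$. Parameterizing the sphere as $(\tau, \sqrt{1-\tau^2}\,\omega)$ for $\omega \in \bbS^{d-2}$, the induced surface measure on the ``latitude'' slice at height $\tau$ is the product of the $(d-2)$-dimensional surface area of a sphere of radius $\sqrt{1-\tau^2}$, which is proportional to $(1-\tau^2)^{(d-2)/2}$, with the Jacobian factor $1/\sqrt{1-\tau^2}$ coming from the change of variables. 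This yields an unnormalized density proportional to $(1-\tau^2)^{(d-3)/2}$, and computing the normalization constant via a Beta integral gives the ratio of Gamma functions in the statement.

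For part~(\ref{part:psi-ratio-bound}), I plug the explicit formula into the ratio and obtain
\[
\frac{\psi_d(\tau-\delta)}{\psi_d(\tau)} = \left(1 + \frac{2\tau\delta - \delta^2}{1-\tau^2}\right)^{(d-3)/2} \le \exp\left(\frac{(d-3)\tau\delta}{1-\tau^2}\right),
\]
using $\log(1+x) \le x$. Since $\tau \le \tfrac{1}{2}$ implies $1-\tau^2 \ge \tfrac{3}{4}$, the exponent is at most $\tfrac{4}{3}(d-3)\tau\delta \le 2d\tau\delta$, as desired. For part~(\ref{part:psi-bound-tau}), I would relate $\psi_d(\tau)$ to the tail $p = \int_\tau^1 \psi_d(t)\,dt$ by splitting on the magnitude of $\tau\sqrt{d}$. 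When $\tau \gtrsim 1/\sqrt{d}$, applying part~(\ref{part:psi-ratio-bound}) to the integrand gives $\psi_d(t) \ge \psi_d(\tau)\cdot \exp(-2dt(t-\tau))$ for $t$ slightly larger than $\tau$, so a direct integration of the resulting Gaussian-like tail yields $p \gtrsim \psi_d(\tau)/(d\tau)$. When $\tau \lesssim 1/\sqrt{d}$, Stirling's approximation applied to the Gamma ratio shows $\psi_d(0) \le O(\sqrt{d})$, and since the density is decreasing in $|\tau|$ and the distribution is symmetric with standard deviation of order $1/\sqrt{d}$, the tail $p$ is bounded below by a universal constant. Combining these two regimes delivers the claimed bound $\psi_d(\tau) \le \CPsiBound \cdot p \cdot \max\{\sqrt{d}, d\tau\}$.

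The main delicate point will be the matching of constants and the clean handling of the transition $\tau \sim 1/\sqrt{d}$ in part~(\ref{part:psi-bound-tau}); both the Gaussian-tail estimate and the Stirling bound need to hold with the same universal constant. The remaining arithmetic is routine, but the ratio bound of part~(\ref{part:psi-ratio-bound}) will be the workhorse for converting local density information into the integral estimate needed for the tail regime.
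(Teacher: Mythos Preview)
The paper does not prove this lemma at all: it is quoted verbatim from \cite{BBN20} and invoked as a black box in the proof of \pref{lem:beta-concentration}. So there is no ``paper's own proof'' to compare your proposal against.

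That said, your plan is sound and would recover the statement. A couple of small remarks. In part~(\ref{part:psi-ratio-bound}) your chain of inequalities is fine, but note that when $\delta > 2\tau$ the numerator $1-(\tau-\delta)^2$ is actually smaller than $1-\tau^2$, so the ratio is at most $1$ and the bound holds trivially; you only need the computation for $\delta \le 2\tau$. In part~(\ref{part:psi-bound-tau}), the lower bound $\psi_d(t) \ge \psi_d(\tau)\exp(-2dt(t-\tau))$ that you write does not quite hold with the constant $2$ for all $d$ (tracking the factors $(d-3)/(1-\tau^2)$ gives a constant slightly larger than $2$ for large $d$), but this is immaterial since you only need \emph{some} universal constant in the exponent. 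A cleaner way to execute the large-$\tau$ regime is to observe that on the interval $[\tau, \tau + 1/(d\tau)]$ the density drops by at most a constant factor (which follows directly from the explicit formula once $\tau \ge 1/\sqrt d$), so $p \ge c\,\psi_d(\tau)/(d\tau)$. Your handling of the small-$\tau$ regime via Stirling and the fact that $p$ is bounded below by a universal constant when $\tau = O(1/\sqrt d)$ is correct.
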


\begin{proof}[Proof of \pref{lem:beta-concentration}]
	When $t \geq 0$, the density $\psi_d(t)$ is a decreasing function in $t$. Thus:
	\begin{align*}
		\Pr_{z \sim \rho}[\tau - \varepsilon \leq \langle z, x \rangle \leq \tau + \varepsilon]
		&= \int_{\tau - \varepsilon}^{\tau + \varepsilon} \psi_d(t) dt \\
		&\leq (2 \varepsilon) \cdot [\psi_d(\tau) \cdot \exp(2d \tau \varepsilon)]
	\end{align*}
	In the last line, we used \pref{part:psi-ratio-bound} of \pref{lem:beta-pdf}, and noted that this is an upper bound even when $\varepsilon > \tau$. Using \pref{part:psi-bound-tau} of \pref{lem:beta-pdf}, and \pref{lem:upper-bound-tau}, we can upper bound $\psi_d(\tau)$:
	\begin{align*}
		\psi_d(\tau) &\leq \CPsiBound p \cdot \max\{\sqrt{d}, d \tau\} \leq p \cdot \left(\CPsiBound \sqrt{2} \cdot \sqrt{d} \cdot \sqrt{\log(1 / p)} \right)
	\end{align*}
	from which the desired bound follows.
\end{proof}

\end{document}